\crefname{maintheorem}{Theorem}{Theorems}
\crefname{maincorollary}{Corollary}{Corollaries}
\begin{document}
\newcommand{\actsonr}{\mathrel{\reflectbox{$\righttoleftarrow$}}}
\newcommand{\actsonl}{\mathrel{\reflectbox{$\lefttorightarrow$}}}

\newcommand{\floor}[1]{\lfloor #1 \rfloor}

\newcommand{\isoeq}{\cong}
\newcommand{\cech}{\vee}
\newcommand{\dsum}{\mathop{\oplus}}
\newcommand{\End}{\mathrm{End}}

\newcommand{\calQ}{\mathcal{Q}}
\newcommand{\calO}{\mathcal{O}}
\newcommand{\calM}{\mathcal{M}}

\newcommand{\frakm}{\mathfrak{m}}

\newcommand{\bbA}{\mathbb{A}}
\newcommand{\bbB}{\mathbb{B}}
\newcommand{\bbC}{\mathbb{C}}
\newcommand{\bbD}{\mathbb{D}}
\newcommand{\bbE}{\mathbb{E}}
\newcommand{\bbF}{\mathbb{F}}
\newcommand{\bbG}{\mathbb{G}}
\newcommand{\bbH}{\mathbb{H}}
\newcommand{\bbI}{\mathbb{I}}
\newcommand{\bbJ}{\mathbb{J}}
\newcommand{\bbK}{\mathbb{K}}
\newcommand{\bbL}{\mathbb{L}}
\newcommand{\bbM}{\mathbb{M}}
\newcommand{\bbN}{\mathbb{N}}
\newcommand{\bbO}{\mathbb{O}}
\newcommand{\bbP}{\mathbb{P}}
\newcommand{\bbQ}{\mathbb{Q}}
\newcommand{\bbR}{\mathbb{R}}
\newcommand{\bbS}{\mathbb{S}}
\newcommand{\bbT}{\mathbb{T}}
\newcommand{\bbU}{\mathbb{U}}
\newcommand{\bbV}{\mathbb{V}}
\newcommand{\bbW}{\mathbb{W}}
\newcommand{\bbX}{\mathbb{X}}
\newcommand{\bbY}{\mathbb{Y}}
\newcommand{\bbZ}{\mathbb{Z}}

\newcommand{\bc}{\mathbf{c}}

\newcommand{\Spec}{\mathrm{Spec}}\
\newcommand{\Triv}{\mathrm{Triv}}
\newcommand{\Loc}{\mathrm{Loc}}
\newcommand{\Et}{\mathrm{Et}}

\newcommand{\ul}[1]{\underline{#1}}

\newcommand{\Vect}{\mathrm{Vect}}
\newcommand{\FilVect}{\mathrm{FilVect}}
\newcommand{\FilVB}{\mathrm{FilVB}}

\newcommand{\trFil}{\mathrm{trFil}}
\newcommand{\gr}{\mathrm{gr}}
\newcommand{\LattFilt}[1]{\mr{Latt}_{#1}\mr{Vect}}

\newcommand{\dR}{\mathrm{dR}}

\newcommand{\HS}{\mathrm{HS}}

\newcommand{\Hodge}{\mathrm{Hdg}}

\newcommand{\BC}{\mathrm{BC}}
\newcommand{\CM}{\mathrm{CM}}

\newcommand{\goodred}{\textrm{good-red}}

\newcommand{\dash}{\mathrm{-}}
\newcommand\+{\mkern3mu}

\newcommand{\rev}[1]{{\color{teal} #1}}
\newcommand{\strike}[1]{{\color{gray} \sout{#1}}}
\newcommand{\spc}[1]{{\color{blue} \textsf{$\blacktriangle\blacktriangle\blacktriangle$ Comment: [#1]}}}

\newcommand{\triv}{\mr{triv}}
\newcommand{\crys}{\mathrm{crys}}
\newcommand{\Fil}{F}
\newcommand{\calL}{\mathcal{L}}
\newcommand{\Stab}{\mathrm{Stab}}
\newcommand{\Spd}{\mathrm{Spd}}
\newcommand{\mbb}[1]{\mathbb{#1}}
\newcommand{\HT}{\mathrm{HT}}
\newcommand{\GH}{\mathrm{GH}}
\newcommand{\GM}{\mathrm{GM}}
\newcommand{\Gr}{\mathrm{Gr}}
\newcommand{\Fl}{\mathrm{Fl}}
\newcommand{\LT}{\mathrm{LT}}
\newcommand{\colim}{\mathrm{colim}}
\newcommand{\cris}{\mathrm{cris}}
\renewcommand{\l}{\left}
\renewcommand{\r}{\right}
\newcommand{\GL}{\mathrm{GL}}
\newcommand{\Sh}{\mathrm{Sh}}

\newcommand{\Perf}{\mathrm{Perf}}
\newcommand{\Perfd}{\mathrm{Perfd}}
\newcommand{\mc}[1]{\mathcal{#1}}
\renewcommand{\mbb}[1]{\mathbb{#1}}
\newcommand{\mr}[1]{\mathrm{#1}}
\newcommand{\mf}[1]{\mathfrak{#1}}
\newcommand{\ms}[1]{\mathscr{#1}}

\newcommand{\ol}[1]{\overline{#1}}

\newcommand{\Kt}{\mathrm{Kt}}
\newcommand{\Isoc}{\mr{Isoc}}

\newcommand{\basic}{\mr{basic}}
\newcommand{\ab}{\mr{ab}}
\newcommand{\Spa}{\mathrm{Spa}}
\newcommand{\Spf}{\mathrm{Spf}}
\newcommand{\perf}{\mathrm{perf}}
\newcommand{\Spr}{\mr{Spr}}
\newcommand{\Hom}{\mr{Hom}}
\newcommand{\Ker}{\mr{Ker}}
\newcommand{\adm}{\mr{adm}}
\newcommand{\bs}{\backslash}
\newcommand{\BB}{\mathrm{BB}}
\newcommand{\an}{\mathrm{an}}
\newcommand{\AdmFil}{\mathrm{AdmFil}} 
\newcommand{\Id}{\mr{Id}}

\newcommand{\QQ}{\mathbb{Q}}
\newcommand{\ZZ}{\mathbb{Z}}

\newcommand{\FF}{\mathrm{FF}}

\newcommand{\proet}{\mr{pro\acute{e}t}}
\newcommand{\et}{\mr{\acute{e}t}}
\newcommand{\fet}{\mr{f\acute{e}t}}

\newcommand{\badm}{{\textrm{b-adm}}}
\newcommand{\Qpbreve}{{\breve{\mbb{Q}}_p}}
\newcommand{\Qpbrevebar}{{\overline{\breve{\mbb{Q}}}_p}}

\newcommand{\Hdg}{\mr{Hdg}}
\newcommand{\Hdggen}{\mr{gen}}
\newcommand{\MT}{\mr{MT}}
\newcommand{\Rep}{\mathrm{Rep}}

\newcommand{\FilPhiMod}{\mathrm{MF}^{\varphi}}
\newcommand{\waFilPhiMod}{\mathrm{MF}^{\varphi, \mr{wa}}}

\newcommand{\Isom}{\mr{Isom}}
\newcommand{\Gal}{\mr{Gal}}
\newcommand{\Aut}{\mr{Aut}}
\newcommand{\AdmPair}{\mr{AdmPair}}
\newcommand{\bAdmPair}{\AdmPair^\basic}
\newcommand{\Mot}{\mr{Mot}}
\newcommand{\Forget}{\mr{Forget}}

\newcommand{\Cont}{\mr{Cont}}
\newcommand{\AP}{\mathbf{A}}
\newcommand{\GAP}{\mathbf{A}}
\newcommand{\ev}{\mathrm{ev}}
\newcommand{\std}{\mathrm{std}}
\newcommand{\Lie}{\mr{Lie}}
\newcommand{\spck}[1]{{\color{teal} \textsf{$\dagger\dagger\dagger$ CK: [#1]}}}

\newcommand{\bl}{\mr{bl}}

\numberwithin{equation}{subsubsection}
\theoremstyle{plain}

\newtheorem{maintheorem}{Theorem} 
\renewcommand{\themaintheorem}{\Alph{maintheorem}} 
\newtheorem{maincorollary}[maintheorem]{Corollary}
\newtheorem{mainconjecture}[maintheorem]{Conjecture}

\newtheorem*{theorem*}{Theorem}

\newtheorem{theorem}[subsubsection]{Theorem}
\newtheorem{corollary}[subsubsection]{Corollary}
\newtheorem{conjecture}[subsubsection]{Conjecture}
\newtheorem{proposition}[subsubsection]{Proposition}
\newtheorem{lemma}[subsubsection]{Lemma}

\theoremstyle{definition}

\newtheorem{example}[subsubsection]{Example}
\newtheorem{assumption}[subsubsection]{Assumption}
\newtheorem{definition}[subsubsection]{Definition}
\newtheorem{remark}[subsubsection]{Remark}
\newtheorem{question}[subsubsection]{Question}

\renewcommand{\Vect}{\mathrm{Vect}}

\title[Admissible pairs and $p$-adic Hodge structures I]{Admissible pairs and $p$-adic Hodge structures I: transcendence of the de Rham lattice}

\author{Sean Howe}
\email{sean.howe@utah.edu}
\author{Christian Klevdal}
\email{cklevdal@ucsd.edu}

\begin{abstract}
For an algebraically closed non-archimedean extension $C/\mbb{Q}_p$, we define a Tannakian category of \emph{$p$-adic Hodge structures} over $C$ that is a local, $p$-adic structural analog of the global, archimedean category of $\mbb{Q}$-Hodge structures in complex geometry. In this setting the filtrations of classical Hodge theory must be enriched to lattices over a complete discrete valuation ring, Fontaine's integral de Rham period ring $B^+_\dR$, and a pure $p$-adic Hodge structure is then a $\mbb{Q}_p$-vector space equipped with a $B^+_\dR$-lattice satisfying a natural condition analogous to the transversality of the complex Hodge filtration with its  conjugate. We show $p$-adic Hodge structures are equivalent to a full subcategory of \emph{basic} objects in the category of \emph{admissible pairs}, a toy category of cohomological motives over $C$ that is equivalent to the isogeny category of rigidified Breuil-Kisin-Fargues modules and closely related to Fontaine's $p$-adic Hodge theory over $p$-adic subfields. As an application, we characterize basic admissible pairs with complex multiplication in terms of the transcendence of $p$-adic periods. This generalizes an earlier result for one-dimensional formal groups and is an unconditional, local, $p$-adic analog of a global, archimedean  characterization of CM motives over $\mbb{C}$ conditional on the standard conjectures, the Hodge conjecture, and the Grothendieck period conjecture (known unconditionally for abelian varieties by work Cohen and Shiga and Wolfart). 
\end{abstract}

\maketitle

\tableofcontents

\section{Introduction}\label{s.introduction}

The de Rham comparison isomorphism equips the rational singular cohomology of a complex algebraic variety $X$ with a Hodge filtration defined over $\mbb{C}$. After choosing a basis for the singular cohomology, the modulus of this Hodge filtration is a complex point in a partial flag variety defined over $\mbb{Q}$, and a fundamental question in algebraic geometry is to relate algebraic and arithmetic properties of these period moduli to algebraic and arithmetic properties of the equations defining $X$ (see, e.g, \cite{cattani-deligne-kaplan}). In particular, one would like to understand which algebraic conditions on the flag variety correspond to algebraic conditions on the coefficients of the equations defining $X$, and vice versa. The goal of this paper and its sequel is to explore a local, $p$-adic analog of this question, where complex algebraic varieties are replaced with rigid analytic varieties over a complete algebraically closed extension of $\mbb{Q}_p$ (eventually embedded in the more versatile geometric framework of diamonds). In this first part we focus purely on the theory over a point, which can be handled  algebraically without knowledge of the modern foundations of $p$-adic geometry. 

A fundamental example over $\mbb{C}$ is Schneider's \cite{schneider:j} 1937 result on the transcendence of the $j$-invariant. Recall that a complex elliptic curve $E$ can be presented analytically as a quotient $\mbb{C}/(\mbb{Z}+\mbb{Z}\tau)$ for $\tau \in \mbb{H}^\pm \subset \mbb{P}^1(\mbb{C})$ or algebraically as the solution set of a Weierstrass equation $y^2=4x^3+ax+b$. The $j$-invariant
\[ j=1728\frac{a^3}{a^3-27b^2} = q^{-1}+744+196884q + \ldots (q=e^{2\pi i\tau})\]
is independent of choices and controls the field of definition of $E$ as an algebraic variety. Schneider's theorem says that if $j$ and $\tau$ are both algebraic over $\mbb{Q}$ then $E$ has complex multiplication (CM). Conversely, if $E$ has CM then $\tau$ is quadratic imaginary and $j$ is contained in an abelian extension of $\mbb{Q}(\tau)$. In \cite{howe:transcendence}, one author proved an analog where elliptic curves are replaced with one-dimensional $p$-divisible formal groups, singular homology is replaced with the $p$-adic Tate module, and the Hodge filtration is replaced with the Hodge-Tate filtration: let $C/\mbb{Q}_p$ be an algebraically closed non-archimedean\footnote{A non-archimedean field is a field that is complete for a non-archimedean absolute value.} extension, let $\mc{O}_C$ be the valuation ring in $C$ with maximal ideal $\mf{m}_C$ and residue field $\kappa=\mc{O}_C/\mf{m}_C$, and let $\overline{\mbb{F}}_p$ be the algebraic closure of $\mbb{F}_p$ in $\kappa$. For $W$ the $p$-typical Witt vectors, let $C_0=W(\kappa)[1/p]$, which we identify with the maximal complete absolutely unramified sub-extension of $C$. Let $\breve{\mbb{Q}}_p=W(\overline{\mbb{F}}_p)[1/p] \subseteq C_0$. For a subfield $K \subseteq C$, $\overline{K}$ is its algebraic closure in $C$. 

For $G/\mc{O}_C$ a one-dimensional $p$-divisible formal group, we say $G$ is $\overline{C}_0$-analytic if there is a finite extension $K/C_0$ in $C$ and a choice of formal coordinate such that the power series defining the group law has coefficients in $\mc{O}_K$. We say the Hodge-Tate filtration $\Lie\+ G(1) \subseteq T_p G \otimes C$ is $\overline{C}_0$-analytic if it is defined over $\overline{C}_0$, i.e. there is a basis of $\Lie\+ G(1)$ consisting of vectors in $T_p G \otimes \overline{C}_0$. We say $G$ has complex multiplication (CM) if it admits isogenies by a semisimple $\mbb{Q}_p$-algebra of dimension equal to the height of $G$ (i.e. the rank of the free $\mbb{Z}_p$-module $T_p G$).

\begin{theorem}\label{theorem.transcendence-ht-filt-one-dim}{\cite{howe:transcendence}} Let $G/\mc{O}_C$ be a one-dimensional $p$-divisible formal group. If $G$ and the Hodge-Tate filtration are both $\overline{C}_0$-analytic, then $G$ has CM. Conversely, if $G$ has CM, then the Hodge-Tate filtration is defined over a finite extension $K/\mbb{Q}_p$ of degree equal to the height of $G$, and $G$ is defined over an abelian extension of $K$.  
\end{theorem}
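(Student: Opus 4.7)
The plan is to recast the problem via the Lubin-Tate tower and its two fundamental period maps, then invoke a rigidity/descent principle relating the combined rationality of the two periods to the existence of CM.

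\textbf{Reduction to periods.} After fixing a quasi-isogeny of $G$ with a height-$n$ one-dimensional formal group over $\overline{\mbb{F}}_p$ and a $\mbb{Z}_p$-basis of $T_pG$, the datum of $G$ corresponds to a point $\tilde{x}$ of the infinite-level Lubin-Tate tower $\mc{M}_\infty$ over $\breve{\mbb{Q}}_p$. This perfectoid space carries two $\bbP^{n-1}$-valued period maps: the Gross-Hopkins map $\pi_{\mr{GH}}$, which factors through the finite-level moduli and records the Hodge filtration on the covariant rational Dieudonn\'{e} module, and the Hodge-Tate map $\pi_{\mr{HT}}$, which records the Hodge-Tate line in $T_pG\otimes C$. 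Using \'{e}taleness of $\pi_{\mr{GH}}$ at finite level, $G$ is $\overline{C}_0$-analytic iff $\pi_{\mr{GH}}(\tilde{x})\in \bbP^{n-1}(\overline{C}_0)$, while the $\overline{C}_0$-analyticity of the Hodge-Tate filtration is by definition $\pi_{\mr{HT}}(\tilde{x})\in \bbP^{n-1}(\overline{C}_0)$. The theorem is therefore the assertion that the combined $\overline{C}_0$-rationality of both periods cuts out precisely the CM locus.

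\textbf{Converse direction.} Assume $G$ has CM by a $\mbb{Q}_p$-algebra $A$ of dimension $n$. Since $\Lie G$ is one-dimensional, $A$ has no nontrivial idempotents and must be a field $K/\mbb{Q}_p$ of degree $n$. By local Lubin-Tate theory, $G$ is isomorphic over $\mc{O}_C$ to a Lubin-Tate formal $\mc{O}_K$-module defined over $\mc{O}_L$ for some abelian extension $L/K$, with $\mc{O}_K$ acting on $\Lie G$ via a chosen embedding $\sigma\colon K \hookrightarrow L \hookrightarrow C$. Under the decomposition $T_pG \otimes_{\mbb{Z}_p} C \cong \bigoplus_{\tau\colon K\hookrightarrow C} C$, the Hodge-Tate filtration is identified with the $\sigma$-isotypic summand, which is defined over $\sigma(K)$, a degree-$n$ extension of $\mbb{Q}_p$.

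\textbf{Main direction.} The hard direction promotes simultaneous $\overline{C}_0$-rationality of both periods to the existence of CM on $G$. The strategy is to package the two periods into the pair $(D, \mr{Fil}^1 D_C)$ consisting of the rational covariant Dieudonn\'{e} isocrystal $D$ of $G$ (naturally defined over $C_0$) and its Hodge filtration, and to show that under both hypotheses this pair admits a descent to a weakly admissible filtered $\varphi$-module over a finite extension of $\breve{\mbb{Q}}_p$. Once such a descent is in hand, the isocrystal is automatically isoclinic of slope $1/n$, its endomorphism algebra is the division algebra of invariant $1/n$ over $\mbb{Q}_p$, and the descended one-dimensional Hodge filtration singles out a maximal commutative subfield of degree $n$ stabilizing it; the corresponding endomorphisms respect the filtration and lift to endomorphisms of $G$, producing the CM.

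\textbf{Main obstacle.} The crux of the argument is the descent step, which is genuinely nontrivial because $\overline{C}_0$ can be strictly larger than $\overline{\breve{\mbb{Q}}_p}$ (when the residue field $\kappa$ properly contains $\overline{\mbb{F}}_p$), so $\overline{C}_0$-rationality does not automatically yield a descent to the arithmetically meaningful field. One must show that for a non-CM deformation, the Gross-Hopkins period and the Hodge-Tate period are independent in a transcendence sense relative to $\overline{\breve{\mbb{Q}}_p}$, so at least one of them must escape $\bbP^{n-1}(\overline{C}_0)$. The key technical input is Fargues's analysis of the two period maps on the Lubin-Tate tower (and Lubin-Tate/Drinfeld duality, which exchanges the roles of $\pi_{\mr{GH}}$ and $\pi_{\mr{HT}}$), together with a rigidity statement for the joint fibers of the two maps under the $\GL_n(\mbb{Q}_p)\times \mc{O}_D^\times$-action; this is the local $p$-adic analog of Schneider's transcendence theorem.
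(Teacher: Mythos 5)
Your converse direction and the reduction to period maps on the Lubin--Tate tower are fine, but the main direction has a genuine gap. You claim that once $(D,\mathrm{Fil})$ descends to a weakly admissible filtered $\varphi$-module over a finite extension of $\breve{\mbb{Q}}_p$, the line $\mathrm{Fil}$ ``singles out a maximal commutative subfield of degree $n$ stabilizing it'' inside the division algebra $\Delta=\End(D)$, and this produces CM. That inference is false. Since $D$ is the simple isocrystal of slope $1/n$, the sub-Newton/sub-Hodge constraint is vacuous, so \emph{every} line in $\mbb{P}(D_{\overline{\breve{\mbb{Q}}}_p})$ yields a weakly admissible filtered $\varphi$-module, i.e.\ a one-dimensional height-$n$ formal group over a finite extension of $\breve{\mbb{Q}}_p$; the CM lines form a countable subset. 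For a generic line the stabilizer in $\Delta^\times$ is just the center, so there is no degree-$n$ commutative subfield preserving it. Descent of $(D,\mathrm{Fil})$ alone is therefore strictly weaker than the conclusion, and granting it does not close the argument.

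What your descent step discards is the rationality of the Hodge--Tate filtration with respect to the \emph{\'etale} $\mbb{Q}_p$-structure $T_pG\otimes\mbb{Q}_p$, which is a different rational structure from the crystalline one $D$ recorded by the filtered $\varphi$-module; the theorem is precisely about the transcendence relation between these two structures across the $B_\dR$-comparison. The working argument (as in the proof of the generalization \cref{main.transcendence}, and in the original form indicated in \cref{remark.relation-with-one-dim-proof}) is Galois-theoretic: fix $K/C_0$ finite with $G$ and the Hodge--Tate filtration both defined over $K$, let $\rho$ be the crystalline $\Gal(\overline{C}_0/K)$-representation on the Tate module, and observe that by the rationality hypothesis $\rho(\sigma)$ preserves the de Rham lattice, hence is a tensor automorphism of the associated $p$-adic Hodge structure; full faithfulness (\cref{lemma.tannakian-automorphisms}) forces $\rho(\sigma)$ into the center of the motivic Galois group, and Zariski density of the image of $\rho$ (\cref{corollary.galois-rep-dense-open}) then forces that group to be abelian, hence a torus. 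Your ``Main obstacle'' paragraph gestures at the right intuition --- independence of the two periods over $\overline{\breve{\mbb{Q}}}_p$ --- but the argument you actually wrote down does not implement it; it hands off to a descent that, even if established, leaves the CM conclusion unproved. (Note also that the paper cites this theorem from \cite{howe:transcendence} rather than re-proving it, recovering it as a special case of \cref{main.transcendence} via \cref{example.p-divisible-groups}.)
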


Schneider's theorem is a global, archimedean transcendence result because the transcendence considered in both the defining equations of varieties and in cohomology is with respect to an archimedean extension of the global field $\mbb{Q}$. By contrast, \cref{theorem.transcendence-ht-filt-one-dim} is a local, $p$-adic transcendence result because it describes the transcendence of numbers in $C$ over $p$-adic subfields. In both cases, the new  result was the condition for CM, while the more refined information about fields of definition in the CM case was already known. In the following, we will focus on similar characterizations of CM in more general settings and mostly leave aside the refined information on fields of definition. 

\subsection{Transcendence results}

Schneider's theorem was generalized by Cohen \cite{cohen:transcendence} and Shiga and Wolfart \cite{shiga-wolfart:transcendence} to all abelian varieties using the W\"ustholz analytic subgroup theorem. Conditional on strong conjectures, we further generalize to all motives over $\mbb{C}$ (i.e., to the cohomology of all smooth projective complex varieties). We say an object $M$ in a connected neutral Tannakian category has complex multiplication (CM) if its Tannakian structure group, i.e. the automorphism group of any fiber functor on the Tannakian subcategory $\langle M \rangle$ generated by $M$, is a torus. 

\begin{maintheorem}\label{main.complex}Assume the standard conjectures, the Hodge conjecture, and the Grothendieck period conjecture. Let $M \in 
\Mot(\mbb{C})$, the Tannakian category of pure motives over $\mbb{C}$ with $\QQ$-coefficients \cite{andre:motives}. Then $M$ has CM if and only if $M \in \Mot(\overline{\mbb{Q}})$ and the Hodge filtration on the Betti realization of $M$ is $\overline{\mbb{Q}}$-algebraic.
\end{maintheorem}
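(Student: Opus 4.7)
The plan is to prove both directions by translating the statement into Tannakian data provided by the assumed conjectures. Under the standard conjectures and the Hodge conjecture, for $M$ a motive over $\ol{\mbb{Q}}$ the motivic Galois group $G_M := \underline{\Aut}^\otimes(H_B|_{\langle M \rangle})$ coincides with the Mumford--Tate group of the Hodge realization, and the de Rham period torsor
\[ \mc{P}_M := \underline{\Isom}^\otimes(H_\dR|_{\langle M \rangle}, H_B|_{\langle M \rangle}) \]
is a (geometrically connected) $G_M$-torsor over $\ol{\mbb{Q}}$ whose canonical $\mbb{C}$-point $\sigma$ comes from the Betti--de Rham comparison. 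Grothendieck's period conjecture then asserts that $\sigma$ is Zariski-dense in $\mc{P}_M$ (equivalently, the transcendence degree of the periods equals $\dim G_M$). Writing $\mu : \mbb{G}_m \to G_{M,\mbb{C}}$ for the Hodge cocharacter and $P \subseteq G_{M,\mbb{C}}$ for its associated parabolic, the $G_M$-equivariant \emph{Hodge-filtration map}
\[ \pi : \mc{P}_M \twoheadrightarrow G_M/P,\quad \tau \mapsto \tau(F^\bullet H_\dR(M)) \]
is a $P$-torsor fibration, and $\pi(\sigma)$ is the Hodge filtration on $H_B(M) \otimes \mbb{C}$ transported via $\sigma$.

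For $(\Leftarrow)$, the hypothesis says that $\pi(\sigma)$ is an $\ol{\mbb{Q}}$-point of $G_M/P$. Zariski-density of $\sigma$ forces $\pi(\sigma)$ to be Zariski-dense in $G_M/P$; as $G_M/P$ is connected, this forces it to be zero-dimensional, i.e.\ $P = G_M$. Thus $\mu$ is central in $G_M$. Its Galois conjugate $\bar\mu$ is then also central, and since the Hodge morphism $h_\mbb{C} : \mbb{S}_\mbb{C} \to G_{M,\mbb{C}}$ factors as $(z,w) \mapsto \mu(z)\bar\mu(w)$ through $Z(G_{M,\mbb{C}})$, the real morphism $h$ factors through $Z(G_M)_\mbb{R}$. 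As $G_M$ is the smallest $\mbb{Q}$-subgroup through which $h$ factors, $G_M$ is abelian, hence a torus by reductivity of Mumford--Tate, so $M$ has CM.

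For $(\Rightarrow)$, assume $M$ has CM, so $G_M$ is a torus. Conditional on the Hodge conjecture, every CM Hodge structure is the realization of a motive defined over $\ol{\mbb{Q}}$ (cf.\ the theory of CM motives in Deligne--Milne--Ogus--Shih), so $M \in \Mot(\ol{\mbb{Q}})$. Next, since $G_M$ is a torus, $P = G_M$, so the Hodge filtration is $G_M(\mbb{C})$-stable. As $G_M$ splits over some number field $K \subseteq \ol{\mbb{Q}}$, the space $H_B(M) \otimes K$ decomposes as a $K$-rational direct sum of $G_M$-character eigenspaces, and the Hodge filtration, being $G_M$-stable, is a sum of such eigenspaces; hence it is $\ol{\mbb{Q}}$-algebraic.

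The main obstacle I anticipate is the precise setup of the first paragraph: identifying $G_M$ with the Mumford--Tate group (which needs the standard conjectures to obtain a Tannakian category of motives with a good Hodge realization, and the Hodge conjecture to eliminate motivic symmetries not predicted by Hodge theory), and making rigorous the equivalence between Grothendieck's period conjecture and Zariski-density of $\sigma$ in $\mc{P}_M$. Once this framework is in place, the remaining steps are standard Tannakian manipulations with cocharacters, parabolics, and flag varieties.
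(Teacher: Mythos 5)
Your argument is correct, and it follows the paper's overall structure in the forward (hard) direction: translate the hypotheses into a flag-variety period map, use the Grothendieck period conjecture to make the period point generic, observe that a generic $\overline{\mbb{Q}}$-point forces the flag variety to be trivial, and conclude $P = G_M$. The difference is in how you then get from $P = G_M$ to $G_M$ a torus. You reason as follows: $P = G_M$ means the Hodge cocharacter $\mu$ is central; its conjugate $\bar\mu$ is then central; so the Hodge morphism $h$ lands in $Z(G_M)_\mathbb{R}$; by minimality of the Mumford--Tate group $G_M$ is abelian; and reductivity of Mumford--Tate groups (polarizability) rules out an additive factor. The paper instead applies its \cref{lemma.tannakian-automorphisms}: every $g \in G_M(\mbb{Q})$ stabilizes the Hodge filtration (because $P = G_M$), hence induces a tensor automorphism of the Hodge realization, hence lies in $Z(G_M)(\mbb{Q})$; Zariski density of $G_M(\mbb{Q})$ then gives $G_M = Z(G_M)$; and the additive part is ruled out because $\mathrm{Ext}^1(\triv,\triv) = 0$ in $\mbb{Q}$-Hodge structures. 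The paper explicitly flags (in a footnote) that it avoids invoking reductivity of $G_M$ because the density-of-rational-points argument is the one that transports to the $p$-adic setting of \cref{main.transcendence}, where reductivity is not available. Your classical argument is perfectly valid here, but does not set up the parallel that is the point of \cref{main.complex} in this paper.

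One minor wording issue in your $(\Rightarrow)$ direction: you assert that the Hodge filtration, being $G_M$-stable, ``is a sum of'' character eigenspaces. This does not follow when characters occur with multiplicity; any subspace of a given eigenspace would be $G_M$-stable. The correct statement is that the Hodge filtration is split by the Hodge cocharacter $\mu_B$, which is a cocharacter of a torus split over a number field $K$ and is therefore itself defined over $K$; the $\mu_B$-weight decomposition (hence the filtration) is then $K$-rational. Your conclusion is right, but the justification needs this adjustment.
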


The main result of this paper, \cref{main.transcendence} below, is an unconditional local, $p$-adic analog of \cref{main.complex}. It includes, as a special case, a generalization of \cref{theorem.transcendence-ht-filt-one-dim} from one-dimensional formal groups to all isoclinic formal groups (see \cref{example.p-divisible-groups}), analogous to Cohen and Shiga and Wolfart's generalization of Schneider's theorem from elliptic curves to all complex abelian varieties. Once the general formalism is established, the proof is almost identical to the proof of \cref{main.complex} (a motivated reader can find the proof of the latter in \cref{ss.complex-transcendence} before continuing the introduction). To state it, we first introduce some definitions.

The analog of $\Mot(\mbb{C})$ will be the $\mbb{Q}_p$-linear Tannakian category $\AdmPair^\basic(C)$ of \emph{basic admissible pairs} over $C$. It is a full Tannakian sub-category of a category $\AdmPair(C)$ of admissible pairs over $C$, a toy-category of cohomological motives over $C$ which we give an overview of now. (For precise definitions see \cref{s.admissible-pairs}.) The objects of $\AdmPair(C)$ are isocrystals\footnote{An isocrystal for us is always a finite dimensional $\breve{\mbb{Q}}_p$-vector space equipped with a semi-linear automorphism (semi-linearity with respect to the Frobenius lift on $\breve{\mbb{Q}}_p$). One could also setup the theory using $E$-isocrystals where $\breve{\mbb{Q}}_p$ is replaced with $\breve{E}$ for $E/\mbb{Q}_p$ a finite extension, but the additional generality gained is subsumed by consideration of objects in our category with endomorphisms by $E$, and in particular by allowing more general $G$-structure.} equipped with some extra structure. To describe this extra structure, first recall that Fontaine has defined a natural complete discrete valuation ring $B^+_\dR$ with residue field $C$; as usual, we write $B_\dR=\mr{Frac}(B^+_\dR)$. Non-canonically, we have $B^+_\dR \cong C[[t]]$ and $B_\dR \cong C((t))$. The extra structure we consider is that of a $B^+_\dR$-lattice after base change to $B_\dR$. An object of $\AdmPair(C)$ is thus a pair $(W,\mc{L}_\et)$, where $W$ is an isocrystal and $\mc{L}_\et \subseteq W_{B_\dR} = W \otimes_{\breve{\QQ}_p} B_\dR$ is a $B^+_\dR$-lattice subject to an admissibility condition (see \cref{def.admpair}). It is \emph{basic} if the slope grading on the isocrystal induces a grading also on the lattice $\mc{L}_\et$, and we write $\AdmPair^\basic(C)$ for the full Tannakian subcategory of basic admissible pairs. Note that for an admissible pair $(W,\mc{L}_\et)$, the lattice $\mc{L}_\et$ induces a trace filtration $F^\bullet W_C$ (called the \emph{Hodge filtration} on $W_C$: $\Fil^i W_C$ is the image of $W_{B^+_\dR} \cap t^i \mc{L}_\et$ under the specialization map $W_{B^+_\dR}\rightarrow W_C$).

Admissible pairs arise, for example, from the cohomology of smooth proper formal schemes over $\mc{O}_C$ (using $A_{\mr{inf}}$ or prismatic cohomology \cite{bhatt-morrow-scholze:1, bhatt-scholze:prismatic}, see \cref{example.geometric-admissible-pairs}). In this case, the isocrystal comes from the crystalline cohomology of the special fiber and the lattice comes from the comparison with \'{e}tale cohomology and refines the Hodge filtration. There are equivalent presentations of this category, e.g. as the isogeny category of rigidified Breuil-Kisin-Fargues modules, but here we prefer this elementary perspective to emphasize the connection with classical definitions of period domains in $p$-adic Hodge theory. 

There is a canonical inclusion $\overline{C}_0 \subset B^+_\dR$. We say an admissible pair is $\overline{C}_0$-analytic if the Hodge filtration on $W_C$ is defined over $\overline{C}_0$ and if $\mc{L}_\et$ is obtained as the convolution of the Hodge filtration on $W_{\overline{C}_0}$ and the valuation filtration on $B_\dR$, that is, if $\mc{L}_\et=\sum_{i \in \mbb{Z}} \Fil^{-i} B_\dR \cdot \Fil^i W_{\overline{C}_0}$. The $\overline{C}_0$-analytic admissible pairs form a full Tannakian subcategory 
$\AdmPair(\overline{C}_0)$ of $\AdmPair(C)$. We similarly have a full Tannakian subcategory $\AdmPair(\overline{\breve{\mbb{Q}}}_p)$ of $\overline{\breve{\mbb{Q}}}_p$-analytic admissible pairs.  

\begin{remark}
The rationality of an admissible pair is determined by the rationality of the \'{e}tale lattice with respect to the $\breve{\mbb{Q}}_p$-vector space underlying the isocrystal. In particular, for admissible pairs coming from cohomology of smooth proper formal schemes over $\mc{O}_C$, if the formal scheme is defined over $\mc{O}_K$ for a complete subfield $K \subseteq \overline{C}_0$ then the admissible pair is $\overline{C}_0$-analytic. 
\end{remark}

There is also a natural \emph{linear} realization of $\AdmPair(C)$ given by a functor to the category of $\mbb{Q}_p$-vector spaces equipped with a $B^+_\dR$-lattice after base change to $B_\dR$. In the cohomological setting, the vector space is $\mbb{Q}_p$-\'{e}tale cohomology, the lattice is a canonical deformation of de Rham cohomology, and the trace filtration is the Hodge-Tate filtration on $C$-\'{e}tale cohomology. Moreover, in this case the data depends only on the rigid analytic generic fiber (it determines the underlying Breuil-Kisin-Fargues module, but not the rigidification), and in fact such cohomological pairs exist for any smooth proper rigid analytic variety over $C$. 

For a pair $(V,\mc{L}_\dR)$ consisting of a $\mbb{Q}_p$-vector space $V$ and a $B^+_\dR$-lattice $\mc{L}_\dR \subseteq V_{B_\dR}$, we say $\mc{L}_\dR$ is $\overline{C}_0$-analytic (resp. $\overline{\mbb{Q}}_p$-analytic) if the associated filtration on $V_{C}$ is defined over $\overline{C}_0$ (resp. $\overline{\mbb{Q}}_p$) and $\mc{L}_\dR$ is the convolution of this filtration on $V_{\overline{C}_0}$ (resp. $V_{\overline{\mbb{Q}}_p}$) and the valuation filtration on $B_\dR$. In the cohomological setting, the rationality of $\mc{L}_\dR$ against the underlying $\mbb{Q}_p$-vector space $V$ is divorced from the rationality of the variety itself --- it is thus a natural analog of the rationality of the complex Hodge filtration against singular cohomology in complex geometry.

The following unconditional local, $p$-adic analog of \cref{main.complex} is our main result:  
\begin{maintheorem}\label{main.transcendence}
Let $M \in \AdmPair^\basic(C)$, the Tannakian category of basic admissible pairs over $C$.  If $M \in \AdmPair^\basic(\overline{C}_0)$ and the de Rham lattice on the \'{e}tale realization of $M$ is $\overline{C}_0$-analytic, then $M$ has CM. Conversely, if $M$ has CM, then $M \in \AdmPair^\basic({\overline{\breve{\mbb{Q}}}_p})$ and the de Rham lattice is ${\overline{\mbb{Q}}_p}$-analytic.  
\end{maintheorem}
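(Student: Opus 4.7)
Write $\omega_V\colon \AdmPair^\basic(C) \to \Vect_{\mbb{Q}_p}$ for the \'etale fiber functor and set $G := \Aut^\otimes(\omega_V|_{\langle M \rangle})$, a $\mbb{Q}_p$-algebraic group acting on $V := \omega_V(M)$. Via Tannakian formalism, $\mc{L}_\dR$ corresponds to a Hodge cocharacter $\mu\colon \mbb{G}_{m,C} \to G_C$ (unique up to $G(C)$-conjugation) cutting out the Hodge-Tate filtration on $V_C$, so a $C$-point of the flag variety $\Fl_G^\mu = G/P_\mu$ defined over the reflex field $E \subseteq \overline{\mbb{Q}}_p$. The basic hypothesis translates into the statement that the slope cocharacter $\nu_b\colon \mbb{D} \to G_{\breve{\mbb{Q}}_p}$ factors through the center of $G_{\breve{\mbb{Q}}_p}$. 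The $\overline{C}_0$-analyticity of $\mc{L}_\dR$ is equivalent to this period point being $\overline{C}_0$-rational in $\Fl_G^\mu$, and similarly for the trace filtration on $W_C$.

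\textbf{CM $\Rightarrow$ analyticity.} Suppose $G$ is a $\mbb{Q}_p$-torus. It splits over a finite extension of $\mbb{Q}_p$, so every cocharacter of $G_C$ descends to $G_{\overline{\mbb{Q}}_p}$. In particular $\mu$ is $\overline{\mbb{Q}}_p$-rational, so the Hodge-Tate filtration on $V_C$ is defined over $\overline{\mbb{Q}}_p$ and the convolution recipe makes $\mc{L}_\dR$ $\overline{\mbb{Q}}_p$-analytic. On the isocrystal, $\mu$ acts on $W$ after base change to the compositum $\overline{\mbb{Q}}_p \cdot \breve{\mbb{Q}}_p = \overline{\breve{\mbb{Q}}}_p$, and the resulting trace filtration on $W_C$ descends to $W_{\overline{\breve{\mbb{Q}}}_p}$, witnessing the $\overline{\breve{\mbb{Q}}}_p$-analyticity of $M$ as an admissible pair.

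\textbf{Analyticity $\Rightarrow$ CM.} I mirror the Tannakian proof of \cref{main.complex}. The two analyticity hypotheses put the period point of $\Fl_G^\mu(C)$ into $\Fl_G^\mu(\overline{C}_0)$. The heart of the argument is an unconditional $p$-adic analog of the Grothendieck period conjecture: in this situation, the $G(C)$-orbit of the period point is zero-dimensional, equivalently $G \subseteq P_\mu$. Since $G$ is reductive, after conjugating $\mu$ within its $G(C)$-orbit (preserving the filtration) we may place $G$ inside a Levi $L_\mu \subseteq P_\mu$; because $\mu$ factors through $G$ by the Tannakian minimality characterization of $G$, this forces $\mu$ to be central in $G$. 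Together with the basic hypothesis, which places $\nu_b$ in the center, the smallest $\mbb{Q}_p$-subgroup of $\GL(V)$ through which $\mu$ (over $C$) and $\nu_b$ (over $\breve{\mbb{Q}}_p$) factor is a $\mbb{Q}_p$-torus. By minimality $G$ equals this torus, so $M$ has CM.

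\textbf{The main obstacle.} Everything reduces to the unconditional $p$-adic GPC-type input used above: $\overline{C}_0$-rationality of the period point in $\Fl_G^\mu$ forces $G$ to stabilize it. Its archimedean analog is the still-open Grothendieck period conjecture; in the $p$-adic setting it should be a theorem, extracted from the rigid coupling between the $\mbb{Q}_p$-\'etale realization and the $\breve{\mbb{Q}}_p$-isocrystal imposed by the admissibility condition through $B^+_\dR$. Establishing this input in the generality of basic admissible pairs --- thereby generalizing the one-dimensional formal group transcendence of \cite{howe:transcendence} to arbitrary Tannakian $G$ --- is where I expect the main technical effort to lie.
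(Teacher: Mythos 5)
Your proposal correctly identifies the architecture of the argument (mirror the proof of \cref{main.complex}, plus one new $p$-adic transcendence input), but it does not contain a proof: you explicitly reduce the hard direction to an ``unconditional $p$-adic analog of the Grothendieck period conjecture'' and then state that establishing this input ``is where I expect the main technical effort to lie.'' That input \emph{is} the theorem. The paper's proof supplies exactly the missing mechanism: fix a $p$-adic subfield $K\subseteq\overline{C}_0$ over which both the admissible pair and its de Rham lattice are defined, let $\mf{I}=\Gal(\overline{C}_0/K)$ act, and use the crystalline Galois representation $\rho\colon\mf{I}\to G(\mbb{Q}_p)$ attached to $\omega_{\mf{G}_K}$ (\cref{ss.galois-representation}). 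Then $\sigma(g)=g\rho(\sigma)$ for $g=c_\dR$ (\cref{lemma:crystalline-galois-rep-etale-trivialized}), the hypothesis forces $\rho(\sigma)$ to preserve the de Rham lattice (\cref{remark:de-Rham-lattice-Galois-equivariant}), full faithfulness of $\bAdmPair(C)\simeq\HS(C)$ makes $\rho(\sigma)$ a tensor automorphism of a fully faithful functor, so $\rho(\sigma)\in Z(G)(\mbb{Q}_p)$ by \cref{lemma.tannakian-automorphisms}, and Zariski density of $\rho(\mf{I})$ — which comes from Serre's theorem on Hodge-Tate representations via \cref{corollary.galois-rep-dense-open} — gives $G=Z(G)$. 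None of this machinery appears in your sketch, and without it you have restated rather than proved the theorem.

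There is also a substantive error in the Tannakian endgame. You write ``Since $G$ is reductive, after conjugating $\mu$ within its $G(C)$-orbit we may place $G$ inside a Levi $L_\mu\subseteq P_\mu$.'' But $G=\MG(M)$ is \emph{not} assumed reductive — the paper devotes considerable space (\cref{remark.reductivity-criterion}, \cref{example.ext-of-trivial}, \cref{main.exactness}) precisely to the fact that Mumford-Tate/motivic Galois groups of $p$-adic Hodge structures need not be reductive, since the polarization argument from archimedean Hodge theory fails for essentially all groups outside type $A_n$. The paper instead concludes $G$ is abelian from the central automorphism argument above, and then rules out an additive factor using \cref{example.no-non-trivial-rigid-analytic-ext-of-trivial-by-itself}: there are no nontrivial $\overline{C}_0$-analytic extensions of the trivial admissible pair by itself. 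Your argument has no mechanism to exclude $\mbb{G}_a$ factors, and it silently imports reductivity where none is available. The easy direction (CM $\Rightarrow$ analyticity) of your sketch is essentially correct and matches \cref{prop.cm-has-rig-an-periods} — for a torus every Schubert cell of the affine Grassmannian is a single point defined over a finite extension of the base — though your phrasing conflates rationality of the \emph{type} $[\mu]$ with rationality of the period \emph{point}.
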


A key motivational observation is that we can put a condition on a $B^+_\dR$-latticed $\mbb{Q}_p$-vector space that is completely analogous to the transversality condition of a filtration with its complex conjugate in the definition of a weight $n$ pure Hodge structure. This condition allows us to cut out a category of \emph{$p$-adic Hodge structures} which is equivalent to the category of \emph{basic} admissible pairs. The relation between basic admissible pairs and $p$-adic Hodge structures is a natural \emph{structural} analog of the relation between motives over $\mbb{C}$ and complex Hodge structures, and this is why the basic hypothesis appears in \cref{main.transcendence}.  

\begin{example}\label{example.intro-elliptic-curves}
If $E/\mc{O}_C$ is an elliptic curve with reduction $E_\kappa$, $\kappa=\mc{O}_C/\mf{m}_C$, the admissible pair attached to the degree $1$ cohomology of $E$ is basic if and only if 
\begin{enumerate}
    \item $E_\kappa$ is supersingular, or
    \item $E_\kappa$ is ordinary and, up to isogeny, $E$ is the canonical lift of $E_\kappa$.
\end{enumerate}
\end{example}

\begin{example}\label{example.p-divisible-groups}The category of basic admissible pairs includes as a full subcategory the category of $p$-divisible groups up-to-isogeny over $\mc{O}_C$ equipped with a lifting of the slope decomposition from the special fiber (this is illustrated in \cref{example.intro-elliptic-curves} by passing from $E$ to $E[p^\infty]$ with its slope decomposition). This includes, e.g., all isoclinic $p$-divisible formal groups (the height $n$ one-dimensional case treated in \cite{howe:transcendence} is equivalent to isoclinic of slope $1/n$), and the field of rationality of the admissible pair is equivalent to the field of rationality of the $p$-divisible group up to isogeny. In these cases, the de Rham lattice is equivalent to the Hodge-Tate filtration, and thus \cref{main.transcendence} is indeed a generalization \cref{theorem.transcendence-ht-filt-one-dim}. \end{example}

\begin{remark}\label{remark.splitting-and-serre-result}
For $M \in \AdmPair(\overline{C}_0)$, the Hodge-Tate filtration is canonically split (e.g., by the theory of Hodge-Tate Galois representations). Thus there is a Hodge-Tate grading on the \'{e}tale realization, not just a Hodge-Tate filtration. It is then a short step from results of Sen to see that if the Hodge-Tate grading is defined over $\overline{C}_0$, then $M$ has CM --- this is made explicit, e.g., in \cite[Theorem 3]{serre:groupes-alg-ht}, which computes the transcendence degree of the field of definition of the Hodge-Tate grading as the dimension of the motivic Galois group quotiented by its center.

Our result thus implies that, in the basic case, algebraicity of the Hodge-Tate \emph{grading} is equivalent to algebraicity of the de Rham lattice. Our proof does not proceed directly through this equivalence, though we do make use of the associated Galois representation. We emphasize that, in cases where the Hodge-Tate filtration uniquely determines the de Rham lattice, it is a priori a much weaker condition to require that the Hodge-Tate filtration be defined over $\overline{C}_0$ than to require that the Hodge-Tate grading be defined over $\overline{C}_0$. In particular, this equivalence does not hold outside of the basic case: for example, when $M$ is attached to an ordinary elliptic curve, the Hodge-Tate filtration is always algebraic but the Hodge-Tate grading is only algebraic if the Serre-Tate coordinate is a root of unity (this can be deduced from \cite[Theorem 3]{serre:groupes-alg-ht} --- the condition on $q$ is equivalent to requiring that the associated admissible pair has CM, cf. also \cref{example.intro-elliptic-curves}). 

For our purposes --- especially in Part II --- it is crucial to have a result in terms of the Hodge-Tate filtration or de Rham lattice. Indeed, unlike, the Hodge-Tate grading, these exist in families, making it possible to formulate more general results about special subvarieties in the relative setting (this is closely related to the fact that, for a general admissible pair over $C$, there is a Hodge-Tate filtration but no canonical splitting). 

\end{remark}

\begin{remark}By the above considerations, our analogy between basic admissible pairs / $p$-adic Hodge structures and motives over $\mbb{C}$ / Hodge structures works well for transcendence theory. However, as exhibited by \cref{example.intro-elliptic-curves}, the admissible pairs in the cohomology of an algebraic variety are not always basic. Moreover, even in the basic case, this analogy is \emph{not} good for discussing algebraic cycles. One reason is that, to detect algebraic cycles via cohomology, one needs a global structure on the coefficients analogous to the $\mbb{Q}$-structure on singular cohomology. In our setting, this is possible only on the crystalline side where the natural candidate is the Kottwitz global isocrystal of Scholze's \cite{scholze:icm2018} conjectural cohomology for varieties over $\overline{\mbb{F}}_p$. Assuming this cohomology theory exists, the analog of the Hodge conjecture in this setting would following from a crystalline Tate conjecture for the global isocrystal combined with the variational $p$-adic Hodge conjecture of \cite{emerton:variational-p-adic}.
\end{remark}

\begin{remark} In \cref{theorem.transcendence-with-slope} we give an extension of \cref{main.transcendence} that applies outside of the basic case. In \cite{howe:transcendence} a more general statement was conjectured, but it seems likely that the version given here is actually optimal (see \cref{ss.beyond-basic}). 
\end{remark}

\subsection{Structure groups and exactness of filtrations}
The categories of admissible pairs and $p$-adic Hodge structures are interesting beyond \cref{main.transcendence}. Indeed, the moduli spaces of admissible pairs, which we study in Part II, include the local Shimura varieties of Rapoport-Viehmann \cite{rapoport-viehmann} and Scholze \cite{scholze:berkeley}, and their non-minuscule generalizations due to Scholze \cite{scholze:berkeley}. Here the basic case is classically the most interesting, e.g., in cohomological constructions of Langlands correspondences.

In constructing these moduli spaces, just as in the theory of complex period domains and Shimura varieties, it is natural to consider admissible pairs with $G$-structure for $G/\mbb{Q}_p$ a linear algebraic group. In the complex setting and in most past work in the $p$-adic setting, one takes $G$ reductive. When one is interested in the complex Hodge theory of complex projective varieties, there is good reason for this: the polarization forces the Tannakian structure group to be reductive because it gives rise to a compact mod center real form. There is an analogous construction of an inner form in the $p$-adic case, but unfortunately the necessary compactness can only hold for groups of type $A_n$ so the polarization does not help (see \cref{remark.reductivity-criterion} for a more detailed discussion). In fact, the restrictions on a $p$-adic Hodge structure that would enforce reductivity via this argument are so severe that we could not in good conscience impose them in the development of the theory! Note that this plays a role even for the classical local Shimura varieties with $G$ reductive, as there can be special subvarieties corresponding to non-reductive subgroups (see Part II). 

\cref{main.transcendence} can be reformulated as a statement about period maps for admissible pairs with $G$-structure.  Black-boxing all of the $p$-adic geometry, we now describe this formulation. This will segue us to a subtle point in the theory caused by the existence of non-reductive structure groups and will lead to our second main result. 

Let $G/\mbb{Q}_p$ be a connected linear algebraic group and let $\mc{G}$ be an admissible pair with $G$-structure. Attached to $\mc{G}$ there are two natural invariants --- the first is a $G$-isocrystal, classified by an element in the Kottwitz set $B(G)$ of twisted conjugacy classes in $G(\breve{\mbb{Q}}_p)$, and the second is an element of the double coset space $G(B^+_\dR) \backslash G(B_\dR) / G(B^+_\dR)$ measuring the position of the \'{e}tale lattice relative to the isocrystal (or rather the de Rham lattice which it spans over $B^+_\dR$). If this double coset contains $\mu(t)$ for any $\mu$ in a conjugacy class $[\mu]$ of cocharacters of $G_{\overline{\mbb{Q}}_p}$ and any choice of uniformizer $t$ for $B^+_\dR$, we say $G$ is of type $[\mu]$. In particular, if we fix a $b \in G(\breve{\mbb{Q}}_p)$ and conjugacy class $[\mu]$, there is then a natural infinite level moduli $\mc{M}_{b,[\mu]}$ of admissible pairs with $G$-structure of type $[\mu]$ equipped with a trivialization of the underlying $G$-isocrystal to $b$ and a trivialization of the \'{e}tale fiber functor. There are also moduli $\Gr_{[\mu^{\pm1}]}$ parameterizing $B^+_\dR$-lattices in relative position $[\mu^{\pm 1}]$, and Bialynicki-Birula maps from these to the flag varieties $\Fl_{[\mu^{\mp 1}]}$ parameterizing filtrations of type $[\mu^{\mp 1}]$. Together, we obtain a diagram of period maps
\[\begin{tikzcd}
	& {\mc{M}_{b,[\mu]}} \\
	{\Gr_{[\mu]}} && {\Gr_{[\mu^{-1}]}} \\
	{\Fl_{[\mu^{-1}]}} && {\Fl_{[\mu]}}
	\arrow["{\pi_{\mc{L}_\et}}"', from=1-2, to=2-1]
	\arrow["{\pi_{\mc{L}_\dR}}", from=1-2, to=2-3]
	\arrow["{\pi_{\Hodge}}", from=1-2, to=3-1]
	\arrow["{\pi_{\HT}}"', from=1-2, to=3-3]
	\arrow["\BB"', from=2-1, to=3-1]
	\arrow["\BB", from=2-3, to=3-3]
\end{tikzcd}\]
The period $\pi_{\mc{L}_\et}$ determines the rationality of a $G$-admissible pair, while $\pi_{\mc{L}_\dR}$ is the de Rham lattice period. The map $\BB$ is the Bialynicki-Birula map, which sends a lattice to its induced filtration (the Hodge filtration for the \'{e}tale lattice, and the Hodge-Tate filtration for the de Rham lattice). If $[\mu]$ is minuscule, then $\BB$ is an isomorphism and it suffices to use only the filtration periods $\pi_\Hdg$ and $\pi_\HT$. In this case the diagram is analogous to the following uniformization diagram for a  Shimura variety $\Sh_{G, K}$ attached to a Shimura datum $(G,X)$ and level structure $K$, 
\[\begin{tikzcd}
	& X \times G(\mbb{A}_f)/K &\\
	  {\Sh_{G, K}(\mbb{C}) \cong \bigsqcup_{i \in I} \Gamma_i \backslash X^+} && {\Fl_{[\mu^{-1}]}(\mbb{C})}
	\arrow["\pi", from=1-2, to=2-3]
	\arrow["J"', from=1-2, to=2-1]
\end{tikzcd}\]
where $[\mu]$ is the class of Hodge cocharacters, $\pi$ classifies the Hodge filtration, $I$ is a finite index set, and the $\Gamma_i$ are subgroups of $G(\mbb{Q})$ determined by $K$. 

\begin{remark}\label{remark.cocharacter-filtration}At this point, one might reasonably be confused about the $\mu$'s and $\mu^{-1}$'s. To preserve our sanity, we adopted the simple convention that a cocharacter $\mu$ of $G$ defines a decreasing filtration on any representation of $V$ of $G$ such that $\Fil^p V=\bigoplus_{i \geq p} V[i]$, for $V[i]$ the weight spaces where $\mu$ acts by $z^i$. Then, $\Fl_{[\mu]}$ is the moduli of decreasing filtrations of this type. Thus, $\Fl_{[\mu]}$ agrees on the nose with the flag variety denoted $\Fl^\std_{G,\mu^{-1}}$ in \cite[p. 660]{caraiani-scholze:non-compact} but, as an argument in favor of our choice, note that $\Fl_{[\mu]}$ is also canonically identified with the flag variety denoted $\Fl_{G,\mu}$ in loc cit. Indeed, $\Fl_{G,\mu}$ is defined by attaching to $\mu$ an increasing filtration on any representation $V$ of $G$ such that $\Fil_p V=\bigoplus_{i \leq p} V[-i]$. Under the canonical identification of increasing and decreasing filtrations by negating the indices, this agrees with our definition of the decreasing filtration attached to $\mu$. The acrobatics here arise from the convention that the Hodge cocharacter attached to a complex Hodge structure acts by $z^{-p}$ on the subspace $H^{p,q}$ of type $(p,q)$. Recall though that there are several good reasons for this convention \cite[p. 252]{deligne:interpretation-modulaire}!  
\end{remark}

A point of $\mc{M}_{b,[\mu]}(C)$ is \emph{special} if the Tannakian structure group of the associated $G$-admissible pair is a torus. \cref{main.transcendence} is then nearly equivalent to

\begin{maincorollary}\label{main.G-structure-period-maps} Suppose $b$ is basic and   $x \in \mc{M}_{b,[\mu]}(C)$. If both $\pi_{\mc{L}_\dR}(x) \in \Gr_{[\mu^{-1}]}(\overline{C}_0)$ and $\pi_{\mc{L}_\et}(x) \in \Gr_{[\mu]}(\overline{C}_0)$, then $x$ is special. Conversely, if $x$ is special, then $\pi_{\mc{L}_\dR}(x) \in \Gr_{[\mu^{-1}]}(\overline{\mbb{Q}}_p)$ and $\pi_{\mc{L}_\et}(x) \in \Gr_{[\mu]}(\overline{\breve{\mbb{Q}}_p})$.
\end{maincorollary}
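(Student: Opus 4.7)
The plan is to deduce \cref{main.G-structure-period-maps} from \cref{main.transcendence} by Tannakian formalism applied through a faithful representation of $G$. Fix a faithful representation $\rho : G \hookrightarrow \mr{GL}(V_0)$. A point $x \in \mc{M}_{[\mu],b}(C)$ encodes a $G$-admissible pair $\mc{G}_x$ together with trivializations of the underlying $G$-isocrystal (to $b$) and of the étale $G$-torsor. Pushing forward along $\rho$ produces a basic admissible pair $\rho(\mc{G}_x) \in \AdmPair^\basic(C)$ (basic because $b$ is basic, so the slope morphism is central and pulls back to a grading of $V_0$ that is respected by the $G$-structure on the étale lattice). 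Because $\rho$ is faithful, the Tannakian subcategory of $\AdmPair^\basic(C)$ generated by $\rho(\mc{G}_x)$ has Tannakian structure group equal to the image under $\rho$ of the Tannakian structure group of $\mc{G}_x$; in particular, $x$ is special if and only if $\rho(\mc{G}_x)$ has CM in the sense of \cref{main.transcendence}.

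The key remaining task is to translate the Grassmannian period-map conditions into the analyticity conditions appearing in \cref{main.transcendence}. Under the trivialization of the $G$-isocrystal, $\pi_{\mc{L}_\et}(x)$ records the position of the étale $B^+_\dR$-lattice inside the $B_\dR$-module spanned by the pushforward isocrystal $b(V_0)$. The assertion $\pi_{\mc{L}_\et}(x) \in \Gr_{[\mu]}(\overline{C}_0)$ is then equivalent to saying that this lattice is pulled back from an $\overline{C}_0$-rational point of the flag variety $\Fl_{[\mu^{-1}]}$ along the Bialynicki-Birula map, which is exactly the condition that it be the convolution of an $\overline{C}_0$-rational filtration on $V_0$ (namely the Hodge filtration of $\rho(\mc{G}_x)$) with the $t$-adic valuation filtration on $B_\dR$. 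By the definition of $\overline{C}_0$-analyticity of an admissible pair, this says $\rho(\mc{G}_x) \in \AdmPair(\overline{C}_0)$. Symmetrically, under the trivialization of the étale fiber functor, $\pi_{\mc{L}_\dR}(x)$ records the position of the de Rham lattice inside $V_0 \otimes_{\mbb{Q}_p} B_\dR$, and $\pi_{\mc{L}_\dR}(x) \in \Gr_{[\mu^{-1}]}(\overline{C}_0)$ (respectively $\overline{\mbb{Q}}_p$) translates to $\overline{C}_0$-analyticity (respectively $\overline{\mbb{Q}}_p$-analyticity) of the de Rham lattice on the étale realization of $\rho(\mc{G}_x)$.

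With this dictionary in hand, both directions become immediate applications of the corresponding directions of \cref{main.transcendence} to $\rho(\mc{G}_x)$: the forward direction uses that $\overline{C}_0$-analyticity of the pair together with $\overline{C}_0$-analyticity of the de Rham lattice forces CM, and the converse uses that CM forces $\overline{\breve{\mbb{Q}}}_p$-analyticity of the admissible pair and $\overline{\mbb{Q}}_p$-analyticity of the de Rham lattice. The principal obstacle is not the deduction itself, which is formal, but rather the careful bookkeeping required to identify Grassmannian points of a given type over a given ring of definition with convolution lattices arising from $\overline{C}_0$- or $\overline{\mbb{Q}}_p$-rational filtrations, including the correct $\mu$-versus-$\mu^{-1}$ conventions that distinguish the étale and de Rham sides through the Bialynicki-Birula map.
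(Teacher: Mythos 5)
Your proof is correct and follows essentially the same route as the paper, which dispatches Corollary \ref{main.G-structure-period-maps} in one line by invoking the setup of \S\ref{ss.periods} and the fact that $\Rep\, G$ has a single tensor generator, thereby reducing to Theorem \ref{main.transcendence}. The extra detail you supply --- checking that $\rho(\mc{G}_x)$ is basic when $b$ is basic, identifying the étale and de Rham lattice periods with $\overline{C}_0$- and $\overline{\mbb{Q}}_p$-analyticity via the convolution/Bialynicki-Birula dictionary, and tracking the $\mu$ versus $\mu^{-1}$ conventions --- merely makes explicit the bookkeeping the paper's one-sentence proof leaves to \S\ref{ss.periods}.
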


We note that $\BB$ is a surjection on $C$-points but not an isomorphism when $\mu$ is not minuscule; however, it induces a bijection on $\overline{C}_0$-points via the convolution construction used earlier. In Part I there will be no need to define these various spaces and maps --- we will make the $C$-points together with their Galois action explicit in \cref{ss.periods} without any $p$-adic geometry. 

For $G$ reductive, the spaces and maps appearing in this diagram were constructed in \cite{scholze:berkeley}. In this case, the double cosets $G(B^+_\dR)\backslash G(B_\dR) / G(B^+_\dR)$ are exhausted by the Cartan decomposition, so any $G$-admissible pair has a type and these diagrams for varying $[\mu]$ give a complete picture. When $G$ is not reductive, the Cartan decomposition is no longer exhaustive, but we still say $\mc{G}$ is of type $[\mu]$ if it lies in the double coset attached to $[\mu]$ and that it is \emph{good} if this holds for some $[\mu]$. 

In Part II we will generalize the construction of moduli spaces to the non-reductive case. The main difficulty is tied to the other main result of Part I:

\begin{maintheorem}\label{main.exactness}
The following are equivalent for a $G$-admissible pair $\mc{G}$:
\begin{enumerate}
    \item $\mc{G}$ is good (resp. has type $[\mu]$).
    \item The Hodge-Tate filtration for $\mc{G}$ is an exact functor from $\Rep\+ G$ to filtered $C$-vector spaces (resp. of type $[\mu]$).
    \item The Hodge filtration for $\mc{G}$ is an exact functor from $\Rep\+ G$ to filtered $C$-vector spaces (resp. of type $[\mu^{-1}]$).
\end{enumerate}
\end{maintheorem}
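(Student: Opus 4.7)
The plan is to tie all three conditions to the existence of a single cocharacter $\mu$ of $G$ controlling the lattice position, by combining the Saavedra--Rivano--Deligne theorem (exact $\mathbb{Z}$-filtrations on a fiber functor $\omega: \Rep G \to \Vect_K$ are classified up to $P$-conjugation by cocharacters $\mu: \mathbb{G}_{m,K} \to G_K$ via $\Fil^p V = \bigoplus_{i \ge p} V[i]$) with elementary divisor theory for lattices over the discrete valuation ring $B^+_\dR$. After fixing a trivialization of the $G$-isocrystal underlying $\mc{G}$ over $B_\dR$, the position of $\mc{L}_\et \subset W_{B_\dR}$ is encoded by $g \in G(B_\dR)$ well-defined modulo $G(B^+_\dR)$ on both sides, while the companion linear-realization lattice $\mc{L}_\dR \subset V_{B_\dR}$ sits at position $g^{-1}$, so that type $[\mu]$ for $\mc{G}$ corresponds to type $[\mu^{-1}]$ for $\mc{L}_\dR$.

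For (1) $\Rightarrow$ (2), (3): if $g \in G(B^+_\dR)\mu(t)G(B^+_\dR)$, then in each $V \in \Rep G$ the lattice $\mc{L}_{V,\et}$ is $\GL(V)(B^+_\dR)$-equivalent to $\mu_V(t)\cdot V_{B^+_\dR} = \bigoplus_i t^i V[i]_{B^+_\dR}$. A direct trace-filtration computation gives $\Fil^p W_C = \bigoplus_{i\le -p} W[i]_C$, the cocharacter filtration attached to $\mu_C^{-1}$, exhibiting the Hodge filtration in type $[\mu^{-1}]$; the parallel computation using $\mc{L}_\dR$ (position $\mu^{-1}(t)$) yields the Hodge--Tate filtration in type $[\mu]$. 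Both filtrations are globally induced by a single cocharacter of $G$, so they are exact.

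For (2) (or (3)) $\Rightarrow$ (1): assume the Hodge--Tate filtration is exact. Saavedra--Deligne produces a cocharacter $\mu_C$ of $G_C$ inducing it, whose conjugacy class descends to a class $[\mu]$ over $\overline{\mathbb{Q}_p}$; fix a representative $\mu$ and view $\mu(t) \in G(B_\dR)$. The model lattice $\mu^{-1}(t)\cdot V_{B^+_\dR}$ induces the same filtration on $V_C$ as $\mc{L}_\dR$. Two lattices inducing the same $C$-filtration have the same $\GL(V)(B^+_\dR)$-type in every $V$ (both have the elementary divisors dictated by the filtration jumps), and inside $G(B_\dR)$ they differ by an element of the unipotent radical of the $\mu$-parabolic in $G(B^+_\dR)$, which acts freely and transitively on the Bialynicki--Birula fibers. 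Hence $\mc{L}_\dR$ is $G(B^+_\dR)$-equivalent to the model, giving $g^{-1} \in G(B^+_\dR)\mu^{-1}(t)G(B^+_\dR)$, which inverts to (1) in type $[\mu]$. The same argument with Hodge in place of Hodge--Tate proves (3) $\Rightarrow$ (1).

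The main obstacle is this final ``promotion'' step, passing from representation-level agreement of elementary divisors to a $G$-valued statement about the double coset of $g$. For $G$ reductive the Cartan decomposition exhausts $G(B_\dR)$, everything is automatic, and the only content of the theorem is the familiar fact that lattice-induced filtrations on reductive fiber functors are always exact. For non-reductive $G$, exactness becomes a genuine restriction, and the necessary Tannakian/deformation-theoretic analysis of the BB-fibers as $G(B^+_\dR)$-unipotent torsors is what underpins the equivalence — in particular, what makes (2) $\Leftrightarrow$ (3) via (1), with the sign swap $[\mu] \leftrightarrow [\mu^{-1}]$ reflecting the inverse relationship between the two lattice realizations on either side of the period diagram.
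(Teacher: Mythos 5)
Your overall scaffolding is right — you identify the three conditions as being tied to membership of the position element $g$ in a single Cartan-type double coset, the direction (1)$\Rightarrow$(2),(3) is indeed a straightforward computation via the model lattice $\mu(t)\cdot V_{B^+_\dR}$, the equivalence (2)$\Leftrightarrow$(3) comes from the Tate twist relating the two induced filtrations, and the whole content of the theorem is in the "promotion step" from representation-level elementary-divisor data to the $G$-level double coset. But that promotion step is precisely where your proof has a genuine gap, and it is the step the paper spends most of Section 3 establishing.

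Concretely, your argument for (2)$\Rightarrow$(1) runs: exactness gives a cocharacter $\mu$ via Saavedra--Rivano; the model lattice $\mu^{-1}(t)V_{B^+_\dR}$ induces the same $C$-filtration as $\mc{L}_\dR$; both lattices therefore have the same elementary divisors in every representation; hence they "differ by an element of the unipotent radical of the $\mu$-parabolic in $G(B^+_\dR)$, which acts freely and transitively on the Bialynicki--Birula fibers." The last clause is the whole theorem, and it is being assumed rather than proved. The Bialynicki--Birula map is only defined on the open Schubert cell $\Gr_{[\mu]}$ of $\Gr_G$, and the unipotent-radical action on its fibers is only available once you already know your lattice lies in that cell. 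A priori your lattice is simply some double coset in $G(B^+_\dR)\backslash G(B_\dR)/G(B^+_\dR)$; for non-reductive $G$ not all such double cosets are of the form $G(B^+_\dR)\,t^\mu\,G(B^+_\dR)$, and knowing that the $\GL(V)$-elementary divisors of $g$ agree with those of $t^{\rho\circ\mu}$ for each faithful $\rho$ does not formally imply that $g$ lies in the $G$-cell — that implication is exactly what must be established. So your appeal to transitivity on "the BB fibers" is circular: you are appealing to a property of a space you have not yet placed your lattice in.

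The paper's argument closes this gap by an explicit combinatorial analysis of how unipotent elements move points between Cartan cells in the affine Grassmannian. The key tool is \cref{thm.specialization-goes-up-when-not-good}: for $g=mu$ with $m$ in the Levi and $u$ in the unipotent radical, the $\GL(V)$-type of $\rho(g)$ dominates $[\rho\circ\mu]$, with equality for a faithful $\rho$ if and only if $g$ lies in $G(B^+_\dR)\,t^\mu\,G(B^+_\dR)$. This in turn rests on the lemmas about unipotent elements acting on Schubert cells (Lemmas \ref{lemma.first-unipotent-shifting} and \ref{lemma.refined-unipotent-action}), which require careful case analysis with respect to a Borel and do not follow from general Tannakian formalism or elementary divisor theory alone. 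The strict-filtration hypothesis from exactness is used precisely to show that the type of $\rho_\ast\omega_\bl$ matches the type of its associated graded along the unipotent filtration, which is the hypothesis feeding into the specialization theorem. Your write-up correctly flags that this is "the main obstacle" and gestures at it as a "Tannakian/deformation-theoretic analysis," but no such analysis is supplied, and what is actually needed is the Bruhat--Tits-style computation, not a Tannakian soft argument.
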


\begin{remark}
Lemma 3.25 of \cite{anschutz} is false\footnote{Erratum at \url{https://janschuetz.perso.math.cnrs.fr/cm_bkf_modules_erratum.pdf}}. Indeed, it is equivalent to the claim that the Hodge-Tate filtration is exact on the entire category of admissible pairs, which would contradict this theorem. That the Hodge-Tate filtration is not exact can be seen already in a simple motivating example (see \cref{example.ext-of-trivial}).
\end{remark}

\subsection{Related work, Parts II and III}
The category of $\AdmPair(\overline{C}_0)$ is equivalent to a Fontaine category of admissible $\overline{C}_0$-filtered $\varphi$-modules over $C_0$ (see \cref{remark.cryst-comparison} for related discussion). The Tannakian structure groups in this context have been extensively studied, especially in regards to classification of their possible simple factors --- see \cite{serre:groupes-alg-ht, wintenberger:groupes-alg-assoc}. With some modifications, most of \cref{main.transcendence} and its proof could be phrased entirely in this classical language. However, there are conceptual benefits in organizing things from $C$ down, and this perspective will be indispensable for the relative theory in Parts II and III.  

As noted at the start of the introduction, the category of admissible pairs over $C$ is equivalent to the category of rigidified Breuil-Kisin-Fargues modules. Ansch\"utz \cite{anschutz}, building on observations in \cite{bhatt-morrow-scholze:1}, has studied this category from a different perspective. In particular, he showed it is Tannakian and connected. Ansch\"utz also classified the CM admissible pairs, recovering a description that is essentially equivalent to an earlier result of Serre \cite[Th\'{e}or\`{e}me 6]{serre:groupes-alg-ht} on Hodge-Tate representations --- see \S \ref{ss.cm-adm-pair} for further discussion. The category of admissible pairs as presented here is also implicit to varying degrees throughout the literature on local Shimura varieties (especially in \cite{scholze:berkeley}), but the category of $p$-adic Hodge structures and the emphasis on the structural analogy between Hodge structures/motives over $\mbb{C}$ and $p$-adic Hodge structures/basic admissible pairs is a new contribution in our work.

As far as we are aware, \cref{theorem.transcendence-ht-filt-one-dim} recalled above and Serre's \cite[Th\'{e}or\`{e}me 3]{serre:groupes-alg-ht} computation of the transcendence degree of the extension generated by Hodge-Tate periods (see \cref{remark.splitting-and-serre-result}) are the only previous results on $p$-adic transcendence questions of the nature considered here. In Part II, we prove a bi-analytic Ax-Lindemann theorem for basic local Shimura varieties and their non-minuscule generalizations. This implies, in particular, a bi-analytic characterization of higher dimensional special subvarieties analogous to the characterization of special points in \cref{main.G-structure-period-maps}. It includes the zero dimensional case, thus subsumes \cref{main.transcendence} and \cref{main.G-structure-period-maps}. However, many of the ingredients developed here in Part I will also be used in Part II. This organization separates out the purely algebraic aspects of the argument appearing in Part I from the geometric aspects in Part II, which require a more substantial technical apparatus (the theory of locally spatial diamonds). We hope this separation will make the proofs and formalism more easily understood and motivated, and that the results will be more accessible to a reader whose interest is born from experience in complex transcendence. 

The relative theory introduced in Part II to prove our Ax-Lindemann theorem will only use constant coefficients since the moduli spaces in question include trivializations of the underlying $\mbb{Q}_p$-vector space or isocrystal. To formulate a more robust theory (e.g. so that the objects form a $v$-stack), it is better to allow arbitrary local systems (of $\mbb{Q}_p$-vector spaces for $p$-adic Hodge structures or isocrystals for admissible pairs). This general definition has the downside of allowing for many objects that cannot possibly arise  from geometry --- in fact, this can be seen already over a $p$-adic field and is related to our restriction to algebraically closed residue field in the definition of good reduction here (see \cref{remark.more-general-def-over-p-adic-field}). In Part III we complete our results by setting up a general formalism allowing variation of the coefficients and then cutting out a nice subcategory of variations of $p$-adic Hodge structure and formulating a potential good reduction conjecture for these. 

\subsection{Outline} 
In \cref{s.preliminaries} we recall some results and definitions for Tannakian categories, isocrystals, the twistor line, and the Fargues-Fontaine curve. It can be ignored to begin with and returned to as needed; the main novelty is in some aspects of our parallel treatment of the twistor line and the Fargues-Fontaine curve.

In \cref{s.filtrations-and-lattices}, we first recall some standard results on the relation between filtered and latticed vector spaces in a formulation that is convenient for our purposes. Afterwards, the majority of the section is dedicated to a proof of \cref{theorem.good-equivalence-bilatticed}, which implies \cref{main.exactness} above. The proof relies on a careful study of the behavior of types of lattices under extensions (\cref{thm.specialization-goes-up-when-not-good}) that will also be crucial to establishing good properties of moduli spaces for non-reductive groups in Part II. 

In \cref{s.pahs}, we define our category of $p$-adic Hodge structures and establish its basic properties. We motivate the definition using an approach to real Hodge structures via the twistor line that originates in work of Simpson; we learned of the analogy between the twistor line and the Fargues-Fontaine curve from Laurent Fargues, so that the originality here can be attributed mainly to our na\"{i}vet\'{e} in taking this analogy more literally than might initially seem a good idea! An important point for Part II is our interpretation of the Mumford-Tate group using Hodge-Tate lines in \cref{ss.hodge-tate-lines} (see also \cref{ss.hodge-lines}), analogous to the characterization of Mumford-Tate groups of complex Hodge structures using Hodge tensors. 

In \cref{s.admissible-pairs}, we define the category of admissible pairs, explaining how it falls out naturally from the category of $p$-adic Hodge structures. We show that the category of $p$-adic Hodge structures is equivalent to the subcategory of basic admissible pairs, and then extend many of the results about $p$-adic Hodge structures to admissible pairs. That this category is connected Tannakian is due to Ansch\"utz \cite{anschutz} (building on remarks in \cite{bhatt-morrow-scholze:1}) in the context of rigidifed Breuil-Kisin-Fargues modules, and this and many of the other results can also be argued in essentially the same way as for $p$-adic Hodge structures. In  \cref{ss.admissible-pairs-with-good-reduction} and \cref{ss.galois-representation} we study $\overline{C}_0$-analytic admissible pairs and their associated Galois representations --- the two key points are that $\overline{C}_0$-analytic admissible pairs are automatically good (\cref{theorem.rig-an-ap-is-good}), and that the Galois representation associated to an $\overline{C}_0$-analytic admissible pair has open image in the motivic Galois group (\cref{corollary.galois-rep-dense-open}). We describe the period maps of \cref{main.G-structure-period-maps} in \cref{ss.periods}. 

In \cref{s.transcendence} we prove \cref{main.complex} and \cref{main.transcendence}. The proofs are essentially the same, and that is more or less the point --- since we are working with a toy category of motives that is further removed from algebraic cycles, the work of the earlier sections establishes all of the nice properties analogous to the standard conjectures, the Hodge conjecture, and the Grothendieck period conjecture that are needed to give an unconditional result. We encourage the reader who is primarily interested in \cref{main.transcendence} to turn now to \cref{s.transcendence} and read the short proof of \cref{main.complex} and the surrounding discussion as this will give a firm footing for the path we take in the rest of the paper. We conclude in \cref{ss.beyond-basic} by giving an extension of \cref{main.transcendence} to the non-basic case, \cref{theorem.transcendence-with-slope}, and discussing the precise relation of the present work with \cite[Conjecture 4.1]{howe:transcendence}. 

\subsection{Notation}
Throughout this paper $p > 0$ is a prime number. A non-archimedean field is a field that that is complete with respect to a non-archimedean absolute value such that the residue field is of characteristic $p$. A $p$-adic field is a discretely valued non-archimedean field of characteristic zero whose residue field is perfect, and is strict if the residue field is algebraically closed. 

\subsection{Acknowledgements} During parts of the preparation of this work, Sean Howe was supported by MSRI as a research member at the 2019 program on Derived Algebraic Geometry, by the NSF through grants DMS-1704005 and DMS-2201112, by an AMS-Simons travel grant, and as a visitor at the 2023 Hausdorff Trimester on The Arithmetic of the Langlands Program supported by the Deutsche Forschungsgemeinschaft (DFG, German Research Foundation) under Germany's Excellence Strategy – EXC-2047/1 – 390685813. Christian Klevdal was supported by Samsung Science and Technology Foundation under Project Number SSTF-BA2001-02, and by NSF grant DMS-1840190. We would like to thank Laurent Fargues, Fran\c{c}ois Loeser, and Jared Weinstein for helpful conversations, and Alex Youcis for comments on an earlier draft. 

\section{Preliminaries}\label{s.preliminaries}

\subsection{Tannakian categories}\label{ss.tannakian}

\subsubsection{Notation}\label{sss.tannakian-categories-notation}
All categories considered are $k$-linear for some field $k$ (which should be clear from the context), and all functors are assumed to be $k$-linear. 

A Tannakian category $\mc{C}$ over a field $k$ is a rigid abelian tensor category where the endomorphism ring of the unit object is $k$, and such that there exists an exact tensor functor $\omega \colon \mc{C} \to \Vect(k')$ (called a fiber functor) for $k'$ an extension of $k$. Given a fiber functor $\omega$ as above, there is an affine group scheme $\Aut^\otimes \omega$ over $k'$ whose $R$ points are the tensor automorphisms of $\omega \otimes R$. Further, $\omega$ induces an equivalence of categories $\tilde{\omega} \colon \mc{C}_{k'} \xrightarrow{\sim} \mr{Rep}\+ \Aut^\otimes(\omega)$. The Tannakian category $\mc{C}$ is neutral if there exists a fiber functor $\omega$ over $k$. A Tannakian subcategory $\mc{C}'$ of $\mc{C}$ is a strictly full subcategory that is closed under direct sum, tensor products, duals and subquotients. Given a collection of objects $M$ of $\mc{C}$, we denote by $\langle M \rangle$ the strictly full Tannakian subcategory generated by $M$, i.e.\ the smallest strictly full subcategory closed under direct sum, tensor products, duals and subquotients. 

For $G$ an affine group scheme over $k$, we denote by $\Rep\+ G$ the Tannakian category over $k$ of algebraic representations of $G$ on finite dimensional $k$-vector spaces. It is neutralized by the standard fiber functor $\omega_\std = \omega_{\std,G} \colon \Rep\+ G \to \Vect(k), (V, \rho) \mapsto V$. Any fiber functor $\omega \colon \Rep\+ G \to \Vect(k)$ determines an \'{e}tale $G$-torsor $\mr{Isom}^\otimes(\omega_\std, \omega)$ over $\Spec\+ k$, and hence a cohomology class in $H^1(k, G)$. Two fiber functors $\omega, \omega'$ are isomorphic if and only if they have the same cohomology class. If $f \colon G \to H$ is a morphism of affine group schemes over $k$, then pullback $f^\ast \colon \Rep\+ H \to \Rep\+ G$ is an exact tensor functor that commutes with the standard fiber functors, and Tannakian duality states that conversely any exact tensor functor $F \colon \Rep\+ H \to \Rep\+ G$ such that $\omega_{\std,G} \circ F = \omega_{\std, H}$ is $f^\ast$ for a uniquely determined morphism $f \colon G \to H$. 

For $\mc{C}$ a Tannakian category over $k$ with fiber functor $\omega$ over $k'$, we write\footnote{It would be more principled (but cumbersome) to include the choice of fiber functor $\omega$ in the notation; but the choice will be either specified or clear from the context. In particular, when considering vector bundles we often implicity take it to be evaluation at a geometric point.} $G\dash\mc{C}$ for the category (groupoid) of objects in $\mc{C}$ with $G$-structure, i.e.\ the objects being exact tensor functors $Q \colon \Rep\+ G \to \mc{C}$ such that $\omega \circ Q$ is isomorphic to $\omega_\std \otimes k'$, and morphisms are natural transformations of tensor functors. If $f \colon G \to H$ is a morphisms of affine group schemes over $k$, then composition along $\Rep\+ H \xrightarrow{f^\ast} \Rep\+ G$ gives a functor $f_\ast \colon G\dash\mc{C} \to H\dash\mc{C}$. 

\begin{lemma}\label{lemma.G-structure-in-semsimple-categories}
Let $G$ be a connected linear algebraic group over a characteristic 0 field $k$, let $U$ be the unipotent radical of $G$, and let $G = MU$ be a Levi decomposition with inclusion map $s: M \hookrightarrow G$ and projection $\pi: G \rightarrow M$. Then for any semisimple Tannakian category $\mc{C}$ over $k$ with fiber functor over $k'$, and any object $F$ of $G\dash\mc{C}$, $F \cong (s \circ \pi)_\ast F$. In particular, $s_\ast$ is essentially surjective. 
\end{lemma}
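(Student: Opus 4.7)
The plan is to use Tannakian duality to translate the problem into a statement about morphisms of linear algebraic groups, and then invoke Mostow's theorem on reductive subgroups. Let $\langle F \rangle \subseteq \mc{C}$ be the full Tannakian subcategory generated by the essential image of $F$; restricting $\omega$ makes $\langle F \rangle$ equivalent to $\Rep_{k'} H$ for $H := \Aut^\otimes(\omega|_{\langle F \rangle})$. Because $\mc{C}$ is semisimple in characteristic zero, $\langle F \rangle$ is semisimple and hence $H$ is reductive. After base change to $k'$, Tannakian duality identifies $F$ with pullback along a morphism $\alpha \colon H \to G_{k'}$, and the target $(s \circ \pi)_* F = F \circ (s \circ \pi)^*$ similarly corresponds to pullback along $s \circ \pi \circ \alpha$.

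Next, by Mostow's theorem in characteristic zero, since $H$ is reductive there exists $u \in U(k')$ with $u \alpha u^{-1}$ taking values in the Levi $s(M_{k'})$; applying $\pi$ to both sides, and using that $\pi$ kills $U$, identifies this factorization with $s \circ \pi \circ \alpha$, so $u \alpha u^{-1} = s \circ \pi \circ \alpha$. A direct computation shows that conjugation by any $g \in G_{k'}(k')$ induces a tensor natural isomorphism between pullback tensor functors $\alpha^* \xrightarrow{\sim} (g \alpha g^{-1})^*$ via multiplication by $g$ on each representation. Applied with $g = u^{-1}$, this yields a natural tensor isomorphism $\alpha^* \cong (s \circ \pi \circ \alpha)^* = \alpha^* \circ (s \circ \pi)^*$ of tensor functors $\Rep G_{k'} \to \Rep H$.

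Transporting through the equivalence $\Rep H \cong \langle F \rangle$ and the base change $\Rep G \to \Rep G_{k'}$ then produces the desired natural tensor isomorphism $F \cong F \circ (s \circ \pi)^* = s_* \pi_* F$ in $G\dash\mc{C}$, so $s_*$ is essentially surjective with quasi-inverse (up to natural isomorphism) given by $\pi_*$. The main obstacle I foresee is the application of Mostow's theorem over the possibly non-algebraically closed field $k'$: one needs the conjugating element $u$ chosen in $U(k')$ rather than merely in $U(\overline{k'})$. In characteristic zero this is fine because unipotent groups are cohomologically trivial, but it deserves careful checking; a fallback, if this subtlety becomes delicate, is to prove the result directly by choosing compatible splittings of the tensor-compatible filtration of each $V \in \Rep G$ by iterated $U$-invariants, whose splittings exist in $\mc{C}$ by semisimplicity.
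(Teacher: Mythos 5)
Your strategy is the same as the paper's — pass through Tannakian duality to a homomorphism $\alpha\colon H\to G$ from a reductive group $H$, and conjugate its image into the Levi — but there is a genuine gap that you have not addressed, and it is not the one you flag. The issue you raise (whether Mostow's conjugating element $u$ lives in $U(k')$ rather than $U(\overline{k'})$) is indeed handled by vanishing of $H^1(k',U)$ for $U$ unipotent in characteristic $0$, so that is fine. The gap is one step earlier: the equivalence $\tilde\omega$ identifies $\Rep\, H$ with the base change $\langle F\rangle_{k'}$, not with $\langle F\rangle\subseteq\mc{C}$, so the isomorphism you build via $u$ is a tensor isomorphism $F\otimes k'\cong \bigl((s\circ\pi)_*F\bigr)\otimes k'$ of functors $\Rep\, G_{k'}\to\mc{C}_{k'}$. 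The lemma asserts an isomorphism of $k$-linear tensor functors $\Rep\, G\to\mc{C}$, i.e.\ an isomorphism in $G\dash\mc{C}$ over $k$, and that does not follow automatically from its existence after base change. Your last sentence, ``transporting through the equivalence $\Rep H \cong \langle F\rangle$ and the base change $\Rep G\to\Rep G_{k'}$ then produces the desired isomorphism,'' silently performs a descent that needs justification.

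The paper closes exactly this gap before doing the Tannakian argument: it first observes that the isomorphisms $F\to(s\circ\pi)_*F$ lifting the identity on $\pi_*F$ form a torsor under $U$, and that $H^1(\Gal(\ell/k),U(\ell))=0$ because $U$ is unipotent (in characteristic $0$), so it suffices to produce the isomorphism after a finite Galois extension $\ell/k$ containing $k'$; only then does it pass to the structure group $H$ over $\ell$, use reductivity to see $f^{-1}(U)=\{e\}$, and conjugate into $M$. So the mechanism you cite (triviality of unipotent cohomology) is indeed the right tool, but it must be deployed for the $k\leftarrow k'$ descent of the natural isomorphism, not only for the choice of $u$ over $k'$. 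Once you add that reduction step, your argument becomes a complete proof essentially identical to the paper's. As an aside, your proposed ``fallback'' — splitting the filtration by $\mf{u}^iV$ directly — is subtler than it looks: there is in general no \emph{canonical} $M$-equivariant splitting of that filtration (consider $G=\mbb{G}_m\times\mbb{G}_a$), so making the chosen splittings compatible with tensor products and naturality is not immediate, and this approach would need a different organizing idea.
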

\begin{proof}
Let $\mc{T}$ be the functor on $k$-algebras sending $R$ to the set of isomorphisms $F \otimes R \xrightarrow{\sim} (s \circ \pi)_\ast F \otimes R$ that induce the canonical identification $\pi_\ast F \otimes R= \pi_\ast (s \circ \pi)_\ast F \otimes R$. By definition, this is a quasi-torsor for the functor $\mc{U}$ sending $R$ to the automorphisms of $F \otimes R$ that induce the identity on $\pi_\ast F$. We claim $\mc{T}$ is an \'{e}tale torsor, and that $\mc{U}$ is a unipotent group scheme over $k$. Given this claim, we obtain the result since $H^1_\et(\Spec\+ k, \mc{U})$ is trivial (because we have assumed $k$ has characteristic zero, $\mc{U}$ has a filtration by normal subgroups with quotients $\mbb{G}_a$ so that this is reduced to the usual vanishing of $H^1_\et(\Spec\+ k, \mbb{G}_a)$). 

To establish the claim, we first note that, by replacing $\mc{C}$ with $\langle F(\Rep\+ G) \rangle$, we may in particular assume $\mc{C}$ is finitely generated, and thus that $k'$ is a finite extension of $k$.  We write $\omega$ for the given fiber functor over $k'$ on $\mc{C}$ and fix an identification of $\omega \circ F$ with the fiber functor $\omega_\std \otimes k'$ on $\Rep\+G$. Then, writing $H$ for the algebraic group over $k'$ of automorphisms of $\omega \otimes k'$, Tannakian duality gives that $\mc{C}_{k'}=\Rep\+ H$ and that $F \otimes k'$ is induced by a homomorphism $f:H \rightarrow G_{k'}$, which is a closed immersion by \cite[Proposition 2.21]{deligne-milne:tannakian}. It follows that the automorphisms of $F \otimes k'$ are identified with the centralizer of $f$ in $G_{k'}$, and thus $\mc{U}_{k'}$ is identified with the centralizer of $f$ in $U_{k'}$, which is unipotent as a closed subgroup of a unipotent group. From the description of $\mc{C}$ by descent from $k'$ as in, e.g, \cite[p.35]{deligne-milne:tannakian}, we find $\mc{U}$ can be defined via descent data from $\mc{U}_{k'}$, thus it is an algebraic group over $k$ which is unipotent because $\mc{U}_{k'}$ is. Moreover, there is a point of $\mc{T}$ over $k'$ thus $\mc{T}$ is an \'{e}tale torsor:  since $f$ is a closed immersion, $f^{-1}(U_{k'})$ is contained in the unipotent radical of $H$, which (since $\mc{C}$ is semisimple and thus $H$ is reductive) is trivial. It follows from the main theorem of \cite{Mostow.FullyReducible} that $f(H)$ is contained in a Levi subgroup $M'$ of $G_{k'}$ which is conjugate to $M$ by an element $u \in U(k')$. In particular, $\mr{Ad}(u) \circ f = s\circ \pi \circ f$, and the action of $u$ gives the isomorphism $F \otimes k' \cong (s \circ \pi)_\ast F \otimes k'$. 

\end{proof}

\subsubsection{Canonical $G$-structure}\label{ss.canonical-G-structure}
Throughout the paper, we find ourselves frequently talking about the canonical $\Aut^\otimes(\omega)$-structure attached to a neutralized Tannakian category $(\mc{C}, \omega)$ over a field $k$, which we now clarify. Setting $G = \Aut^\otimes(\omega)$, we have the following commutative (on the nose) diagram
\[\begin{tikzcd}
	{\mc{C}} && {\Rep\+ G} \\
	& {\Vect(k)}
	\arrow["\omega", from=1-1, to=2-2]
	\arrow["{\tilde{\omega}}", from=1-1, to=1-3]
	\arrow["{\omega_\std}"', from=1-3, to=2-2]
\end{tikzcd}\]
where $\tilde{\omega}$ is an exact tensor functor and an equivalence of categories. By \cite[Proposition I.4.4.2]{rivano:tannakian}, we can choose a quasi-inverse $Q \colon \Rep\+ G \to \mc{C}$ that is a tensor functor and such that $\tilde{\omega} \circ Q$ (resp. $Q \circ \tilde{\omega}$) is isomorphic to $\Id_{\Rep\+ G}$ (resp. $\Id_{\mc{C}}$) as a tensor functor. In particular, such a $Q$ is an exact tensor functor (it is exact as an equivalence of abelian categories). If we fix such a $Q$ and isomorphism $\tilde{\omega} \circ Q \cong \Id_{\Rep\+ G}$, we obtain an isomorphism of tensor functors
\[ \omega \circ Q = (\omega_\std \circ \tilde{\omega}) \circ Q = \omega_\std \circ (\tilde{\omega} \circ Q) \cong \omega_\std \circ \Id_{\Rep\+ G}= \omega_\std. \]

The choice of a pair consisting of such a $Q$ and an isomorphism $\tilde{\omega} \circ Q = \Id_{\Rep\+ G}$ is unique up to unique isomorphism. By abuse of notation, we will sometimes refer to $Q$ as the canonical $G$-structure and sometimes refer to $Q$ with this identification as the canonical $G$-structure.

\subsubsection{Automorphisms of Tannakian categories}
Suppose $G$ is an affine algebraic group over a perfect field $k$ and let $Z(G)$ denote the center of $G$ (an affine algebraic group over $k$). If $z \in Z(G)(k)$ then for each $V \in \Rep\+ G$, $\rho(z) \in \GL(V)$ is an isomorphism of representations. Hence $z$ determines a tensor automorphism of the identity functor on $\Rep\+ G$, denoted by $z \cdot$. It will be useful  to record the basic observation that $Z(G)$ accounts for all automorphisms of Tannakian categories. 

\begin{lemma}\label{lemma.tannakian-automorphisms}
Suppose $F \colon \Rep\+ G \to \mc{C}$ is a fully faithful exact tensor functor with $\mc{C}$ a Tannakian category over $k$. Then for each tensor automorphism $\alpha \colon F \xrightarrow{\sim} F$, there is a unique $\beta \in Z(G)(k)$ such that $\alpha_V  = F(\beta\cdot)$ for all $V \in \Rep\+ G$. 
\end{lemma}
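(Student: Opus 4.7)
The plan is to use the fully faithfulness of $F$ to lift $\alpha$ to a tensor automorphism of the identity functor on $\Rep\, G$, and then to invoke the standard identification $\Aut^\otimes(\Id_{\Rep\, G}) = Z(G)(k)$.

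First I would argue that for each $V \in \Rep\, G$, full faithfulness of $F$ supplies a unique morphism $\tilde{\alpha}_V \colon V \to V$ in $\Rep\, G$ with $F(\tilde{\alpha}_V) = \alpha_V$; applying full faithfulness to $\alpha_V^{-1}$ shows $\tilde{\alpha}_V$ is an isomorphism. For any $f \colon V \to W$ in $\Rep\, G$, the naturality square for $\alpha$ at $F(f)$ together with full faithfulness implies $\tilde{\alpha}_W \circ f = f \circ \tilde{\alpha}_V$, so $\tilde{\alpha}$ is a natural automorphism of $\Id_{\Rep\, G}$. Similarly, the tensor compatibility $\alpha_{V \otimes W} = \alpha_V \otimes \alpha_W$ and unit normalization $\alpha_{\mathbf{1}} = \mr{id}$ descend through $F$ to show that $\tilde{\alpha}$ is a tensor automorphism of $\Id_{\Rep\, G}$.

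Next, composing with the standard fiber functor $\omega_\std$ gives a tensor automorphism of $\omega_\std$, and by Tannakian reconstruction this corresponds to a unique element $\beta \in \Aut^\otimes(\omega_\std)(k) = G(k)$ characterized by $\omega_\std(\tilde{\alpha}_V) = \rho_V(\beta)$ for every $(V,\rho_V) \in \Rep\, G$. The fact that each $\tilde{\alpha}_V$ is already a morphism in $\Rep\, G$, rather than only a linear map, means $\rho_V(\beta)$ commutes with $\rho_V(g)$ for every $k$-algebra $R$ and $g \in G(R)$ (applied to $\Rep\, G_R$), so $\beta$ lies in $Z(G)(k)$. Then $\alpha_V = F(\tilde{\alpha}_V) = F(\beta\cdot)$ for all $V$. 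Uniqueness is immediate: if $\beta'$ also works then $\rho_V(\beta) = \rho_V(\beta')$ for any faithful $V$, forcing $\beta = \beta'$.

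The only real point of substance is the identification $\Aut^\otimes(\Id_{\Rep\, G})=Z(G)(k)$, which is standard from Tannakian reconstruction applied to $\omega_\std$ together with the observation that $G$-equivariance of $\tilde{\alpha}_V$ translates into centrality of the resulting group element; I do not anticipate any other obstacles. Perfectness of $k$ is not needed for the argument as phrased, and the assertion $\beta \in Z(G)(k)$ (rather than $Z(G)(\bar k)^{\Gal(\bar k/k)}$) follows directly from the reconstruction statement at the level of $k$-points.
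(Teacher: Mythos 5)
Your proof is correct and follows essentially the same route as the paper's: lift $\alpha$ via full faithfulness to a natural tensor automorphism of $\Id_{\Rep G}$, push it through $\omega_\std$ to get $\beta \in \Aut^\otimes(\omega_\std)(k) = G(k)$, and use $G$-equivariance of the lifted endomorphisms to conclude centrality. If anything, your argument is slightly more careful than the paper's at the last step — the paper's proof says $\beta$ commutes with all $g \in G(k)$, which by itself does not give $\beta \in Z(G)(k)$ when $G(k)$ is not dense, whereas you correctly observe that $G$-equivariance of $\tilde\alpha_V$ gives commutation with $G(R)$ for every $k$-algebra $R$, which is precisely the functor-of-points condition defining $Z(G)(k)$; your remark that perfectness of $k$ is not used is also accurate.
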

\begin{proof}
As $F$ is fully faithful, for each $V \in \Rep\+ G$, there is a unique $\beta_V \in \Hom_G(V, V)$ such that $F(\beta_V) = \alpha_V$ and $\beta_W \circ f = f \circ \beta_V$ whenever $f \in \Hom_G(V, W)$. Thus $\beta = \{ \beta_V \}$ defines an element of ${\Aut}^\otimes(\omega_{\std})(k) = G(k)$. Moreover, $\beta g = g \beta$ for all $g \in G(k)$ since $\beta_V \in \Hom_G(V,V)$ and hence $\beta \in Z(G)(k)$. 
\end{proof}


\subsection{Isocrystals}\label{ss.isocrystals}
We introduce Kottwitz' categories of real and $p$-adic isocrystals. References are \cite{kottwitz:isocrystals} for $p$-adic isocrystals and \cite[Construction 9.3]{scholze:icm2018} and  \cite{kottwitz:bg} for real isocrystals.

\subsubsection{Real isocrystals}\label{sss.real-isocrystals}
Let $\mbb{H}=\mbb{R} + \mbb{R} i + \mbb{R} j + \mbb{R} k$ denote the Hamiltonian quaternions and let $\mbb{C} = \mbb{R} + \mbb{R}i \subseteq \mbb{H}$.  Let $W_{\mbb{R}}=\mbb{C}^\times \sqcup j \mbb{C}^\times$, the Weil group of $\mbb{R}$. 

A real isocrystal is a semilinear representation of $W_{\mbb{R}}$ on a finite dimensional complex vector space whose restriction to $\mbb{C}^\times$ is algebraic. Here the semilinearity is for the natural map $W_{\mbb{R}} \rightarrow \Gal(\mbb{C}/\mbb{R})$ sending $j$ to the non-trivial element (this can be realized as the conjugation action on $\mbb{C} \subseteq \mbb{H}$). 

We write $\Kt_{\mbb{R}}$ for the Kottwitz category of real isocrystals\footnote{In \cite[Construction 9.3]{scholze:icm2018}, $\Kt_{\mbb{R}}$ is described as the category of graded quaternionic vector spaces; this is easily seen to be equivalent to our definition by using the action of $j$ to give the semilinear automorphism of loc. cit.}. The category $\Kt_{\mbb{R}}$ is a semi-simple Tannakian category over $\mbb{R}$ (though it is not neutral). We can describe the simple objects explicitly: for each  $\lambda \in \frac{1}{2}\mbb{Z}$, $\lambda=a/b$ with $b>0$ and $\gcd(a,b)=1$, let
\[ D_\lambda:= \mbb{C}[\pi]/(\pi^{b}-(-1)^a) \]
with $\mbb{C}^\times$ acting by $z^{a}$ and $j$ acting as multiplication by $\pi$ composed with complex conjugation on the coefficients. Then each simple object in $\Kt_{\mbb{R}}$ is isomorphic to $D_\lambda$ for a unique $\lambda$ in $\frac{1}{2}\mbb{Z}$. 

\subsubsection{$p$-adic isocrystals}\label{sss.isocrystals}
Let $\overline{\mbb{F}}_p$ be an algebraic closure of $\mbb{F}_p$ and let $\breve{\mbb{Q}}_p= W(\overline{\mbb{F}}_p)[1/p]$.  We will frequently be given a complete algebraically closed non-archimedean extension $C/\mbb{Q}_p$, in which case we take $\overline{\mbb{F}}_p$ to be the algebraic closure of $\mbb{F}_p$ in $\mc{O}_C/\mf{m}_C$ and identify $\breve{\mbb{Q}}_p$ with the completion of the maximal unramified algebraic extension of $\mbb{Q}_p$ in $C$. 

A $p$-adic isocrystal is a finite dimensional $\breve{\mbb{Q}}_p$-vector space equipped with a semilinear automorphism $\varphi_W \colon W \xrightarrow{\sim} W$, i.e.\ an additive isomorphism such that $\varphi_W(\lambda w) = \varphi_{\breve{\QQ}_p}(\lambda)\varphi_W(w)$ for $\lambda \in \breve{\QQ}_p, w \in W$, where $\varphi_{\breve{\mbb{Q}}_p}$ is the automorphism of $\breve{\mbb{Q}}_p$ induced by the $p$-power Frobenius on $\overline{\mbb{F}}_p$. We write $\Kt_{\mbb{Q}_p}$ for the category of $p$-adic isocrystals. By the Dieudonn\'{e}-Manin classification, the category $\Kt_{\mbb{Q}_p}$ is a semi-simple Tannakian category over $\QQ_p$ (though it is not neutral) and we can describe the simple objects explicitly: for $\lambda \in \mbb{Q}$, $\lambda=a/b$ with $b>0$ and $\gcd(a,b)=1$, let 
\[ D_\lambda := \breve{\mbb{Q}}_p[\pi]/(\pi^b-p^{a}),\]
with Frobenius $\varphi_{D_\lambda}$ corresponding to the semilinear map $\pi \varphi_{\breve{\mbb{Q}}_p}$ (i.e. $\varphi_{\breve{\mbb{Q}_p}}$ on the coefficients of a polynomial in $\pi$ followed by multiplication by $\pi$). Each simple object in $\Kt_{\mbb{Q}_p}$ is isomorphic to one of the form $D_\lambda$ for $\lambda \in \mbb{Q}$.

\subsubsection{Isocrystals with $G$-structure}\label{sss.isocrystals-G-structure}
We recall results of \cite{kottwitz:isocrystals} in this subsection. For convenience we write $\sigma = \varphi_{\breve{\QQ}_p} \colon \breve{\QQ}_p \to \breve{\QQ}_p$ for the lift of the $p$-power Frobenius. Let $G$ be a connected linear algebraic group over $\QQ_p$. If $b \colon \Rep\+ G \to \Kt_{\QQ_p}$ is an exact tensor functor, then composition with $\omega_\Kt: \Kt_{\QQ_p} \to \Vect(\breve{\QQ}_p)$ gives a fiber functor valued in $\breve{\QQ}_p$-vector spaces. The $G_{\breve{\QQ}_p}$-torsor $\Isom^\otimes(\omega_\Kt \circ b, \omega_\std \otimes \breve{\QQ}_p)$ yields an element of $H^1(\breve{\QQ}_p, G)$ which, as $G$ is connected and linear and $\breve{\QQ}_p$ has cohomological dimension 1, is trivial by Steinberg's theorem. Thus we can (and do) fix an identification $\omega_\Kt \circ b  = \omega_{\std} \otimes \breve{\QQ}_p$. For each $V \in \Rep\+ G$, the map $\varphi_{b(V)}\sigma^{-1} \colon V_{\breve{\QQ}_p} \to V_{\breve{\QQ}_p}$ is a $\breve{\QQ}_p$-linear automorphism such that $f  \circ \varphi_{b(V)}\sigma^{-1} = \varphi_{b(W)}\sigma^{-1} \circ f$ for any map $f \colon V \to W$ in $\Rep\+ G$. In short, $\varphi_b \sigma^{-1} \in \Aut^\otimes(\omega_{\std})(\breve{\QQ}_p) = G(\breve{\QQ}_p)$. By a slight abuse of notation, we will let $b$ denote both the element $\varphi_b \sigma^{-1} \in G(\breve{\QQ}_p)$  (so $\varphi_b = b \sigma$) and the tensor functor; any $b \in G(\breve{\QQ}_p)$ determines the corresponding tensor functor 
    \[ b \colon \Rep\+ G \to \Kt_{\QQ_p}, \qquad (V, \rho) \mapsto V_b:=(V_{\breve{\QQ}_p}, \rho(b) \sigma). \]
For $b, b' \in G(\breve{\QQ}_p)$, any isomorphism $b \cong b'$ of $G$-isocrystals is given by an element $g \in G(\breve{\QQ}_p)$ such that $b' = gb\sigma(g)^{-1}$, in which case we say that $b, b'$ are $\sigma$-conjugate. Thus, the groupoid $G\dash\Kt_{\QQ_p}$ is equivalent to the groupoid with objects $b \in G(\breve{\QQ}_p)$, and $\Hom(b,b') = \{ g \in G(\breve{\QQ}_p) \colon b' = gb\sigma(g)^{-1} \}$.  If we write $B(G)$ for the quotient of $G(\breve{\QQ}_p)$ by the relation of $\sigma$-conjugacy then $B(G)$ is the set of isomorphism classes of $G$-isocrystals. As $\Kt_{\QQ_p}$ is semisimple, Lemma \ref{lemma.G-structure-in-semsimple-categories} shows that $B(G) = B(G/U)$ for $U \subseteq G$ the unipotent radical (more precisely, \cref{lemma.G-structure-in-semsimple-categories} gives injectivity of $B(G) \rightarrow B(G/U)$ while surjectivity is immediate since $G(\breve{\mbb{Q}}_p)$ surjects onto $(G/U)(\breve{\mbb{Q}}_p)$). Kottwitz \cite{kottwitz:isocrystals-II} gives a complete description of $B(G)$ for $G$ reductive. 

For $b \in G(\breve{\QQ}_p)$, the $\breve{\QQ}_p$-linear extension of the $G$-isocrystal $b$ is an exact tensor functor from $\Rep\+ G$ to $\QQ$-graded $\breve{\QQ}_p$-vector spaces. Semisimplicity and the description of the simple objects $D_\lambda$ implies it is induced (via Tannakian duality) by a morphism $\nu_b \colon \mbb{D}_{\breve{\QQ}_p} \to G_{\breve{\QQ}_p}$ where $\mbb{D} = \varprojlim_{[n]} \mbb{G}_m$ is the pro-torus with character group $\QQ$. In our notation, $\mbb{D}_{\breve{\QQ}_p}$ acts by $z^\lambda$ on the $D_{\lambda}$ isotypic component. The $G(\breve{\QQ}_p)$-conjugacy class of $\nu_b$ is fixed by $\sigma$ and an invariant of the $\sigma$-conjugacy class of $b$. This leads to the Newton map 
    \[ \ol{\nu} \colon B(G) \to \mc{N}(G) := \left(\Hom(\mbb{D}, G)/G^{\mr{ad}}\right)(\QQ_p). \]
The latter notation means the $\QQ_p$-points of the scheme of conjugacy classes of morphisms $\mbb{D} \to G$, which can in turn be identified with $(X_\ast(T)^+ \otimes \QQ)^{\Gal(\overline{\mbb{Q}_p}/\mbb{Q}_p)}$ once a Borel pair $T \subseteq B \subseteq G_{\ol{\QQ}_p}$ is chosen.

We call a $G$-isocrystal $b$ \emph{basic} if the slope homomorphism $\nu_b$ is central, in which case $\nu_b$ is independent of the choice of $b$ in $[b]$ and is a well-defined element of $\Hom(\mbb{D}, G)$. We write $B(G)_{\basic}$ for the subset of basic isomorphism classes. Note that for $M=G/U$, the identification $B(G)=B(M)$ induces an inclusion $B(G)_{\basic} \subseteq B(M)_{\basic}$, but if the center of $M$ acts non-trivially on $U$ then the inclusion may be strict. 

\begin{example}\label{example.basic-levi-non-basic-G}
If $G$ is the group of upper triangular matrices in $\GL_2$ then, taking $M\leq G$ to be the subgroup of diagonal matrices, $b=\mr{diag}(p,1)$ is in $B(M)_{\basic}$ but is not in $B(G)_{\basic}$.
\end{example}

For $b \colon \Rep\+ G \to \Kt_{\QQ_p}$ a $G$-isocrystal, we write $G_b$ for the functor on $\mbb{Q}_p$-algebras
\[ R \mapsto \Aut^\otimes( b \otimes_{\mbb{Q}_p} R), \]
where the latter term is the group of tensor automorphisms of the $R$-linear extension of $b$. Identifying $b$ with an element of $G(\breve{\QQ}_p)$ as above identifies $G_b$ with the functor of \cite[Proposition 1.12]{rapoport-zink:period-spaces} and hence $G_b$ is represented by a connected linear algebraic group over $\QQ_p$. Moreover there is a canonical closed immersion $G_{b, \breve{\mbb{Q}}_p} \hookrightarrow G_{\breve{\mbb{Q}}_p}$ that identifies $G_{b}(\mbb{Q}_p)$ with the fixed points of $g \mapsto b \sigma(g) b^{-1}$ on $G(\breve{\QQ}_p)$. 

\newcommand{\ad}{\mr{ad}}
When $b$ is basic, the canonical closed immersion $G_{b,\breve{\mbb{Q}}_p} \rightarrow G_{\breve{\mbb{Q}}_p}$ is an isomorphism: one fun way to see this is to note that in this case the adjoint action on $\Lie\+ G_{\breve{\mbb{Q}}_p}$ gives the trivial isocrystal, so it admits a $\mbb{Q}_p$-basis such that elements in a small enough $\mbb{Z}_p$-lattice exponentiate to elements of $G_b(\mbb{Q}_p)$, thus we see that $\dim \Lie\+ G_b \geq \dim \Lie\+ G$ (equality of groups follows since $G$ is, by assumption, a connected linear algebraic group and $G_{b,\breve{\mbb{Q}}_p}$ is a closed subgroup with the same Lie algebra as $G_{\breve{\mbb{Q}}_p}$). In a slightly more useful way, following \cite[\S5.2]{kottwitz:bg}, we may $\sigma$-conjugate to assume $b$ satisfies $(b\sigma)^n=z\sigma^n$ for some positive integer $n$ and central element $z$. Then the image $b_\ad$ of $b$ in the adjoint group $G^\ad(\breve{\mbb{Q}}_p)$ is fixed by $\sigma^n$ so $b_\ad \in G(\mbb{Q}_{p^n})$ and $G_b$ is the associated inner form obtained by this conjugation.

\subsection{The twistor line}\label{ss.twistor-line}
As in \cref{sss.real-isocrystals}, let $\mbb{H}=\mbb{R} + \mbb{R} i + \mbb{R} j + \mbb{R} k$ denote the Hamiltonian quaternions, let $\mbb{C} = \mbb{R} + \mbb{R}i \subseteq \mbb{H}$, and let $W_{\mbb{R}}=\mbb{C}^\times \sqcup j \mbb{C}^\times$, the Weil group of $\mbb{R}$. 

We view $\mbb{H}=\mbb{C} +  j \mbb{C} $ as a complex vector space via right multiplication. We let $W_{\mbb{R}}$ act on $\mbb{H}$ by right multiplication and on $\mbb{C}$ by the natural map $W_{\mbb{R}} \rightarrow \Gal(\mbb{C}/\mbb{R})$. Then, the natural action of $W_{\mbb{R}}$ on $\Cont(\mbb{H}, \mbb{C})$, $(w \cdot f) (x)= w f(x w)$ preserves the space $\mbb{C}[X,Y]$ of polynomial functions on $\mbb{H}$ (where $X(a+jb)=a$ and $Y(a+jb)=b$). Concretely, we have $(z \cdot f)(X,Y) = f(Xz,Yz)$ for $z \in \mbb{C}^\times$ and
\[ (j \cdot f)(X,Y)=\overline{f(-\overline{Y}, \overline{X})}=\overline{f}(-Y, X) \]
where $\overline{f}$ denotes complex conjugation on the coefficients of the polynomial $f$. 

Let $\mbb{C}(k)$ denote the semilinear representation of $W_{\mbb{R}}$ on $\mbb{C}$ where $j$ acts by complex conjugation and $z$ acts by $z^{{2k}}$ (this is the real isocrystal $D_{2k}$ in the notation of \cref{sss.real-isocrystals}). Then, we define a scheme over $\mbb{R}$
\[ \tilde{\mbb{P}}^1 := \mr{Proj}\left(\bigoplus_{k \in \mbb{Z}} \left(\mbb{C}[X,Y] \otimes_{\mbb{C}} \mbb{C}(-k) \right)^{W_{\mbb{R}}}\right) = \mr{Proj} \left( \bigoplus_{k \in \mbb{Z}} \mbb{C}[X,Y]_{2k}^{j=1} \right). \]
The natural bundle $\mc{O}_{\tilde{\mbb{P}}}(1)$ has an $\mbb{R}$-basis of global sections 
\[ S=iXY, T=\frac{1}{2}(iX^2 - iY^2), U=\frac{1}{2}(X^2 + Y^2) \]
and these define a closed embedding $\tilde{\mbb{P}}^1 \hookrightarrow \mbb{P}^2_{\mbb{R}}$ identifying $\tilde{\mbb{P}}^1$ with the vanishing set $S^2 + T^2 + U^2=0$.
In particular, $\tilde{\mbb{P}}^1$ is the Brauer-Severi variety for $\mbb{H}$. Alternatively, we can see this using the inclusion of the graded ring into $\mbb{C}[X,Y]$ to obtain a map $\mbb{P}^1_{\mbb{C}} \rightarrow \tilde{\mbb{P}}^1$ which factors through an isomorphism $\mbb{P}^1_{\mbb{C}}=\tilde{\mbb{P}}^1 \times_{\Spec\+ \mbb{R}} \Spec\+ \mbb{C}.$

We write $\infty_{\mbb{C}}$ for the point $\Spec\+ \mbb{C} \xrightarrow{[1:0]} \mbb{P}^1_{\mbb{C}} \rightarrow \tilde{\mbb{P}}^1$. The action of \[ U(1)=\{a + bi | a^2+b^2=1 \} \subseteq \mbb{C} \subseteq \mbb{H}\]
on $\tilde{\mbb{P}}^1$ induced by left multiplication on $\mbb{H}$ is identified with the action on $\mbb{P}^1_{\mbb{C}}$ by $z \cdot [X:Y]=[z X:  z^{-1} Y]$. This action fixes $\infty_{\mbb{C}}$ and its conjugate $0_{\mbb{C}}$. The points $\infty_{\mbb{C}}$ and $0_{\mbb{C}}$  underly the same closed point in $\tilde{\mbb{P}}^1$, cut out by $S=0$, and this is the unique closed point fixed by $U(1)$. 

\begin{example}
The map $\mbb{P}^1_\mbb{C} \rightarrow \tilde{\mbb{P}}^1$ is finite \'{e}tale thus induces an isomorphism between the formal neighborhood of $[1:0]$ in $\mbb{P}^1_{\mbb{C}}$ and the formal neighborhood of the closed point underlying $\infty_{\mbb{C}}$ in $\tilde{\mbb{P}}^1$. In particular, we may choose as uniformizing parameter $t=Y/X$ on which $U(1)$ acts by $z^{-2}$ and identify the formal completion with $\mbb{C}[[t]]$. Note that the action of $U(1)$ by left multiplication on $\tilde{\mbb{P}}^1$ extends to all of $\mbb{H}^\times$, thus in particular to $W_{\mbb{R}}$; the action of $j^2$ in $W_{\mbb{R}}$ is trivial, and $j$ also fixes this closed point but acts by complex conjugation on the residue field. Note also that the subgroup $W_{\mbb{R}}$ is the full stabilizer in $\mbb{H}^\times$ of this closed point. 
\end{example}

\begin{remark}\label{remark.twistor-analytic-uniformization}
If we consider the full action by $\mbb{H}^\times$ instead of just $U(1)$, then we obtain an identification of the closed points of $\tilde{\mbb{P}}^1$ with $\mbb{H}^\times / W_{\mbb{R}}$ by taking the orbit of the closed point underlying $\infty_{\mbb{C}}$. 

We could also arrive at this construction by observing that the conjugation action of $\mbb{H}$ on itself preserves $\mbb{R}$ and the norm form, thus it preserves the orthogonal complement of $\mbb{R}$, $\mbb{R}i + \mbb{R}j+\mbb{R}k$. It then preserves the norm zero curve in $\mbb{P}^2=\mbb{P}(\mbb{R}i + \mbb{R}j+\mbb{R}k)$. If we write the coefficients as $Si + Tj + Uk$ then this curve is $S^2+T^2+U^2=0$. The action of $\mbb{H}^\times$ is transitive and the stabilizer of the closed point $S=0$ is $W_{\mbb{R}}$ so we again obtain this identification. 

In fact, this can be upgraded to an identification of analytic spaces 
\[ \tilde{\mbb{P}}^{1,\an} = \mbb{H}^\times / W_{\mbb{R}},\]
where $\mbb{H}^\times \subseteq \mbb{H}=\mbb{C}+j\mbb{C}$ gives the complex structure, $\tilde{\mbb{P}}^{1,\an}$ is formed as the quotient of the locally ringed space $\mbb{P}^{1,\an}_{\mbb{C}}$ by the semilinear Galois action as in \cite{huisman:exponential}, and the action of $W_{\mbb{R}}$ on the sheaf of functions is as described above for polynomials. 
\end{remark}

There is a classification theorem for vector bundles on $\tilde{\mbb{P}}^1$ akin to Grothendieck's classification theorem for vector bundles on $\mbb{P}^1_{\mbb{C}}$. For comparison with the $p$-adic case, it is useful to realize this via a functor from the category of real isocrystals $\Kt_{\mbb{R}}$ (see \cref{sss.real-isocrystals}) to vector bundles on $\tilde{\mbb{P}}^1$. Attached to any $W \in \Kt_{\mbb{R}}$, we obtain a vector bundle $\mc{E}(W)$ on $\tilde{\mbb{P}}^1$ as the sheaf associated to the graded module 
\[ \bigoplus_{k \in \mbb{Z}} \left(\mbb{C}[X,Y] \otimes_{\mbb{C}} W(-k) \right)^{W_{\mbb{R}}}\] 
where $W(-k):=W \otimes_{\mbb{C}} \mbb{C}(-k)$. We write $\mc{O}_{\tilde{\mbb{P}}^1}(\lambda):=\mc{E}(D_{-\lambda})$. 

\begin{remark}
The line bundles $\mc{O}_{\tilde{\mbb{P}}^1}(k)$ arising from the $\mr{Proj}$ construction are canonically identified with the bundles $\mc{E}(D_{-k})$ for $k \in \mbb{Z}$, so that the terminology will not cause confusion. By construction they are also identified with the restriction of $\mc{O}_{\mbb{P}^2}(k)$ to $\tilde{\mbb{P}}^1$ realized as the vanishing locus $S^2+ T^2 +U^2=0$. For $\lambda \in \frac{1}{2}\mbb{Z} \backslash \mbb{Z}$, $\mc{O}_{\tilde{\mbb{P}}^1}(\lambda)$ can be identified with the pushforward of $\mc{O}_{\mbb{P}^1_{\mbb{C}}}(2\lambda)$ to $\tilde{\mbb{P}}^1$, whereas if $\lambda \in \ZZ$, the pushforward of $\mc{O}_{\mbb{P}^1_{\mbb{C}}}(2\lambda)$ is identified with $\mc{O}_{\tilde{\mbb{P}}^1}(\lambda)^{\oplus 2}$.
\end{remark}

\begin{theorem}
Every vector bundle on $\tilde{\mbb{P}}^1$ is a direct sum of stable bundles. Any stable bundle has slope $\lambda \in \frac{1}{2}\mbb{Z}$, and for any $\lambda \in \frac{1}{2}\mbb{Z}$, $\mc{O}_{\tilde{\mbb{P}}^1}(\lambda)$ is the unique up to isomorphism stable bundle of slope $\lambda$. The assignment $W \mapsto \mc{E}(W)$ is a faithful and essentially surjective exact tensor functor, and it restricts to an equivalence between the category of $D_\lambda$-isotypic real isocrystals and the category of semistable vector bundles on $\tilde{\mbb{P}}^1$ of slope $-\lambda$. 
\end{theorem}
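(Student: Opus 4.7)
The plan is to reduce everything to Grothendieck's classification of vector bundles on $\mbb{P}^1_\mbb{C}$ via Galois descent along the degree-$2$ \'etale cover $\pi \colon \mbb{P}^1_\mbb{C} \to \tilde{\mbb{P}}^1$. Using the explicit descriptions recalled just before the theorem, $\pi^* \mc{O}_{\tilde{\mbb{P}}^1}(\lambda) \cong \mc{O}_{\mbb{P}^1_\mbb{C}}(2\lambda)$ for $\lambda \in \mbb{Z}$ and $\pi^* \mc{O}_{\tilde{\mbb{P}}^1}(\lambda) \cong \mc{O}_{\mbb{P}^1_\mbb{C}}(2\lambda)^{\oplus 2}$ for $\lambda \in \frac{1}{2}\mbb{Z} \setminus \mbb{Z}$. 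Normalizing slopes on $\tilde{\mbb{P}}^1$ so that $\pi^*$ doubles slopes (making $\mc{O}_{\tilde{\mbb{P}}^1}(\lambda)$ of slope $\lambda$), any bundle $\mc{F}$ on $\tilde{\mbb{P}}^1$ pulls back by Grothendieck to $\bigoplus_i \mc{O}(n_i)$, and the descent action of $\mr{Gal}(\mbb{C}/\mbb{R})$ permutes summands of equal degree. Grouping by $n$ decomposes $\mc{F}$ as a direct sum of bundles each with pullback of the form $\mc{O}(n)^{\oplus r(n)}$, so the classification reduces to analyzing one $n$ at a time.

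For stability of $\mc{O}_{\tilde{\mbb{P}}^1}(\lambda)$: the rank-$1$ case $\lambda \in \mbb{Z}$ is automatic. For $\lambda \in \frac{1}{2}\mbb{Z} \setminus \mbb{Z}$, any line subbundle $\mc{L} \subseteq \mc{O}_{\tilde{\mbb{P}}^1}(\lambda)$ pulls back to a line subbundle of $\mc{O}(2\lambda)^{\oplus 2}$ and so has degree $\leq 2\lambda$; equality would force $\pi^*\mc{L}$ to be a direct summand of $\mc{O}(2\lambda)^{\oplus 2}$, determining a point of $\mbb{P}^1(\mbb{C})$ fixed by the descent involution. But this involution is $[X:Y] \mapsto [-\bar{Y}:\bar{X}]$, which has no fixed points (equivalently, $\tilde{\mbb{P}}^1(\mbb{R}) = \emptyset$ since it is a conic with no real points), so $\mc{O}_{\tilde{\mbb{P}}^1}(\lambda)$ is stable of slope $\lambda$.

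To classify bundles with pullback $\mc{O}(n)^{\oplus r}$, I would proceed by descent. For even $n = 2\lambda$, twisting by the invertible bundle $\mc{O}_{\tilde{\mbb{P}}^1}(-\lambda)$ reduces to the trivial-bundle case, whose equivariant structures correspond to $r$-dimensional real vector spaces; this subcategory is equivalent to $\Vect_\mbb{R}$, hence semisimple with unique simple object $\mc{O}_{\tilde{\mbb{P}}^1}(\lambda)$. For odd $n = 2\lambda$ no invertible twist reduces to the trivial case, so I would instead directly compute $\End(\mc{O}_{\tilde{\mbb{P}}^1}(\lambda)) = \mbb{H}$ by descent, mirroring the identification $\End(D_{-\lambda}) = \mbb{H}$; then the category of bundles with pullback $\mc{O}(n)^{\oplus 2r}$ is equivalent to finite-dimensional right $\mbb{H}$-modules, again semisimple with $\mc{O}_{\tilde{\mbb{P}}^1}(\lambda)$ as its unique simple object. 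Together with the previous step, this establishes that every vector bundle is a direct sum of $\mc{O}_{\tilde{\mbb{P}}^1}(\lambda)$'s, that these exhaust the stable bundles up to isomorphism, and that the semistable bundles of slope $\lambda$ form a semisimple subcategory with a unique simple object.

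Finally, the functor $\mc{E}$ is a tensor functor by construction (the source is a graded ring with its natural product), and it is exact because $\Kt_\mbb{R}$ is semisimple; faithfulness follows since $\mc{E}(D_\lambda) = \mc{O}_{\tilde{\mbb{P}}^1}(-\lambda)$ is nonzero on each simple object, and essential surjectivity is the content of the classification just obtained. The final assertion follows by identifying $D_\lambda$-isotypic isocrystals with direct sums of $D_\lambda$, which map to direct sums of $\mc{O}_{\tilde{\mbb{P}}^1}(-\lambda)$, i.e.\ to semistable bundles of slope $-\lambda$. I expect the main obstacle to be the descent computation in the odd-$n$ case: one must verify that the Galois cocycle produces the nontrivial Brauer class $[\mbb{H}] \in \mr{Br}(\mbb{R})$ rather than the split class of $\mr{Mat}_2(\mbb{R})$. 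This should be traceable to the fact that $\tilde{\mbb{P}}^1$ is the Brauer--Severi variety of $\mbb{H}$, as recalled in the excerpt, giving the calculation a conceptual anchor even when carried out in coordinates on $\mc{O}(n)^{\oplus 2}$.
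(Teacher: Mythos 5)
The paper states this theorem without giving a proof of its own --- it is presented as a classical fact about the Brauer--Severi curve of $\mbb{H}$, with Huisman's GAGA invoked in the surrounding remarks --- so there is no in-paper argument to compare against. Your proof is nonetheless the natural one: Galois descent along the degree-two \'etale cover $\pi \colon \mbb{P}^1_{\mbb{C}} \to \tilde{\mbb{P}}^1$, reducing everything to Grothendieck's splitting theorem. It is correct in outline. Two points deserve attention.

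First, the opening reduction is phrased too loosely. The Grothendieck splitting of $\pi^*\mc{F}$ is not canonical, so the observation that a semilinear descent isomorphism ``permutes summands of equal degree'' does not by itself give a direct sum decomposition of $\mc{F}$. What does descend canonically is the Harder--Narasimhan filtration of $\pi^*\mc{F}$; one then checks the resulting filtration of $\mc{F}$ splits step by step, using that $\mr{Ext}^1_{\tilde{\mbb{P}}^1}(Q, S)$ for a lower-slope quotient $Q$ against a strictly higher-slope sub $S$ injects into $\mr{Ext}^1_{\mbb{P}^1_{\mbb{C}}}(\pi^*Q, \pi^*S)$, which is a sum of groups $H^1(\mbb{P}^1_{\mbb{C}}, \mc{O}(m))$ with $m \geq 1$ and hence vanishes. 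This is a small repair, not a fatal gap, but the argument as written skips it.

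Second, the ``main obstacle'' you single out --- verifying that the descent cocycle for the odd case yields the nonsplit Brauer class --- in fact dissolves given material already in hand. The faithful tensor functor $\mc{E}$ furnishes a nonzero ring homomorphism $\mbb{H} = \End_{\Kt_{\mbb{R}}}(D_{-\lambda}) \to \End_{\tilde{\mbb{P}}^1}\bigl(\mc{O}_{\tilde{\mbb{P}}^1}(\lambda)\bigr)$, injective because $\mbb{H}$ is simple; after base change to $\mbb{C}$ the target becomes $\End\bigl(\mc{O}_{\mbb{P}^1_{\mbb{C}}}(2\lambda)^{\oplus 2}\bigr) \cong M_2(\mbb{C})$, so both sides have $\mbb{R}$-dimension four and the map is an isomorphism. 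No coordinate computation of the cocycle is required.
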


\begin{remark}
The functor $\mc{E}$ does not give an equivalence between $\Kt_{\mbb{R}}$ and vector bundles on $\tilde{\mbb{P}}^1$ because for vector bundles there are maps going up in slope (e.g. the maps from $\mc{O}_{\tilde{\mbb{P}}^1}$ to $\mc{O}_{\tilde{\mbb{P}}^1}(1)$ given by the global sections $S$, $T$, and $U$). 
\end{remark}

\begin{remark}
By the GAGA theorem of \cite{huisman:exponential}, vector bundles on $\tilde{\mbb{P}}^1$ are equivalent to those on the real analytic variety $\tilde{\mbb{P}}^{1,\an}$. If we use the uniformization of \cref{remark.twistor-analytic-uniformization}, then the functor from $\Kt_{\mbb{R}}$ can be realized by sending $W$ to 
$\mc{O}_{\mbb{H}^\times} \otimes_{\mbb{C}} W$ and then using the diagonal action of $W_{\mbb{R}}$ to descend it to $\tilde{\mbb{P}}^{1,\an}$. 
\end{remark}

\subsection{Period rings and the Fargues-Fontaine curve}\label{ss.fargues-fontaine}

Let $C$ be an algebraically closed non-archimedean extension of $\mbb{Q}_p$. We now recall the construction of some of Fontaine's period rings and the Fargues-Fontaine curve attached to $C$. All material recalled here can be found in \cite{fargues-fontaine:courbes}. After defining some rings, our presentation of the material will mirror our presentation of the twistor line in \cref{ss.twistor-line}.

First, recall that reduction mod $p$ induces a bijection 
\[ \mc{O}_{C^\flat}:=\mc{O}_C^\flat = \lim \left( \mc{O}_{C} \xleftarrow{x^p} \mc{O}_C \xleftarrow{x^p} \cdots \right) =\lim \left(\mc{O}_{C}/p \xleftarrow{x^p} \mc{O}_{C}/p  \xleftarrow{x^p} \cdots\right).\]
The rightmost description equips $\mc{O}_{C^\flat}$ with a ring structure. We write $x \mapsto x^\sharp$ for the multiplicative map to $\mc{O}_C$ given by projection to the first component in the middle description. For $|\cdot|_C$ the absolute value on $C$, the assignment $|x|:=|x^\sharp|_C$ is an absolute value on $\mc{O}_{C^\flat}$. Let $\varpi \in \mc{O}_{C^\flat}$ with $\varpi^\sharp=p$, i.e.  $\varpi = (p, p^{1/p}, \ldots)$.  Then $\mr{Frac}\ \mc{O}_{C^\flat}=\mc{O}_{C^\flat}[1/\varpi]$ and there is a natural identification 
\[ C^\flat := \lim \left(C \xleftarrow{x^p} C \xleftarrow {x^p} \cdots\right) = \mc{O}_{C^\flat}[1/\varpi].\]
The obvious extension of the absolute value makes $C^\flat$ into a non-archimedean field in characteristic $p$ with valuation ring $\mc{O}_{C^\flat}$. The surjection $\mc{O}_{C^\flat} \rightarrow \mc{O}_{C}/p$ induces, by the universal property of the $p$-typical Witt vectors functor $W$, a surjection 
\[ \theta: W(\mc{O}_{C^\flat}) \rightarrow \mc{O}_C. \]
We have $\theta([x])=x^\sharp$, where $[x]$ denotes the multiplicative lift. The kernel of $\theta$ is principal (generated, e.g., by $[\varpi] - p$). Let $A_\crys$ be the $p$-complete divided power envelope of $\ker \theta$ and let $B^+_\crys=A_\crys[1/p]$. If we fix a compatible system $(\zeta_{p^n})$ of $p$-power roots of unity in $C$, we can define $\epsilon=(1, \zeta_p, \zeta_{p^2},\ldots) \in \mc{O}_C^\flat$ and then the formal power series defining $\log([\epsilon])=\log(1+([\epsilon]-1))$ converges in $A_\crys$ to an element\footnote{Commonly referred to as the $p$-adic $2\pi i$ because it is the period describing the comparison between de Rham and \'{e}tale cohomology for $\mbb{G}_m$.} $t$. We write $B^+_\dR$ for the completion of $B^+_\crys$ along $\ker \theta$; $t$ is a generator for the kernel of the natural extension $\theta:B^+_\dR \rightarrow C$. We write $B_\crys= B^+_\crys[1/t]$, $B_\dR=B^+_\dR[1/t]$. The Frobenius on $W(\mc{O}_{C^\flat})$ extends to an endomorphism $\varphi$ on $A_\crys, B^+_\crys,$ and $B_\crys$. If $C=\overline{K}^\wedge$ for $K$ a $p$-adic field, then $\Gal(\overline{K}/K)$ acts on everything in sight.

The inclusion $\ol{\mbb{F}}_p \subseteq \mc{O}_C/\mf{m}_C$ lifts canonically to an inclusion of $\overline{\mbb{F}}_p$ into $\mc{O}_C/p$ and thus into $\mc{O}_{C^\flat}$ since $\ol{\mbb{F}}_p$ is perfect. Thus $B^+_\crys$ is canonically a $\breve{\mbb{Q}}_p = W(\ol{\mbb{F}}_p)[1/p]$-algebra compatibly with the Frobenius lifts. We write $\breve{\mbb{Q}}_p(k)$ for $\breve{\mbb{Q}}_p$ equipped with the semilinear automorphism $\varphi_{\breve{\mbb{Q}}_p(k)}=p^{-k}\varphi_{\breve{\mbb{Q}}_p}$ (the isocrystal $D_{-k}$ in the notation of \cref{sss.isocrystals}). 
 
\begin{definition}
The (algebraic) Fargues-Fontaine curve of $C^\flat$ is defined by 
\[ \FF = \FF_{C^\flat} = \mr{Proj}\left(\bigoplus_{k \in \mbb{Z}} \left(B^+_\crys \otimes_{\breve{\mbb{Q}}_p} \breve{\mbb{Q}}_p(k) \right)^{\varphi=1}\right)= \mr{Proj}\left(\bigoplus_{k \in \mbb{Z}} (B^+_\crys)^{\varphi=p^k}\right) \]
The point $\infty_C \in \FF_{C^\flat}$ is induced by the surjection $\theta: B^+_\crys \rightarrow C$, and this induces an identification of $B^+_\dR$ with the completed local ring at $\infty_C$. 
\end{definition}

\begin{remark}\label{remark.analytic-fargues-fontaine}
The analytic Fargues-Fontaine curve $\FF^\an = \FF^\an_{C^\flat}$ of $C^\flat$ is the adic space over $\Spa(\mbb{Q}_p, \mbb{Z}_p)$ formed by taking the quotient of
\[ Y:= (\Spa(W(\mc{O}_{C^\flat}), W(\mc{O}_{C^\flat})) \backslash V([\varpi]p) \]
by the (properly discontinous) action of the Frobenius $\varphi$. 
\end{remark}

There is a natural functor $D \mapsto \mc{E}(D)$ from $\Kt_{\QQ_p}$ to vector bundles on $\FF$ sending $D$ to the sheaf attached to the graded module 
\[ \bigoplus_{k \in \mbb{Z}} (B^+_\crys \otimes_{\breve{\mbb{Q}}_p} D(k))^{\varphi=1} = \bigoplus_{k \in \mbb{Z}} (B^+_\crys \otimes_{\breve{\mbb{Q}}_p} D)^{\varphi=p^k}. \]
(The $\varphi$ in the superscript is the diagonal action). For $\lambda \in \mbb{Q}$ we write $\mc{O}_\FF(\lambda):=\mc{E}(D_{-\lambda})$, where $D_{-\lambda}$ is as in \cref{sss.isocrystals}. 

\begin{theorem}
Every vector bundle on $\FF$ is a direct sum of stable bundles, and for any $\lambda \in \mbb{Q}$, $\mc{O}_\FF(\lambda)$ is the unique up to isomorphism stable bundle of slope $\lambda$.  The assignment $W \mapsto \mc{E}(W)$ is a faithful and essentially surjective exact tensor functor, and it restricts to an equivalence between the category of $D_{-\lambda}$-isotypic isocrystals and the category of semistable vector bundles on $\FF$ of slope $\lambda$. 
\end{theorem}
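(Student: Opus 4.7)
The plan is to follow the strategy used for $\tilde{\mbb{P}}^1$: reduce via the Dieudonn\'{e}--Manin classification of isocrystals to simple objects, then match the simples and their Hom spaces with stable bundles on $\FF$. The new ingredient, relative to $\tilde{\mbb{P}}^1$, is the classification theorem for vector bundles on $\FF$, which is substantially deeper. I would begin by unwinding the definitions to identify $\mc{E}(D_{-\lambda})$ with $\mc{O}_\FF(\lambda)$: the $k$-th graded piece
\[ (B^+_\crys \otimes_{\breve{\mbb{Q}}_p} D_{-\lambda}(k))^{\varphi=1} = (B^+_\crys \otimes_{\breve{\mbb{Q}}_p} D_{-\lambda+k})^{\varphi=1} \]
matches on the nose with the corresponding graded piece in the Proj definition of $\mc{O}_\FF(\lambda)$. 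Since Dieudonn\'{e}--Manin decomposes every isocrystal as a direct sum of simples $D_\mu$ and both source and target categories are additive, this determines $\mc{E}$ on objects.

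The crux is the classification of vector bundles on $\FF$, which is the main result of \cite{fargues-fontaine:courbes}. I would approach it in three steps: (a) verify that $\FF$ is Dedekind-like with a good degree theory, so that every vector bundle admits a Harder--Narasimhan filtration; (b) show this HN filtration splits, reducing the classification to the semistable case; (c) identify semistable bundles of slope $\lambda$ as copies of $\mc{O}_\FF(\lambda)$. Step (b) is the main obstacle and typically proceeds by establishing the vanishing $\mr{Ext}^1(\mc{O}_\FF(\mu), \mc{O}_\FF(\lambda)) = 0$ for $\mu > \lambda$; this reduces to explicit surjectivity statements for twisted Frobenius operators on $B_\crys$ (twisted fundamental exact sequences). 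Step (c) then uses the division-algebra structure of $\End(\mc{O}_\FF(\lambda))$, computed from $H^0$ and matched against $\End(D_{-\lambda})$ coming from Dieudonn\'{e}--Manin; uniqueness of the stable bundle of a given slope follows because any nonzero map between stables of equal slope must be an isomorphism by degree-rank considerations.

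With the classification in hand, the remaining assertions become essentially formal. Faithfulness and exactness of $\mc{E}$ can be checked on simples using semisimplicity of $\Kt_{\mbb{Q}_p}$: a morphism $D_\lambda \to D_\mu$ vanishes unless $\lambda = \mu$, and the nonzero case is compatible with $\Hom$ between the associated bundles via the endomorphism computation. Essential surjectivity is immediate from the classification together with the identifications $\mc{E}(D_{-\lambda}) = \mc{O}_\FF(\lambda)$. The equivalence between $D_{-\lambda}$-isotypic isocrystals and semistable bundles of slope $\lambda$ follows because both categories are equivalent to finite-rank modules over the same $\mbb{Q}_p$-division algebra, with $\mc{E}$ inducing the canonical identification of endomorphism rings. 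The main obstacle remains step (b) of the classification theorem, where the substantive content of Fargues--Fontaine enters.
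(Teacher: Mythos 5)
The paper does not actually prove this theorem: it is stated as a recollection, and \cref{ss.fargues-fontaine} opens by noting that all material in that section is recalled from \cite{fargues-fontaine:courbes}. So there is no proof of the authors' own to compare against; you are sketching the content of a black box the paper simply invokes. With that understood, your three-step outline --- a Harder--Narasimhan formalism, splitting of the HN filtration via an $\mathrm{Ext}^1$-vanishing, and identification of semistable bundles of a fixed slope via the division-algebra endomorphism ring matched with $\mathrm{End}(D_{-\lambda})$ --- does track the Fargues--Fontaine strategy, and you are right that step (b) is where the genuine $p$-adic depth lives.

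Two signs are reversed and should be fixed. First, the twist: with $\breve{\mbb{Q}}_p(k) = D_{-k}$ one has $D_{-\lambda}(k) = D_{-\lambda} \otimes D_{-k} \cong D_{-\lambda - k}$, not $D_{-\lambda + k}$; the paper's displayed identity $(B^+_\crys \otimes W(k))^{\varphi = 1} = (B^+_\crys \otimes W)^{\varphi = p^k}$ fixes the convention, and note that in this paper $\mc{O}_\FF(\lambda) := \mc{E}(D_{-\lambda})$ is a \emph{definition}, so the only reconciliation needed is with the $\mathrm{Proj}$-twist $\mc{O}_\FF(k)$ for $k \in \mbb{Z}$. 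Second, and more consequentially, the $\mathrm{Ext}^1$-vanishing is stated in the wrong direction. In the HN filtration the maximal-slope piece is the \emph{sub}object, so splitting requires
\[
\mathrm{Ext}^1\bigl(\mc{O}_\FF(\mu), \mc{O}_\FF(\lambda)\bigr) = 0 \quad \text{for } \mu < \lambda,
\]
not $\mu > \lambda$. Cohomologically this is $H^1(\FF, \mc{O}_\FF(\nu)) = 0$ for $\nu \geq 0$; for $\nu < 0$ the $H^1$ is nonzero (already $H^1(\FF, \mc{O}_\FF(-1)) \cong C/\mbb{Q}_p$), so the inequality you wrote is false and would not split the filtration. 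It is exactly the nonnegative-slope $H^1$-vanishing, together with the global generation of $\mc{O}_\FF(\nu)$ for $\nu > 0$ used to pin down the semistable bundles, that constitutes the hard period-theoretic input of \cite{fargues-fontaine:courbes}; the rest of your sketch (faithfulness via slope rigidity, the isotypic equivalence via matching module categories over a fixed division algebra) is formal once that is in hand.
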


\begin{example} $\mc{O}_\FF(0)=\mc{E}(D_0)=\mc{O}_\FF.$ More generally, there is an obvious functor from $\mbb{Q}_p$-vector spaces to isocrystals inducing an equivalence with the category of $0$-isotypic isocrystals, and an inverse functor is given by taking $\varphi_W$-invariants. Composed with $\mc{E}$, we obtain an equivalence between $\mbb{Q}_p$-vector spaces and vector bundles on $\FF$ that are semistable of slope zero. This functor is naturally isomorphic to $V \mapsto V \otimes_{\mbb{Q}_p} \mc{O}_\FF$, and the inverse functor is given by global sections. 
\end{example} 

\begin{remark}\label{remark.fargues-fontaine-GAGA-vector-bundles}
The functor $\mc{E}$ has a simple interpretation on $\FF^\an$. Indeed, the vector bundle $W \otimes_{\breve{\mbb{Q}}_p} \mc{O}_{Y}$ on the cover $Y$ of \cref{remark.analytic-fargues-fontaine} 
admits descent data to $\FF^\an$ via the diagonal action of Frobenius. Denote by $\mc{E}^\an$ the resulting functor to vector bundles on $\FF^\an$, and let $\mc{O}_{\FF^\an}(\lambda)=\mc{E}^\an(D_{-\lambda}).$ Then for $k \in \mbb{Z}$ one finds 
\[ H^0(\FF^\an, \mc{O}(k)) = (B^+_\crys)^{\varphi=p^k},\]
and using this one obtains a map of ringed spaces $\FF^\an \rightarrow \FF$ that induces a GAGA equivalence of vector bundles identifying $\mc{E}^\an$ with $\mc{E}$.
\end{remark}

\subsubsection{$G$-bundles on the Fargues-Fontaine curve}
Let $G$ be a connected linear algebraic group over $\QQ_p$. A $G$-bundle on $\FF$ is an exact tensor functor from $\Rep\+ G$ to the category of vector bundles on $\FF$. If $b$ is a $G$-isocrystal, then the composition of $b$ with the functor $\mc{E}$ above yields a $G$-bundle $\mc{E}_b$ on $\FF$. If $G$ is reductive, then every $G$-bundle on $\FF$ is isomorphic to $\mc{E}_b$ for a $G$-isocrystal $b$ \cite{fargues:torsors}. 

\subsection{Modifications of vector bundles}\label{ss.modifications}
Let $X$ be a connected scheme that is locally the spectrum of a PID, $\infty \in |X|$ a closed point, $j \colon X \setminus \infty \hookrightarrow X$, and $\mc{E}$ a vector bundle. By assumption, the completed local ring $\hat{\mc{O}}_{X,\infty}$ is a DVR, and we denote by $\hat{\mc{E}}_\infty$ the sections of $\mc{E}$ over $\Spec\+ \hat{\mc{O}}_{X,\infty}$, a finite rank free $\hat{\mc{O}}_{X,\infty}$-module. Now suppose $\mc{L} \subset \hat{\mc{E}}_\infty \otimes \mr{Frac}(\hat{\mc{O}}_{X,\infty})$ is a $\hat{\mc{O}}_{X,\infty}$-lattice. By Beauville-Laszlo \cite{beauville-laszlo}, there is a unique vector bundle $\mc{E}_{\mc{L}}$ with $(\mc{E}_\mc{L})|_U = \mc{E}|_U$ and $\widehat{(\mc{E}_\mc{L})}_\infty = \mc{L}$. Explicitly, $\mc{E}_\mc{L} \subset j_\ast \mc{E}|_U$ has sections
    \[ \mc{E}_\mc{L}(V) = \{ s \in \mc{E}(V - \infty) \colon s|_{\mr{Frac}(\hat{\mc{O}}_{X,\infty})} \in \mc{L} \} . \]

\begin{definition}\label{def.modification-vb}
The vector bundle $\mc{E}_\mc{L}$ above is the modification of the vector bundle $\mc{E}$ by the lattice $\mc{L}$ at $\infty$. 
\end{definition}

\section{Filtrations and lattices}\label{s.filtrations-and-lattices}
Let $L$ be an algebraically closed field of characteristic zero and let $B^+$ be a complete discretely valued $L$-algebra with algebraically closed residue field $C$. We write $\theta: B^+ \rightarrow C$ for the quotient, which equips $C$ with the structure of an $L$-algebra such that $\theta$ is a map of $L$-algebras. Let $B$ denote the fraction field of $B^+$, and let $F^\bullet B$ denote the valuation filtration on $B$. For convenience, we fix a generator $t$ for the maximal ideal in $B^+$ (so, in particular, $F^i B= t^i B^+$). 

\begin{example}\label{example.filtration-lattice-situations} \hfill
\begin{enumerate}
    \item We could take $B^+=\mbb{C}[[t]]$, $B=\mbb{C}((t))$, $L=C=\mbb{C}$.
    \item If $C$ is an algebraically closed non-archimedean extension of $\mbb{Q}_p$, let $\kappa=\mc{O}_C/\mf{m}_C$ be the residue field, let $C_0=W(\kappa)[1/p]$ be the maximally absolutely unramified subextension, let $L=\overline{C}_0$, the algebraic closure of $C_0$ in $C$, and let $B^+=B^+_\dR$, $B=B_\dR$. By construction (see \cref{ss.fargues-fontaine}), $B^+_\dR$ is a $C_0$-algebra compatibly with the canonical map $\theta:B^+_\dR \rightarrow C$, and Hensel's lemma extends this to a map of $\overline{C}_0$-algebras. Note that any $p$-adic field contained in $C$ is automatically contained in $\overline{C}_0$. In this case it is convenient to take $t$ to be the ``$p$-adic $2\pi i$", whose definition depends on the choice of a compatible system of $p$-power roots of unity in $C$ thus is well-defined only up to multiplication by an element of $\mbb{Z}_p^\times$. 
    \item By the Cohen structure theorem \cite[\href{https://stacks.math.columbia.edu/tag/0C0S}{Tag 0C0S}]{stacks-project}, we can always choose a section $C \rightarrow B^+$ of $L$-algebras so that $B^+=C[[t]]$ and $B=C((t))$, but such a section is non-canonical in general and may not respect other structure --- in particular, if $K$ is a $p$-adic field and $C=\overline{K}^\wedge$, then in the setting of (2) such a section cannot be chosen to be continuous for the natural topology on $B^+_\dR$ or to respect the natural actions of $\Gal(\overline{K}/K)$.
\end{enumerate}
\end{example}

In this section, we study the relation between $B^+$-latticed $L$-vector spaces and $C$-filtered $L$-vector spaces. In the rest of the paper we will apply this study principally in case (2) above to understand the relation between the de Rham and \'{e}tale lattices and the Hodge and Hodge-Tate filtrations in $p$-adic Hodge theory. We will also consider case (1) in our motivational description of real Hodge structures. 

In \cref{ss.from-filtrations-to-lattices-and-back} we recall some basic constructions relating $C$-filtered and $B^+$-latticed $L$-vector spaces. In the sections following, we shift perspectives slightly and explore a more delicate relation between bilatticed $B$-vector spaces and filtered $C$-vector spaces that plays an important role in describing the elementary invariants of $p$-adic Hodge structures and admissible pairs. The main result is \cref{theorem.good-equivalence-bilatticed}, which will immediately imply \cref{main.exactness} after the relevant definitions are in place. 

\subsection{First relations}\label{ss.from-filtrations-to-lattices-and-back}
\newcommand{\can}{\mr{can}}

We will consider the following categories: 
\begin{definition}\label{def.filt-latt-vector-spaces}\hfill
\begin{enumerate}
    \item For $K$ any field, $\Vect^{f}(K)$ is the category of filtered $K$-vector spaces, i.e. pairs $(V, F^\bullet V)$ where $V$ is a finite dimensional $K$-vector space and $F^\bullet V$ is a separated and exhaustive decreasing filtration of $V$. A morphism   
    \[ (V, F^\bullet V) \rightarrow (V', F^\bullet V') \]
    is a morphism of $K$-vector spaces $f: V \rightarrow V'$ such that $f(F^k V) \subseteq F^k V'$ for all $k \in \mbb{Z}$. It is strict if $f(F^k V)=f(V) \cap F^k V'$ for all $k \in \mbb{Z}$.
    \item With $L, B^+, B$ as above, $\Vect^{B^+\dash\mr{latticed}}(L)$ is the category of $B^+$-latticed $L$-vector spaces consisting of pairs $(V,\mc{L})$ where $V$ is a finite dimensional $L$-vector space and $\mc{L} \subseteq V_B$ is a $B^+$-lattice, i.e.\ a $B^+$-submodule $\mc{L} \subset V_B$ such that $\mc{L} \otimes_{B^+} B \to V_B$ is an isomorphism. A morphism
    \[ (V, \mc{L}) \rightarrow (V', \mc{L}') \]
    is a morphism of $L$-vector spaces  $f: V \rightarrow V'$ such that $f(\mc{L}) \subseteq \mc{L}'$. A morphism is \emph{strict} if $f(\mc{L})=f(V)_B \cap \mc{L}'$. 
\end{enumerate}
\end{definition}

These are rigid tensor categories with the usual tensor products and duals of vector spaces / modules / filtered vector spaces. In each category, a complex is exact if it is exact as a complex of vector spaces and each morphism is strict --- in the filtered categories, this is equivalent to requiring that the complex obtained by applying the associated graded functor is exact. None of these exact categories is abelian --- indeed, in all cases there are morphisms which are not strict, while strictness is precisely the condition that the natural quotient filtration/lattice on the coimage agree with the natural submodule filtration/lattice on the image. 

\begin{remark}
In later sections we also consider variants of these categories where we fix some additional structure such as an underlying $\mbb{Q}_p$-vector space or isocrystal. 
\end{remark}

There are natural functors between latticed and filtered vector spaces:
\begin{enumerate}
    \item We denote by $\BB: \Vect^{B^+\dash\mr{latticed}}(L) \rightarrow \Vect^{f}(C)$ the Bialynicki-Birula functor 
\[ (V, \mc{L}) \mapsto (V_C, \trFil^\bullet_{\mc{L}} V_C) \]
where $\trFil^k_{\mc{L}} V_C$ is the image of $(\Fil^k B \cdot \mc{L}) \cap V_{B^+}$ in $V_C=V_{B^+} \otimes_{B^+} C$. We call $\trFil^\bullet_\mc{L} V_C$ the trace filtration of $\mc{L}$ on $V_C$.
    \item We denote by $\mc{L}_\can: \Vect^{f}(L) \rightarrow \Vect^{B^+\dash\mr{latticed}}(L)$ the canonical lattice functor 
\[ (V, F^\bullet V_{L}) \mapsto (V, \mc{L}_{F^\bullet V_{L}}) \]
where $\mc{L}_{F^\bullet V_{L}}:=\sum_i \Fil^{-i}B \cdot \Fil^i V_{L} \subseteq V_B$. 
\end{enumerate}

The following lemma is elementary: 
\begin{lemma}\label{lemma.relation-between-BB-and-canonical-filtration}
Both $\BB$ and $\mc{L}_\can$ are tensor functors, $\mc{L}_\can$ is fully faithful and exact, and $\BB \circ \mc{L}_\can $ is naturally identified with extension of scalars
\[ \Vect^{f}(L) \rightarrow \Vect^{f}(C),\; (V, F^\bullet V_{L}) \mapsto (V_C, (F^\bullet V_{L})_C) \]
\end{lemma}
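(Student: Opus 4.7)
The plan is to reduce each claim to explicit calculations using suitable graded/adapted bases, and to organize the proof around the chain $\BB\circ\mc{L}_\can \cong (-)_C$, from which fullness of $\mc{L}_\can$ will follow formally.

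\textbf{Step 1 (Well-definedness and tensor structure of $\mc{L}_\can$).} Since $L$ is a field, any filtration on $V_L$ admits a graded splitting $V_L=\bigoplus_p V^p_L$ with $F^iV_L=\bigoplus_{p\geq i} V^p_L$. Plugging this into the definition gives
\[ \mc{L}_\can(V, F^\bullet) = \bigoplus_p t^{-p} B^+\cdot V^p_L, \]
which is manifestly a free $B^+$-lattice in $V_B$, so $\mc{L}_\can$ is well-defined. Since tensor products of graded splittings split the tensor filtration, the identity $\mc{L}_\can(V\otimes V')=\mc{L}_\can(V)\otimes_{B^+}\mc{L}_\can(V')$ is immediate; compatibility with duals is analogous. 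Faithfulness of $\mc{L}_\can$ on morphisms is trivial since $\mc{L}_\can(f)$ has the same underlying $L$-linear map.

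\textbf{Step 2 (Computation of $\BB\circ \mc{L}_\can$).} Using the graded splitting from Step 1,
\[ t^k B^+\cdot \mc{L}_\can(V)\cap V_{B^+} \;=\; \bigoplus_p t^{\max(k-p,0)} B^+ V^p_L, \]
whose image in $V_C = V_{B^+}/tV_{B^+}$ is $\bigoplus_{p\geq k} C\cdot V^p_L = (F^k V_L)_C$. This identifies $\BB\circ\mc{L}_\can$ with extension of scalars $(V,F^\bullet V_L)\mapsto (V_C,(F^\bullet V_L)_C)$, and the identification is natural in $(V,F^\bullet V_L)$ because it is induced by the canonical map $V_{B^+}\to V_C$ applied to the sub-lattice defining $\Fil^k$.

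\textbf{Step 3 (Fullness and exactness of $\mc{L}_\can$).} Given an $L$-linear $f\colon V\to V'$ with $f(\mc{L}_\can V)\subseteq \mc{L}_\can V'$, Step 2 says $f_C$ is filtered, i.e.\ $f_C((F^kV_L)_C)\subseteq (F^kV'_L)_C$. Because $f$ is $L$-linear and $(F^kV'_L)_C\cap V'_L = F^kV'_L$ (which is clear from a splitting), we conclude $f(F^kV_L)\subseteq F^kV'_L$, so $f$ is a map of filtered $L$-vector spaces; this proves fullness. For exactness, any strict short exact sequence of filtered $L$-vector spaces admits a compatible graded splitting, and the resulting direct-sum decomposition of the lattices shows the induced sequence is strict exact in $\Vect^{B^+\text{-latticed}}(L)$.

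\textbf{Step 4 (Tensor structure of $\BB$).} Since $B^+$ is a DVR, any latticed vector space $(V,\mc{L})$ admits a Smith normal form: a $B^+$-basis $e_1,\dots,e_n$ of $V_{B^+}$ and integers $\lambda_i$ with $\mc{L}=\bigoplus t^{-\lambda_i}B^+ e_i$. The computation of Step 2 generalizes to yield $\Fil^k_\mc{L} V_C = \operatorname{span}_C\{\bar e_i : \lambda_i\geq k\}$, where $\bar e_i$ denotes the reduction modulo $t$. For a tensor product $(V\otimes V',\mc{L}\otimes\mc{L}')$ the basis $\{e_i\otimes e'_j\}$ is adapted, with exponents $\{\lambda_i+\mu_j\}$, and both $\Fil^k_{\mc{L}\otimes\mc{L}'}(V\otimes V')_C$ and $\sum_{a+b=k}\Fil^a_\mc{L} V_C\otimes \Fil^b_{\mc{L}'}V'_C$ equal $\operatorname{span}_C\{\bar e_i\otimes \bar e'_j : \lambda_i+\mu_j\geq k\}$. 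Compatibility with duals is analogous. The main subtlety is that the adapted basis $e_i$ lies in $V_{B^+}$ rather than $V_L$, but this causes no trouble because $V_C$ is canonically $V_{B^+}/tV_{B^+}$ and the filtration is defined intrinsically inside $V_C$. This is the only step where one genuinely needs more than a graded splitting; everything else is book-keeping with the explicit formula defining $\mc{L}_\can$.
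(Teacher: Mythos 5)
The paper states this lemma without proof, calling it ``elementary,'' so there is no argument to compare against; your verification via graded splittings for filtrations (Steps 1–3) and a Smith normal form basis for lattices in the DVR $B^+$ (Step 4) is the expected direct argument, and I checked that each step is correct, including the subtler points that the identification in Step 2 is splitting-independent (both sides are intrinsic subspaces of $V_C$) and that the combinatorial identity $\{\lambda_i + \mu_j \geq k\} \Leftrightarrow \{\exists\, a+b=k,\ \lambda_i\geq a,\ \mu_j\geq b\}$ underlying Step 4 holds. Your proof is complete and correct.
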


\begin{example}\label{example.latticed-K-vect-BB-not-exact} Unlike $\mc{L}_\can$, the functor $\BB$ is not exact: consider the exact sequence in $\Vect^{B^+\dash\mr{latticed}}(L)$ (recall that $t$ is a generator for $\Fil^1 B$)
\[ 0 \rightarrow (L, B^+) \xrightarrow{\cdot e_1}  (L^2, B^+ e_1 + B^+ (e_2 + \frac{1}{t} e_1) )\rightarrow (L,  B^+) \rightarrow 0. \]
Applying $\BB$ produces a sequence of filtered vector spaces that is exact as a sequence of vector spaces but such that the morphisms are not strict. Indeed, taking the zeroth graded part after applying $\BB$ gives the sequence
\[ 0 \rightarrow C \rightarrow 0 \rightarrow C \rightarrow 0. \]
\end{example}

In certain situations a filtration is uniquely determined by the corresponding lattice. We will recall a version of this with $G$-structure.

\begin{definition}\label{def.filt-lattice-G-torsor}
Let $K$ be a subfield of $L$ and let $G$ be a connected linear algebraic group over $K$. 
\begin{enumerate}
    \item A \emph{filtration} on a $G$-bundle\footnote{Recall that we take the Tannakian viewpoint, i.e.\ $\omega$ is an exact $K$-linear $\otimes$-functor.} $\omega \colon \Rep\+ G \to \Vect(M)$ (for $M = K$ or $C$) is an exact tensor functor $\Fil^\bullet \omega: \Rep\+ G \rightarrow \Vect^f(M)$  and an identification $\omega \xrightarrow{\sim} \Forget \circ \Fil^\bullet \omega$ where $\Forget$ is the forgetful functor $\Vect^f(M)\rightarrow \Vect(M)$.
    \item  A \emph{lattice} on a $G$-bundle $\omega \colon \Rep\+ G \to \Vect(K)$ is an exact tensor functor $\omega_{\mc{L}}: \Rep\+ G \rightarrow \Vect^{B^+\dash\mr{latticed}}(K)$ and an identification $\omega \xrightarrow{\sim} \Forget \circ \omega_{\mc{L}}$ where $\Forget$ is the forgetful functor $\Vect^{B^+\dash\mr{latticed}}(K) \rightarrow \Vect(K)$. 
\end{enumerate}
Finally, if $\omega$ is a $G$-bundle over $K$, and $K'$ is an extension of $K$ contained in $C$ (resp. $L$) then a $K'$-filtration (resp. lattice) on $\omega$ is a filtration (resp. lattice) on $\omega \otimes K'$. 
\end{definition}
\begin{remark}
   Our primary interest is when $\omega$ is the trivial $G$-bundle, i.e.\ $\omega = \omega_\std$ for the standard fiber functor on $\Rep\+ G$. 
\end{remark}

Filtrations on the trivial $G$-bundle are parameterized by the points of the flag variety $\Fl_G$. Indeed, any filtration on $\omega_\std$ is split by  \cite[Th\'eor\`eme 2.4 of Chapitre 4]{rivano:tannakian}, thus isomorphic to the filtration attached to a cocharacter $\mu$ of $G$, 
\[ F^i_\mu(V, \rho)=\bigoplus_{j \geq i} V[j], \]
where $V[j]$ is weight $j$ component, i.e. the isotypic component for the character $z \mapsto z^j$ of $\mbb{G}_m$ under the action $\rho \circ \mu$. Two filtrations $F_\mu$ and $F_{\mu'}$ are isomorphic if and only if $\mu$ and $\mu'$ are conjugate, so that this conjugacy class $[\mu]$ is a natural invariant of a filtered $G$-bundle and $\Fl_{G,C}=\bigsqcup_{[\mu]}{\Fl_{[\mu],C}}$ where $[\mu]$ runs over all conjugacy classes of cocharacters of $G_C$ and $\Fl_{[\mu]}$, which is naturally defined over the field of definition of $[\mu]$, parameterizes filtrations of the trivial $G$-bundle of type $[\mu]$ --- the action of $G$ on the point $F_\mu \in \Fl_{[\mu]}$ induces, after base change to the field of definition of $\mu$, an isomorphism $\Fl_{[\mu]}=G/P_\mu$ where $P_\mu$ is the stabilizer of $F_\mu$. 

Lattices on the trivial $G$-bundle are parameterized by the coset 
\[ \Gr_G(C):= G(B)/G(B^+). \]
In the $p$-adic setting these will be the $C$-points of a $B^+_\dR$-affine Grassmannian --- for now, however, it suffices to treat this as notation. To see this classification, choose a trivialization $\triv_{\mc{L}}: \omega_\std \otimes B^+ \xrightarrow{\sim} \mc{L} \circ \omega_{\mc{L}}$ where $\mc{L}$ denotes the forgetful functor from $\Vect^{B^+\dash\mr{latticed}}(K)$ to $B^+$-modules (this is possible since $B^+$ is a complete DVR with algebraically closed residue field $C$). The composition of $\triv_{\mc{L}} \otimes_{B^+} B$ with the extension of scalars to $B$ of the fixed $\omega_\std \xrightarrow{\sim} \Forget \circ \omega_{\mathcal{L}}$ gives an element 
$g  \in \Aut^\otimes(\omega_{\std, B}) = G(B)$. Any other choice of trivialization of $\mc{L} \circ \omega_{\mc{L}}$ differs from $\triv$ by an element of $G(B^+)$, so the coset $gG(B^+) \in G(B)/G(B^+)$ only depends on the lattice $\omega_{\mc{L}}$. In the opposite direction, attached to a coset $g G(B^+)$, we consider the lattice on the trivial $G$-bundle
\[ \omega_{g}: (V, \rho) \mapsto (V, \rho(g) \cdot V_{B^+}) \]
and the above argument shows every lattice on the trivial $G$-bundle is isomorphic to one of this form. If $G$ is reductive, the Cartan decomposition gives
\[ \Gr_G(C) = \bigsqcup \Gr_{[\mu]}(C),\; \Gr_{[\mu]}(C):=G(B^+)t^\mu G(B^+)/G(B^+) \subseteq G(B)/G(B^+)\]
where $[\mu]$ runs over all conjugacy classes of cocharacters of $G_L$ (since $L$ is algebraically closed, these are the same as the conjugacy classes of $G_C$), and we note that the open Schubert cell $\Gr_{[\mu]}(C)$ is independent of the choice of a representative $\mu$ and of the choice of uniformizer $t$. We caution that this decomposition is only disjoint at the level of points --- once the affine Grassmannian is equipped with a geometric structure there are closure relations determined by the Bruhat order. 

If $G$ is not reductive, we still have $\bigsqcup_{[\mu]} \Gr_{[\mu]}(C) \subseteq \Gr_G(C)$, but this subset is no longer exhaustive. This will be studied further in \cref{ss.bilatticed-and-exactness}.

In any case, it is an immediate computation that if $g=g^+ t^\mu G(B^+) \in \Gr_{[\mu]}$ for $g^+ \in G(B^+_\dR)$ then $\BB \circ \omega_g = \overline{g^+} \cdot F_{\mu^{-1}}$ where $\overline{\bullet}$ denotes reduction $G(B^+)\rightarrow G(C)$. For each $[\mu]$ we thus have the following $G(L)$-equivariant commutative diagram

\[\begin{tikzcd}
	& { \Gr_{[\mu]}(C)} & \Gr_G(C) \\
	{\Fl_{[\mu^{-1}]}(L)} && {\Fl_{[\mu^{-1}]}(C)}
	\arrow[hook, from=1-2, to=1-3]
	\arrow["\BB"{description}, from=1-2, to=2-3]
	\arrow[hook, from=2-1, to=2-3]
	\arrow["{\mc{L}_\can}"{description}, from=2-1, to=1-2]
\end{tikzcd}\]

\begin{proposition}[Criteria for the filtration to determine the lattice]\label{prop.filt-det-latt-criteria} \hfill
\begin{enumerate}
    \item If the weights of $[\mu]$ in the adjoint action on $Lie G$ are $\leq 1$, then $\BB: \Gr_{[\mu]}(C) \rightarrow \Fl_{[\mu^{-1}]}(C)$ is a bijection (when $G$ is reductive, this is equivalent to requiring the weights lie in $\{-1, 0, 1\}$, i.e. that $[\mu]$ is minuscule). 
    \item Suppose $\mf{G}$ is a group acting on $B^+$ preserving the maximal ideal, that $K=C^\mf{G} \subseteq L$ for the induced action on $C$, and that $\mf{G}$ acts on $K \cdot t$ by an infinite order character. If $G$ and $[\mu]$ are defined over $K$, then $\Fl_{[\mu^{-1}]}(K)$ is identified with the $\mf{G}$-invariants in $\Gr_{[\mu]}(C)$ (for the natural action of $\mf{G}$ on $\Gr_{G,[\mu]}(C) \subseteq G(B)/G(B^+)$ induced by the action on $G(B)$). 
\end{enumerate}
\end{proposition}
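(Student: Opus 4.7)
Surjectivity of $\BB\colon \Gr_{[\mu]}(C) \to \Fl_{[\mu^{-1}]}(C)$ follows from the identity $\BB \circ \mc{L}_\can = (\cdot)\otimes_L C$ of \cref{lemma.relation-between-BB-and-canonical-filtration}, together with the observation that any $F \in \Fl_{[\mu^{-1}]}(C)$ is split by some cocharacter over $C$ which lifts to $B^+$ by smoothness of the scheme of cocharacters of $G$. For injectivity, by the $G(B^+)$-equivariance of $\BB$ it suffices to show the fiber over the basepoint $F_{\mu^{-1}}$ is a single point. Such a preimage is represented by $h \cdot t^\mu G(B^+)$ with $h \in G(B^+)$ and $\ol{h} \in P_{\mu^{-1}}(C)$, and the issue reduces to verifying $t^{-\mu} h t^\mu \in G(B^+)$.

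Using smoothness of $P_{\mu^{-1}}$, lift $\ol{h}$ to some $p \in P_{\mu^{-1}}(B^+)$ and factor $h = p h'$ with $h'$ in the first congruence kernel of $G(B^+) \to G(C)$. Since $P_{\mu^{-1}}$ has only non-positive $\mr{ad}(\mu)$-weights, conjugation by $t^{-\mu}$ preserves $P_{\mu^{-1}}(B^+)$, so $t^{-\mu} p t^\mu \in G(B^+)$. For the second factor, characteristic zero gives $h' = \exp(tX)$ for some $X \in \mf{g}(B^+)$; decomposing $X = \sum_n X_n$ into $\mr{ad}(\mu)$-weight pieces yields $\mr{Ad}(t^{-\mu})(tX) = \sum_n t^{1-n} X_n$, which lies in $\mf{g}(B^+)$ precisely when every weight $n$ appearing satisfies $n \leq 1$---which is the hypothesis. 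Hence $t^{-\mu} h' t^\mu \in G(B^+)$, and together with the first factor this gives $t^{-\mu} h t^\mu \in G(B^+)$, as desired.

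\textbf{Plan for (2).} Send $F \in \Fl_{[\mu]}(K)$ to $\nu(t) G(B^+) \in \Gr_{[\mu]}(C)$, where $\nu\colon \mbb{G}_{m,K} \to G_K$ is any cocharacter of class $[\mu]$ splitting $F$. This is well-defined modulo $G(B^+)$ because any two such splittings differ by a $K$-point of the unipotent radical of the stabilizer of $F$, and it is $\mf{G}$-invariant since $\mf{g}(\nu(t)) = \nu(\chi(\mf{g}) \cdot t) = \nu(\chi(\mf{g})) \, \nu(t)$ with $\nu(\chi(\mf{g})) \in G(K) \subseteq G(B^+)$. Injectivity is immediate: $\BB$ applied to $\nu(t)G(B^+)$ recovers the filtration $F$ over $C$, which is rational over $K$ by assumption. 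For surjectivity, given $\mc{L} \in \Gr_{[\mu]}(C)^\mf{G}$, the filtration $\BB(\mc{L})$ is $\mf{G}$-invariant by functoriality, so by Galois descent for the projective $K$-variety $\Fl_{[\mu^{-1}]}$ it lies in $\Fl_{[\mu^{-1}]}(K)$. Choosing a $K$-splitting produces a candidate lattice $\nu(t) G(B^+)$ mapping to the same filtration as $\mc{L}$; the discrepancy lies in the stabilizer of $\nu(t) G(B^+)$ in $G(B)/G(B^+)$, and the induced $\mf{G}$-cocycle on this stabilizer is expressed, via the $\mr{ad}(\mu)$-weight decomposition under conjugation by $\nu(t)$, in terms of strictly nontrivial powers of $\chi$. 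The infinite-order hypothesis on $\chi$ then forces this cocycle to be trivial, identifying $\mc{L}$ with $\nu(t)G(B^+)$.

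\textbf{Main obstacle.} The essential non-routine step is the last one of (2): upgrading $\mf{G}$-invariance of the \emph{filtration} $\BB(\mc{L})$ to $\mf{G}$-invariance of the \emph{lattice} $\mc{L}$ itself. The infinite-order character hypothesis on $\chi$ is precisely what rigidifies the stabilizer analysis; without it, nontrivial twists by Levi-valued characters of finite order could appear, breaking the descent. The weight-by-weight exponential analysis mirrors the argument in (1) and is the right technical tool to exploit the infinite-order character.
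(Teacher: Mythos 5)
For part (1) your argument is correct and is, up to presentation, the same as the paper's: the paper computes the $G(B^+)$-stabilizer of $t^\mu G(B^+)$ as $t^\mu G(B^+) t^{-\mu}\cap G(B^+)$ and identifies it with the preimage of $P_{\mu^{-1}}(C)$ when all adjoint weights are $\leq 1$, then concludes by $G(B^+)$-equivariance and transitivity. Your factorization of a fiber representative $h = p\cdot h'$ and the exponential analysis of $h'$ is exactly the weight-by-weight computation implicit in the paper's stabilizer identification.

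For part (2) there are two problems, one notational and one substantive.

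First, the sign: you send $F$ to $\nu(t)G(B^+)$, but this is not well-defined. If $\nu' = u\nu u^{-1}$ with $u\in R_u(P_F)(K)$, then $\nu'(t)G(B^+) = u\nu(t)G(B^+)$, and equality with $\nu(t)G(B^+)$ would require $\nu(t)^{-1}u\nu(t)\in G(B^+)$; but $R_u(P_F)$ has strictly \emph{positive} $\nu$-weights, so conjugation by $\nu(t)^{-1}$ sends $u$ to something with $t^{-n}$ factors for $n>0$, which leaves $G(B^+)$. The correct map is $F\mapsto \nu(t)^{-1}G(B^+) = \mc{L}_\can(F)$: then $\nu(t)u\nu(t)^{-1}\in G(B^+)$ since the positive-weight factors get scaled by $t^{n}\in B^+$, and $\BB$ of this coset is $F_\nu = F$ as you claim (your statement ``$\BB$ applied to $\nu(t)G(B^+)$ recovers $F$'' is also off by this inversion --- it gives $F_{\nu^{-1}}$).

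Second, and more seriously, the surjectivity argument is a sketch that does not close. Writing $\mc{L} = g\mc{M}$ for the candidate $\mc{M} = \mc{L}_\can(\BB(\mc{L}))$, the $\mf{G}$-invariance of both only gives that $\sigma\mapsto g^{-1}\sigma(g)$ is a cocycle valued in $\mathrm{Stab}_{G(B)}(\mc{M}) = \nu(t)^{-1}G(B^+)\nu(t)$. Trivializing this cocycle replaces $g$ by a $\mf{G}$-invariant element of $G(B)$, but one still has to argue that $G(B)^{\mf{G}}$ stabilizes $\mc{M}$, and one has to actually prove the $H^1$ vanishing --- ``expressed in terms of strictly nontrivial powers of $\chi$'' is not a proof when the group $\mathrm{Stab}_{G(B)}(\mc{M})\cap G(B^+)$ is a complicated pro-unipotent congruence group and the filtration by $\mu$-weights includes a weight-zero piece on which $\chi$ does not act. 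The paper sidesteps all of this with a much more elementary argument at the level of vector spaces: it shows directly, by induction on the length $n$ with $V_{B^+}\supseteq\mc{L}\supseteq t^nV_{B^+}$, that a $\mf{G}$-invariant lattice admits a basis in $V_{K[[t]]}$, and then invokes the Rees correspondence between $K$-filtrations and $\mf{G}$-equivariant $K[[t]]$-lattices (using Zariski density of $\chi(\mf{G})$ in $\mbb{G}_m$). This is both shorter and avoids any non-abelian cohomology. If you want to keep the cocycle framing, you would at minimum need to reprove the content of the paper's induction in that language, which is likely to be longer than the original.
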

\begin{proof}
We first treat (1). For general $\mu$, the stabilizer of $t^\mu G(B^+)$ for the left multiplication action of $G(B^+)$ is $t^{\mu}G(B^+)t^{-\mu} \cap G(B^+)$. If the weights of $\mu$ in the adjoint representation are all $\leq 1$, then this is exactly the pre-image under reduction to $G(C)$ of the $C$-points of the connected subgroup whose Lie algebra is $F_{\mu^{-1}}^0\Lie\+ G$ (i.e. the sum of the weight $\leq 0$ eigenspaces for $\mu$), and this connected subgroup is also the stabilizer of the filtration $F_{\mu^{-1}}^\bullet \omega_\std$. Since the map from $\Gr_{[\mu]}(C)$ to $\Fl_{[\mu^{-1}]}(C)$ is equivariant and the action of $G(B^+)$ is transitive on each, we conclude that it is a bijection.

For the second condition, it suffices to show that the canonical filtration induces an equivalence between $\Vect^f(K)$ and the category of $B^+$-latticed $K$-vector spaces $(V, \mc{L})$ such that the lattice is invariant under the semi-linear action of $\mf{G}$ on $V_B$. To see this, first note that a lattice invariant under $\mf{G}$ must come from a $K[[t]]$-lattice in $V_{K((t))}$: indeed, by multiplying by a power of $t$, we may assume $V_{B^+} \supseteq \mc{L} \supseteq t^{n}V_{B^+}$, then, arguing by induction on $n$, the $\mf{G}$-invariant lattice $\mc{L}'=\mc{L} \cap t V_{B^+}$ has a basis in $t V_{K[[t]]}$. If we write $W_C$ for the image of $\mc{L}$ in $V_C$, it is invariant under $\mf{G}$, thus of the form $W_C = W \otimes_K C$ for $W$ a subspace of $V$. Since the image of $\mc{L}$ in $V_C$ is contained in the image of $\frac{1}{t} \mc{L}'$, we may arrange our basis $e_1, \ldots, e_n$ for $\mc{L}'$ in $t V_{K[[t]]}$ so that for $d=\dim W$, the image of $\frac{1}{t} e_1, \ldots, \frac{1}{t} e_d$ spans $W$. Then $\frac{1}{t} e_1, \ldots, \frac{1}{t} e_d, e_{d+1}, \ldots, e_n$ are a basis for $\mc{L}$ in $V_{K[[t]]}$. 

With this established, the equivalence is a standard result about the Rees construction for $K((t))$, using that the image of $\mf{G}$ acting on $t$ is Zariski dense in $\mbb{G}_m$ by assumption. 
\end{proof}

\begin{example}
If $G=\GL_n$, then a lattice on the trivial $G$-bundle is equivalent to a lattice $\mc{L}$ in $B_\dR^n$ and a filtration on the trivial $G$-bundle is equivalent to a filtration of $C^n$. In the first case, to be minuscule means that there is an $i \in \mbb{Z}$ such that $\Fil^{i+1} B \cdot \mc{L} \subset {B^+}^n \subset \Fil^i B \cdot \mc{L}.$ In the second case, to be minuscule means that there an $i \in \mbb{Z}$ such that $\gr^{j} C^n=0$ for $j \neq i, i+1$. 
\end{example}

\begin{corollary}\label{corollary.filtration-trivial-implies-trivial}
A $B^+$-lattice on a vector space is trivial (i.e. $\mc{L}_\dR=V_{B^+_\dR}$) if and only if the induced filtration is trivial. 
\end{corollary}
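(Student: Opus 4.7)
The forward direction is immediate from the definitions: if $\mc{L} = V_{B^+}$, then $\Fil^k B \cdot \mc{L} = t^k V_{B^+}$, whose intersection with $V_{B^+}$ is $V_{B^+}$ for $k \leq 0$ and $t^k V_{B^+}$ for $k \geq 1$; passing to $V_C$ yields $V_C$ in degrees $\leq 0$ and $0$ in degrees $\geq 1$, which is the trivial filtration. So the content lies in the converse.

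For the converse, my plan is to reduce to a direct computation using the elementary divisor theorem for the discrete valuation ring $B^+$. Given the two $B^+$-lattices $V_{B^+}$ and $\mc{L}$ in $V_B$, Smith normal form produces a $B^+$-basis $f_1, \ldots, f_n$ of $V_{B^+}$ together with integers $a_1 \leq \cdots \leq a_n$ such that $t^{a_1} f_1, \ldots, t^{a_n} f_n$ is a $B^+$-basis of $\mc{L}$. In this basis the calculation of $\BB(V,\mc{L})$ becomes transparent: $\Fil^k B \cdot \mc{L} = \bigoplus_i B^+ \cdot t^{k+a_i} f_i$, so intersecting with $V_{B^+}$ and reducing modulo $t$ gives
\[ \Fil^k_\mc{L} V_C \;=\; \mr{span}_C\{ \bar f_i \,:\, a_i \leq -k\}. \]

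Assuming the induced filtration is trivial, i.e. $\Fil^0_\mc{L} V_C = V_C$ and $\Fil^1_\mc{L} V_C = 0$, the first equality forces every $a_i \leq 0$ and the second forces every $a_i \geq 0$. Hence $a_i = 0$ for all $i$, and $\mc{L} = V_{B^+}$. The main (indeed only) potential obstacle is confirming that Smith normal form applies in our setting, but since $B^+$ is a complete discrete valuation ring this is standard, and both lattices are free of the same rank over $B^+$, so the reduction is unproblematic. Note that an alternative route through \cref{prop.filt-det-latt-criteria}(1) is not directly available because that statement requires knowing in advance that $\mc{L}$ lies in the trivial Schubert cell $\Gr_{[0]}(C)$; the elementary divisor argument above is precisely what extracts this information from triviality of the filtration.
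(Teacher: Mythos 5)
Your proof is correct. The elementary-divisors argument you give is exactly the Cartan decomposition for $\GL(V)$ over the DVR $B^+$ (Smith normal form is the $\GL_n$ case of Cartan), so in substance you are unwinding what the paper invokes. The paper's proof is a one-liner citing \cref{prop.filt-det-latt-criteria}-(1); your self-contained computation makes the mechanism transparent.

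One small pushback on your closing remark that the route through \cref{prop.filt-det-latt-criteria}-(1) "is not directly available." You are right that applying part (1) with $[\mu]=0$ literally gives only a vacuous statement (bijection between singletons), so the citation as written is compressed. But the missing ingredient is not your elementary-divisors computation per se: it is the two facts, established in the discussion immediately preceding \cref{prop.filt-det-latt-criteria}, that (a) for $G=\GL(V)$ reductive the Cartan decomposition is exhaustive, so every lattice lies in some $\Gr_{[\mu]}(C)$, and (b) $\BB$ carries $\Gr_{[\mu]}(C)$ into $\Fl_{[\mu^{-1}]}(C)$ (the paper's explicit formula $\BB\circ\omega_g = \overline{g^+}\cdot F_{\mu^{-1}}$). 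Together these say that the type of $\BB(\mc{L})$ determines $[\mu]$; triviality of the filtration forces $[\mu]=0$, and then $\Gr_{[0]}(C)$ is a singleton. For $\GL_n$ over $B^+$, facts (a) and (b) \emph{are} the Smith normal form computation you wrote out, so the two arguments are the same; you have simply made the invocation of Cartan explicit rather than relying on the surrounding text.
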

\begin{proof}
This is a very special case of \cref{prop.filt-det-latt-criteria}-(1).
\end{proof}

\begin{example}\label{example.galois-action-lattice}
In the setting of \cref{example.filtration-lattice-situations}-(2), suppose $K \subseteq C$ is a $p$-adic field, $C=\overline{K}^\wedge$, and $\mf{G}=\Gal(\overline{K}/K)$. Then \cref{prop.filt-det-latt-criteria}-(2) applies. It also applies if $K = C = \mbb{C}$ and $\mf{G} = U(1)(\mbb{R})$ acting on $t$ by $z \mapsto z^i$ for $i \neq 0$. 
\end{example}

At this point it is natural to ask: for $G$ not reductive, if we have a lattice $\omega_{\mc{L}}$ on the trivial $G$-bundle whose classifying point does not lie in $\Gr_{[\mu]}(C)$ for any $[\mu]$, then what can we say about $\BB \circ \omega_{\mc{L}}$? The main result to be developed in the remainder of this section, \cref{theorem.good-equivalence-bilatticed}, implies that this is equivalent to inexactness of $\BB \circ \omega_{\mc{L}}$ (i.e. failure to be a filtered $G$-bundle).  

\begin{example}\label{example.latticed-Ga-not-exact}
We can interpret \cref{example.latticed-K-vect-BB-not-exact} as a lattice on the trivial $\mbb{G}_a$-bundle. It is classified by the point $\frac{1}{t} \cdot B^+ \in B/B^+=\mbb{G}_a(B)/\mbb{G}_a(B^+)$. This lies outside of any Schubert cell --- indeed, the only cocharacter of $\mbb{G}_a$ is trivial, so only the trivial coset $B^+$ itself lies in a Schubert cell. The failure of the short exact sequence in \cref{example.latticed-K-vect-BB-not-exact} to remain exact after passing to filtered vector spaces illustrates the failure of $\BB \circ \omega_{\mc{L}}$ to be exact if $\omega_{\mc{L}}$ lies outside a Schubert cell. 
\end{example}
    
\subsection{Bilatticed $B$-vector spaces and exactness of filtrations}\label{ss.bilatticed-and-exactness}

We now define a coarser category than $\Vect^{B^+\dash\mr{latticed}}(L)$ through which $\BB$ factors and whose isomorphism classes will capture the invariant $[\mu]$ discussed above. Using this category, we will formulate and prove the general principle relating the Schubert cells and the failure of exactness of $\BB$.

\begin{definition} Let $\Vect^{\bl}(B)$ be the category of bilatticed $B$-vector spaces, i.e. finite dimensional $B$-vector spaces equipped with a pair $\mc{L}_1, \mc{L}_2$ of $B^+$-lattices. A morphism
$(V, \mc{L}_1, \mc{L}_2) \rightarrow (V', \mc{L}_1', \mc{L}_2')$ is a morphism of $B$-vector spaces  $f: V \rightarrow V'$ such that $f(\mc{L}_i) \subseteq \mc{L}_i'$ for $i=1, 2$. A morphism is \emph{strict} if $f(\mc{L}_i)=f(V) \cap \mc{L}'_i$ for $i=1,2$. A complex of bilatticed $B$-vector spaces is exact if it is exact as a complex of $B$-vector spaces and each morphism is strict. 
\end{definition}

If $V$ is a $B$-vector space and $\mc{L} \subset V$ is a $B^+$-lattice, we consider the filtration on $V$ by the $B^+$-submodules $F^\bullet B \cdot \mc{L}$. Using the filtrations of this form, we obtain two $L$-linear tensor functors from $\Vect^\bl(B)$ to $\Vect^f(C)$:
\[ \BB_1: (V, \mc{L}_1, \mc{L}_2) \mapsto (\mc{L}_{1,C},\trFil_{\mc{L}_2}^\bullet \mc{L}_{1,C}) \textrm{ and }  \BB_2: (V, \mc{L}_1, \mc{L}_2) \mapsto (\mc{L}_{2,C},\trFil_{\mc{L}_1}^\bullet \mc{L}_{2,C}) \]
where $\trFil_{\mc{L}_i}^\bullet \mc{L}_{j,C}$ is the trace filtration as in \cref{ss.from-filtrations-to-lattices-and-back} formed by taking the image of $(F^\bullet B \cdot \mc{L}_i) \cap \mc{L}_j$ in  $\mc{L}_{j, C}$. 

\begin{example}\label{example.latticed-k-vect-to-bilatticed}
We define two functors from $\Vect^{B^+\dash\mr{latticed}}(L)$ to $\Vect^{\bl}(B)$:
\[ \bl_1: (V, \mc{L}) \mapsto (V_{B}, V_{B^+}, \mc{L}),\; \bl_2: (V, \mc{L}) \mapsto(V_B, \mc{L}, V_{B^+}) \]
They differ by the involution of $\Vect^{\bl}(B)$ swapping the two lattices, and for each $i=1,2$ the following diagram commutes up to a natural isomorphism:
\[\begin{tikzcd}
	{\Vect^{B^+\dash\mr{latticed}}(L)} & {\Vect^{\bl}(B)} \\
	& {\Vect^f(C)}
	\arrow["{\BB_i}", from=1-2, to=2-2]
	\arrow["{\bl_i}", from=1-1, to=1-2]
	\arrow["\BB"', from=1-1, to=2-2]
\end{tikzcd}\]
The functors $\bl_i$, $i=1,2$ are exact, so any failure of exactness in $\BB$ is visible in the functors $\BB_i$. The isomorphism class in $\Vect^\bl$ will give the invariant describing the Schubert cell for a $B^+$-lattice on the trivial $G$-bundle. 
\end{example}

There is a simple relation between the associated gradeds for the $\BB_i$:

\begin{lemma}\label{lemma.associated-gradeds-twist} For each $j \in \mbb{Z}$, scalar multiplication induces an isomorphism of functors from $\Vect^{\bl}(B)$ to $\Vect(C)$
\[ \gr^{j}B \otimes (\gr^{-j}\circ \BB_1)  \xrightarrow{\sim} \gr^{j} \circ \BB_2. \]
\end{lemma}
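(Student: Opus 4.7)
The plan is to unwind the definitions of $\gr^{-j}\BB_1$ and $\gr^j\BB_2$ to concrete subquotients of the ambient $B$-vector space $V$, exhibit the proposed isomorphism as literal multiplication by $t^j$, and then repackage that multiplication in terms of the $\gr^j B$-action.

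Concretely, I would first observe that $F^{-j}_{\mc{L}_2}\mc{L}_{1,C}$ is the image of $t^{-j}\mc{L}_2 \cap \mc{L}_1$ in $\mc{L}_1/t\mc{L}_1$, whose kernel is $t^{-j}\mc{L}_2 \cap t\mc{L}_1$; passing to the quotient by $F^{-j+1}_{\mc{L}_2}\mc{L}_{1,C}$ and chasing the three inclusions gives the clean identification
\[\gr^{-j}\BB_1(V,\mc{L}_1,\mc{L}_2) \;=\; \frac{t^{-j}\mc{L}_2 \cap \mc{L}_1}{(t^{-j}\mc{L}_2 \cap t\mc{L}_1) + (t^{-j+1}\mc{L}_2 \cap \mc{L}_1)}.\]
The same manipulation on the other side yields
\[\gr^{j}\BB_2(V,\mc{L}_1,\mc{L}_2) \;=\; \frac{t^{j}\mc{L}_1 \cap \mc{L}_2}{(t^{j}\mc{L}_1 \cap t\mc{L}_2) + (t^{j+1}\mc{L}_1 \cap \mc{L}_2)}.\]
Multiplication by $t^j$ in $V$ is a $C$-linear bijection sending the numerator and the two pieces of the denominator on the left exactly to their counterparts on the right, so multiplication by $t^j$ induces a natural isomorphism $\gr^{-j}\BB_1 \xrightarrow{\sim} \gr^j\BB_2$ of functors $\Vect^{\bl}(B) \to \Vect(C)$.

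To promote this to the claimed isomorphism $\gr^j B \otimes \gr^{-j}\BB_1 \xrightarrow{\sim} \gr^j\BB_2$, I would define the map by the scalar action: for $\bar b \in \gr^j B$ represented by $b \in t^j B^+$ and $\bar v \in \gr^{-j}\BB_1$ represented by $v \in t^{-j}\mc{L}_2 \cap \mc{L}_1$, send $\bar b \otimes \bar v$ to the class of $bv$ in the quotient describing $\gr^j\BB_2$. The candidate is well-defined because replacing $b$ by an element of $t^{j+1}B^+$ pushes $bv$ into $t^{j+1}\mc{L}_1 \cap \mc{L}_2$ and hence into the denominator, and replacing $v$ by an element of $(t^{-j}\mc{L}_2 \cap t\mc{L}_1) + (t^{-j+1}\mc{L}_2 \cap \mc{L}_1)$ sends $bv$ into $(t^{j}\mc{L}_1 \cap t\mc{L}_2)+(t^{j+1}\mc{L}_1 \cap \mc{L}_2)$. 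Taking $b = t^j$ recovers the multiplication-by-$t^j$ isomorphism above, so the map is an isomorphism; naturality in $(V,\mc{L}_1,\mc{L}_2)$ is immediate from the definition since morphisms of bilatticed vector spaces are $B$-linear and preserve both lattices.

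I do not expect any serious obstacle here: everything reduces to the straightforward algebra of intersections of lattices under multiplication by $t^j$, and independence of the choice of uniformizer is automatic once the map is expressed via the $\gr^j B$-action rather than a fixed $t^j$. The only point that deserves a word of care is the equality $(t^{-j}\mc{L}_2 \cap \mc{L}_1)\cap t\mc{L}_1 = t^{-j}\mc{L}_2 \cap t\mc{L}_1$ (and its analog on the other side) used in identifying the graded pieces, but this is just the fact that $t\mc{L}_1 \subset \mc{L}_1$.
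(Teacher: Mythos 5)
Your proof is correct and takes essentially the same approach as the paper: you identify each graded piece with the same explicit subquotient of $V$ and observe that multiplication by $t^j$ matches numerators and denominators. You supply slightly more detail than the paper (justifying the subquotient identification and the repackaging via the $\gr^j B$-action), but the underlying argument is the same.
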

\begin{proof}
We have
\begin{align*} \gr^{-i}_{\mc{L}_2} \mc{L}_{1,C} &= \left( (t^{-i} \mc{L}_2) \cap \mc{L}_1 \right) /\left( (t^{-i+1}\mc{L}_2) \cap \mc{L}_1 + (t^{-i} \mc{L}_2) \cap t \mc{L}_1 \right)\\
\gr^{i}_{\mc{L}_1} \mc{\mc{L}}_{2,C}&= \left( \mc{L}_2 \cap (t^{i} \mc{L}_1) \right) / \left( t \mc{L}_2 \cap (t^i \mc{L}_1) + \mc{L}_2 \cap (t^{i+1} \mc{L}_1)\right)  
\end{align*}
and multiplying by $t^{i}$ is an isomorphism from the the first to the second. 
\end{proof}

\begin{remark} 
In $p$-adic Hodge theory, this is the usual isomorphism from the Tate twists of the graded components for the Hodge filtration to the graded components for the Hodge-Tate filtration.
\end{remark}

For $G$ a connected linear algebraic group over $L$, in the following a filtered $G$-bundle will mean a filtered $G$-bundle over $\Spec\+ C$, i.e. an exact rigid tensor functor $\omega_f: \Rep\+ G \rightarrow \Vect^f(C)$. Recall from the previous subsection that the isomorphism classes are parameterized by the type of the filtration, a conjugacy class of cocharacters $[\mu]_{\omega_f}$ of $G$. 

A bilatticed $G$-bundle (over $B$) is an exact $L$-linear tensor functor
\[ \omega_\bl: \Rep\+ G \rightarrow \Vect^\bl(C). \]
The induced functors $\mc{L}_i \circ \omega_\bl$ from $\Rep\+ G$ to $B^+$-modules are $G$-bundles on $\Spec\+ B^+$, thus trivial (because the residue field $C$ is algebraically closed). Choosing trivializations for each, the isomorphism $\mc{L}_2 \otimes B \xrightarrow{\sim} V \xrightarrow{\sim} \mc{L}_1 \otimes B$ induces an automorphism of the trivial $G$-bundle over $\Spec\+ B$, thus an element of $G(B)$. The image in the double coset space
\[ G(B^+) \backslash G(B) / G(B^+) \]
is independent of the choice of trivializations, and this assignment identifies the double cosets with the isomorphism classes of bilatticed $G$-bundle. 

As above, if $G$ is reductive, then the Cartan decomposition identifies this double coset space with the conjugacy classes of cocharacters of $G$ (a conjugacy class is matched with the unique double coset containing $t^\mu=\mu(t)$ for any element $\mu$ in the conjugacy class). For a general connected linear algebraic $G$ we can assign in this way distinct double cosets to each conjugacy class $[\mu]$, but not all double cosets are of this form.

\begin{definition}\label{def:good-bi-latticed-torsor}
A bilatticed $G$-bundle $\omega_\bl$ is \emph{good} if it lies in the double coset $G(B^+) t^{\mu} G(B^+)$ of a conjugacy class of cocharacters $[\mu]$. In this case we write $[\mu]_{\omega_\bl}$ for this conjugacy class and call it the \emph{type} of the good bilatticed $G$-bundle. 
\end{definition}

We now explain the connection with exactness of filtrations: If $G$ is reductive, then since $\Rep\+ G$ is semisimple it is easy to verify that for any bilatticed $G$-bundle $\omega_{\bl}$, $\BB_i \circ \omega_{\bl}$ is exact, i.e. is a filtered $G$-bundle (this is because $\BB_i$ preserves split exact sequences). For $G$ non-reductive this may not be the case, as can be seen by combining \cref{example.latticed-k-vect-to-bilatticed} and \cref{example.latticed-Ga-not-exact}. In fact, we have

\begin{theorem}\label{theorem.good-equivalence-bilatticed}
The following are equivalent for a bilatticed $G$-bundle $\omega_\bl$:
\begin{enumerate}
    \item $\omega_\bl$ is good 
    \item $\BB_1 \circ \omega_\bl$ is a filtered $G$-bundle (i.e. is exact)
    \item $\BB_2 \circ \omega_\bl$ is a filtered $G$-bundle (i.e. is exact)
\end{enumerate}
In this case, if the type of $\omega_\bl$ is $[\mu]$, then the type of $\BB_1 \circ \omega_\bl$ is $[\mu^{-1}]$ and the type of $\BB_2 \circ \omega_\bl$ is $[\mu]$.  
\end{theorem}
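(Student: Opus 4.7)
The structure of the proof is $(1) \Leftrightarrow (2)$ together with the type assignment for $\BB_1$; then $(1) \Leftrightarrow (3)$ follows by applying this equivalence to $(V, \mc{L}_2, \mc{L}_1)$, the bilatticed $G$-torsor obtained by swapping the two lattices, since this swap inverts the class in $G(B^+) \backslash G(B) / G(B^+)$ and exchanges $\BB_1$ with $\BB_2$.

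The direction $(1) \Rightarrow (2)$ is a direct computation. Suppose $\omega_\bl$ is good of type $[\mu]$. Since $L$ is algebraically closed of characteristic zero, any conjugacy class of cocharacter of $G_C$ has a representative defined over $L$; fix such a $\mu : \mbb{G}_{m,L} \to G_L$ and pull back to $G_{B^+}$. Choose trivializations so that $\omega_\bl(V) = (V_B, V_{B^+}, t^\mu V_{B^+})$ for every $V \in \Rep G$. Using the weight decomposition $V = \bigoplus_j V[j]$ for $\mu$, one checks
\[ (t^i \cdot t^\mu V_{B^+}) \cap V_{B^+} = \bigoplus_j t^{\max(i+j,0)} V[j] \otimes B^+, \]
so the image $F^i \mc{L}_{1,C}$ in $V_C$ equals $\bigoplus_{j \leq -i} V[j]_C = F^i_{\mu^{-1}} V_C$ in the paper's convention. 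Hence $\BB_1 \circ \omega_\bl$ factors as $\Rep G \xrightarrow{(\mu^{-1})^*} \Rep \mbb{G}_m \to \Vect^f(C)$ and is therefore an exact tensor functor of type $[\mu^{-1}]$. The parallel computation for $\BB_2$ gives type $[\mu]$.

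The substantive direction is $(2) \Rightarrow (1)$. Assume $\BB_1 \circ \omega_\bl$ is exact, and so is a filtered $G$-torsor over $C$. By the splitting of filtered fiber functors \cite[Th\'eor\`eme 2.4 Chapitre 4]{rivano:tannakian} recalled in \cref{ss.from-filtrations-to-lattices-and-back}, it is classified up to isomorphism by a conjugacy class of cocharacter $[\nu]$ of $G_C$; choose a representative defined over $L$ and pull back to $G_{B^+}$. The goal is to show that the double coset of $\omega_\bl$ in $G(B^+) \backslash G(B) / G(B^+)$ is that of $t^{\nu^{-1}}$, so $\omega_\bl$ is good of type $[\nu^{-1}]$.

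The main obstacle is that for non-reductive $G$ the double coset space $G(B^+)\backslash G(B)/G(B^+)$ strictly exceeds the set of good classes, and knowing only that the $\GL(V)$-Cartan invariant of $\omega_\bl$ on each faithful $V \in \Rep G$ matches the image of $\nu^{-1}$ does not force goodness (compare \cref{example.latticed-Ga-not-exact}). To bridge this gap I would invoke \cref{thm.specialization-goes-up-when-not-good} established earlier in this section: for a non-good bilatticed $G$-torsor, there must exist a short exact sequence $0 \to V' \to V \to V'' \to 0$ in $\Rep G$ on which the Cartan type of $\omega_\bl$ on $V$ strictly exceeds the sum of the types on $V'$ and $V''$ in the dominance order on cocharacters of $\GL(V)$. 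A standard elementary divisor calculation (using that $\BB_1$ factors through $\bl_1$ of \cref{example.latticed-k-vect-to-bilatticed}) then translates such a strict jump into a failure of strictness at some step of the induced sequence $0 \to F^\bullet\mc{L}_{1,C}(V') \to F^\bullet\mc{L}_{1,C}(V) \to F^\bullet\mc{L}_{1,C}(V'') \to 0$, contradicting exactness of $\BB_1 \circ \omega_\bl$. The contrapositive yields $(2) \Rightarrow (1)$, and the type assignment is then furnished by the direct computation of the first step.
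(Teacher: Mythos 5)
Your proof is correct and takes essentially the same route as the paper: both hinge on \cref{thm.specialization-goes-up-when-not-good} applied to the filtration of a faithful representation by powers of the nilradical of $\Lie G$, together with the observation that strictness of the induced filtered sequence forces the type of the whole to equal the type of its associated graded. The only genuine variation is in how $(2)\Leftrightarrow(3)$ is handled --- you use the lattice-swap involution (which inverts the class in $G(B^+)\backslash G(B)/G(B^+)$), while the paper instead deduces $(2)\Leftrightarrow(3)$ from \cref{lemma.associated-gradeds-twist}, both equally clean --- and you slightly elide the step of extracting a \emph{single} two-term short exact sequence with a strict jump from what \cref{thm.specialization-goes-up-when-not-good} actually gives (a strict jump between the type of $V$ and of the full graded $\gr^\bullet_{\mathfrak{u}} V$); that reduction needs a small induction via \cref{prop.latt-graded-properties}, or one can avoid it entirely, as the paper does, by arguing directly that strictness of $\BB_2$ along the whole $\mathfrak{u}^\bullet$-filtration implies the type of $V$ equals the type of the graded.
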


To show this, we first observe that it is immediate from the definitions that if $\omega_\bl$ is good then the $\BB_i \circ \omega_\bl$ are exact with the claimed types --- indeed, to say that $\omega_\bl$ is good of type $[\mu]$ is the same as saying that 
it is isomorphic to the functor 
\[ \Rep\+ G \rightarrow \Vect^\bl(B), (\rho, V) \mapsto (V_B, V_{B^+}, \rho(\mu(t))\cdot V_{B^+} )\] 
and for this bilatticed $G$-bundle the associated filtrations are exactly those given by $\mu^{-1}$ and $\mu$. It is also immediate from \cref{lemma.associated-gradeds-twist} that (2) $\leftrightarrow$ (3) --- indeed, a functor $\Rep\+ G \rightarrow \Vect^f(C)$ is exact if and only if the associated graded is exact. 

Thus what remains is to show that if $\BB_1 \circ \omega_\bl$ (or $\BB_2 \circ \omega_\bl$) is exact then $\omega_\bl$ is good. This turns out to be intimately related to the behavior of types under extensions. Indeed, this is illustrated in \cref{example.latticed-Ga-not-exact}: the associated sequence of filtered vector spaces fails to be exact, and the type of the extension is $(1,-1)$ while the type of the associated graded is $(0,0)$. When a short exact sequence of bilatticed vector spaces does give rise to a short exact sequence of filtered vector spaces, then it is easy to see that the type stays the same (simply take the associated graded for the short exact sequence of filtered vector spaces). Our strategy for finishing the proof of \cref{theorem.good-equivalence-bilatticed} is thus to show that the good bilatticed $G$-bundles are exactly the ones such that, for any faithful representation $V$ of $G$, the type of $\omega_{\bl}(V)$ agrees with the type of its associated graded (for the filtration by sub-objects corresponding to the unipotent radical $U$ of $G$). 

\begin{example}\label{example.fvs-types-same-not-strict}
Consider the following sequence of filtered vector spaces $0 \rightarrow S \rightarrow V \rightarrow Q \rightarrow 0$, where the maps in the first row are inclusion and projection:
\[ \begin{matrix}F^k, k \leq 0: & 0 \rightarrow Ce_1 \rightarrow Ce_1 + Ce_2 \rightarrow Ce_2 \rightarrow 0 \\
F^1: & 0 \rightarrow 0 \rightarrow Ce_1 \xrightarrow{0} Ce_2 \rightarrow 0 \\
F^{k}, k \geq 2: & 0 \rightarrow 0 \rightarrow 0 \rightarrow 0 \rightarrow 0 \end{matrix} \]
This is a short exact sequence of vector spaces and the maps respect the filtrations but are not strict exact. However, both $V$ and $S \oplus Q$ have type $(1,0).$ This particular example cannot arise from an exact sequence of bilatticed vector spaces because it is easy to check that for a strict surjection of bilatticed vector spaces, the induced map on filtered vector spaces is surjective on the smallest non-zero part of the filtration (whereas the map on $F^1$ in this example is the zero map!). 
\end{example}

\subsection{Dominance and extensions}

Note that, in \cref{example.latticed-Ga-not-exact} as discussed above, the type of the extension does not change in an arbitrary way: the type $(1,-1)$ of the extension is not equal to the type $(0,0)$ of the associated graded, but it does lie above it in the standard dominance partial ordering. We recall this partial order now, then prove that the type of an extension always rises over the type of the graded; we will need to bootstrap from this computation to identify when equality holds. 

Recall that a conjugacy class of cocharacters $[\mu]$ of $\GL_n$ is equivalent to a multiset of $n$ integers, where $i$ appears with multiplicity equal to the multiplicity of the character $z \rightarrow z^i$ of any representative of $[\mu]$ acting on the standard representation of $\GL_n$. We will write such a multiset as $[\mu]=(\mu_1, \ldots, \mu_n)$ 
where $\mu_1 \geq \mu_2 \geq \cdots \geq \mu_n$. Then, if $[\mu]=(\mu_1, \ldots, \mu_n)$ and $ [\nu]=(\nu_1, \ldots, \nu_n)$ we say $[\mu] \leq [\nu]$ if for each $k$, $\sum_{i=1}^k \mu_i \leq \sum_{i=1}^k \nu_i$. This is the dominance partial order.

For $V$ a bilatticed vector space of type $[\mu]=(\mu_1, \ldots, \mu_n)$, $\mu_1 \geq \ldots \geq \mu_n$, we would like to give a characterization of this number $\sum_{i=1}^k \mu_i$ in terms of $V$. To that end, first note that it can be thought of as the largest integer appearing in the type of the $k$th exterior power $\Lambda^k V$. On the other hand, if we define the \emph{order}, $\mr{ord}(W)$, of a bilatticed vector space $W=(B,\mc{L}_1,\mc{L}_2)$ to be smallest $i$ such that $t^i \mc{L}_1 \subseteq \mc{L}_2$, then it is trivial to see that if $W$ has type $[\nu]=(\nu_1,\ldots,\nu_n)$  then $\mr{ord}(W)=\nu_1$. Thus $\sum_{i=1}^k \mu_i = \mr{ord}(\Lambda^k V)$, and this gives us a useful way to get a hold of these numbers.  Using this interpretation, we can see that the type always rises in extensions: the proof is almost identical to that of \cite[Lemma 1.2.3]{katz:isocrystals}.
\newcommand{\ord}{\mr{ord}}
\begin{proposition}\label{prop.latt-graded-properties}
$V \in \Vect^\bl(B)$ be equipped with a  filtration by strict sub-objects $F^i V$, and let $\gr^\bullet(V)=\bigoplus_i F^i V/F^{i+1} V$, an object of $\Vect^{\bl}(B)$. If $\gr^\bullet V$ is of type $[\mu]$ and $V$ is of type $[\nu]$ then $[\mu] \leq [\nu]$.
\end{proposition}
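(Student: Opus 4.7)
The plan is to adapt the argument for Newton polygons of $F$-isocrystals (cf.\ \cite[Lemma 1.2.3]{katz:isocrystals}) to the bi-latticed setting. By induction on the length of the filtration (using that the dominance partial order is compatible with adding a fixed direct summand), it suffices to treat a strict short exact sequence $0 \to V' \to V \to V'' \to 0$ in $\Vect^\bl(B)$, in which case $\gr^\bullet V = V' \oplus V''$. The key device is the \emph{order} $\ord(W)$ of a bi-latticed $W = (V, \mc{L}_1, \mc{L}_2)$, the smallest integer $i$ with $t^i \mc{L}_1 \subseteq \mc{L}_2$; if $W$ has type $(\nu_1 \geq \cdots \geq \nu_n)$ then $\ord(W) = \nu_1$. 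Endowing $\Lambda^k V$ with the natural bi-lattice yields $\sum_{i=1}^k \nu_i = \ord(\Lambda^k W)$, which recasts the desired inequalities $\sum_{i=1}^k \mu_i \leq \sum_{i=1}^k \nu_i$ as inequalities on orders of exterior powers.

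Two formal properties of $\ord$ drive the argument. First, $\ord$ is monotone under strict sub-objects and strict quotients: if $W' \subseteq W$ is strict then $\ord(W') \leq \ord(W)$, and if $W \to W''$ is a strict surjection then $\ord(W'') \leq \ord(W)$. Both follow immediately from the definition since $t^i \cdot$ commutes with the intersection and image operations defining the induced lattices, and iteration gives $\ord(W) \geq \ord(W''')$ for any strict subquotient $W'''$. Second, $\ord$ is additive on tensor products: $\ord(W_1 \otimes W_2) = \ord(W_1) + \ord(W_2)$. This follows from the elementary divisor theorem over the PID $B^+$: simultaneous diagonalization of the two lattices in each factor shows that the type of $W_1 \otimes W_2$ is the multiset of pairwise sums of the types.

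The technical heart, which I expect to be the main obstacle, is to verify that the natural filtration $F^a \Lambda^k V = \mr{image}(\Lambda^a V' \otimes \Lambda^{k-a} V \to \Lambda^k V)$ is a filtration by strict sub-objects of $\Lambda^k V$ with associated graded pieces $\Lambda^a V' \otimes \Lambda^{k-a} V''$. For this I would argue directly with lattices: strictness of $V' \hookrightarrow V$ yields $\mc{L}_i \cap V' = \mc{L}'_i$, and the free $B^+$-module $\mc{L}''_i$ on $V''$ lets us split each sequence $0 \to \mc{L}'_i \to \mc{L}_i \to \mc{L}''_i \to 0$; although the splittings for $i=1,2$ need not be compatible, the induced filtration on $\Lambda^k \mc{L}_i$ is intrinsic and coincides with the one induced by $F^\bullet \Lambda^k V$, giving the desired strictness.

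Granted these inputs, monotonicity combined with additivity gives, for every $a + b = k$,
\[ \ord(\Lambda^k V) \;\geq\; \ord\bigl(\Lambda^a V' \otimes \Lambda^b V''\bigr) \;=\; \sum_{i=1}^a \nu'_i + \sum_{j=1}^b \nu''_j, \]
where $(\nu'_i), (\nu''_j)$ are the decreasing types of $V', V''$. Since $(\mu_i)$ is the sorted concatenation of $(\nu'_i)$ and $(\nu''_j)$, one has $\sum_{i=1}^k \mu_i = \max_{a+b=k}\bigl(\sum_{i=1}^a \nu'_i + \sum_{j=1}^b \nu''_j\bigr)$, so $\sum_{i=1}^k \nu_i \geq \sum_{i=1}^k \mu_i$ for all $k$. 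Equality when $k = \dim V$, necessary for the dominance order, follows from the strictness of the short exact sequence via the canonical isomorphism $\det V \cong \det V' \otimes \det V''$ in $\Vect^\bl(B)$.
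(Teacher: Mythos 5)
Your proof is correct and takes essentially the same route as the paper's: reduce by induction to a strict short exact sequence, use the order invariant $\ord(\Lambda^k(-))$ to detect the partial sums $\sum_{i\leq k}\mu_i$, and compare $\Lambda^k V$ with $\Lambda^k(V'\oplus V'')$ via the strict filtration on $\Lambda^k V$ with graded pieces $\Lambda^a V'\otimes\Lambda^{k-a}V''$ (the argument of \cite[Lemma 1.2.3]{katz:isocrystals}, as the paper itself notes). The only cosmetic difference is your closing remark about equality at $k=\dim V$: the paper's dominance relation on multisets does not impose $\sum\mu_i=\sum\nu_i$, so that observation, while true (and immediate from $\det V\cong\det V'\otimes\det V''$), is not needed here.
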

\begin{proof}
By induction on the length of the filtration by strict sub-objects, it suffices to treat the case where the filtration has two steps, i.e. where we have a strict exact sequence in $\Vect^\bl(B)$
\[ 0 \rightarrow S \rightarrow V \rightarrow Q \rightarrow 0. \]
We want to show the type of $V$ lies above the type of $S \oplus Q$. 

Thus we need to show $\ord(\Lambda^k V) \geq \ord (\Lambda^k (S \oplus Q))$ for each $k$. To that end, first note $\Lambda^k (S \oplus Q)) = \oplus_j \Lambda^{j} S \otimes \Lambda^{k-j} Q$, thus $\mr{ord}(\Lambda^k (S \oplus Q)) )= \mr{max}_j \ord(\Lambda^{j} S \otimes \Lambda^{k-j} Q)$. On the other hand, $\Lambda^k V$ has a filtration by strict subojects such that the associated graded terms are $\Lambda^{j} S \otimes \Lambda^{k-j} Q$, so $\mr{ord}(\Lambda^k V) \geq \mr{max}_j  \ord(\Lambda^{j} S \otimes \Lambda^{k-j} Q).$ 
\end{proof}

\begin{remark}
We can also define the \emph{order} of a filtered vector space $(V, F^\bullet V)$ to be the largest $k$ such that $F^k V\neq 0$; a simple computation shows $\BB_2$ preserves order. 
\end{remark}

\subsection{The action of unipotent elements on affine Schubert cells}\label{ss.unipotent-elements-acting}\hfill\\

\noindent {\bf Notation}: \emph{To make the proofs more explicit and to free up the letter $B$ for Borel, for the remainder of \cref{s.filtrations-and-lattices} we fix an isomorphism $B^+=C[[t]]$, and write $C[[t]]$ in place of $B^+$ and  $C((t))$ in place of $B$ --- see \cref{example.filtration-lattice-situations}-(3).}\\

We also fix some notation for this subsection only: let $G=\GL_n$, let $T \subseteq \GL_n$ be the diagonal maximal torus, and let $\mc{B}$ denote the set of Borel subgroups containing $T$. Let $B^+$ be the upper-triangular Borel with unipotent subgroup $U^+$, and let $B^{-}$ be lower triangular Borel with unipotent subgroup $U^{-}$. We write $X_*(T)$ for the group of cocharacters of $T$. For each root $\alpha$ there is a co-root $\alpha^\vee \in X_*(T)$; if $\alpha$ is positive (by which we mean with respect to $B^+$) we call $\alpha^\vee$ a positive co-root. We call a cocharacter $\mu \in X_*(T)$ dominant if $\alpha(\mu)\geq 0$ for each positive root $\alpha$, and let $X_\ast(T)^+$ be the set of such dominant cocharacters. (Concretely, these are the cocharacters $\mu(z) = \mr{diag}(z^{a_1}, \ldots, z^{a_n})$ for integers $a_1 \geq a_2 \geq \cdots \geq a_n$.)
For cocharacters $\mu, \mu' \in X_\ast(T)$, we write $\mu \leq \mu'$ if $\mu'-\mu$ is a non-negative sum of positive co-roots. On the dominant co-characters this is equivalent to the dominance relation discussed in the previous subsection plus the condition that $\det \mu=\det \mu'$.

We write $K=\GL_n(C[[t]])$. Every conjugacy class $[\mu]$ of cocharacters of $\GL_n$ has a unique dominant representative $\mu$, so the Cartan decomposition reads
\[ \GL_n(C((t))) = \bigsqcup_{\mu \in X_\ast(T)^+} K t^\mu K .\]
For $B \in \mc{B}$ with unipotent subgroup $U$, the Iwasawa decomposition is 
\[ \GL_n(C((t))) = \bigsqcup_{\mu \in X_*(T)} U(C((t))) t^\mu K. \]
Essentially, we need to compare these two decompositions. The following lemma can be deduced from results of Bruhat and Tits \cite{bruhat-tits:groups-reductifs-sur-un-corps-local} in a more general setting. We give a direct proof then explain in \cref{remark.bt-proof} how it can be deduced from \cite{bruhat-tits:groups-reductifs-sur-un-corps-local}.

\begin{lemma}\label{lemma.first-unipotent-shifting}
Let $B \in \mc{B}$ with unipotent subgroup $U$. Let $\mu \in X_*(T)^+$ be a dominant co-character, and let $u \in U(C((t)))$.  Let $\nu \in X_*(T)^+$ be the dominant cocharacter such that $u \cdot t^\mu K \subseteq K t^\nu K$. Then, 
\begin{enumerate}
    \item $\mu \leq \nu$. 
    \item if $B=B^{-}$ then $\mu=\nu$ if and only if $u \cdot t^\mu K= t^\mu K$ or equivalently 
    \[ t^{-\mu} u t^{\mu} \in U(C[[t]])=U(C((t))) \cap K. \]
    \item if $B=B^+$ then  $\mu=\nu$ if and only if $u \in U(C[[t]])=K \cap U(C((t)))$. 
\end{enumerate}
\end{lemma}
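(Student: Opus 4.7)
The plan is to work in $G = \GL_n$ and exploit the classical fact from elementary divisor theory that for $g \in \GL_n(C((t)))$ lying in $K t^\nu K$ with $\nu = (\nu_1 \geq \cdots \geq \nu_n)$ dominant, the sum $\nu_n + \nu_{n-1} + \cdots + \nu_{n-k+1}$ equals the minimum $t$-adic valuation of any $k \times k$ minor of $g$. Since $\det u = 1$, the dominance relation $\mu \leq \nu$ is equivalent to requiring that the sum of the $k$ smallest entries of $\mu$ be an upper bound for the sum of the $k$ smallest entries of $\nu$ for every $k$; all three parts will be proved by exhibiting specific minors whose valuations realise such inequalities.

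For (1), lift any Weyl representative to $K$ via $N_G(T)(C) \subseteq K$ so that after conjugation we may assume $B = B^+$ and the cocharacter becomes $w^{-1}\mu$ (which permutes the entries of $\mu$ but is no longer dominant, though this is immaterial). Writing $g' = (w^{-1} u w) t^{w^{-1} \mu}$ as an upper triangular matrix, the $k \times k$ minor with row and column set $R = C = S$ is itself upper triangular with determinant $t^{\sum_{s \in S} (w^{-1}\mu)_s}$; minimising over $k$-subsets $S$ yields the sum of the $k$ smallest entries of $\mu$, providing the desired upper bound on $\nu_n + \cdots + \nu_{n-k+1}$.

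For (2), the equivalence of $u \cdot t^\mu K = t^\mu K$ and $t^{-\mu} u t^\mu \in U^-(C[[t]])$ is immediate because conjugation by $t^{-\mu}$ preserves $U^-(C((t)))$, and an element of $U^-(C((t)))$ lies in $K$ iff all its entries lie in $C[[t]]$. The direction $u \cdot t^\mu K = t^\mu K \Rightarrow \nu = \mu$ is tautological. For the converse, fix $i > j$ and consider the $(n-i+1) \times (n-i+1)$ minor of $g = u t^\mu$ with rows $\{i, i+1, \ldots, n\}$ and columns $\{j, i+1, \ldots, n\}$. Lower-triangularity of $g$ ensures that the first row of this submatrix has its unique nonzero entry at column $j$, equal to $u_{ij} t^{\mu_j}$; expanding along this row reduces the determinant to $\pm u_{ij} t^{\mu_j}$ times the determinant of the lower-triangular submatrix on rows and columns $\{i+1, \ldots, n\}$, which is $t^{\mu_{i+1} + \cdots + \mu_n}$. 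The hypothesis $\nu = \mu$ forces the valuation of this minor to be at least $\mu_i + \mu_{i+1} + \cdots + \mu_n$, yielding $v(u_{ij}) \geq \mu_i - \mu_j$; this system of inequalities for all $i > j$ is precisely the condition $t^{-\mu} u t^\mu \in U^-(C[[t]])$.

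For (3), the same strategy applies with $g = u t^\mu$ now upper triangular: given $\nu = \mu$ and a pair $i < j$, the $(n-j+1) \times (n-j+1)$ minor with rows $\{i, j+1, \ldots, n\}$ and columns $\{j, j+1, \ldots, n\}$ has a unique nonzero entry in column $j$ (at row $i$), and cofactor expansion gives minor value $\pm u_{ij} t^{\mu_j + \mu_{j+1} + \cdots + \mu_n}$; comparing with the lower bound $\mu_j + \cdots + \mu_n$ from $\nu = \mu$ forces $v(u_{ij}) \geq 0$, whence $u \in U^+(C[[t]]) \subseteq K$. The main technical obstacle in (2) and (3) is spotting the right row and column sets to isolate $u_{ij}$ in the minor computation; once these are in hand the determinants collapse cleanly against triangular blocks.
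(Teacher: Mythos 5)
Your proof is correct, and it takes a genuinely different route from the paper's. You appeal directly to the Smith normal form characterization of the Cartan cell: the minimum $t$-adic valuation of a $k\times k$ minor of $g$ equals the sum of the $k$ smallest entries of $\nu$ when $g \in Kt^\nu K$. Part (1) then follows by conjugating by a Weyl element in $K$ to reduce to upper-triangular $u$, where the diagonal $k\times k$ minors on rows and columns $S$ compute all the sums $\sum_{s\in S}\mu_s$ directly; and parts (2) and (3) are handled by isolating $u_{ij}$ in a single minor whose complementary block is a triangular corner of $t^\mu$. The paper instead routes everything through \cref{prop.latt-graded-properties}, the slope-raising inequality for bilatticed vector spaces under strict filtrations (modeled on Katz's lemma for isocrystals): part (1) is a one-line application, and parts (2)--(3) are handled by first proving the equivalence $(2)\Leftrightarrow(3)$ via transpose, then locating a badly-behaved entry $u_{ij}$ and using the opposite-order invariant $\overline{\ord}$ on a graded piece $F_jV/F_{i-1}V$ to derive a strict dominance inequality. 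Both proofs are elementary and in fact turn on closely related exterior-power/minor calculations; the difference is that you make the determinants completely explicit and self-contained, whereas the paper factors the argument through an abstract proposition that it reuses elsewhere (\cref{thm.specialization-goes-up-when-not-good}) and that applies more uniformly to general $G$. Your version is more hands-on and directly verifiable; the paper's is more structural.
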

\begin{proof}
Part (1) is an immediate consequence of \cref{prop.latt-graded-properties}. Indeed, if we consider the bilatticed vector space $(C((t))^n, C[[t]]^n, u t^\mu \cdot C[[t]]^n)$, then it has a filtration by strict subojects coming from the standard filtration stabilized by $B$, and the associated graded is isomorphic $(C((t))^n, C[[t]]^n, t^\mu \cdot C[[t]]^n)$.

For the rest, we first note that (2) and (3) are equivalent: suppose (2) holds, and suppose $u \in U^+$ is such that $u \cdot t^\mu \in K t^\mu K$. Then, by taking transpose, $t^{\mu} u^t=(t^{\mu} u^t t^{-\mu}) t^\mu \in K t^\mu K$, and since $(t^{\mu} u^t t^{-\mu}) \in U^{-}$, (2) gives that $u^t=t^{-\mu}(t^{\mu} u^t t^{-\mu})t^{\mu} \in U^-(C[[t]])$, and thus $u \in U^+(C[[t]])$. In the other direction, suppose (3) holds, and let $u \in U^{-}$ be such that $u t^{\mu} \in K t^\mu K$. Then, taking transpose,  $t^{\mu} u^t=(t^{\mu} u^t t^{-\mu}) t^\mu \in K t^\mu K$ and by (3), $t^{\mu} u^t t^{-\mu} \in U^+(C[[t]])$. Taking transpose again, we find $t^{-\mu} u^t t^{\mu} \in U^-(C[[t]])$. 

It thus suffices to prove $(3)$. The if direction is clear, so, it remains only to show that if $u \not \in U^+(C[[t]])$ then $\nu > \mu$. To that end, let $i$ be the index of the first row of $u$ whose entries are not in $C[[t]]$, and let $j>i$ be the first index such that $u_{i,j} \not\in U^+(C[[t]])$. Then, for $V=(C((t))^n, C[[t]]^n, u t^\mu \cdot C[[t]]^n)$, and $F_i V$ the standard increasing filtration of $V$ attached to $B^+$, one finds that the type of $F_jV/F_{i-1}V$ is strictly larger than the type $(\mu_i, \ldots, \mu_j)$ of its associated graded. Indeed, we use the opposite order 
    \[ \ol{\ord}(V) = \max\{k \colon t^{-k}\mc{L}_2 \subseteq \mc{L}_1 \} \]
which picks out the smallest integer in the type of $V$. The vector $u_{i,j}t^{\mu_j}e_i + \cdots + t^{\mu_j} e_j$ is in the second lattice of $F_jV/F_{i-1}V$, so if $t^{-k}\mc{L}_2(F_jV/F_{i-1}V) \subseteq \mc{L}_1(F_jV/F_{i-1}V)$ then $k \leq \mu_j + v(u_{i,j}) < \mu_j$ (where $v$ denotes the additive $t$-adic valuation). Hence $\ol{\ord}(F_jV/F_{i-1}V) \leq \mu_j + v(u_{i,j}) < \mu_j$, 
 which implies the type of $F_j V/ F_{i-1} V$ is strictly greater than the type $(\mu_i, \ldots, \mu_j)$ of its associated graded by \cref{prop.latt-graded-properties}. Applying \cref{prop.latt-graded-properties} to the filtration $0 \subseteq F_{i-1} V \subseteq F_j V \subseteq V$ and the non-strict inequalities that the type of $F_{i-1} V$ is greater than or equal to $(\mu_1, \ldots, \mu_{i-1})$ and the type of $V/F_j V$ is greater than or equal to $(\mu_{j+1}, \ldots, \mu_n)$, we conclude. 
\end{proof}

\begin{remark} \label{remark.bt-proof}
Alternatively, \cref{lemma.first-unipotent-shifting}-(1) follows from \cite[Corollary 4.3.17]{bruhat-tits:groups-reductifs-sur-un-corps-local} and \cref{lemma.first-unipotent-shifting}-(2) follows from \cite[Proposition 4.4.4-(ii)]{bruhat-tits:groups-reductifs-sur-un-corps-local}; one can then obtain \cref{lemma.first-unipotent-shifting}-(3) by using the equivalence between (2) and (3) explained in the proof above. To make it possible to follow these citations, we explain how our notation compares to the notation in \cite[Chapter 4]{bruhat-tits:groups-reductifs-sur-un-corps-local}:  For $D$ the dominant chamber for the roots of a Borel $B$ containing $T$, the group  $\hat{\mc{B}}^0_D$ of loc. cit. is the group generated by the set of diagonal matrices with entries in $C((t))$ such that all of the entries have the same valuation and the $C((t))$-points of the unipotent subgroup of the \emph{opposite} Borel to $B$. In particular, the fixed chamber $\bf{D}$ in loc.\ cit.\ corresponds in our setup to the dominant chamber for the roots of $B^+$, so that $\hat{\mc{B}}^0$, which by definition in loc. cit. is $\hat{\mc{B}}^0_{\bf{D}}$, contains our $U^{-}$. 
\end{remark}

We need refinements of parts (2)-(3) that apply to arbitrary $B \in \mc{B}$. We fix as above a dominant cocharacter $\mu \in X_\ast(T)^+$ and $B \in \mc{B}$ with unipotent radical $U$. The conjugation action of $\mu$ induces a decomposition $U=U_{>0} \cdot U_0 \cdot U_{<0}$ where $\Lie\+ U_{>0}$ (resp. $\Lie\+ U_0$, resp. $\Lie\+ U_{<0}$) consists of all roots $\alpha$ in $\Lie\+ U$ such that $\alpha(\mu)>0$ (resp. $=0$, resp. $<0$). Note that since $\mu$ is dominant $U_{>0} \leq U \cap U^+$ and $U_{<0} \leq U \cap U^{-}$, while $U_0$, the subgroup of elements in $U$ centralized by $\mu$, can contain a mix of both positive and negative root subgroups. By \cite[Theorem 3.3.11]{conrad-gabber-prasad:pseudoreductive} this decomposition exists already for any closed subgroup $U' \subseteq U$ preserved by conjugation by $t^\mu$, and is compatible with the decomposition of $U$, i.e.
\begin{equation}\label{eq.U-intersections}U'=U'_{>0} U'_0 U'_{<0} \textrm{ for } U'_{>0} = U' \cap U_{>0}, U'_0=U' \cap U_0, \textrm{ and } U'_{<0}=U' \cap U_{<0}.\end{equation}

\begin{lemma}\label{lemma.refined-unipotent-action}
With notation as above, let $u \in U'(C((t)))$ and let $u=u_{>0} u_0 u_{<0}$ denote its product decomposition. Then $u \cdot t^\mu \in K t^\mu K$ if and only if all of the following hold:
\begin{enumerate}
    \item $u_{>0} \in U'_{>0} \cap K$
    \item $u_0=t^{-\mu} u_0 t^{\mu} \in U'_0 \cap K$
    \item $t^{-\mu}u_{<0}t^{\mu} \in U'_{<0} \cap K$
\end{enumerate}
Furthermore, in this case $u \cdot t^\mu K = u_{>0} \cdot t^\mu K = u_{>0} u_0 \cdot t^\mu K$.
\end{lemma}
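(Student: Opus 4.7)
The ``if'' direction is a direct calculation. Using that $u_0$ centralizes $t^\mu$,
\[ u t^\mu = u_{>0} u_0 u_{<0}\,t^\mu = u_{>0}\,t^\mu \cdot u_0 \cdot (t^{-\mu} u_{<0} t^\mu),\]
so under (1)--(3) each factor to the right of $u_{>0} t^\mu$ lies in $K$, placing $u t^\mu \in u_{>0}t^\mu K \subseteq K t^\mu K$. The same rewriting gives $u t^\mu K = u_{>0}t^\mu K = u_{>0} u_0 t^\mu K$, after absorbing the rightmost factors into $K$ and using $u_0 t^\mu = t^\mu u_0$.

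For the ``only if'' direction, assume $u t^\mu \in K t^\mu K$. My plan is to show (2) and (1) first and then deduce (3). Condition (3) is the easiest once (1) and (2) are in hand: then $(u_{>0}u_0)^{-1} \in K$, so $(u_{>0}u_0)^{-1}\, u t^\mu = u_{<0} t^\mu \in K t^\mu K$, and \cref{lemma.first-unipotent-shifting}(2) applied to $u_{<0} \in U^{-}(C((t)))$ delivers $t^{-\mu} u_{<0} t^\mu \in K$.

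For (1) and (2) the plan is to apply \cref{prop.latt-graded-properties} to the bilattice $\mc{B}=(\mc{L}_1, u t^\mu \mc{L}_1)$, which has type $\mu$ by assumption. The $\mu$-weight decomposition $V = \bigoplus_a V_a$ and its filtration $F^{\ge a} V = \bigoplus_{b\ge a} V_b$ give a filtration of $\mc{B}$ by strict sub-bilattices, whose associated graded splits block-diagonally over the weights. In the weight-$a$ block, $u_{>0}$ and $u_{<0}$ act as the identity (they strictly raise or lower weight and so vanish on the associated graded), so only the restriction $u_0|_{V_a}$ affects the bilattice structure; the $a$-th block thus has type $(a,\dots,a)$ iff $u_0|_{V_a} \in K$. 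Summing over $a$, the associated graded has type $\ge \mu$ from its shape, and \cref{prop.latt-graded-properties} forces type $\le \mu$; equality then forces $u_0 \in K$, giving (2). Condition (1) follows by an analogous argument after left-multiplying by $u_0^{-1} \in K$ to reduce to the case $u_0 = 1$, in which one applies \cref{lemma.first-unipotent-shifting}(3) to $u_{>0} \in U^{+}$ after a further associated-graded step to strip the $u_{<0}$-contribution.

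The main technical obstacle is justifying the computation of the associated graded of $\mc{B}$ with respect to $F^{\ge a} V$, since neither $u$ nor $u^{-1}$ preserves this filtration when $u_{<0} \ne 1$: one must show that the intersections $F^{\ge a} V \cap u t^\mu \mc{L}_1$ are well-behaved enough for the associated graded to isolate exactly the diagonal action of $u_0$ on each $V_a$. This rests on the facts that $t^{-\mu} u_{<0} t^\mu$ remains in $U_{<0}(C((t)))$ (so acts compatibly with the $\mu$-weight filtration on the level of associated gradeds) and that the product decomposition $U' = U'_{>0} U'_0 U'_{<0}$ from \eqref{eq.U-intersections} has root-disjoint factors, cleanly separating the three contributions block by block.
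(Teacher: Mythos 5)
Your "if" direction and the deduction of (3) from (1)--(2) are fine, but the engine you propose for (2) --- and hence, by chain reaction, for (1) --- does not work. The central claim, that on the associated graded of the $\mu$-weight filtration $F^{\ge a}V=\bigoplus_{b\ge a}V_b$ only $u_0|_{V_a}$ affects the block-$a$ bilattice, is false. The problem is precisely the one you flag: $u_{<0}$ does \emph{not} preserve this decreasing filtration, and its effect on the intersections $F^{\ge a}V_B\cap ut^\mu\mc{L}_1$ is substantial --- it simultaneously imposes constraints that shrink the naive block lattice and contributes leakage from higher-weight pieces that enlarges it.

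Here is a concrete counterexample to the block claim. Take $G=\GL_3$, $\mu=(1,0,0)$ (so $V_1=Ce_1$, $V_0=Ce_2\oplus Ce_3$), and let $B\in\mc{B}$ be the Borel whose positive roots are $\{\alpha_{23},\alpha_{21},\alpha_{31}\}$ (ordering $2>3>1$), so that $U_0=\langle E_{23}\rangle$ and $U_{<0}=\langle E_{21},E_{31}\rangle$. Let $u_{>0}=u_0=1$ and $u_{<0}=1+t^{-2}E_{31}$; then $ut^\mu$ has columns $te_1+t^{-1}e_3,\,e_2,\,e_3$. The weight-$1$ block intersection $\mc{L}_2\cap Be_1$ forces the coefficient of the first column into $tC[[t]]$ and is $t^2C[[t]]e_1$, of type $(2)$, not $(1)$ --- even though $u_0|_{V_1}=1\in K$. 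The weight-$0$ block is $B^+\bar e_2+t^{-1}B^+\bar e_3$, of type $(0,-1)$, which is \emph{not} $\ge(0,0)$ in the dominance order (the partial sum $0+(-1)=-1<0$). So both halves of your claim fail: the block type is not controlled by $u_0|_{V_a}$ alone, and "the associated graded has type $\ge\mu$ from its shape" is not justified by block-wise comparison. The blocks reduce to $u_0|_{V_a}\cdot t^aV_{a,B^+}$ only when $t^{-\mu}u_{<0}t^\mu\in K$ --- but that is condition (3), which you intend to prove \emph{after} (1) and (2). The proposed "further associated-graded step to strip the $u_{<0}$-contribution" to establish (1) runs into the same circularity: to pass $u_{<0}$ through $t^\mu$ and invoke \cref{lemma.first-unipotent-shifting}(3) on $u_{>0}$ you already need (3), and neither the $F^{\ge a}$ nor the opposite filtration $F_{\le a}$ cleanly isolates $u_{>0}$ on the associated graded.

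The paper's proof takes an altogether different route. It refines $U_0=U_{0,+}U_{0,-}$ with $U_{0,\pm}=U_0\cap U^\pm$, uses that $U_{0,-}$ normalizes $U_{<0}$ to write $u=u_{>0}u_{0,+}u'_{<0}u_{0,-}$, and then walks along a chain of adjacent Borels $B^+=B_0,\dots,B_l=B$ that peels off the negative root factors one at a time. When a factor $u_{-\alpha_i}=t^{-m}$ with $m>0$ appears, an explicit $\mr{SL}_2$ identity rewrites $u_{-\alpha_i}t^\mu$ as a unipotent element times $t^{\mu+m\alpha_i^\vee}$, and conjugating by a Weyl element and applying \cref{lemma.first-unipotent-shifting}(1) plus a lexicographic comparison produces a contradiction with $ut^\mu\in Kt^\mu K$. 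After $u_{0,-}$ and $t^{-\mu}u'_{<0}t^\mu$ are pushed into $K$, the surviving $u_{>0}u_{0,+}\in B^+(C((t)))$ is handled by \cref{lemma.first-unipotent-shifting}(3). That direct, root-by-root manipulation is exactly what the associated-graded computation fails to deliver.
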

\begin{proof}
It follows from the identities
\[ u t^\mu = u_{>0} u_0 u_{<0} t^\mu  = (u_{>0} u_0) t^\mu (t^{-\mu} u_{<0} t^\mu) = u_{>0} t^{\mu} ( u_0 t^{-\mu}u_{<0} t^\mu) \]
that if (1)-(3) are satisfied then $u \cdot t^\mu \in K t^\mu K$ and the last claim also holds. It remains to show that, if $u \cdot t^\mu \in K t^\mu K$ then (1)-(3) hold. 

Using \cref{eq.U-intersections}, we may assume $U'=U$. We further refine $U_{0}=U_{0,+}\cdot U_{0,-}$ where $U_{0,+}=U_{0} \cap U^+$ and $U_{0,-}=U_{0} \cap U^-$. We thus write $u=u_{>0} u_{0,+} u_{0,-} u_{<0}$. We note moreover that $U_{0,-}$ normalizes $U_{<0}$ and commutes with $t^\mu$, so we can rewrite this as
\[ u = u_{>0} u_{0,+} u_{0,-} u_{<0} = u_{>0} u_{0,+} u'_{<0} u_{0,-} \textrm{ where } u'_{<0}=u_{0,-} u_{<0} u_{0,-}^{-1} \]

We now choose a chain of adjacent Borel subgroups $B^+=B_0, B_1, B_2, \ldots B_l=B$ such that, for each $1\leq i \leq l$, there is a unique root $\alpha_i$ such that $\alpha_i$ is in $\Lie\+ U^+$ and $-\alpha_i$ is in $\Lie\+ U$ and to move from $B_{i-1}$ to $B_{i}$ we swap $\alpha_i$ for $-\alpha_i$. Note that, for each $i$, $-\alpha_i$ is a weight of $\Lie\+ U_{<0}$ or $\Lie\+ U_{0,-}$, and because $U_{0,-}$ normalizes $U_{<0}$, we can and do choose this chain so that $-\alpha_1, \ldots, -\alpha_s$ lie in $U_{<0}$ and $-\alpha_{s+1}, \ldots, -\alpha_l$ lie in $U_{0,-}$. Thus we may write 
\[ u= u_{>0} u_{0,+} u'_{<0} u_{0,-} = u_{>0} u_{0,+} u'_{-\alpha_1}\ldots u'_{-\alpha_s}u_{-\alpha_{s+1}}\ldots u_{-\alpha_l} \]
so that $u'_{<0}=u'_{-\alpha_1}\ldots u'_{-\alpha_s}$ and $u_{0,-}=u_{-\alpha_{s+1}}\ldots u_{-\alpha_l}$ for unique elements $u_{-\alpha_i}', i = 1,\ldots,s,$ and $u_{-\alpha_i}, i = s+1, \ldots, l$ of the root subgroups $U_{-\alpha_i}(C((t)))$.  Suppose now that $u_{0,-}$ is not in $K$. Then at least one factor $u_{-\alpha_{i}}$ must not be in $K$; let $s+1 \leq i \leq l$ be the index of the last root factor that is not in $K$. Note that since $t^\mu$ acts trivially by conjugation on $U_0$, we may pass the factors starting at $i$ through $t^\mu$ to obtain
\[ u \cdot t^\mu K = u_{>0} u_{0,+} u'_{<0} u_{-\alpha_{s+1}}\ldots u_{-\alpha_l} t^\mu K =  u_{>0} u_{0,+} u'_{<0} u_{-\alpha_{s+1}}\ldots u_{-\alpha_{i-1}} t^\mu u_{-\alpha_i} K. \]
We may fix an identification of $U_{-\alpha_i}(C((t)))$ with $C((t))$ such that $u_{-\alpha_i} =t^{-m}$ for $m>0$. From the following computation with $2 \times 2$ matrices, applied to the principal $\mr{SL}_2$ containing $U_{\alpha_i}$ and $U_{-\alpha_i}$ (the latter as the lower triangular unipotents and the former as the upper triangular unipotents), 
\[ \begin{bmatrix} 1 & 0\\ t^{-m} &1 \end{bmatrix} = \begin{bmatrix}1 & t^{m} \\ 0 & 1\end{bmatrix} \begin{bmatrix}t^m & 0 \\ 0 & t^{-m} \end{bmatrix} \begin{bmatrix} 0 & -1 \\ 1 & t^{m} \end{bmatrix}. \]
we find 
\[ u \cdot t^\mu K = u_{>0} u_{0,+} u'_{< 0} u_{-\alpha_{s+1}}\ldots u_{-\alpha_{i-1}} v_{\alpha_i} t^{\mu + m \alpha_i^\vee} K \]
where $v_{\alpha_i}$ is in the root subgroup for $\alpha_i$. The product $s$ on the left of $t^{\mu + m \alpha_i^\vee}$ lies in the unipotent radical of $B_{i-1}$. If we conjugate by the Weyl element $w$ sending $\mu + m \alpha_i^\vee$ to a dominant weight $\mu'$, this does not change the Schubert cell, so we get $w s w^{-1} t^{\mu'} \in K t^\mu K$. By construction, $\mu'$ is strictly larger than $\mu$ in the lexicographic order.  Since the element $w s w^{-1}$ lies in the unipotent radical of $w B_{i-1} w^{-1}$, \cref{lemma.first-unipotent-shifting}-(1) implies that $w s w^{-1} t^{\mu'}$ lies in $K t^\nu K$ for $\nu \geq \mu'$, but as lexicographic order refines dominance order, we see $\nu$ is strictly greater than $\mu$ in the lexicographic order, a contradiction to $\nu=\mu.$ 

Thus we have $u_{0,-} \in K$, and we may pass it through $t^\mu$ to obtain
\[  u \cdot t^\mu K = u_{>0} u_{0,+} u'_{<0} \cdot t^\mu K. \]
Arguing similarly to the above with $u'_{<0}=u'_{-\alpha_1}\ldots u'_{-\alpha_s}$, we find that $t^{-\mu} u'_{<0} t^{\mu} \in K$ so we may pass it through $t^\mu$ to obtain 
\[ u \cdot t^\mu K = u_{>0} u_{0,+} \cdot t^\mu K.\]

Since $u_{>0} u_{0,+} \in B^+(C((t)))$, we conclude that $u_{>0} u_{0,+}$ is in $K$ by \cref{lemma.first-unipotent-shifting}-(3). Thus, so is each factor. We have already established $u_{0,-} \in K$ and $t^{-\mu} u'_{<0} t^\mu \in K$, thus since $u_{0,-}$ commutes with $t^{\pm \mu}$, we find also that
\[ t^{-\mu} u_{<0} t^\mu = t^{-\mu} u_{0,-}^{-1} u'_{<0} u_{0,-} t^{\mu} = u_{0,-}^{-1} (t^{-\mu} u'_{<0} t^{\mu}) u_{0,-} \in K.\] 
\end{proof}

\subsection{Conclusion}

We now give the promised specialization result refining and generalizing \cref{prop.latt-graded-properties}. 

\begin{theorem}\label{thm.specialization-goes-up-when-not-good}
Let $G$ be a connected linear algebraic group over $C$ with Levi decomposition $G=M \ltimes U$.  Let $g \in G(C((t)))$ with $g=mu$ and let $[\mu]$ be the conjugacy class of cocharacters of $M$ identifying the Schubert cell containing $m$ (i.e. $m \in M(C[[t]]) t^{\mu} M(C[[t]])$). For any representation $\rho: G \rightarrow \GL_n$, if $[\nu]$ is the conjugacy class of cocharacters indexing the Schubert cell containing $\rho(g)$, then $[\nu]\geq [\rho\circ \mu]$ (the conjugacy class of cocharacters indexing the Schubert cell containing $\rho(m)$). If $\rho$ is faithful, then $[\nu]=[\rho \circ \mu]$ if and only if $g \in G(C[[t]]) t^{\mu} G(C[[t]]).$
\end{theorem}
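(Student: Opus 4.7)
For the inequality $[\nu] \geq [\rho \circ \mu]$, I would exhibit the bilatticed vector space $W_g := (V_{C((t))}, V_{C[[t]]}, \rho(g) V_{C[[t]]})$ attached to $\rho(g)$ (with $V = C^n$) as an iterated extension whose associated graded has type $[\rho \circ \mu]$. Specifically, the filtration of $V$ by powers $\mf{u}^iV$ of the augmentation ideal of $U$ is $G$-stable, and $U$ acts trivially on each graded piece. Applying the bilatticed assignment to this filtration then yields a filtration by strict sub-objects of $W_g$ whose associated graded is naturally isomorphic to $W_m := (V_{C((t))}, V_{C[[t]]}, \rho(m) V_{C[[t]]})$, and the latter has type $[\rho \circ \mu]$ since $g$ and $m$ induce the same action on the graded pieces. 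The desired inequality then follows from \cref{prop.latt-graded-properties}.

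For the equivalence when $\rho$ is faithful, the ``if'' direction is immediate from functoriality of $\rho$. For the converse, I reduce to the case $g = t^\mu u$ with $u \in U(C((t)))$ as follows: writing $m = k_1 t^\mu k_2$ with $k_i \in M(C[[t]])$ and using normality of $U$ in $G$, rewrite $g = k_1 \cdot t^\mu (k_2 u k_2^{-1}) \cdot k_2$, so that replacing $g$ by $k_1^{-1} g k_2^{-1}$ preserves both the hypothesis and the conclusion. Then decompose $u = u_{>0} u_0 u_{<0}$ using the conjugation action of $\mu$ on $U$, via \cite[Theorem 3.3.11]{conrad-gabber-prasad:pseudoreductive}. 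The claim is that the hypothesis $[\nu] = [\rho \circ \mu]$ forces integrality conditions on these three pieces (namely $t^\mu u_{>0} t^{-\mu}$, $u_0$, and $u_{<0}$ lie in the corresponding integral subgroups), which together with a direct calculation using that $u_0$ commutes with $t^\mu$ imply $g \in G(C[[t]]) t^\mu G(C[[t]])$.

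To establish the integrality conditions I would adapt the proof technique of \cref{lemma.refined-unipotent-action} inside $\GL_n$: choose a maximal torus $T \subseteq M$ containing the image of $\mu$, extend to $T' \supseteq \rho(T)$ in $\GL_n$, and pick a Borel $B' \supseteq T'$ for which $\rho \circ \mu$ is dominant. Then $\rho(u_{>0})$ lies in the unipotent radical of $B'$, $\rho(u_0)$ in the centralizer Levi of $\rho \circ \mu$, and $\rho(u_{<0})$ in the unipotent radical of the opposite Borel. Any failure of integrality in one of these three pieces, via the $\mathrm{SL}_2$-cancellation identity underlying \cref{lemma.first-unipotent-shifting}(3) and the chain-of-Borels construction in the proof of \cref{lemma.refined-unipotent-action}, produces a strict increase in the $\GL_n$-Cartan type of $\rho(g)$, contradicting $[\nu] = [\rho \circ \mu]$. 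The main obstacle is that $\rho(U)$ is not typically contained in the unipotent radical of a single Borel of $\GL_n$ for which $\rho \circ \mu$ is dominant: the three components $\rho(u_{>0})$, $\rho(u_0)$, $\rho(u_{<0})$ straddle $B'$, its Levi, and $B'^-$, so \cref{lemma.refined-unipotent-action} does not apply off-the-shelf, and the chain-of-Borels bookkeeping must be carried out separately for each piece, with the most delicate case being $\rho(u_{<0})$ where conjugation by $t^{\rho \circ \mu}$ decreases valuations and one must use a dual $\mathrm{SL}_2$-identity inside $B'^-$.
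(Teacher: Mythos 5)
Your treatment of the first part (the inequality $[\nu] \geq [\rho \circ \mu]$) is correct and matches the paper in substance: you filter $V$ by $\mf{u}^iV$ and apply \cref{prop.latt-graded-properties} directly, whereas the paper first reduces to the solvable case and then invokes \cref{lemma.first-unipotent-shifting}(1), but these are the same argument at heart. Your reduction to $g = t^\mu u$ by conjugating by $k_1, k_2 \in M(C[[t]])$ is also correct and is what the paper does.

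The gap is in the converse, and it is a real one: the obstacle you flag at the end — that $\rho(U)$ is ``not typically contained in the unipotent radical of a single Borel of $\GL_n$ for which $\rho \circ \mu$ is dominant'' — reflects a misreading of \cref{lemma.refined-unipotent-action} and causes you to plan extra bookkeeping that is never needed. In \cref{lemma.refined-unipotent-action} there are \emph{two} Borels in play: dominance of $\mu$ is declared with respect to the fixed $B^+$, while the unipotent subgroup $U'$ is required only to lie in the unipotent radical of \emph{some} Borel $B \in \mc{B}$ containing the torus $T$, and $B$ need not equal $B^+$. The $U' = U'_{>0} U'_0 U'_{<0}$ decomposition is by $\mu$-weight and is allowed to straddle $U^+$, the centralizer, and $U^-$ — the whole point of that lemma is to handle exactly that configuration. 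You conflate the Borel $B'$ making $\rho \circ \mu$ dominant with the Borel that must contain $\rho(U)$; they are different, and once this is disentangled the lemma applies off the shelf with $U' = \rho(U)$.

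What makes this work cleanly, and what your proposal is missing, is the paper's reduction to the solvable group $\mu(\bbG_m) \ltimes U$: after passing to $m = t^\mu$, the element $g$ lives in $(\bbG_m \ltimes U)(C((t)))$, and since $\bbG_m \ltimes U$ is solvable its image under $\rho$ lies inside a single Borel of $\GL_n$. Conjugating so that this Borel contains $T$ and so that $\rho \circ \mu$ is dominant for $B^+$, one obtains precisely the setup of \cref{lemma.refined-unipotent-action}: $\rho(U)$ is a closed subgroup of the unipotent radical of $B \in \mc{B}$, preserved by conjugation by $t^{\rho \circ \mu}$ (because $U$ is normal in $G$). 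The lemma then yields $u_{>0}, u_0 \in U(C[[t]])$ and $t^{-\mu} u_{<0} t^\mu \in U(C[[t]])$ directly, and the product identity $g = t^\mu u = (t^\mu u t^{-\mu}) t^\mu$ finishes the argument without any separate case analysis or ``dual $\mathrm{SL}_2$-identity.'' Your plan to redo the chain-of-Borels argument piecewise would at best re-derive \cref{lemma.refined-unipotent-action}, and as sketched it is not clear it closes, since the three components do not sit inside a single unipotent group in your setup (you never arrange that). The fix is not more bookkeeping; it is the preliminary replacement of $G$ by $\bbG_m \ltimes U$.
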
 
\begin{proof}

It changes nothing to multiply on the left and right by an element of $M(C[[t]]))$, so we may assume $m=t^{\mu}$. Then, replacing $G$ with $\mbb{G}_m \ltimes U$, we may assume $G$ is solvable. Then its image under $\rho$ lies in a Borel subgroup, and conjugating $\rho$ suitably by an element of $\GL_n(C)$ we may assume that $\rho \circ \mu$ is dominant and this is a standard Borel $B \in \mc{B}$. Then $\rho(u)$ factors through the unipotent radical of $B$, and the first claim follows from \cref{lemma.first-unipotent-shifting}. If the representation is faithful then $U$ is a subgroup of the unipotent radical of $B$ preserved under conjugation by $t^\mu$, so if $[\mu]=[\nu]$ then \cref{lemma.refined-unipotent-action} gives
\[ u t^\mu = u_{>0}u_0 u_{<0} t^{\mu} = u_{>0} u_0 t^{\mu} t^{-\mu}u_{<0} t^{\mu} \in U(C[[t]]) t^{\mu} U(C[[t]]). \]
\end{proof}

\begin{proof}[Conclusion of proof of \cref{theorem.good-equivalence-bilatticed}]
Recall that it remains only to show $(3) \implies (1)$ in \cref{theorem.good-equivalence-bilatticed}.  To that end, let $\omega_\bl$ be a bilatticed $G$-bundle, and suppose $\omega_f:=\BB_2 \circ \omega_\bl$ is a filtered $G$-bundle. Pick a representative $g \in G(C((t)))$ for the double coset corresponding to $\omega_\bl$, and write $g=mu$ for a Levi decompostion $G=MU$. Let $[\mu]$ be the conjugacy class of cocharacters of $M$ describing the Schubert cell of $m$. We want to show $g \in G(C[[t]]) t^\mu G(C[[t]])$.

To that end, let $\rho: G \rightarrow \GL(V)$ be a faithful representation. By \cref{thm.specialization-goes-up-when-not-good}, it suffices to show $[\rho\circ \mu]$ is the type of $\rho_\ast \omega_{\bl}$, i.e. that the bilatticed vector space associated to $\rho(g)$ is of type $[\rho \circ \mu]$. 

Let $\mf{u}:=\Lie\+ U$. Then $\rho(G)$ is contained in the parabolic subgroup stabilizing the filtration of $V$ by $\mf{u}^i V$. The action on the associated graded is through $M$, thus the associated graded for this filtration is of type $[\rho \circ \mu]$. Since the associated filtration by subobjects in the category of filtered vector spaces after applying $\BB_2$ is strict by assumption, we conclude the filtration on $V$ is also of type $[\rho \circ \mu]$. Since this agrees with the type of $\rho_\ast \omega_{\bl}$, we are done. 
\end{proof}

\section{Real and $p$-adic Hodge structures}\label{s.pahs}

In this section we will define the category of $p$-adic Hodge structures and establish some of its basic properties. By way of motivation, we begin in \cref{ss.real-hodge} by recalling a geometric perspective on the definition of a real Hodge structure via the theory of vector bundles on the twistor line (due to Simpson \cite{simpson:mixed-twistor}) that leads to a definition of a category of extended real Hodge structures containing real Hodge structures as a full subcategory. This perspective will be mirrored in our definition of $p$-adic Hodge structures in \cref{ss.p-adic-hodge-definition}, replacing the twistor line with the Fargues-Fontaine curve. With these definitions in place, in \cref{ss.structural-properties} and \cref{ss.pahs-invariants} we give a symmetric treatment of the basic structural properties of both extended real and $p$-adic Hodge structures --- in particular, we discuss Mumford-Tate groups and some fundamental invariants. In \cref{ss.hodge-tate-lines} we explain how to compute the Mumford-Tate group of a $p$-adic Hodge structure using Hodge-Tate lines, analogous to the Hodge tensors in classical Hodge theory. It is important here to give a criterion that applies to non-reductive groups; we explain more about the lack of reductivity in this theory and the relation to polarizability in classical Hodge theory in \cref{remark.reductivity-criterion}. 

\subsection{Definition of (extended) real Hodge structures}\label{ss.real-hodge} Traditionally, a real Hodge structure of weight $k$ is defined to be a pair $(V, \Fil^\bullet V_{\mbb{C}})$ where $V$ is a finite dimensional real vector space and $\Fil^\bullet$ is a decreasing filtration on $V_{\mbb{C}}$ satisfying the following transversality condition with its complex conjugate
\[ \Fil^{p} V_{\mbb{C}} \oplus  \overline{\Fil^{k-p+1} V_{\mbb{C}}} = V_{\mbb{C}}. \]

Our goal here is to reinterpret this transversality condition using the theory of vector bundles on the twistor line $\tilde{\mbb{P}}^1$, which can be constructed as the solution set of $U^2+V^2+W^2=0$ in $\mbb{P}^2_{\mbb{R}}$. Other constructions of $\tilde{\mbb{P}}^1$ and the theory of vector bundles on $\tilde{\mbb{P}}^1$ are discussed in detail in \cref{ss.twistor-line}. The key points here are:
\begin{enumerate}
    \item $\tilde{\mbb{P}}^1/\mbb{R}$ is a smooth projective curve with a canonical point $\infty_{\mbb{C}} \in \tilde{\mbb{P}}^1(\mbb{C})$.
    \item There is an action of the circle group $U(1)= a + bi$, $a^2+b^2=1$ on $\tilde{\mbb{P}}^1$ fixing a unique closed point, the one underlying $\infty_{\mbb{C}}$, and we may identify the completed local ring at $\infty_{\mbb{C}}$ with $\mbb{C}[[t]]$ where the coordinate $t$ is chosen so that $U(1)$ acts on $t$ by $z \cdot t = z^{-2}t$. 
    \item There is a slope formalism for vector bundles on $\tilde{\mbb{P}}^1$. Any vector bundle is a direct sum of stable bundles, any stable bundle has slope $\lambda \in \frac{1}{2} \mbb{Z}$, and for any such $\lambda$ there is a unique up-to-isomorphism stable bundle $\mc{O}_{\tilde{\mbb{P}}^1}(\lambda)$ of slope $\lambda$ (these are matched with the real isocrystals of \cref{sss.real-isocrystals}).  
\end{enumerate}

Now, suppose given a real vector space $V$ and a filtration $F^\bullet V_{\mbb{C}}$ on its complexification. By the canonical lattice construction of \cref{ss.from-filtrations-to-lattices-and-back} and \cref{prop.filt-det-latt-criteria}-(2), there is a unique promotion of $F^\bullet V_{\mbb{C}}$ to a $U(1)$-equivariant lattice in $V_{\mbb{C}((t))}$, $\mc{L}:=\sum_{i} t^{-i}\mbb{C}[[t]] \cdot \Fil^i V_{\mbb{C}}$. Let $(V \otimes_{\mbb{R}} \mc{O}_{\tilde{\mbb{P}}^1})_{\mc{L}}$ be the modification of $(V \otimes_{\mbb{R}} \mc{O}_{\tilde{\mbb{P}}^1})$ by the lattice $\mc{L}$ at $\infty$ (see Definition \ref{def.modification-vb}). 
By a result of Simpson \cite{simpson:mixed-twistor},

\begin{lemma}\label{lemma.Hodge-iff-semistable} $(V, F^\bullet V_{\mbb{C}})$ is a weight $k$ Hodge structure if and only if the modified vector bundle $(V \otimes_{\mbb{R}} \mc{O}_{\tilde{\mbb{P}}^1})_{\mc{L}}$ is semistable of slope $k/2$. 
\end{lemma}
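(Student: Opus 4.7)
The plan is to pull back along the double cover $\pi \colon \mbb{P}^1_{\mbb{C}} = \tilde{\mbb{P}}^1 \times_{\Spec \mbb{R}} \Spec\, \mbb{C} \to \tilde{\mbb{P}}^1$ and apply Grothendieck's classification on $\mbb{P}^1_{\mbb{C}}$. By Galois descent, and using that the preimage under $\pi$ of the closed point underlying $\infty_{\mbb{C}}$ is $\{\infty_{\mbb{C}}, 0_{\mbb{C}}\}$, the pullback $\pi^{\ast}(V \otimes_{\mbb{R}} \mc{O}_{\tilde{\mbb{P}}^1})_{\mc{L}}$ is identified with the modification of $V_{\mbb{C}} \otimes_{\mbb{C}} \mc{O}_{\mbb{P}^1_{\mbb{C}}}$ at $\infty_{\mbb{C}}$ by $\mc{L}$ (in local coordinate $t$) and at $0_{\mbb{C}}$ by the complex-conjugate lattice $\overline{\mc{L}} = \sum_{p} (1/t)^{-p} \cdot \overline{F^p V_{\mbb{C}}} \cdot \mbb{C}[[1/t]]$ (in local coordinate $1/t$). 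Since pullback along $\pi$ doubles degrees and preserves ranks, the original bundle is semistable of slope $k/2$ on $\tilde{\mbb{P}}^1$ if and only if its pullback is semistable of slope $k$ on $\mbb{P}^1_{\mbb{C}}$, i.e., isomorphic to $\mc{O}_{\mbb{P}^1_{\mbb{C}}}(k)^{\oplus d}$ where $d = \dim V$.

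For the forward direction, the plan is to use the Hodge decomposition $V_{\mbb{C}} = \bigoplus_{p+q = k} V^{p,q}$ together with $\overline{V^{p,q}} = V^{q,p}$ to diagonalize both lattices against the same splitting: a short computation yields $\mc{L} = \bigoplus_{p+q=k} t^{-p} V^{p,q}[[t]]$ at $\infty_{\mbb{C}}$ and $\overline{\mc{L}} = \bigoplus_{p+q=k} t^{q} V^{p,q}[[1/t]]$ at $0_{\mbb{C}}$. The modified bundle then decomposes as a direct sum; the summand indexed by $(p, q)$ is the line bundle $V^{p,q} \otimes \mc{O}_{\mbb{P}^1_{\mbb{C}}}(p \cdot \infty + q \cdot 0)$, of degree $p + q = k$. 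Hence the pullback is isomorphic to $\mc{O}_{\mbb{P}^1_{\mbb{C}}}(k)^{\oplus d}$.

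For the converse, write $\mc{E}_{\mr{mod}} := (V \otimes_{\mbb{R}} \mc{O}_{\tilde{\mbb{P}}^1})_{\mc{L}}$. The isomorphism $\pi^{\ast} \mc{E}_{\mr{mod}} \cong \mc{O}_{\mbb{P}^1_{\mbb{C}}}(k)^{\oplus d}$ is equivalent to the conjunction of (a) the degree equality $\deg \pi^{\ast} \mc{E}_{\mr{mod}} = kd$ and (b) the vanishing $H^0(\pi^{\ast} \mc{E}_{\mr{mod}}(-(k+1) \cdot \infty)) = 0$. The main technical step is to evaluate global sections via Beauville--Laszlo descriptions on the two affine opens $\mbb{P}^1 \setminus \{0\}$ and $\mbb{P}^1 \setminus \{\infty\}$, which (for each $m \in \mbb{Z}$) gives
\[
H^0\!\bigl(\mbb{P}^1_{\mbb{C}},\, \pi^{\ast} \mc{E}_{\mr{mod}}(-m \cdot \infty)\bigr) \;=\; \bigoplus_{n \in \mbb{Z}} \bigl(F^{m-n} V_{\mbb{C}} \,\cap\, \overline{F^n V_{\mbb{C}}}\bigr) \cdot t^n.
\]
Condition (b) then translates to the transversality $F^p V_{\mbb{C}} \cap \overline{F^{k - p + 1} V_{\mbb{C}}} = 0$ for all $p$, while (a) becomes $\sum_p p \cdot \dim \mr{gr}^p F = kd/2$. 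Summing the elementary inequality $\dim F^p + \dim \overline{F^{k - p + 1} V_{\mbb{C}}} \leq \dim V$ against (a) forces equality in each $p$, whence $V_{\mbb{C}} = F^p \oplus \overline{F^{k - p + 1} V_{\mbb{C}}}$ for all $p$ --- the defining condition of a weight $k$ Hodge structure. The main obstacle is the explicit global section computation; the dimension-counting step is short bookkeeping once that is in hand.
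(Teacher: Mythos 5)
Your argument is correct. The forward direction (Hodge structure $\Rightarrow$ semistable) is essentially the paper's proof: pull back along the degree-two \'etale cover $\pi : \mbb{P}^1_{\mbb{C}} \to \tilde{\mbb{P}}^1$, commute pullback with modification, use the Hodge decomposition to split into $h^{p,q}$ lines each contributing $\mc{O}_{\mbb{P}^1_{\mbb{C}}}(p+q) = \mc{O}(k)$. Where you go beyond the paper is the converse: the paper proves only the forward direction and attributes the full equivalence to Simpson, while you supply a self-contained argument. Your global-section computation is correct: a Laurent polynomial $\sum_n v_n t^n$ over $\mbb{A}^1 \setminus \{0\}$ lies in $t^m \mc{L}$ at $\infty$ iff $v_n \in F^{m-n}V_{\mbb{C}}$ and in $\overline{\mc{L}}$ at $0$ iff $v_n \in \overline{F^n V_{\mbb{C}}}$, giving the stated formula for $H^0$. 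The final bookkeeping step deserves one more line than you give it: the clean way is the identity
\[
\sum_{p \in \mbb{Z}}\bigl(d - \dim F^p - \dim \overline{F^{k-p+1} V_{\mbb{C}}}\bigr) \;=\; kd - 2\sum_p p\, \dim \gr^p F,
\]
which one checks by Abel summation, noting that both sides are finite sums. Then (a) says the right side is zero, and (b) says each summand on the left is $\geq 0$, forcing every summand to vanish and yielding $V_{\mbb{C}} = F^p \oplus \overline{F^{k-p+1} V_{\mbb{C}}}$. (Neither (a) nor (b) alone suffices: there are filtrations satisfying the degree condition with some summands positive and others negative.) With that identity spelled out, the argument is complete and gives an elementary alternative to citing \cite{simpson:mixed-twistor} for the converse.
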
 
\begin{proof}
Here we just recall why the modification is semistable if it is a weight $k$ Hodge structure. First, observe that, by the constructions in \cref{ss.twistor-line}, it suffices to check that the pullback of the modification to $\mbb{P}^1$ is a direct sum of $\mc{O}(k)$. One can commute the pullback with the modification, so this is equivalent to modifying $V_\mbb{C} \otimes \mc{O}_{\mbb{P}^1_{\mbb{C}}}$ at $\infty$ by the Hodge filtration and at $-\infty$ by its complex conjugate. If we write $V_{\mbb{C}}=\bigoplus h^{p,q}$, then the filtrations both break up as a direct sum over this decomposition, where the Hodge filtration on $h^{p,q}$ is concentrated in degree $p$ and its conjugate in degree $q$. If we fix a basis for $h^{p,q}$, this further decomposes into modifications of one-dimensional trivial bundles along filtrations with these concentrations. The resulting modification is a line bundle of degree $p+q=k$, thus $\mc{O}(k)$, and we conclude. 
\end{proof}

Thus, the canonical filtration realizes the category of $\mbb{C}$-filtered real vector spaces as a full rigid tensor sub-category of $\mbb{C}[[t]]$-latticed real vector spaces, and the condition to be a weight $k$ Hodge structure can be formulated entirely in terms of the lattice. This motivates

\begin{definition}
The category of \emph{extended real Hodge structures} of weight $k$ is the full subcategory of $\mbb{C}[[t]]$-latticed real vector spaces (\cref{def.filt-latt-vector-spaces}) consisting of $(V, \mc{L})$ such that $(V \otimes_{\mbb{R}} \mc{O}_{\tilde{\mbb{P}}^1} )_{\mc{L}}$ is semistable of slope $k/2$. The category of extended real Hodge structures is the full subcategory of $\mbb{Z}$-graded $\mbb{C}[[t]]$-latticed real vector spaces consisting of those whose degree $k$ component is a weight $k$ extended real Hodge structure. 
\end{definition}

By \cref{lemma.Hodge-iff-semistable} and \cref{prop.filt-det-latt-criteria}-(2), the canonical lattice embeds the category of real Hodge structures as a full rigid tensor subcategory of extended real Hodge structures --- the ones such that the lattice is preserved by $U(1)$. Moreover, the functor $\BB$ of \cref{ss.from-filtrations-to-lattices-and-back} attaches to any extended real Hodge structure $(V,\mc{L})$ a Hodge filtration on $V_{\mbb{C}}$, recovering the Hodge filtration on real Hodge structures. In general the lattice contains more information.

\subsection{Definition and examples of $p$-adic Hodge structures}\label{ss.p-adic-hodge-definition}
Let $C/\mbb{Q}_p$ be an algebraically closed non-archimedean extension. In \cite{fargues-fontaine:courbes}, Fargues and Fontaine constructed from $C$ a $1$-dimensional scheme over $\Spec\+ \mbb{Q}_p$, $\FF=\FF_{C^\flat, \mbb{Q}_p}$, the Fargues-Fontaine curve, that behaves in every way like a smooth proper curve over $\mbb{Q}_p$ except that it is of infinite type. The geometry of $\FF$ encodes the relation between Fontaine's period rings. The construction is recalled in more detail in \cref{ss.fargues-fontaine}; the key points for this section are:
\begin{enumerate}
    \item There is a canonical closed point $\infty_C$ and identification of the residue field $\kappa(\infty_C)$ with $C$.  The completed local ring of $\infty_C$ is the complete discrete valuation ring $B^+_\dR$ and the complement of $\infty_C$ is the spectrum of the principal ideal domain $B_\crys^{\varphi=1}$. 
    \item If $C=\overline{K}^\wedge$ for a $p$-adic field $K$, then there is an action of $\Gal(\overline{K}/K)$ on $\FF=\FF_{\mbb{Q}_p, C^\flat}$ and $\infty_{C}$ is the unique closed point fixed by $\Gal(\overline{K}/K)$; the induced action on the residue field $C$ is the standard one, and $t \in B^+_\dR$, the ``$p$-adic $2\pi i$'' which generates the maximal ideal in $B^+_\dR$, transforms by the $p$-adic cyclotomic character of $\Gal(\overline{K}/K)$. 
    \item There is a slope formalism for vector bundles on $\FF$, and any semistable vector bundle has slope $\lambda \in \mbb{Q}$ and is a direct sum of copies of the unique up-to-isomorphism stable bundle $\mc{O}_\FF(\lambda)$ of slope $\lambda$ (these are matched with the $p$-adic isocrystals of \cref{sss.isocrystals}). 
\end{enumerate}

Our definition of a $p$-adic Hodge structure will be analogous to the definition of extended real Hodge structures but using $\FF$ instead of $\tilde{\mbb{P}}^1$.

\begin{definition}\label{def.p-adic-hs}
The category of \emph{$p$-adic Hodge structures} over $C$ of weight $\lambda \in \QQ$ is the full subcategory of $B_\dR^+$-latticed $\QQ_p$-vector spaces (\cref{def.filt-latt-vector-spaces}) consisting of $(V, \mc{L})$ such that $(V \otimes_{\QQ_p} \mc{O}_{\FF} )_{\mc{L}}$ --- the modification of $V \otimes_{\QQ_p} \mc{O}_{\FF}$ by the lattice $\mc{L}$ at $\infty_C$ as in \cref{def.modification-vb} --- is semistable of slope $-\lambda/2$. The category of $p$-adic Hodge structures over $C$ is the full subcategory of $\mbb{Q}$-graded $B_\dR^+$-latticed $\QQ_p$-vector spaces consisting of those whose degree $\lambda$ component is a weight $\lambda$ $p$-adic Hodge structure over $C$. 
\end{definition}

\begin{remark} The convention on weights is chosen so that the Tate structure $\mbb{Q}_p(k)$ of \cref{example.tate-p-adic-hs} has weight $-2k$.
\end{remark}

\begin{remark}
If $K \subseteq C$ is a $p$-adic field and $F^\bullet V_K$ is a filtration of $V_K$, then we can promote it to a lattice $\mc{L}$ on $V_{B_\dR}$ using the canonical lattice construction. Using this, one could define an analog of real Hodge structures instead of extended real Hodge structures, however, outside of the minuscule case where the filtration and lattice are equivalent, this construction does not arise in any natural way that we are aware of --- for example, \cref{main.transcendence} implies that, for a $p$-adic Hodge structure coming from geometry over a $p$-adic field, the lattice will never come from such a filtration unless the $p$-adic Hodge structure has complex multiplication. 
\end{remark}

The category $\HS(C)$ of $p$-adic Hodge structures is a full subcategory of the category of $\mbb{Q}$-graded $B^+_\dR$-latticed $\mbb{Q}_p$-vector spaces, and is stable under the natural tensor product and dual functors. The forgetful functor
    \[ \omega_\et \colon \HS(C) \to \Vect(\QQ_p) \]
is a faithful exact tensor functor. The functor $\BB$ from \cref{ss.from-filtrations-to-lattices-and-back} induces a Hodge-Tate filtration $\Fil^i_\HT$ on $\omega_\et \otimes C$, with 
\[ \Fil^i_\HT(V, \mc{L}) =\textrm{Image in $V_C$ of } (V_{B^+_\dR} \cap (\Fil^i B_\dR) \cdot \mc{L}) \]
It is a tensor functor, but we caution that it is not exact, even when restricted to $p$-adic Hodge structures  --- see \cref{example.ext-of-trivial} below. 

\begin{example}\label{example.latticed-vect-space-geometry}
If $X/C$ is a smooth proper rigid analytic variety then, by \cite[Theorem 13.1]{bhatt-morrow-scholze:1}, for each degree $i$, we obtain a $B^+_\dR$-latticed $\mbb{Q}_p$-vector space
\[ (H^i_\et(X, \mbb{Q}_p), \mc{L}^i_\dR) \]
where $\mc{L}^i_\dR$ is a canonical deformation of $H^i_\dR(X)$ to $B^+_\dR$ embedded as a $B^+_\dR$-lattice inside of $H^i_\et(X,\mbb{Q}_p) \otimes B_\dR$. The induced filtration on $H^i_\et(X, \mbb{Q}_p) \otimes C$ is the Hodge-Tate filtration. If $\mf{X}/\mc{O}_C$ is a smooth proper formal model, then
\[ (H^i_\et(X, \mbb{Q}_p) \otimes \mc{O}_\FF)_{\mc{L}^i_\dR} \]
is semistable if and only if the isocrystal determined by the $i$th crystalline cohomology of the special fiber is $i/2$-isotypic. In particular, if it is a $p$-adic Hodge structure it is of weight $i$, but in many cases it is not a $p$-adic Hodge structure, for example if $X$ is an elliptic curve with ordinary reduction.   
\end{example}

\begin{example}\label{example.tate-p-adic-hs}
For $k \in \mbb{Z}$, the Tate $p$-adic Hodge structure of weight $-2k$ is
\[ \mbb{Q}_p(k):= (\mbb{Q}_p \cdot t^k, B^+_\dR) \cong (\mbb{Q}_p, \Fil^{-k} B^+_\dR). \]
The Hodge-Tate filtration on $\mbb{Q}_p(k) \otimes C$ is concentrated in degree $k$ (i.e. the only non-zero graded is in degree $k$). For $k \geq 0$, $\mbb{Q}_p(k)$ is isomorphic to the dual of the  $p$-adic Hodge structure $(H^{2k}_\et(\mbb{P}^k, \mbb{Q}_p), \mc{L}^{2k}_\dR)$ of the previous example. We have natural identifications $\mbb{Q}_p(a)\otimes \mbb{Q}_p(b)=\mbb{Q}_p(a+b)$ and  $\mbb{Q}_p(k)^\vee=\mbb{Q}_p(-k)$. 

It is clear that any 1-dimensional $B^+_\dR$-latticed $\mbb{Q}_p$-vector space is isomorphic to $\mbb{Q}_p(k)$ for some $k$, so these give all of the one-dimensional $p$-adic Hodge structures. 
\end{example}

\subsection{Structural properties of extended real and $p$-adic Hodge structures}\label{ss.structural-properties}
We work in one of the following two situations:
\begin{enumerate}
    \item[\emph{Real}:] $K=\mbb{R}$, $C=\mbb{C}$, $X=\tilde{\mbb{P}}^1$, $B^+=\mbb{C}[[t]]$, $B=\mbb{C}((t))$, $\infty=\infty_{\mbb{C}}$.
    \item[\emph{$p$-adic}:] $K=\mbb{Q}_p$, $C/\mbb{Q}_p$ is an algebraically closed non-archimedean extension, ${X=\FF_{C^\flat}}$, $B^+=B^+_\dR$, $B=B_\dR$, $\infty=\infty_C$. 
\end{enumerate}
In the first case, by a Hodge structure we mean an extended real Hodge structure. In the second case, by a Hodge structure we mean a $p$-adic Hodge structure. We write $\HS(C)$ for the category of Hodge structures. In both cases, $\BB$ induces a functor from $B^+$-latticed $K$-vector spaces to $C$-filtered $K$-vector spaces. On extended real Hodge structures, this is the Hodge filtration. On $p$-adic Hodge structures, this is the Hodge-Tate filtration (we emphasize this distinction because the Hodge filtration will arise at a different point in the $p$-adic theory --- see \cref{s.admissible-pairs}). We caution that in both cases $\BB$ is not exact on $\HS(C)$ --- see \cref{example.ext-of-trivial}. 

\begin{lemma}\label{lemma.exact-pahs-to-vb}
The functor $(V, \mc{L}) \mapsto (V \otimes \mc{O}_X)_{\mc{L}}$ from $B^+$-latticed $K$-vector spaces to vector bundles on $X$ is exact. 
\end{lemma}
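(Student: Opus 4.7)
The plan is to reduce to checking that strict short exact sequences are sent to short exact sequences, and then to verify exactness locally at each closed point of $X$, using Beauville-Laszlo at $\infty$.

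First I would reduce to a strict short exact sequence
\[ 0 \to (V_1, \mc{L}_1) \to (V_2, \mc{L}_2) \to (V_3, \mc{L}_3) \to 0 \]
in the category of $B^+$-latticed $K$-vector spaces. By definition this means the underlying sequence of $K$-vector spaces is short exact and both maps are strict. Unwinding the strictness conditions, this says precisely that $\mc{L}_1 = (V_1 \otimes_K B) \cap \mc{L}_2$ inside $V_2 \otimes_K B$ and that $\mc{L}_3$ is the image of $\mc{L}_2$ in $V_3 \otimes_K B$. Equivalently,
\[ 0 \to \mc{L}_1 \to \mc{L}_2 \to \mc{L}_3 \to 0 \]
is a short exact sequence of $B^+$-modules.

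Next I would check exactness of the sequence of modified vector bundles stalk by stalk. At any closed point $x \neq \infty$, the modification is trivial, and the stalk at $x$ is just $V_i \otimes_K \mc{O}_{X,x}$; exactness is immediate since $-\otimes_K \mc{O}_{X,x}$ is exact on $K$-vector spaces. At the closed point $\infty$, exactness may be checked after the faithfully flat base change to the completed local ring $\hat{\mc{O}}_{X,\infty} = B^+$. By the construction of the modification via \cite{beauville-laszlo} (see \cref{def.modification-vb}), the completion of $(V_i \otimes_K \mc{O}_X)_{\mc{L}_i}$ at $\infty$ is $\mc{L}_i$, so the sequence of completions at $\infty$ is precisely the short exact sequence of lattices established above. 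Therefore the sequence of vector bundles is exact at every stalk, hence exact.

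There is no serious obstacle here; the content of the lemma is essentially the observation that strictness in the category of latticed vector spaces is exactly the condition that guarantees the induced sequence of $B^+$-lattices remains short exact, and that Beauville-Laszlo gluing is compatible with exact sequences (since it amounts to faithfully flat descent along $\mc{O}(X \setminus \infty) \times \hat{\mc{O}}_{X,\infty} \to $ the appropriate completion of the punctured formal neighborhood). The only potential subtlety is bookkeeping at $\infty$, and this is handled by working at the level of completed stalks.
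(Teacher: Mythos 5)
Your proof is correct, but it takes a somewhat different route from the paper's. The paper does not reduce to short exact sequences or check stalks explicitly; instead it takes an arbitrary morphism $f\colon(V,\mc{L})\to(V',\mc{L}')$, factors it through $T=f(V)$ with the two induced lattices $\mc{M}_1=f(\mc{L})\subseteq\mc{M}_2=f(V)_B\cap\mc{L}'$, and observes that the inclusion $(T\otimes\mc{O}_X)_{\mc{M}_1}\subseteq(T\otimes\mc{O}_X)_{\mc{M}_2}$ has skyscraper cokernel $\infty_*(\mc{M}_2/\mc{M}_1)$, so that $f$ is strict if and only if the image of the induced map of vector bundles is already saturated; exactness of the functor then drops out. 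This gives a clean characterization of strictness in bundle-theoretic terms as a byproduct, which is reused implicitly elsewhere. Your argument instead reduces to a strict short exact sequence, verifies that the induced sequence of lattices $0\to\mc{L}_1\to\mc{L}_2\to\mc{L}_3\to 0$ is exact, and then checks exactness of the modified bundles point by point: trivially away from $\infty$, and at $\infty$ by passing to the completed local ring $B^+$ (where Beauville--Laszlo identifies the completed stalk with $\mc{L}_i$, and exactness descends since completion of the Noetherian DVR $\mc{O}_{X,\infty}$ is faithfully flat). Both arguments are valid and hinge on the same underlying facts — the completed stalk at $\infty$ is the lattice, and the modification is invisible elsewhere — but the paper's version operates on a single morphism sheaf-theoretically, whereas yours decomposes into conflations and works at completed stalks. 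One small caution in your framing: the reduction to strict short exact sequences is legitimate here because strictness guarantees that an exact complex factors through admissible images, but it is worth noting that this step uses the exact-category structure and is not entirely free; the direct computation of $\ker(\mc{L}_n\to\mc{L}_{n+1})=\mr{im}(\mc{L}_{n-1}\to\mc{L}_n)$ from strictness of each map works just as well without the reduction.
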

\begin{proof}
Let $f:(V,\calL) \rightarrow (V', \calL')$ be a morphism. We write
\[ T=f(V), \; \calM_1 = f(\mc{L}) \subset T_{B}, \textrm{ and } \calM_2=f(V)_{B} \cap \mc{L}' \subset T_{B}\]
so that, by assumption, $\mc{M}_1 \subseteq \mc{M}_2$. The inclusion of lattices induces an exact sequence of coherent sheaves on $X$
\[ 0 \rightarrow (T \otimes \mc{O}_X)_{\mc{M}_1} \rightarrow (T \otimes \mc{O}_X)_{\mc{M}_2} \rightarrow {\infty}_* (\mc{M}_1/\mc{M}_2) \rightarrow 0. \]
and we deduce that a morphism is strict if and only if the image of the induced map of vector bundles is saturated. The exactness is then immediate.  
\end{proof}

\begin{lemma}\label{lemma.p-adic-Hodge-abelian}
The category of Hodge structures is abelian.
\end{lemma}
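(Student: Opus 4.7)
The plan is to reduce to a fixed weight using the grading, then show that every morphism between weight-$\lambda$ Hodge structures is automatically strict, and finally construct kernels and cokernels at the level of $B^+$-latticed $K$-vector spaces and verify they remain Hodge structures using the exactness of the functor to vector bundles in \cref{lemma.exact-pahs-to-vb}.

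First, since Hodge structures are $\mbb{Q}$-graded and morphisms respect the grading by definition, $\HS(C)$ decomposes as a product $\prod_\lambda \HS(C)_\lambda$ of weight components, so it suffices to show each $\HS(C)_\lambda$ is abelian. Fix $\lambda$ and a morphism $f \colon (V,\mc{L}) \to (V',\mc{L}')$ in $\HS(C)_\lambda$. I would first show that $f$ is strict. The induced morphism $\tilde f \colon \mc{E} \to \mc{E}'$ of vector bundles is a map between semistable bundles of slope $\lambda/2$. Its image $\mc{I} \subseteq \mc{E}'$ is torsion-free on $X$, hence a vector bundle, and its saturation $\mc{I}^{\mr{sat}}$ is a subbundle of $\mc{E}'$ of the same rank as $\mc{I}$. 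As a quotient of $\mc{E}$, the slope of $\mc{I}$ is at least $\lambda/2$, while as a subbundle of the semistable bundle $\mc{E}'$, the slope of $\mc{I}^{\mr{sat}}$ is at most $\lambda/2$; since $\mc{I} \subseteq \mc{I}^{\mr{sat}}$ have the same rank, both slopes equal $\lambda/2$ and $\mc{I} = \mc{I}^{\mr{sat}}$. The proof of \cref{lemma.exact-pahs-to-vb} shows saturation of the image bundle is equivalent to strictness of the underlying morphism, so $f$ is strict.

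Next I would construct kernels and cokernels in $\HS(C)_\lambda$. For the kernel, let $W = \ker(f) \subseteq V$ and $\mc{M} = \mc{L} \cap W_B$, which is a $B^+$-lattice in $W_B$; the pair $(W,\mc{M})$ is the kernel of $f$ in $B^+$-latticed $K$-vector spaces, and by \cref{lemma.exact-pahs-to-vb} its associated vector bundle identifies with $\ker \tilde f$, which is semistable of slope $\lambda/2$ by the standard fact that kernels of morphisms between semistable bundles of a common slope are semistable of that slope. For the cokernel, let $U = V'/f(V)$ with $\mc{N}$ the image of $\mc{L}'$ in $U_B$; the associated vector bundle is $\mc{E}'/\mc{I}$, again semistable of slope $\lambda/2$ by a dual slope argument using that $\mc{I}$ is saturated. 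Finally, by strictness of $f$, the coimage and image both coincide with $(f(V), f(\mc{L}))$, and the canonical map $\mr{coim}(f) \to \mr{im}(f)$ is an isomorphism.

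The main obstacle is the strictness step, which bridges the Hodge-theoretic data with the abelian structure on vector bundles on $X$; everything else then follows formally from standard slope arguments together with the exactness of $(V,\mc{L}) \mapsto (V\otimes\mc{O}_X)_{\mc{L}}$. These slope arguments are classical on $\tilde{\mbb{P}}^1$ and are provided for the Fargues-Fontaine curve in \cite{fargues-fontaine:courbes}.
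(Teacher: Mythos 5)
Your proof is correct and follows essentially the same strategy as the paper's: reduce to a fixed weight $\lambda$ and then use the slope formalism for semistable bundles of slope $\lambda/2$ to establish that every morphism is strict. Your $\mc{I}$ and $\mc{I}^{\mr{sat}}$ are precisely the bundles $(T\otimes\mc{O}_X)_{\mc{M}_1}$ and $(T\otimes\mc{O}_X)_{\mc{M}_2}$ appearing in the paper's argument, and the chain of slope inequalities $\lambda/2 \leq \mr{slope}(\mc{I}) \leq \mr{slope}(\mc{I}^{\mr{sat}}) \leq \lambda/2$ is the same. The only difference is that you spell out the existence of kernels and cokernels and verify they stay within the subcategory via standard slope arguments, whereas the paper notes that strictness is ``the only non-trivial thing to show'' and leaves those verifications implicit; this is fine since once strictness is known, the kernel and cokernel in $B^+$-latticed vector spaces give semistable bundles of the correct slope exactly as you argue.
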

\begin{proof}
It suffices to show that the category of Hodge structures of a fixed weight $\lambda$ is abelian, and the only non-trivial thing to show is that the morphisms are all strict, i.e. that the coimage is equal to the image. 

We can see this equality through slope considerations: proceeding as in the previous lemma, let $f:(V,\calL) \rightarrow (V', \calL')$ be a morphism of Hodge structures of weight $\lambda$ and write
\[ T=f(V), \; \calM_1 = f(\mc{L}) \subset T_{B}, \textrm{ and } \calM_2=f(V)_{B} \cap \mc{L}' \subset T_{B}.\]
We have $\calM_1 \subset \calM_2$ by assumption, and we want to deduce that $\calM_1 = \calM_2$. 

We have an exact sequence of coherent sheaves on $X$
\[ 0 \rightarrow (T \otimes \mc{O}_X)_{\mc{M}_1} \rightarrow (T \otimes \mc{O}_X)_{\mc{M}_2} \rightarrow {\infty}_* (\mc{M}_2/\mc{M}_1) \rightarrow 0. \]
Thus, the slope $\lambda_2$ of $(T \otimes \mc{O}_X)_{\mc{M}_2}$ is at least as large as the slope $\lambda_1$ of $(T \otimes \mc{O}_X)_{\mc{M}_1}$, with equality if and only if $\mc{M}_1=\mc{M}_2$. Now, the map $f$ realizes $(T \otimes \mc{O}_X)_{\mc{M}_1}$ as a quotient of $(V \otimes \mc{O}_X)_\calL$, which is semistable of slope $\lambda/2$, thus $\lambda_1 \geq \lambda/2$. On the other hand, $(T \otimes \mc{O}_X)_{\mc{M}_2}$ is a subbundle of $(V' \otimes \mc{O}_X)_{\mc{L'}}$, which is also semistable of slope $\lambda/2$, so its slope is $\lambda_2 \leq \lambda/2$. We conclude as we have shown
\[ \lambda/2 \leq \lambda_1 \leq \lambda_2 \leq \lambda/2. \]
\end{proof}

\begin{lemma}\label{lemma.pahs-two-three-sequence}
Suppose that
\[ 0\rightarrow (V_1, \mc{L}_1) \rightarrow (V_2, \mc{L}_2) \rightarrow (V_3, \mc{L}_3) \rightarrow 0\]
is an exact sequence of $B^+$-latticed $K$-vector spaces. If any two of the terms are Hodge structures of the same weight $\lambda$, then so is the third.
\end{lemma}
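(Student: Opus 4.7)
The plan is to reduce the statement to the analogous two-out-of-three property for semistability of vector bundles on $X$. Applying the functor $(V,\mc{L}) \mapsto (V \otimes \mc{O}_X)_{\mc{L}}$, which is exact by \cref{lemma.exact-pahs-to-vb}, to the given short exact sequence yields a short exact sequence of vector bundles
\[ 0 \to \mc{E}_1 \to \mc{E}_2 \to \mc{E}_3 \to 0 \]
on $X$, where $\mc{E}_i := (V_i \otimes \mc{O}_X)_{\mc{L}_i}$. By definition, $(V_i,\mc{L}_i)$ is a Hodge structure of weight $\lambda$ if and only if $\mc{E}_i$ is semistable of slope $\lambda/2$, so it suffices to show that semistability of slope $\lambda/2$ is a two-out-of-three property for short exact sequences of vector bundles on $X$.

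The slope of the middle term is determined by the other two via additivity of rank and degree (here I use that $\mc{E}_2 = \mc{E}_1 \oplus \mc{E}_3$ numerically, so the slope of whichever term is not assumed to have slope $\lambda/2$ is forced to equal $\lambda/2$). So it remains to handle semistability in each of the three cases. If $\mc{E}_1$ and $\mc{E}_2$ are semistable of slope $\lambda/2$ and $\mc{F} \subseteq \mc{E}_3$ is a subbundle with $\mu(\mc{F}) > \lambda/2$, then the preimage $\mc{F}' \subseteq \mc{E}_2$ fits in $0 \to \mc{E}_1 \to \mc{F}' \to \mc{F} \to 0$, so $\mu(\mc{F}')$ is a weighted average of $\lambda/2$ and $\mu(\mc{F})$, hence $>\lambda/2$, contradicting semistability of $\mc{E}_2$. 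Symmetrically, if $\mc{E}_2$ and $\mc{E}_3$ are semistable of slope $\lambda/2$, any subbundle of $\mc{E}_1$ is a subbundle of $\mc{E}_2$, so has slope $\le \lambda/2$. Finally, if $\mc{E}_1$ and $\mc{E}_3$ are semistable of slope $\lambda/2$ and $\mc{F} \subseteq \mc{E}_2$ is any subbundle, then $\mc{F} \cap \mc{E}_1 \subseteq \mc{E}_1$ has slope $\le \lambda/2$ and the saturation of the image of $\mc{F}$ in $\mc{E}_3$ is a subbundle of $\mc{E}_3$ of slope $\le \lambda/2$; expressing $\mu(\mc{F})$ as a weighted average of these two slopes (and using that saturating only increases degree) gives $\mu(\mc{F}) \le \lambda/2$.

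The essential input is thus the exactness of the modification functor from \cref{lemma.exact-pahs-to-vb}; after that, everything is a standard slope-stability argument on the curve $X$ (be it $\tilde{\mbb{P}}^1$ or $\FF$), which behaves in the relevant respects exactly like a smooth projective curve. There is no real obstacle here, since all three cases reduce to the elementary fact that the slope of an extension is the weighted average of the slopes of its sub- and quotient-bundles.
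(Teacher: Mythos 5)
Your proof is correct and follows exactly the paper's approach: apply the exact modification functor from \cref{lemma.exact-pahs-to-vb} to obtain a short exact sequence of vector bundles on $X$, then invoke the two-out-of-three property for semistable bundles of a fixed slope. The paper simply cites that last fact without proof, whereas you spell out the standard slope-chasing argument; the spelled-out version is fine and introduces no gaps.
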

\begin{proof}
By \cref{lemma.exact-pahs-to-vb}, we obtain an exact sequence of vector bundles
\[ 0 \rightarrow (V_1 \otimes \mc{O}_X)_{\mc{L}_1} \rightarrow (V_2 \otimes \mc{O}_X)_{\mc{L}_2} \rightarrow (V_3 \otimes \mc{O}_X)_{\mc{L}_3}  \rightarrow 0 \]
The result follows as semistable bundles of a fixed slope satisfy the two out of three property in a short exact sequence. 
\end{proof}

\begin{example}\label{example.ext-of-trivial}
The $B^+$-latticed $K$-vector space $(K^2, B^+ e_1 + B^+( \frac{1}{t}e_1 + e_2))$ is a Hodge structure. Indeed, this follows from \cref{lemma.pahs-two-three-sequence} since it is an extension of the trivial Hodge structure by itself in the category of  $B^+$-latticed $K$-vector spaces. Note that the exact sequence defining this extension does not remain exact after applying the filtration functor (cf. \cref{example.latticed-K-vect-BB-not-exact}). 
\end{example}

\begin{theorem}\label{theorem.pahs-connected-tannakian} The category of $\HS(C)$ of Hodge structures is a connected neutral Tannakian category over $K$ with fiber functor $\omega_\et:(V,\mc{L}) \mapsto V$.
\end{theorem}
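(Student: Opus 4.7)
The plan is to verify each axiom for a connected neutral Tannakian category in turn; connectedness is the principal obstacle.

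For the rigid tensor structure, define $(V, \mc{L}) \otimes (V', \mc{L}') := (V \otimes_K V', \mc{L} \otimes_{B^+} \mc{L}')$ and $(V, \mc{L})^\vee := (V^\vee, \mc{L}^\vee)$, and take $\mathbf{1} := (K, B^+)$ in weight $0$ as unit. These operations preserve Hodge structures because the modification of a vector bundle at $\infty$ commutes (up to canonical isomorphism) with tensor products and duals, and because tensor products and duals of semistable vector bundles on $X$ remain semistable with the expected slope. Standard evaluation and coevaluation maps exhibit rigidity, and $\End(\mathbf{1}) = K$ follows by direct computation. Combined with \cref{lemma.p-adic-Hodge-abelian}, this makes $\HS(C)$ an abelian rigid tensor $K$-linear category with $\End(\mathbf{1}) = K$. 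The functor $\omega_\et$ is evidently $K$-linear, tensor- and unit-preserving, and faithful (a morphism vanishes iff its underlying $K$-linear map does), and it is exact since exactness in $\HS(C)$ is by definition strict exactness on the underlying $K$-vector spaces. Hence $(\HS(C), \omega_\et)$ is a neutral Tannakian category over $K$.

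The principal obstacle is connectedness of $G := \Aut^\otimes(\omega_\et)$. The weight grading on Hodge structures gives a central pro-cocharacter into $G$ from the weight-grading torus $T$ (the pro-torus $\mbb{D}$ with character group $\mbb{Q}$ in the $p$-adic case, $\mbb{G}_m$ in the real case), and the inclusion of the weight-$0$ Tannakian subcategory corresponds to a surjection $G \twoheadrightarrow G_0$. Since $T$ is connected and quotients of connected groups are connected, it suffices to establish that both $G_0$ and $\ker(G \twoheadrightarrow G_0)$ are connected; the kernel is controlled by the central pro-cocharacter from $T$. For $G_0$, the direct approach is to show the only simple object in the weight-$0$ subcategory is $\mathbf{1}$, so that every weight-$0$ Hodge structure is an iterated extension of $\mathbf{1}$ and $G_0$ is pro-unipotent. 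The hard step is to exhibit a non-zero morphism $\mathbf{1} \to (V, \mc{L})$ from any non-zero weight-$0$ Hodge structure, i.e., to show $V \cap \mc{L} \neq 0$ inside $V \otimes B$; triviality of $\mc{E}_\mc{L}$ yields an $n$-dimensional space of global sections of $\mc{E}_\mc{L}$, but these live in the space of sections over $X \setminus \{\infty\}$ rather than in $V$ itself, requiring a descent argument.

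Alternatively --- and likely the most efficient route given the structure of the paper --- one may anticipate the equivalence proved in \cref{s.admissible-pairs} between $\HS(C)$ and the full Tannakian subcategory of basic admissible pairs inside $\AdmPair(C)$. Then Ansch\"utz's theorem \cite{anschutz} that $\AdmPair(C)$ is a connected Tannakian category implies connectedness of $\HS(C)$ as well, since $\HS(C)$ identifies with a Tannakian subcategory and the Tannakian group of a Tannakian subcategory is a quotient of the ambient Tannakian group, which remains connected.
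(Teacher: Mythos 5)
Your Tannakian bookkeeping (tensor, dual, unit, $\End(\mathbf{1})=K$, faithful exact fiber functor, abelian-ness from \cref{lemma.p-adic-Hodge-abelian}) agrees with the paper, which treats these points as immediate. The gap is in your first route to connectedness: the claim that the only simple weight-$0$ Hodge structure is $\mathbf{1}$ is \emph{false}, and $G_0$ is not pro-unipotent. For instance, let $V$ be the rank-$2$ CM $p$-adic Hodge structure attached to a Lubin--Tate formal group for a quadratic extension $E/\mbb{Q}_p$ as in \cref{ss.cm-adm-pair}; then $V\otimes V^\vee$ contains a two-dimensional, $\mbb{Q}_p$-irreducible, weight-$0$ constituent on which the torus $\mr{Res}_{E/\mbb{Q}_p}\mbb{G}_m$ acts through the character $z\mapsto z/\sigma(z)$, and its Mumford--Tate group is the norm-one torus of $E/\mbb{Q}_p$. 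More structurally, the weight-$0$ part of the CM subcategory has structure group the non-trivial kernel of the weight cocharacter inside a pro-torus. So the ``hard step'' you flag --- producing a non-zero map $\mathbf{1}\to(V,\mc{L})$ for every non-zero weight-$0$ object --- is not a missing descent argument; it is mathematically impossible, because $G_0$ has non-trivial reductive quotients.

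Your second route is logically valid: $\bAdmPair(C)$ is a full Tannakian subcategory of $\AdmPair(C)$, so its structure group is a quotient of that of $\AdmPair(C)$, and Ansch\"utz's connectedness transfers. But this reverses the paper's logic (the paper proves connectedness of $\HS(C)$ directly and then deduces it for $\bAdmPair(C)$ in the proof of \cref{theorem.adm-pair-propeties}) and imports an external result. The paper's own argument is shorter, self-contained, and needs no weight-grading decomposition: by \cite[Corollary 2.22]{deligne-milne:tannakian} it suffices to show that an object $V$ with $\langle V\rangle_\oplus$ tensor-stable is trivial. Let $[a,b]$ be the range of $i$ with $\Gr^i(V_C)\neq 0$ for the Hodge--Tate filtration. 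Since $\BB$ is a tensor functor (though not exact), $\Gr^{2b}((V^{\otimes 2})_C)\supseteq(\Fil^b V_C)^{\otimes 2}\neq 0$, so $b>0$ would force $V^{\otimes 2}\notin\langle V\rangle_\oplus$; symmetrically $a<0$ is excluded, hence $a=b=0$ and $V$ is trivial by \cref{corollary.filtration-trivial-implies-trivial}.
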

\begin{proof}
In \cref{lemma.p-adic-Hodge-abelian} we have shown the category is abelian, and it is immediate from the definition of the tensor product and dual that it is neutral Tannakian over $K$ (with the forgetful fiber functor to $\Vect(K)$). It remains to show it is connected: by \cite[Corollary 2.22]{deligne-milne:tannakian}, it suffices to show that if $V$ is an object such that the strictly full subcategory $\langle V \rangle_{\oplus}$ whose objects are those isomorphic to subquotients of $V^{\oplus k}$ for some $k$ is stable under tensor product then $V$ is trivial. Suppose $V$ is such an object and write $a$ and $b$ for the minimum and maximum of the set $\{ i\+ |\+ \mr{Gr}^{i}(V_C) \neq 0 \}$. Then, for any object $V'$ in  $\langle V \rangle_{\oplus}$, $\Gr^i(V'_C) \neq 0$ implies $i \in [a,b].$

Now, if $a=b=0$, then $V$ is trivial by \cref{corollary.filtration-trivial-implies-trivial} and we are done. Otherwise, either $a<0$ or $b>0$ --- the arguments in the two cases are parallel, so we treat just the case $b>0$. The filtration is a tensor functor, so $F^{2b}(V^{\otimes 2}) = (F^bV)^{\otimes 2}$ and $F^{2b + 1}(V^{\otimes 2}) = 0$. We conclude that $\Gr^{2b}(V^{\otimes 2}) \neq 0$ and thus, by the considerations of the previous paragraph, $V^{\otimes 2} \not\in \langle V \rangle_\oplus$ since $2b \not\in [a,b].$ 
\end{proof}

\subsection{Invariants of Hodge structures}\label{ss.pahs-invariants}
We continue with the notation of \cref{ss.structural-properties}. Given a Hodge structure $V$ over $C$, the \emph{Mumford-Tate} group $\MT(V) = \Aut^\otimes(\omega_\et|_{ \langle V \rangle})$ is the Tannakian structure group of the Tannakian subcategory $\langle V \rangle$ generated by $V$. It is a closed subgroup of $\GL(V)$, and by Theorem \ref{theorem.pahs-connected-tannakian} is connected.

\begin{example}\label{example.hodge-structures}\hfill
\begin{enumerate}
\item For $V$ the Hodge structure of \cref{example.ext-of-trivial}, $\MT(V)=\mbb{G}_a$. 
\item For $\mbb{Q}_p(k)$ the Tate $p$-adic Hodge structure of \cref{example.tate-p-adic-hs}, $\MT(\mbb{Q}_p(k))=\mbb{G}_m$ if $k\neq 0$ and is trivial for $k=0$. 
\end{enumerate}
\end{example}

We extend this definition to the category $G\dash\HS(C)$ of  Hodge structures with $G$-structure (see \cref{sss.tannakian-categories-notation}) for $G/K$ a connected linear algebraic group. An object of $G\dash\HS(C)$ is an exact tensor functor such that $\omega_\et \circ \mc{G}$ is isomorphic to $\omega_\std$, and we define $\MT(\mc{G})$ to be the automorphism group of $\omega_\et$ restricted to the Tannakian subcategory of $\HS(C)$ generated by the essential image of $\mc{G}$. As in \cref{ss.canonical-G-structure}, $\mc{G}$ has a canonical refinement to a Hodge structure with $\MT(\mc{G})$-structure, and a choice of identification $\triv_\et \colon \omega_\et \circ \mc{G} = \omega_\std$ identifies $\MT(\mc{G})$ with a closed subgroup of $G$. 

Additionally, there is a natural exact tensor functor $\omega_\Isoc$ from $\HS(C)$ to the category of isocrystals $\Kt_K$ of \cref{sss.isocrystals} defined as follows. Recall the simple objects $D_\lambda$ of $\Kt_K$ defined in \cref{ss.isocrystals}, and in the real case let $D_\lambda=0$ for $\lambda \not \in \frac{1}{2}\mbb{Z}$. Denote $\mc{D}_\lambda:=\End(D_{-\lambda})=\End(\mc{O}_X(\lambda))$, a division algebra. Then,
\[ \omega_\Isoc(\bigoplus_{w \in \mbb{Q}} (V_{w}, \mc{L}_{w})) = \bigoplus_{w\in \mbb{Q}}  \Hom(\mc{O}(w/2), (V_{w} \otimes \mc{O}_X)_{\mc{L}_{w}}) \otimes_{\mc{D}_{w/2}} D_{-w/2}. \]
Thus, to any $\mc{G} \in G\dash\HS(C)$, we can associate the $G$-isocrystal $\omega_\Isoc \circ \mc{G}$. Isomorphism classes of $G$-isocrystals are classified by the Kottwitz set $B(G)$ (see \cref{sss.isocrystals}) and we write $[b]_{\mc{G}}$ for the point classifying $\omega_\Isoc \circ \mc{G}$. By definition of $\HS(C)$, the slope homomorphism is central in $\MT(\mc{G})$ (up to multiplication by $-2$ it is equal to the weight homomorphism determining the $\mbb{Q}$-grading). In particular, if $\mc{G}=\MT(\mc{G})$, $[b]_{\mc{G}}$ is basic. We will often assume this latter condition or equivalently that the weight morphism is central since it can always be arranged after a reduction of structure group. 

There is also a natural exact tensor functor $\omega_\bl$ from $B^+$-latticed vector spaces to  $\Vect^\bl(B)$ (see \cref{s.filtrations-and-lattices}) defined by 
\[ \omega_{\bl}((V, \mc{L})) \mapsto (V_{B}, \mc{L}, V \otimes B^+). \]
We say $\mc{G} \in G\dash\HS(C)$ is \emph{good} if $\omega_\bl \circ \mc{G}$ is a good bilatticed $G$-bundle, in which case we write $[\mu]_\mc{G}:=[\mu]_{\omega_\bl \circ \mc{G}}$ for the classifying conjugacy class of $\overline{K}$-cocharacters.  By \cref{theorem.good-equivalence-bilatticed}, $\mc{G}$ is good if and only if $\BB \circ \mc{G}: \Rep\+ G \rightarrow \Vect^f(C)$ is a filtered $G$-bundle (i.e. is exact), and then $[\mu]_{\mc{G}}$ is also the type of this filtered $G$-bundle. 

\begin{remark} Note that one also obtains a filtered $G$-bundle of type $[\mu^{-1}]_{\mc{G}}$ through $\BB_1$, which is a filtration on $\omega_{\Isoc} \otimes C$ --- in the $p$-adic case, we will treat this perspective in \cref{s.admissible-pairs}. 
\end{remark}

\begin{example}\hfill
\begin{enumerate}
    \item If $G$ is reductive, any $\mc{G} \in G\dash\HS(C)$ is good.
\item \cref{example.ext-of-trivial} and \cref{example.latticed-K-vect-BB-not-exact} show that the $\mbb{G}_a$-Hodge structure of \cref{example.hodge-structures} is not good.
\end{enumerate}
\end{example}

When $G=\MT(\mc{G})$ or, more generally, $[b]_{\mc{G}}$ is basic, the invariant $[\mu]_{\mc{G}}$ determines $[b]_\mc{G}$: we explain this only in the $p$-adic case. Recall that Kottwitz (\cite[\S6]{kottwitz:isocrystals-II}) has defined for any connected reductive group $G/\mbb{Q}_p$ and conjugacy class $[\mu]$ of cocharacters of $G_{\overline{\mbb{Q}}_p}$ a subset $B(G, [\mu]) \subset B(G)$, and that any such subset contains a unique \emph{basic} element (i.e. an element such that the slope homomorphism is central).  We extend the definition of $B(G, [\mu])$ to any connected linear algebraic group as follows: write $U$ for the unipotent radical of $G$. As explained in \cref{sss.isocrystals-G-structure}, since isocrystals are a semisimple category, $B(G)=B(G/U)$. The projection $G\rightarrow G/U$ also identifies the conjugacy class of cocharacters of $G_{\overline{\mbb{Q}}_p}$ with those of $(G/U)_{\overline{\mbb{Q}}_p}$. Thus we may declare $B(G,[\mu])=B(G/U,[\mu])$. For $G$ non-reductive we continue to call an element of $B(G)$ basic if the slope morphism is central; thus, although $B(G/U,[\mu])$ has a unique basic element, this element may not be basic in $G$; thus $B(G,[\mu])$ either has one or zero basic elements. 

The following should also be true in the real case, but we only prove it in the $p$-adic case (due to the reference to \cite{caraiani-scholze:cohomology-compact-shimura}):
\begin{theorem}\label{theorem.hs-isocrystal-unique-basic-element}
In the $p$-adic case, if $\mc{G} \in G\dash\HS(C)$ is good and the weight morphism is central (e.g. if $G=\MT(\mc{G})$) then $[b]_{\mc{G}}$ is the unique basic element in $B(G,[\mu^{-1}]_{\mc{G}})$. 
\end{theorem}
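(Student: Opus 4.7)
The plan is to reduce to the case where $G$ is reductive and then identify $[b]_{\mc{G}}$ inside $B(G,[\mu]_{\mc{G}})$ by computing its Kottwitz invariant via the Fargues–Fontaine modification that defines $\omega_{\Isoc}\circ\mc{G}$.

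First, I would reduce to the reductive case. Since $\Kt_{\mbb{Q}_p}$ is semisimple, \cref{lemma.G-structure-in-semsimple-categories} gives $B(G)=B(G/U)$ for $U$ the unipotent radical of $G$, and conjugacy classes of cocharacters of $G_{\ol{\mbb{Q}}_p}$ are identified with those of $(G/U)_{\ol{\mbb{Q}}_p}$, so by definition $B(G,[\mu]_{\mc{G}})=B(G/U,[\mu]_{\mc{G}})$. The hypothesis that the weight morphism is central in $G$ implies centrality of the slope morphism in $G$, hence also in $G/U$, so $[b]_{\mc{G}}$ is basic when viewed in either $B(G)$ or $B(G/U)$. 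Replacing $\mc{G}$ by its pushforward along $G\to G/U$ (which, by \cref{lemma.G-structure-in-semsimple-categories} and the projection formula, preserves the type $[\mu]_{\mc{G}}$ and the associated isocrystal class), we reduce to the case where $G$ is connected reductive.

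Next, for reductive $G$, Kottwitz's classification \cite{kottwitz:isocrystals-II} shows that $B(G,[\mu])$ contains a unique basic element, and that a basic element of $B(G)$ is determined by its image in $\pi_1(G)_{\Gamma}$; the unique basic element of $B(G,[\mu])$ is the one whose Kottwitz invariant equals the image $[\mu]^{\natural}\in \pi_1(G)_{\Gamma}$ of $\mu$. Since we have already shown $[b]_{\mc{G}}$ is basic, it therefore suffices to verify the single equality of Kottwitz invariants $\kappa([b]_{\mc{G}})=[\mu]_{\mc{G}}^{\natural}$ in $\pi_1(G)_{\Gamma}$.

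To compute $\kappa([b]_{\mc{G}})$, I would use the interpretation of $\omega_{\Isoc}\circ\mc{G}$ as a modification of vector bundles on $\FF$. By construction, for each $(V,\rho)\in\Rep\,G$ the isocrystal $\omega_{\Isoc}\circ\mc{G}(V)$ corresponds via $\mc{E}$ to the modification $(V\otimes\mc{O}_{\FF})_{\mc{L}(V)}$ of the trivial bundle by the $B^+_{\dR}$-lattice $\mc{L}(V)$ at $\infty_{C}$. Since $\mc{G}$ is good, this system of modifications is precisely a $G$-bundle modification of type $[\mu]_{\mc{G}}$ in the sense of the affine Grassmannian described in \cref{ss.from-filtrations-to-lattices-and-back}. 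The classification of $G$-bundles on $\FF$ (Fargues) together with the computation of the Kottwitz invariant of a modification at $\infty_C$ — exactly the $\BC$-period-map calculation carried out in \cite{caraiani-scholze:cohomology-compact-shimura} — identifies $\kappa$ of the resulting $G$-bundle with $[\mu]_{\mc{G}}^{\natural}\in\pi_1(G)_{\Gamma}$, which is what we wanted.

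The main obstacle is a bookkeeping one: as noted in the introductory discussion of $\mu$ versus $\mu^{-1}$ conventions, one must carefully track which relative position the Hodge-Tate side of the picture assigns to $\mc{L}$ against $V\otimes B^+_{\dR}$, and translate this correctly into the cocharacter $\mu$ used in Kottwitz's definition of $B(G,[\mu])$. Once this is aligned with the conventions of \cite{caraiani-scholze:cohomology-compact-shimura}, the argument reduces to combining uniqueness of the basic element in $B(G,[\mu]_{\mc{G}})$ with the modification formula for the Kottwitz invariant.
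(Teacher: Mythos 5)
Your proof follows essentially the same strategy as the paper's: reduce to the reductive case using the goodness hypothesis and the identification $B(G)=B(G/U)$, then cite the Caraiani–Scholze computation (the paper cites \cite[Prop.~3.5.3]{caraiani-scholze:cohomology-compact-shimura} directly; you unpack the argument through the Kottwitz invariant $\kappa$ and the modification formula, but this is just making explicit what that proposition rests on).

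One small point of imprecision in your reduction step: you attribute the preservation of the type under the pushforward $\mc{G}\mapsto\mc{G}^{\mr{ss}}$ to \cref{lemma.G-structure-in-semsimple-categories} and ``the projection formula,'' but neither of those speaks to the type. \cref{lemma.G-structure-in-semsimple-categories} lives in the isocrystal realization (which is semisimple) and gives the preservation of $[b]$; it says nothing about the bilatticed side. What actually makes the type survive the pushforward is the goodness hypothesis itself: because $\omega_\bl\circ\mc{G}$ lies in the good double coset $G(B^+_\dR)t^\mu G(B^+_\dR)$, and the map from $G$-double cosets to $(G/U)$-double cosets is just the quotient, the image lands in $(G/U)(B^+_\dR)t^{\bar\mu}(G/U)(B^+_\dR)$. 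This is precisely the observation the paper makes explicitly --- and it is the only place goodness enters, so it is worth stating correctly.
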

\begin{proof}
Recall from above that the weight morphism being central is equivalent to $[b]_{\mc{G}}$ being basic. Then, for $G$ reductive, this is \cite[Prop. 3.5.3]{caraiani-scholze:cohomology-compact-shimura}: note that, because of our choice of ordering of the lattices when defining the associated bilatticed vector space, the relevant double-coset for applying \cite[Prop. 3.5.3]{caraiani-scholze:cohomology-compact-shimura} is $G(B^+_\dR)\mu^{-1}(t)G(B^+_\dR)$. In the definition of Schubert cells in \cite[pp. 683-684]{caraiani-scholze:cohomology-compact-shimura}, there is also an inverse appearing on the cocharacter, so this is exactly the double coset associated to the Schubert cell appearing in \cite[Prop. 3.5.3]{caraiani-scholze:cohomology-compact-shimura}. 

Now, because $\mc{G}$ is good, the type of $\omega_\bl \circ \mc{G}$ is the same as the type of $\omega_\bl \circ \mc{G}^\mr{ss}$ where $\mc{G}^\mr{ss}$ denotes the composed functor $\Rep\+G/U \rightarrow \Rep\+G \xrightarrow{\mc{G}} \HS(C)$ --- indeed, the map on isomorphism classes is induced by the quotient map on double cosets in \cref{s.filtrations-and-lattices}.  
\end{proof}

\subsection{Hodge-Tate lines}\label{ss.hodge-tate-lines}
In this section we assume we are in the $p$-adic case, and refer to the real case only for analogy. It would be possible to proceed symmetrically, but we wish to use the term Hodge-Tate lines and reserve the term Hodge lines for a related but distinct concept in \cref{s.admissible-pairs}. 

A powerful tool in the study of Mumford-Tate groups in classical Hodge theory is through a characterization using Hodge tensors. We now give a similar characterization in the $p$-adic setting. In classical Hodge theory one typically works in the polarizable case, but here it is necessary to adjust slightly to allow for non-reductive structure groups: indeed, because our Mumford-Tate groups are not necessarily reductive, if $\mc{G} \in G\dash\HS(C)$ for $G$ reductive, $\MT(\mc{G})$ may not be observable in $G$, i.e. it may not be realizable as the stabilizer of a vector in a representation. Thus we must consider all even integer weights instead of only weight zero tensors. 

Suppose $V$ is a $p$-adic Hodge structure. For $k \in \mbb{Z}$, the space of weight $2k$ Hodge-Tate lines in $V$ is
\[ \HT^{2k}(V):=\left(\Hom_{\HS(C)}(\mbb{Q}_p(-k), V_{2k}) \backslash \{0\}\right) / \mbb{Q}_p^\times. \]
By evaluation, $\HT^{2k}(V)$ is identified with a projective subspace of $\mbb{P}(V_{2k})$. 

\begin{theorem}\label{theorem.hodge-tate-lines-mumford-tate-group}
Suppose $G$ is a connected linear algebraic group and $\mc{G} \in G\dash\HS(C)$ is equipped with a trivialization $\omega_\et \circ \mc{G}=\omega_\std$. Then $\MT(\mc{G}) \leq G$ is the subgroup of $G$ preserving every line $\ell \in \HT^{2k}(\mc{G}(V))$ for every $V \in \Rep\+G$ and $k \in \mbb{Z}$. 
\end{theorem}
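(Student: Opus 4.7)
The plan is to prove the two inclusions separately; let $H \leq G$ denote the subgroup defined in the statement, namely the common stabilizer of all Hodge-Tate lines in all $\mc{G}(V)$. The inclusion $\MT(\mc{G}) \subseteq H$ is a direct consequence of the Tannakian formalism: if $\ell \in \HT^{2k}(\mc{G}(V))$ corresponds to a nonzero morphism $f \colon \mbb{Q}_p(-k) \to \mc{G}(V)_{2k}$ in $\HS(C)$, then $\mbb{Q}_p(-k)$ is a subobject of $\mc{G}(V)_{2k}$ and therefore lies in $\langle \mc{G} \rangle$. Applying $\omega_\et$ to $f$ and using the trivialization $\omega_\et \circ \mc{G} = \omega_\std$ yields a $\MT(\mc{G})$-equivariant map $\mbb{Q}_p \to V_{2k}$ whose image is $\ell$; since $\MT(\mc{G})$ acts on $\omega_\et(\mbb{Q}_p(-k)) \cong \mbb{Q}_p$ through a character, the line $\ell$ is preserved.

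For the reverse inclusion $H \subseteq \MT(\mc{G})$, the key input is Chevalley's theorem on closed subgroups of affine algebraic groups: there exist a representation $W \in \Rep\, G$ and a line $L \subseteq W$ such that $\MT(\mc{G}) = \Stab_G(L)$. The Tannakian equivalence $\langle \mc{G} \rangle \xrightarrow{\sim} \Rep\, \MT(\mc{G})$ induced by $\omega_\et$ then identifies the $\MT(\mc{G})$-stable line $L \subseteq W|_{\MT(\mc{G})} = \omega_\et(\mc{G}(W))$ with a subobject $\tilde L \hookrightarrow \mc{G}(W)$ in $\HS(C)$ satisfying $\omega_\et(\tilde L) = L$. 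Since $\tilde L$ is a one-dimensional $p$-adic Hodge structure, the classification recalled in \cref{example.tate-p-adic-hs} gives $\tilde L \cong \mbb{Q}_p(-k)$ for some $k \in \mbb{Z}$, and as a subobject of the $\mbb{Q}$-graded $\mc{G}(W)$ it must lie entirely inside the weight-$2k$ component $\mc{G}(W)_{2k}$. Thus $L$ is a Hodge-Tate line in $\HT^{2k}(\mc{G}(W))$, and any $g \in H$ preserves it; by Chevalley this forces $g \in \MT(\mc{G})$.

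The main conceptual point --- and the reason the statement must involve lines of all weights $2k$ rather than only weight-zero fixed vectors --- is precisely that $\MT(\mc{G})$ need not be observable in $G$, as discussed in the paragraph preceding the theorem. Chevalley's theorem only supplies a $\MT(\mc{G})$-stable line, not a fixed vector, so the nontrivial twists $\mbb{Q}_p(-k)$ are essential for realizing it as a genuine Hodge-Tate line; had one restricted to weight-zero Hodge-Tate tensors in the sense of fixed vectors in $V_0$, only observable subgroups could be cut out. Once this observation is in place, the argument is a clean application of the Tannakian dictionary and I anticipate no further obstacle.
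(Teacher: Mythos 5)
Your proof is correct and takes essentially the same route as the paper's: one direction is immediate because Hodge-Tate lines underlie one-dimensional sub-Hodge-structures, and the other follows by combining Chevalley's theorem (which the paper invokes implicitly via the phrase ``there is some line in some representation'') with the classification of one-dimensional $p$-adic Hodge structures as Tate objects. Your write-up is somewhat more explicit about the Tannakian equivalence $\langle\mc{G}\rangle\simeq\Rep\,\MT(\mc{G})$ and the graded decomposition forcing $\tilde L$ into a single weight component, but the underlying argument is identical.
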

\begin{proof}
It is immediate that $\MT(\mc{G})$ stabilizes these lines, since they underlie one-dimensional sub-$p$-adic Hodge structures in $\mc{G}(V)$. On the other hand, $\MT(\mc{G})$ is a closed subgroup of $G$, so there is some line in some representation of $G$ such that $\MT(\mc{G})$ is equal to the stabilizer of that line. In particular, this line corresponds to a $1$-dimensional sub $p$-adic Hodge structure, and by \cref{example.tate-p-adic-hs}, it must be a Tate $p$-adic Hodge structure. 
\end{proof}

\newcommand{\Ad}{\mr{Ad}}
\begin{remark}\label{remark.reductivity-criterion}
In this remark we discuss further why we do not assume a condition that implies reductivity of Mumford-Tate groups. Recall that in classical Hodge theory, the Mumford-Tate groups of $\mbb{Q}$-Hodge structures arising from algebraic geometry are always reductive. This follows from the existence of a polarization: indeed, if $(V, \Fil^\bullet V_{\mbb{C}})$ is a $\mbb{Q}$-Hodge structure, then a polarization on $V$ can be defined as a bilinear form $\langle,\rangle$ on $V$ (alternating or symmetric depending on the parity of the weight) such that the map $h: \mbb{S} \rightarrow \GL(V_{\mbb{R}})$ defining the Hodge structure (for $\mbb{S}=\mr{Res}_{\mbb{C}/\mbb{R}} \mbb{G}_m$ the Deligne torus) factors through the similitude group $G$ of $\langle, \rangle$ and such that, for $c$ denoting complex conjugation on $G^\ad(\mbb{C})$, $\Ad h(i) \circ c$ is a Cartan involution for $G^\ad(\mbb{C})$ (note $h(i)$ is the Weil operator). Thus the involution $\Ad h(i) \circ c$ defines a compact real form of $\MT(V)/Z(G) \cap \MT(V)$. We conclude that the unipotent radical of this real form and thus also of the Mumford-Tate group itself is trivial, and thus the Mumford-Tate group is reductive. 

This can be reinterpreted from the perspective adopted at the beginning of this section: first, giving the inner product is equivalent to refining $V$ to a $G$-Hodge structure $\mc{G}$ where $G \subseteq \GL_V$ is the similitude group. Now, the involution $\Ad h(i) \circ c$ defines an inner form that can be checked to be equivalent to the automorphism group of the induced real $G$-isocrystal. 

This interpretation can be transported directly to the $p$-adic setting: Suppose $G/\mbb{Q}_p$ is a connected reductive group and $\mc{G} \in G\dash\HS(C)$. The automorphism group $J(\mbb{Q}_p)$ of the $G$-isocrystal $\omega_\Isoc \circ \mc{G}$ is the $\mbb{Q}_p$-points of an inner form $J$ of $G$ (the inner form $G_b$ for $b$ a representative of $[b]_{\mc{G}}$). If $J/Z_J (\mbb{Q}_p)$ is compact, then the $\mbb{Q}_p$-points of the corresponding inner form of $\MT(\mc{G})/\MT(\mc{G})\cap Z(G)$ are also compact (as a closed subgroup of a compact group), thus this inner form has no unipotent radical so neither does $\MT(\mc{G}).$ Note any choice of element $b \in G(\breve{\mbb{Q}}_p)$ representing $[b]_{\mc{G}}$ plays the role of the Weil operator $h(i)$. 

If $G=\GL_n$, then $J(\mbb{Q}_p)$ is compact mod center exactly when $b$ corresponds to an isoclinic isocrystal of slope $a/n$ for $(a,n)=1$ --- indeed, the endomorphisms of this isocrystal are the central simple algebra of Brauer class $a/n$, which is a division algebra exactly when $a$ is coprime to $n$. Recalling that $[b]_{\mc{G}}$ is uniquely determined by $[\mu]_{\mc{G}}$, this occurs exactly when, writing $[\mu]_{\mc{G}}=(a_1, a_2, \ldots, a_n)$, $\sum a_i$ is coprime to $n$. Indeed, in this case, the relation between $[\mu]$ and $b$ is just that the Newton polygon $b$ is the straight line starting at $(0,0)$ and ending at the endpoint of the Hodge polygon.  

Unfortunately, as explained in \cite[Appendix by Rapoport]{scholze:lubin-tate}, up to the obvious manipulations, this captures all possibilities. In particular, there are no compact inner forms that arise in this way from similitude groups (outside of the exceptional overlap with groups of type $A_1$), so having a bilinear pairing, e.g. from a polarization, does not help at all! 
\end{remark}

\section{Admissible pairs}\label{s.admissible-pairs}
In this section we work in the $p$-adic context (see \ref{ss.structural-properties}), so $C/\mbb{Q}_p$ is an algebraically closed non-archimedean extension. Suppose $(V,\mc{L}_\dR)$ is a $p$-adic Hodge structure and let $W=\omega_\Isoc(V)$. By construction, there is a canonical identification $W_{B^+_\dR}=\mc{L}_\dR$ and thus $V_{B_\dR}=W_{B_\dR}$. Letting $\mc{L}_\et := V_{B^+_\dR}$, we obtain a $B^+_\dR$-latticed isocrystal $(W, \mc{L}_\et)$, and we can recover $V$ as
\[ V = H^0(\FF_{C^\flat}, \mc{E}(W)_{\mc{L}_\et}). \]

This motivates the study of a semi-linear category of \emph{admissible pairs} over $C$, to be defined precisely below, and the above construction will identify $\HS(C)$ with a natural subcategory of admissible pairs. In the introduction, we explained that admissible pairs were a natural toy category of cohomological motives because, in the cohomological setting, their rationality reflects the rationality of defining equations. However, the fact that we can attach an admissible pair to any $p$-adic Hodge structure means this local, $p$-adic setting is much better behaved than its global, archimedean analog --- to any $p$-adic Hodge structure we can attach a motivic object, so we are not left to wonder which ones come from motives! 

The rest of this section develops the essential properties of admissible pairs. Many of the proofs of structural results are the same or very similar to the proofs for extended real or $p$-adic Hodge structures given in \cref{s.pahs}, so we move more quickly through the basic structural material. 

\begin{remark}
It would be possible to continue as in \cref{s.pahs} by developing a symmetric theory for both $p$-adic and extended real Hodge structures, but this quickly loses touch with the aspects that are classically interesting in real Hodge theory (one would even arrive ultimately at a trivial ``transcendence" result in this case --- see \cref{remark.real-hs-transcendence}). However, although the symmetric treatment served us well in \cref{s.pahs} as a tool for better understanding the arguments in the $p$-adic case, in this section it will be clearer to consider only the $p$-adic case and focus instead on the connections with classical notions in $p$-adic Hodge theory.
\end{remark}

\subsection{Definitions, examples, and first properties}

\begin{definition}\label{def.admpair} \hfill
\begin{enumerate}
    \item A \emph{$B^+_\dR$-latticed isocrystal} is a pair $(W, \mc{L})$ where $W$ is an isocrystal and $\mc{L} \subseteq W_{B_\dR}$ is a $B^+_\dR$-lattice.  A morphism $(W, \mc{L}_\et) \rightarrow (W', \mc{L}_\et')$ is a morphism of isocrystals $f: W \rightarrow W'$ such that $f(\mc{L}_\et) \subseteq \mc{L}'_\et$. It is strict if $f(\mc{L}_\et)=f(W)_{B_\dR} \cap \mc{L}'_\et$. A complex of $B^+_\dR$-latticed isocrystals is exact if it is exact as a complex of isocrystals and each morphism is strict. 
    \item An \emph{admissible pair} is a $B^+_\dR$-latticed isocrystal $(W, \mc{L}_\et)$ such that $\mc{E}(W)_{\mc{L}_\et}$ --- the modification of the vector bundle $\mc{E}(W)$ on $\FF_{C^\flat}$ associated to $W$ by the lattice $\mc{L}_\et$ at $\infty$ as in \cref{def.modification-vb} --- is semistable of slope zero. 
    \item An admissible pair $(W, \mc{L})$ is \emph{basic} if the isotypic decomposition $W=\bigoplus_{\lambda \in \mbb{Q}} W_\lambda$ induces a decomposition $\mc{L}=\bigoplus \mc{L}_\lambda$, $\mc{L}_\lambda := W_\lambda \otimes B_\dR \cap \mc{L}$.
\end{enumerate}
We write $\AdmPair(C)$ for the category of admissible pairs and $\bAdmPair(C)$ for the full subcategory of basic admissible pairs.
\end{definition}

The category $\AdmPair(C)$ has realizations to isocrystals and $\QQ_p$-vector spaces, which in the geometric case (\cref{example.geometric-admissible-pairs}) correspond to rational crystalline and $p$-adic \'etale cohomology. We begin by defining these realization functors and the additional structures on them, then show that $\AdmPair(C)$ is a neutral Tannakian category over $\QQ_p$. 

The isocrystalline realization
\begin{equation}
    \omega_\Isoc \colon \AdmPair(C) \to \Isoc, \qquad (W, \mc{L}) \mapsto W 
\end{equation}
is an exact tensor functor. By abuse of notation, we also denote the composition with the forgetful functor $\Isoc \to \Vect(\breve{\QQ}_p)$ by $\omega_\Isoc$. The \'etale realization is 
\begin{equation}
    \omega_\et \colon \AdmPair(C) \to \Vect(\QQ_p), \qquad (W, \mc{L}) \mapsto H^0(\FF_{C^\flat}, \mc{E}(W)_\mc{L})
\end{equation}
where $\mc{E}(W)_\mc{L}$ is the modified vector bundle on the Fargues-Fontaine curve $\FF_{C^\flat}$. This is also an exact tensor functor, moreover we will see in \cref{theorem.adm-pair-propeties} that $\AdmPair(C)$ is a neutral Tannakian category over $\QQ_p$ with fiber functor $\omega_\et$. 

If $(W, \mc{L})$ is an admissible pair, the $B_\dR$-linear extension of the restriction morphism 
\[ \omega_\et(W) = H^0(\FF_{C^\flat}, \mc{E}(W)_\mc{L}) \to H^0(\Spec\+ B_\dR, \mc{E}(W)_{\mc{L}}|_{\Spec\+ B_\dR}) = W \otimes B_\dR \]
is an isomorphism $\omega_\et(W) \otimes B_\dR \cong \omega_\Isoc(W) \otimes B_\dR$. These isomorphism are functorial and give a canonical de Rham comparison isomorphism
\begin{equation}\label{eqn:c_dR}
c_\dR \colon \omega_\et \otimes B_\dR \xrightarrow{\sim} \omega_\Isoc \otimes B_\dR
\end{equation}
between $B_\dR$-valued fiber functors on $\AdmPair(C)$.

The isocrystalline and \'etale realization functors on $\AdmPair(C)$ can be enriched to land in $B_\dR^+$-latticed vector spaces by the \'etale and de Rham lattice functors, respectively given by 
    \[ \omega_{\mc{L}_\et}(W, \mc{L}) = \mc{L}, \qquad \omega_{\mc{L}_\dR}(W, \mc{L}) = c_\dR^{-1}(W \otimes B_\dR^+). \]
\begin{remark}
We call $\mc{L}$ the \'etale lattice because it is the image of $\omega_\et(W, \mc{L}) \otimes B_\dR^+$ under $c_\dR$. The terminology `de Rham lattice' matches with the terminology in \cref{s.pahs} via the functor $\omega_\HS$ of \cref{theorem.adm-pair-propeties}; in the geometric setting (\cref{example.geometric-admissible-pairs}) the de Rham lattice is a canonical deformation of de Rham cohomology. 
\end{remark}

Using the Bialyinicki-Birula functor from $B_\dR^+$-latticed vector spaces to $C$-filtered vector spaces, the \'etale lattice gives the Hodge filtration $\Fil_\Hdg^\bullet$ on $W_C$ and the de Rham lattice gives the Hodge-Tate filtration $\Fil_\HT^\bullet$ on $\omega_\et(W, \mc{L})_C$. Specifically, for $(W, \mc{L}) \in \AdmPair(C)$, the Hodge filtration is 
    \[ \Fil_\Hdg^i = \text{Image in $W_C$ of } (W_{B_\dR^+} \cap (F^iB_\dR)\cdot \mc{L}) \]
and the Hodge-Tate filtration is
    \[ \Fil_\HT^i = \text{Image in $\omega_\et(W, \mc{L})_C$ of } (\omega_\et(W, \mc{L})_{B_\dR^+} \cap (F^iB_\dR)\cdot W_{B^+_\dR}). \]
While the Hodge and the Hodge-Tate filtration give tensor functors valued in $\Vect^f(C)$, we caution that they are not exact (by essentially the same computation as  \cref{example.ext-of-trivial}). 

\begin{lemma}\label{lemma.hodge-filtration-trivial-implies-trivial}
For an admissible pair $(W, \mc{L})$, the following are equivalent. 
\begin{enumerate}
    \item $(W, \mc{L})$ is trivial, i.e.\ isomorphic to a direct sum of $(\breve{\QQ}_p, B_\dR^+)$.
    \item The Hodge filtration on $W_C$ is trivial.
    \item The Hodge-Tate filtration on $\omega_\et(W, \mc{L})_C$ is trivial.
\end{enumerate}
\end{lemma}
\begin{proof}
The type of the Hodge-Tate filtration is the inverse of the type of the Hodge filtration, as follows from \cref{lemma.associated-gradeds-twist}, so (2) holds if and only if (3) holds. It is immediate that (1) implies (2). To see (2) implies (1), note that if the Hodge filtration is trivial then the lattice is trivial by \cref{corollary.filtration-trivial-implies-trivial}. Then $\mc{E}(W)_{\mc{L}}=\mc{E}(W)$, so admissibility implies the isocrystal $W$ is also trivial. 
\end{proof}

Before continuing with the general structure of $\AdmPair(C)$, we give some examples of admissible pairs. 

\begin{example}\label{example.tate-admissible-pairs}
Recall from \cref{ss.isocrystals} that, for $k$ in $\mbb{Z}$, $D_{-k}$ denotes the isocrystal $\breve{\mbb{Q}}_p$ with Frobenius acting by $p^{-k}$. We define the Tate admissible pair
\[ \breve{\mbb{Q}}_p(k):= (D_{-k}, \Fil^{k}B^+_\dR \cdot D_{-k}). \]
For $\omega_\HS$ as in \cref{theorem.adm-pair-propeties} below, we have a canonical identification $\omega_\HS(\breve{\mbb{Q}}_p(k))=\mbb{Q}_p(k)$, for $\mbb{Q}_p(k)$ the Tate $p$-adic Hodge structure of  \cref{example.tate-p-adic-hs}. The Hodge filtration on $\breve{\mbb{Q}}_p(k) \otimes C$ is concentrated in degree $-k$. 

Since every one-dimensional isocrystal is isoclinic, every one dimensional admissible pair is basic. It follows from \cref{example.tate-p-adic-hs} and \cref{theorem.adm-pair-propeties} that, up to isomorphism, the Tate admissible pairs give all one-dimensional admissible pairs. 
\end{example}

\begin{example}\label{example.geometric-admissible-pairs}
If $\mf{X}/\mc{O}_C$ is a smooth proper formal scheme with generic fiber $X$, a rigid analytic variety over $C$, the results of \cite{bhatt-morrow-scholze:1} provide cohomological admissible pairs $(H^i_\crys(\mf{X}_{\kappa}/W(\kappa))[1/p], \mc{L}_\et)$ where $\mc{L}_\et=H_\et^i(X, \mbb{Q}_p)\otimes B^+_\dR$. The associated $B^+_\dR$-latticed $\QQ_p$-vector space obtained by applying $(\omega_\et, \omega_{\mc{L}_\dR})$ is that of \cref{example.latticed-vect-space-geometry}.
\end{example}

The obvious tensor and dual on $B^+_\dR$-latticed isocrystals preserve admissible pairs, and we have:

\newcommand{\BKF}{\mr{BKF}}
\newcommand{\rig}{\mr{rig}}
\newcommand{\MG}{\mr{MG}}
\begin{theorem}\label{theorem.adm-pair-propeties}
$\AdmPair(C)$ is a connected neutral Tannakian category with fiber functor $\omega_\et$. It is equivalent to the isogeny category of rigidified Breuil-Kisin-Fargues modules $\BKF^\circ_{\rig}$ of \cite{anschutz}. The functor $\omega_\et^\mc{L} = (\omega_\et, \omega_{\mc{L}_\dR})$ induces an equivalence 
\[ \omega_\HS: \bAdmPair(C) \xrightarrow{\sim} \HS(C).\] 
\end{theorem}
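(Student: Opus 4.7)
The plan is to transport the proofs from \cref{s.pahs} with appropriate modifications and exhibit an explicit equivalence to $\HS(C)$ on the basic subcategory. For the Tannakian properties of $\AdmPair(C)$, I would mimic \cref{lemma.exact-pahs-to-vb,lemma.p-adic-Hodge-abelian,lemma.pahs-two-three-sequence,theorem.pahs-connected-tannakian} with only notational changes. Exactness of $(W,\mc{L}_\et) \mapsto \mc{E}(W)_{\mc{L}_\et}$ on strict complexes gives, for any morphism $f$, a short exact sequence of vector bundles on $\FF$ whose flanking terms have slope trapped in $[0,0]$ by semistability, forcing coincidence of the coimage and image lattices and hence abelianness with all morphisms strict. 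The two-out-of-three property for admissibility then follows from the analogous property for semistable slope $0$ bundles on $\FF$; the standard tensor and dual preserve admissibility; and $\omega_\et = H^0(\FF, -)$ is an exact faithful tensor functor via the equivalence between $\Vect(\mbb{Q}_p)$ and semistable slope $0$ bundles recalled in \cref{ss.fargues-fontaine}. For connectedness, I would run the argument of \cref{theorem.pahs-connected-tannakian} verbatim, using the Hodge filtration and \cref{lemma.hodge-filtration-trivial-implies-trivial} in place of the Hodge-Tate filtration and \cref{corollary.filtration-trivial-implies-trivial}.

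The equivalence with $\BKF^\circ_\rig$ I would deduce from \cite{anschutz} by unpacking definitions: a rigidified Breuil-Kisin-Fargues module up to isogeny is equivalent data to an isocrystal (from the generic Frobenius on $A_{\mr{inf}}$) equipped with a $B^+_\dR$-lattice (from the rigidification at $\ker\theta$), with the admissibility condition matching triviality of the induced $\FF$-bundle. For the main equivalence, I would exhibit an explicit quasi-inverse to $\omega_\HS$: the functor $\Psi \colon V \mapsto (\omega_\Isoc(V),\, V \otimes B^+_\dR)$ sketched at the start of \cref{s.admissible-pairs}, which lands in $\bAdmPair(C)$ because the weight grading of $V$ induces the isotypic decomposition of $\omega_\Isoc(V)$ and hence a decomposition of the lattice. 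Unravelling, a basic admissible pair $(W,\mc{L}_\et) = \bigoplus_\lambda (W_\lambda,\mc{L}_{\et,\lambda})$ is sent by $\omega_\HS$ to $\bigoplus_\lambda H^0\bigl(\FF,\mc{E}(W_\lambda)_{\mc{L}_{\et,\lambda}}\bigr)$ with lattice $W_\lambda \otimes B^+_\dR$ in grading degree $-2\lambda$, and this is a weight $-2\lambda$ Hodge structure because its modification is canonically $\mc{E}(W_\lambda)$, semistable of slope $-\lambda$ (consistent with the convention of $\omega_\Isoc$, which sends a weight $w$ Hodge structure to the $D_{-w/2}$-isotypic piece of an isocrystal). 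The identities $\omega_\HS \circ \Psi = \mr{Id}$ and $\Psi \circ \omega_\HS = \mr{Id}$ then reduce to the fact that two modifications at $\infty$ with mutually inverse lattice assignments undo each other, together with the canonical identification $\mc{E}(\omega_\Isoc(V)) \cong (V \otimes \mc{O}_\FF)_{\mc{L}_\dR}$ built into the definition of $\omega_\Isoc$.

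The main obstacle will be the slope-versus-weight bookkeeping in the third step --- verifying that the isotypic decomposition of the isocrystal matches the weight grading on both sides, and that the functors are exact tensor equivalences. Exactness can be checked slope-by-slope thanks to the basic hypothesis, reducing to the exactness of $\omega_\et$ established in the first step and of $\omega_\Isoc$ (from semisimplicity of $\Kt_{\mbb{Q}_p}$); compatibility with tensor product and duals is mechanical transport of the constructions already in hand.
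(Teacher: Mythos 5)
Your proposal matches the paper's proof in structure and substance: transport the lemmas of \cref{s.pahs} (with slope trapping in $[0,0]$ for abelianness, two-out-of-three for admissibility, the Hodge filtration and \cref{lemma.hodge-filtration-trivial-implies-trivial} for connectedness), cite \cite{anschutz} for the $\BKF^\circ_\rig$ equivalence, and exhibit the quasi-inverse $\Psi \colon V \mapsto (\omega_\Isoc(V), V \otimes B^+_\dR)$ for $\omega_\HS$, with the slope-$\lambda$ to weight-$(-2\lambda)$ bookkeeping worked out correctly.

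One small caution on your gloss of the $\BKF^\circ_\rig$ equivalence: the $B^+_\dR$-lattice comes from the BKF module structure at the completion along $\ker\theta$, not from the rigidification, and the Frobenius module over $A_\mr{inf}[1/p]$ alone yields only a bundle on $\FF$, not an isocrystal --- the rigidification (in Ansch\"utz's sense, an isomorphism $\bigoplus \gr^\lambda \mc{F}' \xrightarrow{\sim} \mc{F}'$ splitting the slope filtration) is exactly the extra datum needed to promote the bundle to an isocrystal, since $\mc{E}$ is faithful and essentially surjective but not full. The paper routes this through Anschütz's Theorem 3.19 (the quadruples $(\mc{F},\mc{F}',\beta,\alpha)$), with the admissible pair $(W,\mc{L}_\et)$ sent to $(\mc{E}(W)_{\mc{L}},\mc{E}(W),\alpha_\can,\beta_\can)$, where $\beta_\can$ is the canonical slope splitting supplied by the isocrystal grading. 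Since you cite Anschütz rather than re-deriving this, the logic of your proof is unaffected, but the attribution is worth getting straight.
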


Note that the fiber functor $\omega_\et$ on $\HS(C)$ (with the Hodge-Tate filtration) defined in the previous section is canonically identified with $\omega_\et$ (with the Hodge-Tate filtration) on basic admissible pairs via $\omega_\HS$, so that the overlap of notation will cause no confusion. 

\begin{proof}
By \cite[Theorem 3.19]{anschutz}, the category $\BKF^\circ_{\rig}$ is equivalent to the category of quadruples $(\mc{F}, \mc{F}',\beta,\alpha)$ where $\mc{F}$ and $\mc{F}'$ are vector bundles on $\FF$ with $\mc{F}$ trivial (equivalently, semistable of slope zero), $\alpha: \mc{F}|_{\FF\backslash \infty_C} \xrightarrow{\sim} \mc{F}'|_{\FF\backslash \infty_C}$, and $\beta:\bigoplus \gr^\lambda \mc{F}' \xrightarrow{\sim} \mc{F'}$ (where the graded pieces are for the slope filtration). There is a natural functor from $\AdmPair(C)$ to this category 
\[ (W, \mc{L}_\et) \mapsto (\mc{E}(W)_{\mc{L}}, \mc{E}(W), \alpha_\can, \beta_\can)\]
where $\alpha_\can$ is the isomorphism given by the modification construction and $\beta_\can$ is the canonical isomorphism of $\mc{E}(W)$ with its slope graded. It is an exercise in the definitions to verify this is an equivalence. 

The remaining properties claimed for $\AdmPair(C)$ then follow from the corresponding  properties established in \cite{anschutz}. However, we can also justify them directly as in \cref{s.pahs}. Indeed, the proof that the category is abelian is almost identical to the proof of \cref{lemma.p-adic-Hodge-abelian}, using that the modifications in each graded piece are semistable of the same slope. That it is neutral Tannakian is then clear, and the connectedness follows as in the proof of \cref{theorem.pahs-connected-tannakian} using \cref{lemma.hodge-filtration-trivial-implies-trivial}. 

It remains to see that $\omega_\et^{\mc{L}}$ induces an equivalence between basic admissible pairs and $p$-adic Hodge structures. The functor $\omega_\HS$ is given by
\[ (W, \mc{L})=\bigoplus_{\lambda \in \mbb{Q}} (W_\lambda, \mc{L}_\lambda) \mapsto \bigoplus_{w \in \mbb{Q}} \omega_\et^{\mc{L}}(W_{-w/2}, \mc{L}_{-w/2}) \]
and we have described the inverse at the beginning of this section. 
\end{proof}

\subsection{$G$-structure and motivic Galois groups of admissible pairs}
For $G/\mbb{Q}_p$ a connected linear algebraic group, recall (from \cref{ss.tannakian}) the category $G\dash\AdmPair(C)$ of admissible pairs with $G$-structure (or $G$-admissible pairs for short) has objects exact tensor functors $\mc{G}: \Rep\+ G \rightarrow \AdmPair(C)$ such that $\omega_\et \circ \mc{G}$ is isomorphic to $\omega_\std$. A $G$-admissible pair is basic if it factors through $\bAdmPair(C)$.

\begin{definition}
If $W$ is an admissible pair, then the \emph{motivic Galois group} of $W$, $\MG(W)$, is the automorphism group of $\omega_\et|_{\langle W \rangle}$. 
\end{definition}

Any admissible pair $W$ has a canonical $\MG(W)$-structure. More generally, for any $\mc{G} \in G\dash\AdmPair(C)$, we define $\MG(\mc{G})$ to be the automorphism group of $\omega_\et$ restricted to the Tannakian subcategory generated by the essential image of $\mc{G}$. Then, as in \cref{ss.canonical-G-structure}, $\mc{G}$ has a canonical refinement to a $\MG(\mc{G})$-admissible pair, and a choice of isomorphism $\omega_\et \circ \mc{G}=\omega_\std$ identifies $\MG(\mc{G})$ with a closed subgroup of $G$. If $\mc{G}$ is basic, then $\MG(\mc{G})$ is canonically identified with $\MT(\omega_\HS \circ \mc{G})$, the Mumford-Tate group of the associated $G$-$p$-adic Hodge structure, via \cref{theorem.adm-pair-propeties}. For any $\mc{G} \in G\dash\AdmPair(C)$, the isomorphism class of $\omega_\Isoc \circ \mc{G}$ is classified by an element of $B(G)$, denoted $[b]_{\mc{G}}$.

\begin{example} A $G$-admissible pair $\mc{G}$ factors through $\AdmPair(C)^{\basic}$ if and only the slope morphism is central in the motivic Galois group; if $G=\MG(\mc{G})$, this is equivalent to $[b]_{\mc{G}}$ being basic. A good example to keep in mind is an admissible pair arising as an extension of $\breve{\mbb{Q}}_p$ by $\breve{\mbb{Q}}_p(1)$. The trivial extension is basic, with motivic Galois group $\mbb{G}_m$, and the others are non-basic and have motivic Galois group $\mbb{G}_m \ltimes \mbb{G}_a$. If we view these extensions as arising from elliptic curves in a Serre-Tate disk, then only the lifts isogenous to the canonical lift (i.e. with Serre-Tate coordinate a root of unity) give rise to basic admissible pairs (see \cref{example.intro-elliptic-curves}). 
\end{example}

We write $\omega_\bl$ for the functor to bilatticed $B_\dR$-vector spaces 
\[ (W, \mc{L}_\et) \mapsto (W_{B_\dR}, \omega_{\mc{L}_\dR}(W, \mc{L}) \cong W_{B^+_\dR}, \mc{L}_\et) \]
It is an exact tensor functor, and extends the functor $\omega_\bl$ on $p$-adic Hodge structures under the equivalence of \cref{theorem.adm-pair-propeties}. 
Note that the Hodge filtration on $W_C$ is given by $\BB_1 \circ \omega_{\bl}$, while the Hodge-Tate filtration on $\omega_{\et}(W)\otimes_{\mbb{Q}_p}C = \mc{L}_\et \otimes_{B^+_\dR} C $ is given by $\BB_2 \circ \omega_\bl$. 

\begin{definition}
If $\mc{G} \in G\dash\AdmPair(C)$, we say $\mc{G}$ is good if $\omega_{\bl}\circ \mc{G}$ is a good bilatticed $G$-bundle as in Definition \ref{def:good-bi-latticed-torsor}, and write $[\mu]_{\mc{G}}$ for the associated type.  
\end{definition}

By \cref{theorem.good-equivalence-bilatticed}, $\mc{G}$ is good if and only if the Hodge filtration (resp. Hodge-Tate filtration) composed with $\mc{G}$, $\Rep\+ G \rightarrow \Vect^f(C)$ is a filtered $G$-bundle (i.e.\ is exact), 
and then $[\mu^{-1}]_{\mc{G}}$ (resp. $[\mu]_{\mc{G}}$) is the type of the Hodge (resp. Hodge-Tate) filtered $G$-bundle. If $\mc{G}$ is not basic then the invariant $[b]_{\mc{G}}$ is no longer uniquely determined by $[\mu]_{\mc{G}}$, but the proof of \cref{theorem.hs-isocrystal-unique-basic-element} extends immediately to show
\begin{theorem}\label{theorem.good-ap-kottwitz-set}
If $G$ is a connected linear algebraic group and $\mc{G} \in G\dash\AdmPair(C)$ is good, then $[b]_{\mc{G}}$ is contained in $B(G,[\mu^{-1}]_{\mc{G}})$. 
\end{theorem}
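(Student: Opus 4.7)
The plan is to reduce to the reductive case treated by Caraiani-Scholze, extending the strategy already used in the proof of \cref{theorem.hs-isocrystal-unique-basic-element}. I would begin by fixing a Levi decomposition $G = M \ltimes U$ with inclusion $s \colon M \hookrightarrow G$ and projection $\pi \colon G \twoheadrightarrow M \cong G/U$. Recall that for $G$ non-reductive, $B(G, [\mu]_{\mc{G}})$ is by definition $B(M, [\mu]_{\mc{G}})$ under the canonical identifications $B(G) = B(G/U) = B(M)$ (established in \cref{sss.isocrystals-G-structure}) and the matching of conjugacy classes of cocharacters of $G$ and $M$. So it suffices to show that $[b]_{\mc{G}}$, viewed in $B(M)$, lies in $B(M, [\mu]_{\mc{G}})$.

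The next step is to pass from $\mc{G}$ to the $M$-admissible pair $\mc{G}^{\mr{ss}} := (s \circ \pi)_\ast \mc{G}$, and verify two compatibilities. First, applying \cref{lemma.G-structure-in-semsimple-categories} to the semisimple Tannakian category $\Kt_{\mbb{Q}_p}$ yields an isomorphism $\omega_{\Isoc} \circ \mc{G} \cong \omega_{\Isoc} \circ \mc{G}^{\mr{ss}}$ of $G$-isocrystals, showing that $[b]_{\mc{G}}$ and $[b]_{\mc{G}^{\mr{ss}}}$ correspond under $B(G) = B(M)$. Second, since $\mc{G}$ is good of type $[\mu]_{\mc{G}}$, the bilatticed $G$-torsor $\omega_\bl \circ \mc{G}$ lies in a double coset $G(B^+_\dR) t^\mu G(B^+_\dR)$ for a cocharacter $\mu$ that can be chosen to factor through $M$. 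Applying the quotient $\pi \colon G(B_\dR) \to M(B_\dR)$ sends this double coset to the corresponding one in $M$, so $\mc{G}^{\mr{ss}}$ is an $M$-admissible pair of type $[\mu]_{\mc{G}}$ (this step uses that the map on isomorphism classes is induced by the quotient map on double cosets, exactly as in the proof of \cref{theorem.hs-isocrystal-unique-basic-element}).

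This reduces the problem to the reductive case: for $M$ reductive and an $M$-admissible pair $\mc{G}^{\mr{ss}}$ (which is then automatically good) of type $[\mu]_{\mc{G}}$, show $[b]_{\mc{G}^{\mr{ss}}} \in B(M, [\mu]_{\mc{G}})$. This is precisely \cite[Proposition 3.5.3]{caraiani-scholze:cohomology-compact-shimura}, which establishes the Kottwitz invariant identity and the Newton-dominance inequality characterizing the subset $B(M, [\mu]_{\mc{G}}) \subseteq B(M)$ for the $M$-isocrystal arising from an admissible modification of type $[\mu]_{\mc{G}}$ of the trivial $M$-bundle on $\FF$.

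The hard part is not the reduction itself, which is purely formal given the good-bilatticed framework of \cref{s.filtrations-and-lattices} and the semisimplicity of $\Kt_{\mbb{Q}_p}$. The substantive content lives in the reductive statement, which is borrowed from Caraiani-Scholze and in turn rests on Kottwitz's classification together with slope-theoretic facts about $G$-bundles on the Fargues-Fontaine curve. What the present theorem contributes, beyond \cref{theorem.hs-isocrystal-unique-basic-element}, is the observation that the convention $B(G, [\mu]) := B(G/U, [\mu])$, combined with \cref{theorem.good-equivalence-bilatticed}, allows the non-reductive case to be handled uniformly for any good admissible pair, with no need to restrict to the basic locus.
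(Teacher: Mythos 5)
Your proof is correct and matches the paper's approach: the paper literally asserts that the proof of \cref{theorem.hs-isocrystal-unique-basic-element} ``extends immediately,'' meaning exactly the reduction you describe --- pass to the Levi $M \cong G/U$ using the definitional identity $B(G,[\mu]) = B(G/U,[\mu])$, check that $[b]$ and the type transfer via the quotient map on $B^+_\dR$-double cosets, and invoke \cite[Prop.\ 3.5.3]{caraiani-scholze:cohomology-compact-shimura} for the reductive case. One small notational slip: $(s\circ\pi)_*\mc{G}$ is a $G$-admissible pair, not an $M$-admissible pair; the object you actually want is $\pi_*\mc{G} = \mc{G}\circ\pi^*$, which is what the paper calls $\mc{G}^{\mathrm{ss}}$, and once you have that the appeal to \cref{lemma.G-structure-in-semsimple-categories} is redundant since $B(G)=B(M)$ (which already rests on that lemma) does the bookkeeping for you.
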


\subsection{Hodge lines}\label{ss.hodge-lines}

Suppose $W$ is an admissible pair. For $k \in \mbb{Z}$, the space of weight $2k$ Hodge lines in $W$ is
\[ \Hdg^{2k}(W):=\left(\Hom_{\AdmPair(C)}(\breve{\mbb{Q}}_p(-k), W) \backslash \{0\}\right) / \mbb{Q}_p^\times. \]
By evaluation, $\Hdg^{2k}(W)$ is a projective subspace of $\mbb{P}(W^{\varphi_W={p^k}})$, where we note 
\[ W^{\varphi_W=p^k}=\Hom_\Isoc(D_{k}, W) \] 
is a finite dimensional $\mbb{Q}_p$-vector space (recall $D_{k}$ from \cref{ss.isocrystals} denotes the 1-dimensional isocrystal $\breve{\mbb{Q}}_p$ with Frobenius acting by $p^{k}$). 

We also may identify $\Hdg^{2k}(W)$ with a projective subspace of $\mbb{P}(\omega_\et(W))$ by evaluation after application of $\omega_\et$. If $W$ is basic with associated $p$-adic Hodge structure $V$, $\Hdg^{2k}(W)=\HT^{2k}(V)$ compatibly with this identification, and essentially the same proof as \cref{theorem.hodge-tate-lines-mumford-tate-group} yields an extension to all admissible pairs: 

\begin{theorem}
Suppose $G/\mbb{Q}_p$ is a connected linear algebraic group and $\mc{G} \in G\dash\AdmPair(C)$ is equipped with a trivialization $\omega_\et \circ \mc{G}=\omega_\std$. Then $\MG(\mc{G}) \leq G$ is the subgroup of $G$ preserving every line $\ell \in \Hdg^{2k}(\mc{G}(V))$ for each $V \in \Rep\+G, k \in \mbb{Z}$. 
\end{theorem}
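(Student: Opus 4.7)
The plan is to mirror the proof of \cref{theorem.hodge-tate-lines-mumford-tate-group} almost verbatim, using the dictionary: Hodge--Tate lines become Hodge lines, $p$-adic Hodge structures become admissible pairs, Tate $p$-adic Hodge structures become Tate admissible pairs (\cref{example.tate-admissible-pairs}), and the Mumford--Tate group becomes the motivic Galois group $\MG(\mc{G})$. Let me denote by $H \leq G$ the subgroup preserving every $\ell \in \Hdg^{2k}(\mc{G}(V))$ for every $V \in \Rep\, G$ and every $k \in \mbb{Z}$. The task is to establish $\MG(\mc{G}) = H$ inside $G$.

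First I would verify the inclusion $\MG(\mc{G}) \leq H$. Any $\ell \in \Hdg^{2k}(\mc{G}(V))$ is, by definition, the image under $\omega_\et$ (identified with $\omega_\std$ via the fixed trivialization) of the line underlying a morphism $\breve{\mbb{Q}}_p(-k) \hookrightarrow \mc{G}(V)$ in $\AdmPair(C)$. As recalled in \cref{s.G-structure-adm-pair}, $\mc{G}$ factors canonically through an admissible pair with $\MG(\mc{G})$-structure, so applying $\omega_\et$ yields an inclusion of $\MG(\mc{G})$-representations $\mbb{Q}_p \hookrightarrow V$ whose image is $\ell$. Hence $\MG(\mc{G})$ fixes $\ell$.

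For the reverse inclusion $H \leq \MG(\mc{G})$, I would invoke Chevalley's theorem: the closed subgroup $\MG(\mc{G}) \subseteq G$ is the stabilizer of some line $\ell_0 \subset V_0$ for a suitable $V_0 \in \Rep\, G$. The line $\ell_0$ is then an $\MG(\mc{G})$-stable line in $\omega_\et(\mc{G}(V_0)) = V_0$, hence (using the canonical $\MG(\mc{G})$-structure) it underlies a one-dimensional sub-admissible-pair of $\mc{G}(V_0)$. By \cref{example.tate-admissible-pairs}, every one-dimensional admissible pair is isomorphic to $\breve{\mbb{Q}}_p(k)$ for a unique $k \in \mbb{Z}$, so $\ell_0$ is a Hodge line in $\Hdg^{-2k}(\mc{G}(V_0))$. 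Therefore $H \subseteq \Stab_G(\ell_0) = \MG(\mc{G})$.

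The only subtlety — and the reason the statement ranges over all integer weights rather than just $k=0$, and speaks of lines rather than vectors — is that $\MG(\mc{G})$ need not be observable in $G$. As discussed in \cref{remark.reductivity-criterion}, there is no analog of a polarization in this setting forcing reductivity, so weight-zero invariant vectors alone would not suffice to detect $\MG(\mc{G})$. What saves the argument is that Chevalley's theorem gives a line (not a vector), and the one-dimensional classification of \cref{example.tate-admissible-pairs} automatically turns such a line into a Hodge line of some integer weight; beyond this point, the proof is purely formal.
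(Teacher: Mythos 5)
Your proof is correct and matches the paper's intended argument: the paper explicitly says the theorem follows by ``essentially the same proof as'' the Hodge--Tate line theorem (\cref{theorem.hodge-tate-lines-mumford-tate-group}), and you have carried out exactly that dictionary, including the use of Chevalley's theorem to produce a stabilizer line and \cref{example.tate-admissible-pairs} to identify the resulting one-dimensional sub-admissible-pair as a Tate object. The one tiny notational slip is writing ``$\mbb{Q}_p \hookrightarrow V$'' for the image of $\omega_\et(\breve{\mbb{Q}}_p(-k))$, which as an $\MG(\mc{G})$-representation need not be trivial when $k \neq 0$; this does not affect the argument, which only requires that the line be $\MG(\mc{G})$-stable, not that it be fixed pointwise.
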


\begin{remark}
Let $W \in \AdmPair(C)$ and let $V=\omega_\et(W)$. When we identify $\Hdg^{2k}(W)$ with a projective subspace of $\mbb{P}(V)$, it lies inside the projectivization of 
\[ \Hom_{\textrm{$B^+_\dR$-latticed $\mbb{Q}_p$-vector spaces}}(\mbb{Q}_p(-k), (V, W \otimes B^+_\dR)). \]
This is an equality when $W$ is $k$-isotypic, but in general this set of homomorphisms can be larger, so that the Hodge lines cannot typically be determined only using the data of the $B^+_\dR$-latticed $\mbb{Q}_p$-vector space attached to a general admissible pair. 
\end{remark}

\subsection{$\overline{C}_0$-analyticity of an admissible pair and its de Rham lattice}\label{ss.admissible-pairs-with-good-reduction}
\newcommand{\MF}{\mr{MF}}

In this subsection we give precise definitions of the rationality properties of the lattices associated to an admissible pair, as needed for the main theorem. The rationality of the \'etale lattice determines the field of definition of an admissible pair, and we consider the subcategory consisting of those objects that can be defined over some strict $p$-adic subfield of $C$ (recall a non-archimedean field is strict $p$-adic if its value group is discrete and its residue field is algebraically closed). Note that any strict $p$-adic subfield is contained in $\overline{C}_0$ and contains $\breve{\QQ}_p$. 

\begin{definition} \label{def.rigid-analytic-adm-pair}
Let $(W, \mc{L})$ be an admissible pair over $C$ and let $V = \omega_\et(W, \mc{L})$.
\begin{enumerate}
    \item The de Rham lattice $\omega_{\mc{L}_\dR}(W, \mc{L})$ of $(W, \mc{L})$ is \emph{$\overline{C}_0$-analytic} if there is a filtration $\Fil^\bullet$ on $V_{\ol{C}_0}$ such that $\omega_{\mc{L}_\dR}(W, \mc{L}) = \mc{L}_\can(\Fil^\bullet V_{\ol{C}_0}) = \sum_{i \in \ZZ} \Fil^{-i}V_{\ol{C}_0} \otimes \Fil^i B_\dR$. For $K \subseteq C$ a strict $p$-adic subfield, we say that the de Rham lattice is defined over $K$ (or is $K$-analytic) if $\Fil^\bullet$ is defined over $K$.
    \item $(W, \mc{L})$ is \emph{$\overline{C}_0$-analytic} if there is a filtration $\Fil^\bullet$ on $W_{\ol{C}_0}$ such that $\mc{L} = \mc{L}_\can(\Fil^\bullet W_{\ol{C}_0}) = \sum_{i \in \ZZ} \Fil^{-i}W_{\ol{C}_0} \otimes_{\ol{C}_0} \Fil^i B_\dR$. For $K \subseteq C$ a strict $p$-adic subfield, a $\overline{C}_0$-analytic admissible pair $(W,\mc{L})$ has \emph{good reduction over $K$} if this filtration is defined over $K$. 
\end{enumerate}

We write $\AdmPair(\overline{C}_0)$ for the full subcategory of $\AdmPair(C)$ consisting of $\overline{C}_0$-analytic admissible pairs, and an admissible pair with $G$-structure $\mc{G}$ is $\overline{C}_0$-analytic  if it factors through $\AdmPair(\overline{C}_0)$. For $K \subseteq C$ a strict $p$-adic subfield, we write $\AdmPair^\goodred(K)$ for the full subcategory of $\AdmPair(C)$ consisting of admissible pairs with good reduction over $K$, and an admissible pair with $G$-structure $\mc{G}$ has good reduction over $K$ if it factors through $\AdmPair^\goodred(K)$. 
\end{definition}

\begin{remark}\label{remark.more-general-def-over-p-adic-field}
In Part III we will give a definition of admissible pairs over an arbitrary locally spatial diamond. For the diamond $\Spd K$, $K$ a $p$-adic field, this will amount to an admissible pair $(W, \mc{L})$ over $C=\overline{K}^\wedge$ equipped with a semi-linear action of $\Gal(\overline{K}/K)$ on $W$ that preserves $\mc{L}$ for the induced semi-linear action on $W_{B_\dR}$. An admissible pair with good reduction over $K$ will then be one where the semi-linear action is unramified; if the residue field of $K$ is algebraically closed, then the semi-linear action is trivial and we recover the notion above. We make the definition above only in the case of algebraically closed residue field to avoid introducing this semi-linear action, which is irrelevant for our transcendence results. 
\end{remark}

Note that any $\overline{C}_0$-analytic admissible pair has good reduction over a sufficiently large strict $p$-adic subfield $K \subseteq C$. In particular, if $\mc{G}$ is a $\overline{C}_0$-analytic admissible pair with $G$-structure, then since $\Rep\+ G$ has a tensor generator, one can always find a strict $p$-adic subfield $K \subseteq C$ such that $\mc{G}$ has good reduction over $K$.

On the category of admissible pairs with good reduction, there is a canonical splitting of the Hodge-Tate filtration. Below we write $\mathfrak{G}_K=\Gal(\overline{K}/K)$. 
\begin{lemma}\label{lemma.ht-splitting}
Let $(W, \mc{L}) \in \AdmPair(\overline{C}_0)$, and let $V=\omega_\et(W,\mc{L})$. 
\begin{enumerate}
    \item If $\mc{L} = \mc{L}_\can(\Fil^\bullet W_{\ol{C}_0})$ for a filtration $\Fil^\bullet$ on $W_{\ol{C}_0}$, then 
        \[ \Fil^\bullet W_{\ol{C}_0} = \varinjlim_{K \subset \ol{C}_0} \left(\Fil_\Hdg^\bullet W_C\right)^{\mf{G}_K}, \]
    where $K$ ranges over discretely valued subfields of $\ol{C}_0$. In particular, $\Fil^\bullet W_{\ol{C}_0}$ is uniquely determined by $(W, \mc{L})$ and recovered via the Hodge filtration. 
    \item For each $i$, the natural map $F^{-i}W_{\ol{C}_0} \otimes \Fil^i B_\dR \subseteq \mc{L} \cong V \otimes B_\dR^+$ yields a commutative diagram
\[\begin{tikzcd}[sep=small]
	{\Fil^{-i}W_{\ol{C}_0} \otimes \Fil^i B_\dR} & {V \otimes B_\dR^+} \\
	{\gr^{-i} W_{\ol{C}_0} \otimes C(i)} & {V \otimes C}
	\arrow[from=1-2, to=2-2]
	\arrow[from=1-1, to=2-1]
	\arrow[hook, from=1-1, to=1-2]
	\arrow[hook, from=2-1, to=2-2]
\end{tikzcd}\]
which induces an isomorphism
\[ \gr^{-i} W_{\ol{C}_0} \otimes_{\overline{C}_0} C(i) \xrightarrow{\sim} \gr^{i} V_C. \]
\item The previous isomorphisms yield a functorial splitting of the Hodge-Tate filtration on $\AdmPair(\overline{C}_0)$
\[ (W, \mc{L}) \mapsto \bigoplus_{i \in \ZZ} \gr^{-i}W_{\ol{C}_0} \otimes C(i) \xrightarrow{\sim} V_C  \]

\end{enumerate}
\end{lemma}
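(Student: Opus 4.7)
My plan is to deduce all three parts by direct computation, starting from the fact that since we work in characteristic zero the filtration $\Fil^\bullet W_{\overline{C}_0}$ admits a splitting $W_{\overline{C}_0} = \bigoplus_j W_j$ with $\Fil^i W_{\overline{C}_0} = \bigoplus_{j \geq i} W_j$ (over some finite extension $K_0/C_0$ to which the filtration descends). The defining formula for $\mc{L}$ then collapses to the ``diagonal'' form $\mc{L} = \bigoplus_j W_j \otimes_{\overline{C}_0} t^{-j}B_\dR^+$, which makes the subsequent lattice intersections tractable.

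For part (1), one computes $(\Fil^i B_\dR) \cdot \mc{L} \,\cap\, W_{B_\dR^+} = \bigoplus_j W_j \otimes_{\overline{C}_0} t^{\max(i-j,0)} B_\dR^+$ and reduces modulo $t W_{B_\dR^+}$ to find $\Fil^i_\Hdg W_C = \Fil^i W_{\overline{C}_0} \otimes_{\overline{C}_0} C$, i.e.\ the Hodge filtration is the base change of the descended filtration. For any discretely valued $K$ with $K_0 \subseteq K \subseteq \overline{C}_0$, the Ax--Sen--Tate theorem gives $C^{\mf{G}_K} = K$, hence $(\Fil^i_\Hdg W_C)^{\mf{G}_K} = \Fil^i W_{\overline{C}_0} \otimes_{\overline{C}_0} K = \Fil^i W_K$; taking the colimit over such $K$ recovers $\Fil^i W_{\overline{C}_0}$. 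In particular this shows that $\Fil^\bullet W_{\overline{C}_0}$ is intrinsic to $(W,\mc{L})$ (recoverable from the Hodge filtration), giving the uniqueness claim.

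For part (2), the top horizontal map is induced by the inclusion $\Fil^{-i}W_{\overline{C}_0} \cdot \Fil^i B_\dR \subseteq \mc{L}$ composed with the canonical identification $\mc{L} \cong V \otimes B_\dR^+$ coming from the de Rham comparison \eqref{eqn:c_dR}. Commutativity and the factoring are routine: for $x \in \Fil^{-i}W_{\overline{C}_0}$ and $b \in \Fil^i B_\dR = t^i B_\dR^+$ the product $xb$ lies in $\mc{L} \cap t^i W_{B_\dR^+}$, so its image modulo $t\mc{L}$ lies in $\Fil^i_\HT V_C$; and $xb \in t\mc{L}$ whenever $x \in \Fil^{-i+1}W_{\overline{C}_0}$ or $b \in \Fil^{i+1}B_\dR$, so the map descends to $\gr^{-i}W_{\overline{C}_0} \otimes C(i)$. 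That the induced map onto $\gr^i V_C$ is an isomorphism is then exactly the scalar-multiplication isomorphism of \cref{lemma.associated-gradeds-twist}, applied to the bilattice $(W_{B_\dR}, W_{B_\dR^+}, \mc{L})$ whose $\BB_1$ and $\BB_2$ are the Hodge and Hodge-Tate filtrations, after using part (1) to identify $\gr^{-i}$ of the Hodge filtration with $\gr^{-i}W_{\overline{C}_0} \otimes_{\overline{C}_0} C$. Part (3) then follows formally: summing the isomorphisms of (2) yields $\bigoplus_i \gr^{-i}W_{\overline{C}_0} \otimes C(i) \to V_C$; equipping the source with the filtration that places the $i$-th summand in degree $\geq i$ makes this map filtered with isomorphic graded pieces, hence itself an isomorphism and a splitting of the Hodge-Tate filtration. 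Functoriality is inherited from that of the Bialynicki-Birula construction and of \cref{lemma.associated-gradeds-twist}.

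The main obstacle will be clerical rather than conceptual: keeping straight the three objects $V$, $W$, and $\mc{L}$ together with the various lattices (\'etale $\mc{L}_\et = \mc{L}$ versus de Rham $c_\dR^{-1}(W_{B_\dR^+})$) and identifying the correct Hodge versus Hodge-Tate gradeds under the conventions of \cref{s.filtrations-and-lattices}. Once these identifications are in place, everything reduces to part (1) plus \cref{lemma.associated-gradeds-twist}.
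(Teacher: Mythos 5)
Your proof is correct, and for parts (1) and (3) it follows essentially the same path as the paper (the paper's argument for (1) computes $\Fil^\bullet_{\Hdg} = \Fil^\bullet \otimes C$ directly from the definitions without passing through a splitting, but the extra splitting you introduce does no harm and makes the lattice-intersection computation more explicit). The genuine divergence is in part (2): the paper proves the isomorphism $\gr^{-i}W_{\overline{C}_0} \otimes C(i) \xrightarrow{\sim} \gr^i V_C$ by an explicit diagram chase through a $3\times 3$ grid of short exact sequences, whereas you deduce it by applying the scalar-multiplication isomorphism of \cref{lemma.associated-gradeds-twist} to the bilatticed vector space $(W_{B_\dR}, W_{B_\dR^+}, \mc{L})$ (for which $\BB_1$ and $\BB_2$ are precisely the Hodge and Hodge--Tate filtrations), and then using part (1) to identify $\gr^{-i}$ of the Hodge filtration with $\gr^{-i}W_{\overline{C}_0}\otimes C$. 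This is a cleaner route: it reuses a general lemma already in place rather than redoing the homological algebra, and it makes manifest that the map in question is literally multiplication by $t^i$. The one thing you should double-check is that the abstract isomorphism from \cref{lemma.associated-gradeds-twist} agrees with the specific map constructed by the commutative square in the statement; this is immediate since both are induced by scalar multiplication, but it is worth saying in one line. Your argument for (3), equipping the source with the grading-induced filtration and observing the map is filtered with isomorphic gradeds, is exactly the right way to turn the graded isomorphisms of (2) into a splitting, and it spells out a step the paper leaves implicit.

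One clerical slip: in part (1) you write $\Fil^i W_{\overline{C}_0} \otimes_{\overline{C}_0} K$, which does not parse since $K \subseteq \overline{C}_0$; you mean $\Fil^i W_K$ (the $K$-descent of the filtration), which is what the Galois-invariants computation produces via $C^{\mf{G}_K} = K$.
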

\begin{proof}
Functoriality in (3) follows from (1) since the Hodge filtration is functorial. It is immediate from the definitions $\mc{L}_\can(\Fil^\bullet) = \sum_{i \in \ZZ} \Fil^{-i} W_{\ol{C}_0} \otimes \Fil^i B_\dR$ and $\Fil_\Hdg^\bullet = (t^\bullet\mc{L} \cap W_{B_\dR^+})/(t^\bullet\mc{L} \cap tW_{B_\dR^+})$ that $\Fil_\Hdg^\bullet = \Fil^\bullet \otimes C$ and (1) follows by taking the colimit over Galois invariants. (2) then follows from (1) and the following diagram with exact rows and columns:
\[\begin{tikzcd}[sep=small]
	& 0 & 0 & 0 \\
	0 & {t^{i+1}W_{B_\dR^+} \cap t\mc{L}} & {t^{i+1}W_{B_\dR^+} \cap \mc{L}} & {\Fil_\HT^{i+1} V_C} & 0 \\
	0 & {t^{i}W_{B_\dR^+} \cap t\mc{L}} & {t^{i}W_{B_\dR^+} \cap \mc{L}} & {\Fil^i_\HT V_C} & 0 \\
	0 & {\Fil^{-i + 1}_\Hdg W_C \otimes C(i)} & {\Fil^{-i}_\Hdg W_C \otimes C(i)} & {\gr^iV_C} & 0 \\
	& 0 & 0 & 0
	\arrow[from=2-2, to=3-2]
	\arrow[from=3-2, to=4-2]
	\arrow[from=2-3, to=3-3]
	\arrow[from=3-3, to=4-3]
	\arrow[from=4-2, to=4-3]
	\arrow[from=3-2, to=3-3]
	\arrow[from=2-2, to=2-3]
	\arrow[from=2-3, to=2-4]
	\arrow[from=3-3, to=3-4]
	\arrow[from=4-3, to=4-4]
	\arrow[from=2-1, to=2-2]
	\arrow[from=1-2, to=2-2]
	\arrow[from=1-3, to=2-3]
	\arrow[from=1-4, to=2-4]
	\arrow[from=3-1, to=3-2]
	\arrow[from=4-1, to=4-2]
	\arrow[from=4-2, to=5-2]
	\arrow[from=4-3, to=5-3]
	\arrow[from=2-4, to=3-4]
	\arrow[from=3-4, to=4-4]
	\arrow[from=4-4, to=5-4]
	\arrow[from=4-4, to=4-5]
	\arrow[from=3-4, to=3-5]
	\arrow[from=2-4, to=2-5]
\end{tikzcd}\]
\end{proof}

As a consequence, we find $\overline{C}_0$-analytic admissible pairs avoid some of the perversities one encounters regarding types for general admissible pairs: 

\begin{theorem}\label{theorem.rig-an-ap-is-good}
Any $\overline{C}_0$-analytic admissible pair with $G$-structure is good. 
\end{theorem}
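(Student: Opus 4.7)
The plan is to reduce the goodness of $\mc{G}$ to the exactness of the Hodge filtration via \cref{theorem.good-equivalence-bilatticed}, and then to verify that exactness by descending the Hodge filtration from $C$ to $\overline{C}_0$, where it is encoded in the filtration $\Fil^\bullet W_{\overline{C}_0}$ attached to the $\overline{C}_0$-analytic structure.

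First, I would reformulate the goal. For each $V\in \Rep\,G$, write $\mc{G}(V)=(W_V,\mc{L}_V)$. By $\overline{C}_0$-analyticity, there is a filtration $\Fil^\bullet W_{V,\overline{C}_0}$ with $\mc{L}_V=\mc{L}_\can(\Fil^\bullet W_{V,\overline{C}_0})$, and by \cref{lemma.ht-splitting}(1), its base change to $C$ recovers the Hodge filtration $\Fil^\bullet_\Hdg W_{V,C}$. Since $-\otimes_{\overline{C}_0}C$ is exact and tensor on $\Vect^f$, \cref{theorem.good-equivalence-bilatticed} shows it suffices to prove that $V\mapsto (W_{V,\overline{C}_0},\Fil^\bullet W_{V,\overline{C}_0})$ defines an exact tensor functor $\Rep\,G\to \Vect^f(\overline{C}_0)$.

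The linchpin of the argument is the elementary identity $F^i V = t^i\mc{L}_\can(F^\bullet V)\cap V_{\overline{C}_0}$, verified using a basis of $V$ adapted to the filtration. This recovers the filtration uniquely from the lattice (strengthening the fully-faithfulness of $\mc{L}_\can$ in \cref{lemma.relation-between-BB-and-canonical-filtration} to uniqueness on the level of objects), which immediately gives functoriality and the tensor property: a morphism in $\AdmPair(\overline{C}_0)$ induces a morphism of $\mc{L}_\can$-lattices after base change, hence (by full faithfulness) a morphism of filtrations; and $\mc{L}_\can$ being a tensor functor together with the uniqueness identifies the filtration on a tensor product with the tensor product of filtrations. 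For exactness, I take a strict exact sequence $0\to(W_1,\mc{L}_1)\to(W_2,\mc{L}_2)\to(W_3,\mc{L}_3)\to 0$ in $\AdmPair(\overline{C}_0)$, which descends to a strict exact sequence of $B^+_\dR$-latticed $\overline{C}_0$-vector spaces. For the injection, combining strictness of the lattice inclusion with the identity above yields $f(F^i V_1)=f(V_1)\cap F^i V_2$. For the surjection $f\colon V_2\twoheadrightarrow V_3$, strictness of lattices gives $f(\mc{L}_\can(\Fil^\bullet V_2))=\mc{L}_\can(\Fil^\bullet V_3)$; on the other hand, $f(\mc{L}_\can(\Fil^\bullet V_2))=\mc{L}_\can(f(\Fil^\bullet V_2))$ by direct computation, so uniqueness of the filtration forces $f(\Fil^\bullet V_2)=\Fil^\bullet V_3$.

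Assembling these pieces, $V\mapsto (W_{V,\overline{C}_0},\Fil^\bullet W_{V,\overline{C}_0})$ is an exact tensor functor to $\Vect^f(\overline{C}_0)$; base-changing to $C$ gives the Hodge filtration, which is therefore an exact tensor functor, and \cref{theorem.good-equivalence-bilatticed} concludes that $\mc{G}$ is good. The main conceptual obstacle is the exactness step (specifically the strictness of quotient filtrations), which I expect to handle via the lattice-to-filtration bijection in the essential image of $\mc{L}_\can$; the other ingredients are then either elementary or direct consequences of \cref{lemma.ht-splitting}(1) and \cref{lemma.relation-between-BB-and-canonical-filtration}.
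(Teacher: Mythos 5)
Your proof is correct, and it reaches the conclusion via \cref{theorem.good-equivalence-bilatticed} just as the paper does, but the route differs in a meaningful way. The paper's proof is one line: it invokes the functorial splitting of the Hodge-Tate filtration from \cref{lemma.ht-splitting}(3), observes that a functor landing in split (graded) filtrations is automatically exact, and then applies \cref{theorem.good-equivalence-bilatticed} (via $\BB_2$). You instead work on the Hodge side ($\BB_1$): you descend the Hodge filtration to $\overline{C}_0$ via the defining filtration $\Fil^\bullet W_{\overline{C}_0}$, and prove exactness of the descended functor $\Rep\,G\to\Vect^f(\overline{C}_0)$ directly, using the identity $F^i V = t^i\mc{L}_\can(F^\bullet V)\cap V_{\overline{C}_0}$ (which is correct --- the case analysis on $t^{j-i_k}B^+\cap\overline{C}_0$ using $\overline{C}_0\cap\ker\theta=0$ works) together with strictness of short exact sequences in $\AdmPair(\overline{C}_0)$ on the lattice side. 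Your treatment of injections and surjections is correct, and the tensor compatibility via uniqueness of $\mc{L}_\can^{-1}$ is fine (though note this uniqueness is already a formal consequence of the full faithfulness in \cref{lemma.relation-between-BB-and-canonical-filtration}, not a strengthening of it). What your approach buys is that it avoids the diagram chase in \cref{lemma.ht-splitting}(2)--(3) and is more self-contained at this step; what the paper's approach buys is a shorter proof here, reusing a lemma that is developed anyway for the Galois-representation arguments in \cref{ss.galois-representation}.
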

\begin{proof}
The splitting in \cref{lemma.ht-splitting} is functorial, so the Hodge-Tate filtation on $\omega_\et \circ \mc{G}$ is exact and thus the result follows from \cref{main.exactness}.
\end{proof}

\begin{example}\label{example.no-non-trivial-rigid-analytic-ext-of-trivial-by-itself}
Building on \cref{example.ext-of-trivial}, one easily finds that 
\[ \mr{Ext}^1_{\AdmPair(C)}(\triv, \triv)=B_\dR/B^+_\dR \]
and that the only good extension (viewed as a $\mbb{G}_a$-admissible pair) is the trivial one corresponding to the zero coset. By \cref{theorem.rig-an-ap-is-good} we conclude that 
\[ \mr{Ext}^1_{\AdmPair(\overline{C}_0)}(\triv, \triv)=0. \]
This is an incarnation of the fact that a self-extension of the trivial representation of the Galois group of a $p$-adic field is crystalline if and only if it is is unramified (see \cref{ss.galois-representation} below). 
\end{example}

\begin{example}[Existence of $G$-admissible pairs with fixed invariants]\label{example.existence-of-G-admissible-pairs} Combining \cref{theorem.rig-an-ap-is-good} and \cref{theorem.good-ap-kottwitz-set}, we find a $\overline{C}_0$-analytic $G$-admissible pair is good with invariants $[\mu]$ and $b$ such that $b \in B(G,[\mu^{-1}])$. We claim furthermore that given any $b \in B(G,[\mu^{-1}])$, there exists a $\overline{C}_0$-analytic $G$-admissible pair with invariants $b$ and $[\mu]$ (so far it is not obvious that there is any $G$-admissible pair with these fixed invariants!). Indeed, choosing a Levi decomposition $G=MU$, we can always push-out from $M$ to $G$, so it suffices to assume $G$ is reductive. In the reductive case, this existence follows from \cite[Proposition 3.1]{rapoport-viehmann}. 
\end{example}

\subsection{Galois representations for admissible pairs with good reduction}\label{ss.galois-representation}
In this subsection we take $K$ to be a strict $p$-adic field and assume $C=\overline{K}^\wedge$. Then $\omega_\et|_{\AdmPair^{\goodred}(K)}$ promotes naturally to a functor $\omega_{\mf{G}_K}: \AdmPair^{\goodred}(K) \rightarrow \Rep_{\mbb{Q}_p} \mf{G}_K$. Indeed, for $(W, \mc{L})$ in $ \AdmPair^{\goodred}(K)$ the Hodge filtration of $W_C$ is defined over $K$ and $\mc{L} = \sum_{i \in \ZZ} \Fil_\Hdg^{-i}W_K \otimes_K \Fil^i B_\dR$ is preserved by the $\mf{G}_K$ action on $W_{B_\dR}= W\otimes B_\dR$ (acting on $B_\dR$). Consequently there is an induced action $\rho$ of $\mf{G}_K$ on $\omega_{\et}(W,\mc{L})=H^0(\FF, \mc{E}(W)_{\mc{L}})$, which is encoded by the following commutative diagram for each $\sigma \in \mf{G}_K$:

\begin{equation}\label{eqn:galois-rep-admissible-phi-module}
\begin{tikzcd}
	{\omega_\et(W, \mc{L}) \otimes B_\dR} & {W \otimes B_\dR = \omega_\Isoc(W, \mc{L}) \otimes B_\dR} \\
	{\omega_\et(W, \mc{L}) \otimes B_\dR} & {W \otimes B_\dR = \omega_\Isoc(W, \mc{L}) \otimes B_\dR}
	\arrow["{\mr{Id} \otimes \sigma}", from=1-2, to=2-2]
	\arrow["{\rho(\sigma) \otimes \sigma}"', from=1-1, to=2-1]
	\arrow["{c_\dR}", from=2-1, to=2-2]
	\arrow["{c_\dR}", from=1-1, to=1-2]
\end{tikzcd}
\end{equation}

The following is essentially the crystalline comparison as reinterpreted by Fargues and Fontaine in this special case (see \cref{remark.cryst-comparison} below). 
\begin{theorem}\label{theorem.full-faithful-essential-image}
The functor $\omega_{\mf{G}_K}$ is fully faithful; its essential image is stable under sub-objects and is contained in the subcategory of crystalline representations of $\mf{G}_K$. 
\end{theorem}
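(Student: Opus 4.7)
The plan is to reduce to the classical theorem of Colmez--Fontaine on crystalline representations. First, I would identify the category $\AdmPair^{\goodred}(K)$ with the category of weakly admissible filtered $\varphi$-modules over $K$. This identification is possible because $K$ has algebraically closed residue field, so its maximal absolutely unramified subfield $K_0$ equals $\breve{\mbb{Q}}_p$; hence an object of $\AdmPair^{\goodred}(K)$---an admissible pair $(W,\mc{L})$ with $\mc{L}=\mc{L}_\can(\Fil^\bullet W_K)$---is precisely a filtered $\varphi$-module $(W,\varphi_W,\Fil^\bullet W_K)$ over $K$, and admissibility of the pair (semistability of slope $0$ for $\mc{E}(W)_{\mc{L}}$) matches weak admissibility under the Fargues--Fontaine slope dictionary.

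Next, I would check that under this identification the functor $\omega_{\mf{G}_K}$ agrees with the usual crystalline functor
\[ V_\crys(W,\Fil^\bullet W_K) \;=\; \Fil^0\bigl((W\otimes_{K_0}B_\crys)^{\varphi=1}\bigr). \]
This uses the standard presentation of $H^0(\FF,\mc{E}(W)_\mc{L})$ via the fundamental exact sequence relating sections on $\FF\setminus\{\infty_C\}$ (which form $(W\otimes B_\crys)^{\varphi=1}[1/t]$) with the prescribed lattice $\mc{L}$ near $\infty_C$, and the commutativity of diagram \eqref{eqn:galois-rep-admissible-phi-module} matches the standard $\mf{G}_K$-action on $V_\crys$ (note that the $\mf{G}_K$-action on $W$ is trivial because $K_0=\breve{\mbb{Q}}_p$).

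With these identifications the theorem follows from two classical inputs: the fully faithfulness of $V_\crys$ on admissible filtered $\varphi$-modules, with essential image the crystalline representations (Colmez--Fontaine), and the fact that any sub-representation of a crystalline representation is crystalline (a standard property of $B_\crys$). The latter ensures that a $\mf{G}_K$-stable subspace of $\omega_{\mf{G}_K}(W,\mc{L})$ is realized by a weakly admissible sub-filtered-$\varphi$-module with its induced filtration, hence comes from a sub-object in $\AdmPair^{\goodred}(K)$; here one must verify that strictness of the inclusion of admissible pairs matches the condition that the sub-$\varphi$-module carries the induced filtration. The principal obstacle is bookkeeping: carefully setting up the two dictionaries (admissibility-versus-slope, and $\omega_{\mf{G}_K}$-versus-$V_\crys$) so that the classical Colmez--Fontaine theorem applies verbatim. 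Once this is done, fully faithfulness and sub-object-stability are immediate.
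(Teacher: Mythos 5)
Your proposal is correct in substance but takes a genuinely different route from the paper. The paper gives a direct, self-contained argument: starting from $(V,\rho) = \omega_{\mf{G}_K}(W,\mc{L})$ it reconstructs the admissible pair by taking $\mf{G}_K$-invariants of $V_{B_\crys} = W_{B_\crys}$ to recover $W_{K_0}$, then recovers the $\breve{\mbb{Q}}_p$-structure $W$ via the Dieudonn\'e--Manin classification (possible because $K_0$ has algebraically closed residue field), and finally recovers $\mc{L}$ as $V_{B_\dR^+}$. Full faithfulness is then immediate since morphisms are determined on $V_{B_\dR} = W_{B_\dR}$, and stability under subobjects follows from a direct dimension count applied to $S_{B_\crys}^{\mf{G}_K} \subseteq W_{K_0}$. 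In contrast, you reduce to the classical Colmez--Fontaine theorem plus the standard fact that subrepresentations of crystalline representations are crystalline. Both routes work; what the paper's approach buys is self-containment in the Fargues--Fontaine framework already developed, while yours buys reuse of the classical machinery at the cost of carefully setting up the two dictionaries, which (as you note) is essentially the whole content. The paper itself acknowledges in \cref{remark.cryst-comparison} that the essential image is exactly the crystalline representations and that the proof amounts to the expected relation with Fontaine's category of filtered $\varphi$-modules, so your instinct is aligned with the spirit of the result.

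One small imprecision to flag: you write that the maximal absolutely unramified subfield $K_0$ of $K$ equals $\breve{\mbb{Q}}_p$. In this setting $K$ is a finite extension of $C_0 = W(\kappa)[1/p]$ with $\kappa = \mc{O}_C/\mf{m}_C$, so $K_0 = C_0$, whose residue field $\kappa$ can be much larger than $\overline{\mbb{F}}_p$; thus $K_0 \supsetneq \breve{\mbb{Q}}_p$ in general. This does not break your argument (Colmez--Fontaine and the theory of crystalline representations apply for any complete discretely valued field of characteristic $0$ with perfect residue field), but the identification of $\AdmPair^{\goodred}(K)$ with filtered $\varphi$-modules over $K$ has to go through the Dieudonn\'e--Manin equivalence between isocrystals over $\breve{\mbb{Q}}_p$ and $\varphi$-modules over $K_0$, rather than being an equality of base fields.
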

\begin{proof}
Let $(W,\mc{L}) \in \AdmPair^\goodred(K)$. Let $V=\omega_\et(W,\mc{L})$ and let $\rho$ be the Galois representation on $V$. We first explain how to reconstruct $(W,\mc{L})$ from $(V,\rho)$.

Because $V \otimes \mc{O}_\FF|_{\FF \backslash \infty_C}=\mc{E}(W)|_{\FF \backslash \infty_C}$, we have by construction an isomorphism $V_{B_\crys} = W_{B_\crys}$. Taking $\mf{G}_K$-invariants thus recovers $W_{K_0}$ (showing that $V$ is crystalline), compatibly with the natural Frobenius action on both sides (with $K_0=B_\crys^{\mf{G}_K}$). Since $K_0$ has an algebraically closed residue field, the Dieudonn\'{e}-Manin classification for Frobenius modules over $K_0$ shows $W=\sum_{\lambda=\frac{a}{b}} \breve{\mbb{Q}}_p \cdot (W_{K_0})^{\varphi^b = p^a}$, so we recover $W$. Once we have $W$, we recover $\mc{L}$  as $\mc{L}=V_{B^+_\dR} \subseteq V_{B_\dR}=W_{B_\dR}$.

From this it is clear that $\omega_{\mf{G}_K}$ is fully faithful --- indeed, any endomorphism of $W$ or $V$ is determined by its action on $W_{B_\dR} = V_{B_\dR}$. Moreover, if $S \subseteq V$ is a subspace preserved by the Galois action, then the exact sequence
\[ 0 \rightarrow (S_{B_\crys})^{\mf{G}_K} \rightarrow (V_{B_\crys})^{\mf{G}_K} = W_{K_0} \rightarrow ((V/S)_{B_\crys})^{\mf{G}_K}  \]
plus the equality $\dim_{\mbb{Q}_p} V = \dim_{\breve{\mbb{Q}}_p} W$ and the inequalities $\dim_{K_0}(S_{B_\crys})^{\mf{G}_K} \leq \dim_{\mbb{Q}_p} S$ and  $\dim_{K_0} ((V/S)_{B_\crys})^{\mf{G}_K} \leq \dim_{\mbb{Q}_p} V/S$ show that $(S_{ B_\crys})^{\mf{G}_K}$ is a sub $\varphi$-module of $W_{K_0}$ with the same dimension as $S$. Passing to the associated isocrystal $W' \subset W$ as above, we find $W'_{B_\dR} = S_{B_\dR}$ and thus $(W',\mc{L}\cap W'_{B_\dR})=(W', S_{B^+_\dR})$ is a sub-object corresponding to $S$. 
\end{proof}

\begin{remark}\label{remark.cryst-comparison}
    Of course, the essential image of this functor is the category of crystalline representations of $\mf{G}_K$, and the construction in the proof explains also the relation to Fontaine's category of filtered $\varphi$-modules over $K_0$ when we take into account that the $\mf{G}_K$-invariant lattices on $W_{B_\dR}$ are precisely the ones in the image of $\mc{L}_\mr{can}$. We do not discuss this further here as it is not necessary for our present purposes; in Part III we will explain this equivalence more generally in the context of \cref{remark.more-general-def-over-p-adic-field} where we allow also $K$ with non-algebraically closed residue field. 
\end{remark}

If $\mc{G}$ is a $G$-admissible pair with good reduction over $K$, then we obtain a continuous representation $\rho: \mf{G}_K \rightarrow G(\mbb{Q}_p)$ after choosing a trivialization $\omega_\et \circ \mc{G} \cong \omega_\std$. \cref{theorem.full-faithful-essential-image} implies that the induced map from the Tannakian structure group of $\omega_{\mf{G}_K} \circ \mc{G}$ to $\MG(\mc{G})$ is an isomorphism. The former is the Zariski closure of the compact subgroup $\rho(\mf{G}_K)$ in $G(\mbb{Q}_p)$, thus the image of $\rho$ is Zariski-dense in $\MG(\mc{G})$. In fact, we can do slightly better: 

\begin{corollary}\label{corollary.galois-rep-dense-open}
Notation as above, $\rho(\mf{G}_K)$ is an open subgroup of $\MG(\mc{G})(\mbb{Q}_p)$. 
\end{corollary}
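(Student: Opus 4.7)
The plan is to reduce to showing that $\mf{h} := \mr{Lie}(\rho(\mf{G}_K))$ equals $\mr{Lie}(\MG(\mc{G}))$ inside $\mf{g} := \mr{Lie}(G)$. The image $H := \rho(\mf{G}_K)$ is compact (being the continuous image of a profinite group), and so is a closed $p$-adic Lie subgroup of $G(\mbb{Q}_p)$; openness of $H$ in $\MG(\mc{G})(\mbb{Q}_p)$ amounts to the equality of these Lie algebras. The adjoint action of $H$ preserves $\mf{h}$, so the algebraic stabilizer of $\mf{h}$ in $G$ contains $H$, hence also its Zariski closure $\MG(\mc{G})$ (this density is established in the paragraph preceding the corollary). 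Differentiating shows $\mf{h}$ is an ideal in $\mr{Lie}(\MG(\mc{G}))$.

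Let $\mf{h}^{\mr{alg}}$ denote the algebraic hull of $\mf{h}$ (the smallest algebraic Lie subalgebra of $\mf{g}$ containing $\mf{h}$, still an ideal) and let $M_1 \subseteq \MG(\mc{G})$ be the corresponding connected algebraic normal subgroup. Since the $p$-adic exponential of a sufficiently small neighborhood of $0 \in \mf{h} \subseteq \mf{h}^{\mr{alg}}$ lies in both $H$ and $M_1(\mbb{Q}_p)$, the intersection $H \cap M_1(\mbb{Q}_p)$ is open in $H$ and hence of finite index by compactness. Decomposing $H$ into finitely many cosets of $M_1(\mbb{Q}_p)$, taking Zariski closures, and using connectedness of $\MG(\mc{G})$ (from \cref{theorem.adm-pair-propeties}), one concludes $\MG(\mc{G}) \subseteq M_1$, so $M_1 = \MG(\mc{G})$ and $\mf{h}^{\mr{alg}} = \mr{Lie}(\MG(\mc{G}))$.

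It remains to show $\mf{h} = \mf{h}^{\mr{alg}}$, i.e., that $\mf{h}$ is itself algebraic. Fix a faithful representation $G \hookrightarrow \mr{GL}(V)$; the composition $\mf{G}_K \to \mr{GL}(V)(\mbb{Q}_p)$ is the \'etale realization of $\mc{G}(V) \in \AdmPair^\goodred(K)$, and by \cref{theorem.full-faithful-essential-image} this is a crystalline, and in particular a Hodge-Tate, $\mbb{Q}_p$-representation of $\mf{G}_K$. By a theorem of Sen (equivalently, a theorem of Bogomolov), the image of a Hodge-Tate representation $\mf{G}_K \to \mr{GL}(V)(\mbb{Q}_p)$ is open in its Zariski closure, i.e., its $p$-adic Lie algebra is algebraic. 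This yields $\mf{h} = \mf{h}^{\mr{alg}}$ and completes the proof. The main obstacle is this final algebraicity step, which relies on the Sen/Bogomolov input; as a more self-contained alternative one could try to extract algebraicity directly from the explicit Hodge-Tate decomposition of $V_C$ into Tate twists provided by \cref{lemma.ht-splitting}, which makes the Sen operator on $V$ concrete and thereby gives a direct handle on $\mf{h}$ via the cyclotomic character.
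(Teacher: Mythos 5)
Your proof is correct and rests on the same essential input as the paper's: the theorem of Sen (Bogomolov in the abelian case; this is exactly Th\'eor\`eme 1 of the Serre reference the paper cites) that the image of a Hodge--Tate representation is open in its Zariski closure, combined with the identification of that Zariski closure with $\MG(\mc{G})$ from \cref{theorem.full-faithful-essential-image} and the remark preceding the corollary. The paper's proof simply invokes those two facts and stops. Your middle section---the ideal argument, the algebraic hull $\mf{h}^{\mr{alg}}$, the coset/irreducibility argument showing $M_1 = \MG(\mc{G})$---is a correct but redundant re-derivation of the Zariski density already supplied by the preceding paragraph; in fact, once you know the density statement and the Sen theorem (``image open in Zariski closure''), the corollary is immediate without any Lie-algebra bookkeeping, since you never actually need to phrase openness in terms of $\mf{h} = \mf{h}^{\mr{alg}}$. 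The one genuine (minor) variation is how you obtain the Hodge--Tate property: you pass through crystallinity via \cref{theorem.full-faithful-essential-image}, while the paper reads it off directly from the Hodge--Tate splitting in \cref{lemma.ht-splitting}. Both are valid.
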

\begin{proof}
\cref{lemma.ht-splitting} implies $\rho$ is a Hodge-Tate representation. By \cite[Th\'{e}or\`{e}me 1]{serre:groupes-alg-ht}, its image is open in its Zariski closure, which, by the above discussion, is $\MG(\mc{G})$. 
\end{proof}

We record for later use the following remark regarding a Galois theoretic consequence of the rationality of the de Rham lattice. 

\begin{lemma}\label{remark:de-Rham-lattice-Galois-equivariant}
If $\mc{G}$ is a $G$-admissible pair with good reduction over $K$, \emph{and} the de Rham lattice $\omega_{\mc{L}_\dR} \circ \mc{G}$ on $\omega_\et \circ \mc{G}$ is also defined over $K$, then the de Rham lattice $\omega_{\mc{L}_\dR} \circ \mc{G} \subset (\omega_\et \otimes B_\dR) \circ \mc{G}$ is preserved by the $B_\dR$-linear extension $\rho \otimes \mr{Id}_{B_\dR}$ of the action of $\mf{G}_K$ on $\omega_\et$. 
\end{lemma}
\begin{proof}
By definition there is an exact $K$-filtration $\Fil$ on $\omega_\et \circ \mc{G}$ such that $\omega_{\mc{L}_\dR} \circ \mc{G} = \mc{L}_\can \circ \Fil$. Consequently $\mc{L}_\can \circ \Fil = \sum_{i \in \ZZ} \Fil^{-i} (\omega_\et \circ \mc{G}) \otimes_K \Fil^i B_\dR$ is visibly preserved by $\mr{Id}_{\omega_\et \circ \mc{G}} \otimes \sigma^{-1}$ for $\sigma \in \mf{G}_K$; while \eqref{eqn:galois-rep-admissible-phi-module} shows it is preserved by $\rho(\sigma) \otimes \sigma$. Thus it is preserved by $\rho(\sigma) \otimes \mr{Id}_{B_\dR}$.

\end{proof}

\subsection{Periods}\label{ss.periods}
We now describe some period constructions. We note that this kind of analysis goes back at least to Fargues \cite{fargues:two-towers} in the Lubin-Tate case. We write $\omega_\HT, \omega_\Hdg$ for the Hodge-Tate and Hodge-Tate filtrations on $\omega_\et, \omega_\Isoc$ respectively, and view them as tensor functors $\AdmPair(C) \to \Vect^f(C)$ (which are not exact, but are exact when restricted to $\AdmPair(\ol{C}_0)$). 

Let $G$ be a connected linear algebraic group over $\mbb{Q}_p$ and fix an element $b \in G(\breve{\mbb{Q}}_p)$. Let $\mc{G}_b$ be the $G$-isocrystal $(V,\rho) \mapsto (V \otimes_{\mbb{Q}_p} \breve{\mbb{Q}}_p, \rho(b) \varphi_{\breve{\mbb{Q}}_p})$. For $\mc{G}$ a $G$-admissible pair with underlying isocrystal of type $[b] \in B(G)$, fix trivializations
\[ \triv_\Isoc: \omega_\Isoc \circ \mc{G} \cong \mc{G}_b \textrm{ and } \triv_\et: \omega_\et \circ \mc{G} \cong \omega_\std.\]
Note that $\triv_\Isoc$ also induces a isomorphism $\mr{Forget} \circ \omega_\Isoc \circ \mc{G} \cong \omega_\std \otimes \breve{\QQ}_p$; we also call this $\triv_\Isoc$ and allow $\omega_\Isoc \circ \mc{G}$ to be valued in either isocrystals or $\breve{\QQ}_p$-vector spaces depending on the context. Then 
\[ \omega_\std \otimes B_\dR =_{\triv_\et} \omega_\et \circ \mc{G} \otimes B_\dR \xrightarrow{c_\dR} \omega_\Isoc \circ \mc{G} \otimes B_\dR =_{\triv_\Isoc} \omega_\std \otimes B_\dR \]
is given by $c_\dR(\mc{G}, \triv_\et, \triv_\Isoc) = \triv_\Isoc \circ c_\dR \circ \triv_\et^{-1} \in G(B_\dR)$. This period matrix, combined with knowledge of $[b]$, uniquely determines the triple $(\mc{G}, \triv_\et, \triv_\Isoc)$ up to isomorphism of $\mc{G}$ matching the trivializations. In other words, if we write $\mc{M}_{b}(C)$ for the set of such isomorphism classes, then we obtain an injection 
$c_\dR: \mc{M}_{b}(C) \rightarrow G(B_\dR)$. We also have (see \cref{s.filtrations-and-lattices} for the notation):
\begin{enumerate}
    \item The \emph{de Rham lattice period} $\pi_{\mc{L}_\dR}(\mc{G}, \triv_\et) \in \Gr_G(C)$ classifying the $B^+_\dR$-lattice $ \omega_{\mc{L}_\dR} \circ \mc{G}$ on the trivial $G$-bundle $\omega_\et \circ \mc{G} =_{\triv_\et} \omega_\std$.
    \item The \emph{\'{e}tale lattice period} $\pi_{\mc{L}_\et}(\mc{G}, \triv_\Isoc) \in \Gr_G(C)$ classifying the $B^+_\dR$-lattice $\omega_{\mc{L}_\et} \circ \mc{G}$ on the trivial $G$-bundle $\omega_\Isoc \circ \mc{G} =_{\triv_\Isoc} \omega_\std \otimes \breve{\mbb{Q}}_p$.
\end{enumerate}

These are related by the following diagram:
\[\begin{tikzcd}[column sep=small]
	& { \substack{\mc{M}_{b}(C) \subseteq G(B_\dR) \\ \\ c_\dR(\mc{G}, \triv_\Isoc, \triv_\et) = g} } \\
	\\
	{\substack{Gr_{G}(C) = G(B_\dR)/G(B^+_\dR) \\ \\ \pi_{\mc{L}_\et}(\mc{G},\triv_\Isoc)=gG(B^+_\dR)}} && {\substack{\Gr_{G}(C) = G(B_\dR)/G(B^+_\dR) \\ \\\pi_{\mc{L}_\dR}(\mc{G}, \triv_\et)=g^{-1}G(B^+_\dR) }}
	\arrow["{\pi_{\mc{L}_\dR}}"{description}, from=1-2, to=3-3]
	\arrow["{\pi_{\mc{L}_\et}}"{description}, from=1-2, to=3-1]
\end{tikzcd}\]

\begin{remark}\label{rmk:lattice-period-maps-G_K-equivariant}
If $C = \ol{K}^\wedge$ for a $p$-adic field $K$, then the period mappings $\pi_{\mc{L}_\et}$ and $\pi_{\mc{L}_\dR}$ are $\mf{G}_K = \mr{Gal}(\ol{K}/K)$-equivariant for the natural actions. Indeed, if $g \in G(B_\dR)$, the lattice $gG(B_\dR^+)$ on the trivial $G$-bundle sends a representation $V \in \Rep\+ G$ to the lattice $gV_{B_\dR^+} \subset V_{B_\dR}$. Then 
    \[ \sigma \cdot g V_{B_\dR^+} = \sigma(g) \sigma^{-1}(V_{B_\dR^+}) = \sigma(g) V_{B_\dR^+} \]
since $\sigma$ preserves $B_\dR^+$.
\end{remark}

For $[\mu]$ such that $b \in B(G, [\mu])$, write $\mc{M}_{b,[\mu]}(C) \subseteq \mc{M}_{b}$ for the subset consisting of $\mc{G}$ that are good of type $[\mu]$. This is equivalent to requiring that $c_\dR(\mc{G}, \triv_\Isoc, \triv_\et)$ lies in the cell $G(B^+_\dR)t^\mu G(B^+_\dR)$. In this case we also have:
\begin{enumerate}
    \item The Hodge-Tate filtration period $\pi_{\HT}(\mc{G}, \triv_\et) \in \Fl_{[\mu]}(C)$ classifying the filtration $\omega_\HT$ on the trivial $G$-bundle $\omega_\et \otimes C =_{\triv_\et} \omega_\std \otimes C$.
    \item The Hodge filtration period $\pi_{\Hdg}(\mc{G}, \triv_\Isoc) \in \Fl_{[\mu^{-1}]}(C)$ classifying the filtration $\omega_\Hdg$ on the trivial $G$-bundle $\omega_\Isoc \otimes C =_{\triv_\Isoc} \omega_\std \otimes C$. 
\end{enumerate} 

Writing $[t^\mu] := t^\mu G(B_\dR^+)/G(B_\dR^+) \in \Gr_G(C)$, the diagram is then refined to

\[\begin{tikzcd}[column sep=small]
	& { \substack{\mc{M}_{b,[\mu]}(C) \subseteq G(B^+_\dR)t^\mu G(B^+_\dR) \\ \\ c_\dR(\mc{G}, \triv_\Isoc, \triv_\et) = g_1 t^\mu g_2} } \\
	\\
	{\substack{Gr_{[\mu]}(C) = G(B^+_\dR)[t^\mu]\\ \\ \pi_{\mc{L}_\et}(\mc{G},\triv_\Isoc)=g_1 [t^\mu]}} && {\substack{\Gr_{[\mu^{-1}]}(C) = G(B^+_\dR)[t^{-\mu}]  \\ \\ \pi_{\mc{L}_\dR}(\mc{G}, \triv_\et)=g_2^{-1}[t^{-\mu}]}} \\
	\\
	{\substack{\Fl_{[\mu^{-1}]}(C)=G(C)/P_{\mu^{-1}}(C) \\ \\ \pi_{\Hdg}(\mc{G},\triv_\Isoc)=\overline{g}_1P_{\mu^{-1}}} } && {\substack{\Fl_{[\mu]}(C)=G(C)/P_{\mu}(C) \\ \\ \pi_{\HT}(\mc{G},\triv_\Isoc)=\overline{g}_2^{-1}P_{\mu}} }
	\arrow["{\pi_{\mc{L}_\dR}}"{description}, from=1-2, to=3-3]
	\arrow["{\pi_{\mc{L}_\et}}"{description}, from=1-2, to=3-1]
	\arrow["\BB"{description}, from=3-1, to=5-1]
	\arrow["{\pi_\Hdg}"{description}, curve={height=-12pt}, from=1-2, to=5-1]
	\arrow["{\pi_\HT}"{description}, curve={height=18pt}, from=1-2, to=5-3]
	\arrow["\BB"{description}, from=3-3, to=5-3]
\end{tikzcd}\]

Note $\mc{G} \in G\dash\AdmPair(C)$ is $\overline{C}_0$-analytic if and only if, for one (equivalently, any) choice of $\triv_{\Isoc}$,  $\pi_{\mc{L}_\et}([\mc{G},\triv_{\Isoc}])$ is in the image of the canonical lattice map 
\[ \mc{L}_\can: \Fl_G(\overline{C}_0) \rightarrow \Gr_G(C). \]

In particular, to detect the $\overline{C}_0$-analytic points we may assume $C=\overline{C}_0^\wedge$ so that by \cref{prop.filt-det-latt-criteria}-(2) these are exactly the points of $\Gr_G(C)$ stabilized by a finite index subgroup $\mf{I}$ of $\Gal(\overline{C}_0/C_0)$. If $K=\overline{C}_0^{\mf{I}}$, the points stabilized by $\mf{I}$ are exactly those $G$-admissible pairs factoring through $\AdmPair^{\goodred}(K)$. For each $V \in \Rep\+ G$, we have the crystalline Galois representation $\mf{I} \to \GL(\omega_\et \circ \mc{G}(V))$ from \S \ref{ss.admissible-pairs-with-good-reduction}. These are compatible for varying $V \in \Rep\+ G$, and arise via Tannakian theory from a crystalline $G$-valued representation
    \[ \rho = \rho(\mc{G}, \triv_\et): \mf{I} \rightarrow G(\mbb{Q}_p), \qquad \rho(\sigma) = \triv_\et^{-1} \circ \rho_\et(\sigma) \circ \triv_\et. \]
The next lemma shows how this can be recovered from the period of $\mc{G}$ (relative to the trivializations $\triv_\et, \triv_\Isoc$). 

\begin{lemma}\label{lemma:crystalline-galois-rep-etale-trivialized}
For $g = c_\dR(\mc{G}, \triv_\Isoc, \triv_\et) \in G(B_\dR)$ as above, the associated Galois representation $\rho: \mf{I} \rightarrow G(\mbb{Q}_p)$ is recovered by the action on $g$ by 
\[ \sigma(g)=g \rho(\sigma), \]

\end{lemma}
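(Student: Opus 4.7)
The plan is to unpack what Galois equivariance of the de Rham comparison $c_\dR$ becomes once transferred through the two trivializations, which themselves live over $C$ and are not $\mf{I}$-equivariant.

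First I would identify the Galois actions on the two sides of $c_\dR$. On the étale side, since $\mc{G}$ factors through $\AdmPair^{\goodred}(K)$ with $K = \overline{C}_0^{\mf{I}}$, the construction of \S\ref{ss.galois-representation} equips $\omega_\et \circ \mc{G}(V)$ with the Galois representation $\rho_\et(\sigma)$, and $\triv_\et$ intertwines $\rho_\et(\sigma)$ with $\rho(\sigma) \in G(\mbb{Q}_p) \subseteq \GL(V)$ by definition of $\rho$. On the isocrystal side, the functor $\omega_\Isoc$ depends only on the special fiber (which is defined over the algebraically closed residue field of $K$), so $\mf{I}$ acts trivially on the $\breve{\mbb{Q}}_p$-vector space $\omega_\Isoc \circ \mc{G}(V)$; thus after transport by $\triv_\Isoc$ the Galois action on $V \otimes B_\dR$ is just $\Id_V \otimes \sigma$.

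Next I would invoke the Galois equivariance of $c_\dR$ itself. This is exactly the commutative square \eqref{eqn:galois-rep-admissible-phi-module}: on $\omega_\et \circ \mc{G} \otimes B_\dR$ the action of $\sigma$ is $\rho_\et(\sigma) \otimes \sigma$, on $\omega_\Isoc \circ \mc{G} \otimes B_\dR$ it is $\Id \otimes \sigma$, and $c_\dR$ intertwines these. Transporting the square along $\triv_\et$ on the left and $\triv_\Isoc$ on the right, we obtain a commutative diagram
\[
\begin{tikzcd}
V \otimes B_\dR \ar[r,"g"] \ar[d,"\rho(\sigma)\otimes \sigma"'] & V \otimes B_\dR \ar[d,"\Id \otimes \sigma"] \\
V \otimes B_\dR \ar[r,"g"'] & V \otimes B_\dR
\end{tikzcd}
\]
functorially in $V \in \Rep\, G$.

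Finally I would translate commutativity into the claimed identity in $G(B_\dR)$. Commutativity reads $g \circ (\rho(\sigma) \otimes \sigma) = (\Id \otimes \sigma) \circ g$, and conjugating out the $\sigma$ on $B_\dR$ gives
\[
g \cdot \rho(\sigma) = (\Id \otimes \sigma) \circ g \circ (\Id \otimes \sigma)^{-1}.
\]
The right-hand side is, by the very definition of the natural action of $\mf{I}$ on $G(B_\dR)$ (coordinatewise on $B_\dR$), exactly $\sigma(g)$. Since this identity holds functorially in $V$, Tannakian duality promotes it to an identity in $G(B_\dR)$, yielding $\sigma(g) = g \, \rho(\sigma)$. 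The only subtle point is keeping track of which side the trivialization twists act on, but this is forced by writing out the composition $g = \triv_\Isoc \circ c_\dR \circ \triv_\et^{-1}$ carefully.
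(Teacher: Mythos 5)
Your proof is correct and follows essentially the same route as the paper: both unpack the Galois-equivariance square~\eqref{eqn:galois-rep-admissible-phi-module} for $c_\dR$, use that $\mf{I}$ acts trivially on $\omega_\Isoc$ (algebraically closed residue field), and transport along $\triv_\et$ and $\triv_\Isoc$ to obtain $\sigma(g)=g\,\rho(\sigma)$. The only difference is presentational (commutative square vs.\ chain of equalities), and your careful bookkeeping of the direction of $g=\triv_\Isoc\circ c_\dR\circ\triv_\et^{-1}$ in fact fixes a small sign/direction typo that appears in the paper's own computation.
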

\begin{proof}
To see this, we trace definitions noting that the action of $\sigma$ on $c_\dR$ is 
\[ \sigma \cdot c_\dR = (\mr{Id}_{\omega_\Isoc} \otimes \sigma) \circ c_\dR \circ (\mr{Id}_{\omega_\et} \otimes \sigma^{-1}). \]
We substitute the defining property \eqref{eqn:galois-rep-admissible-phi-module} for $\rho_\et$,
\[ \mr{Id}_{\omega_\Isoc} \otimes \sigma = c_\dR \circ (\rho_\et(\sigma)\otimes \sigma)  \circ  c_{\dR}^{-1}, \]
to get 
\[ \sigma \cdot c_\dR = c_\dR \circ \rho_\et(\sigma) \otimes \mr{Id}_{B_\dR}. \]
Putting all of this together, we see that 
\begin{align*}
    \sigma(g) &= (\mr{Id}_{\omega_\std} \otimes \sigma) \circ g \circ (\mr{Id}_{\omega_\std} \otimes \sigma^{-1}), \\
    &= (\mr{Id}_{\omega_\std} \otimes \sigma) \circ (\triv_\Isoc^{-1} \circ c_\dR \circ \triv_\et) \circ (\mr{Id}_{\omega_\std} \otimes \sigma^{-1}), \\
    &= \triv_\Isoc^{-1} \circ (\mr{Id}_{\omega_\Isoc} \otimes \sigma) \circ c_\dR \circ (\mr{Id}_{\omega_\et} \otimes \sigma^{-1}) \circ \triv_\et, \\
    &= \triv_\Isoc^{-1} \circ (\sigma \cdot c_\dR) \circ \triv_\et, \\
    &= \triv_\Isoc^{-1} \circ (c_\dR \circ \rho_\et(\sigma) \otimes \mr{Id}_{B_\dR}) \circ \triv_\et, \\
    &= (\triv_{\Isoc}^{-1} \circ c_\dR \circ \triv_\et) \circ (\triv_\et^{-1}\circ \rho_\et(\sigma) \otimes \mr{Id}_{B_\dR} \circ \triv_\et), \\
    &= g\rho(\sigma).
\end{align*}

\end{proof}

We obtain the following precise analog of the Grothendieck period conjecture:

\begin{corollary}
If $\mc{G}$ is a $\overline{C}_0$-analytic $G$-admissible pair and $G=\MG(\mc{G})$, then $c_\dR \circ \mc{G}$ is a generic point of the torsor of isomorphisms between $\omega_\et \circ \mc{G}$ and $\omega_\Isoc \circ \mc{G}$. 
\end{corollary}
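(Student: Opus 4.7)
The plan is to combine the Galois equivariance $\sigma(g) = g\rho(\sigma)$ from Lemma \ref{lemma:crystalline-galois-rep-etale-trivialized} with the Zariski density of the crystalline Galois image in $G$. Since $\mc{G}$ is $\overline{C}_0$-analytic, I will first fix a strict $p$-adic subfield $K \subseteq \overline{C}_0$ over which $\mc{G}$ has good reduction with $C = \overline{K}^\wedge$, together with trivializations $\triv_\et, \triv_\Isoc$. This produces the period $g = c_\dR(\mc{G},\triv_\Isoc,\triv_\et) \in G(B_\dR)$ and the associated crystalline representation $\rho \colon \mf{G}_K \to G(\mbb{Q}_p)$. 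After the trivializations, the torsor of isomorphisms is identified with $G_{\breve{\mbb{Q}}_p}$, so the genericity assertion reduces to showing that no proper closed subscheme $Z \subsetneq G_{\breve{\mbb{Q}}_p}$ contains the point $g \in G(B_\dR)$.

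Arguing by contradiction, suppose $g \in Z(B_\dR)$ for some such $Z$. Because $Z$ is defined over $\breve{\mbb{Q}}_p \subseteq K$, the subset $Z(B_\dR) \subseteq G(B_\dR)$ is $\mf{G}_K$-invariant, so Lemma \ref{lemma:crystalline-galois-rep-etale-trivialized} gives $g\rho(\sigma) = \sigma(g) \in Z(B_\dR)$ for every $\sigma \in \mf{G}_K$; equivalently, the left coset $g \cdot \rho(\mf{G}_K) \subseteq G(B_\dR)$ lies in $Z_{B_\dR}$. By Corollary \ref{corollary.galois-rep-dense-open}, $\rho(\mf{G}_K)$ is open in $G(\mbb{Q}_p) = \MG(\mc{G})(\mbb{Q}_p)$, and the Tannakian reconstruction of $\MG(\mc{G})$ from the image of $\rho$ (enabled by Theorem \ref{theorem.full-faithful-essential-image}) forces the Zariski closure of $\rho(\mf{G}_K)$ in $G_{\mbb{Q}_p}$ to be all of $G_{\mbb{Q}_p}$. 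This density will persist under base change to $G_{B_\dR}$ by a standard linear-algebra argument using that $G$ is geometrically integral (being smooth and connected in characteristic zero), and left-translation by $g$ is an automorphism of $G_{B_\dR}$; hence $g \cdot \rho(\mf{G}_K)$ will be Zariski dense in $G_{B_\dR}$. But then $Z_{B_\dR} = G_{B_\dR}$, forcing $Z = G_{\breve{\mbb{Q}}_p}$, a contradiction.

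The conceptual heart of the proof is the interplay between the Galois equivariance of the period matrix and the density of the Galois image inside the motivic Galois group, both already built into the earlier machinery. The only technical wrinkle will be the transfer of Zariski density across the base change from $\mbb{Q}_p$ to $B_\dR$, and I do not expect any real obstacle here.
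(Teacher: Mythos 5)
Your proof is correct and follows essentially the same route as the paper's, which is the one-line observation that the $\mf{G}_K$-orbit of $c_\dR$ is Zariski dense because $\sigma(g)=g\rho(\sigma)$ and $\rho$ has Zariski-dense image (Corollary \ref{corollary.galois-rep-dense-open}); you have simply unpacked that line by running the contradiction argument against a proper $\breve{\mbb{Q}}_p$-subscheme and spelling out the density transfer under base change, which the paper itself records (for a later application) in the footnote to Theorem \ref{theorem.transcendence-with-slope}. Two minor stylistic points: the appeal to geometric integrality is unnecessary for the base-change density step (the linear-algebra argument with a basis of the field extension suffices for any $G$), and the reduction to $C=\overline{K}^\wedge$ is justified by the observation, implicit in \cref{ss.periods}, that a $\overline{C}_0$-analytic $\mc{G}$ descends to $\overline{C}_0^\wedge = \overline{K}^\wedge$ and the genericity assertion over $\breve{\mbb{Q}}_p$ is insensitive to whether $g$ is viewed in $G(B_{\dR,C})$ or $G(B_{\dR,\overline{K}^\wedge})$.
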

\begin{proof}
We need to show that the $\mf{I}$-orbits of $c_\dR \circ \mc{G}$ are Zariski dense in $\mr{Isom}^\otimes(\omega_\et \circ \mc{G}, \omega_\Isoc \circ \mc{G})$, but this follows since the image of $\rho$ is Zariski dense in $G$ by \cref{corollary.galois-rep-dense-open}.
\end{proof}

\subsection{Admissible pairs with complex multiplication}\label{ss.cm-adm-pair}

An admissible pair $M$ has complex multiplication (CM) if its motivic Galois group $\MG(M)$ is a torus. We now give the easy direction of \cref{main.transcendence}/\cref{main.G-structure-period-maps}:
\begin{proposition}\label{prop.cm-has-rig-an-periods}
Suppose $\mc{G} \in G\dash\AdmPair(C)$ has complex multiplication. Then, for any choice of $\triv_\et$ and $\triv_{\Isoc}$, $\pi_{\mc{L}_\et}(\mc{G}, \triv_{\Isoc})$ is $\overline{\breve{\mbb{Q}}}_p$-analytic and $\pi_{\mc{L}_\dR}(\mc{G}, \triv_\et)$ is $\overline{\mbb{Q}}_p$-analytic. In particular, if $M \in \AdmPair(C)$ has complex multiplication then it is $\overline{\breve{\mbb{Q}}}_p$-analytic  and its de Rham lattice is $\overline{\mbb{Q}}_p$-analytic.
\end{proposition}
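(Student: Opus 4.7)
The plan is to reduce to the case $G = T = \MG(\mc{G})$ and then exploit the fact that for a torus the affine Grassmannian has zero-dimensional Schubert cells. First, using the discussion in \S\ref{s.G-structure-adm-pair}, I can replace $\mc{G}$ by its canonical refinement to an $\MG(\mc{G})$-admissible pair, so it suffices to treat the case $G = T$ where $T/\mbb{Q}_p$ is a torus. Fixing trivializations $\triv_\et$ and $\triv_\Isoc$ gives a period $g = c_\dR(\mc{G}, \triv_\Isoc, \triv_\et) \in T(B_\dR)$, and I need to show the classes $g T(B^+_\dR)$ and $g^{-1}T(B^+_\dR)$ in $\Gr_T(C)$ are $\overline{\mbb{Q}}_p$-analytic (noting $\overline{\mbb{Q}}_p \subseteq \overline{\breve{\mbb{Q}}}_p \subseteq \overline{C}_0$).

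Since $T$ is reductive, $\mc{G}$ is good, so there is a Hodge cocharacter $\mu = [\mu]_{\mc{G}} \in X_*(T)$, defined over a finite extension $E$ of $\mbb{Q}_p$ (contained in $\overline{\mbb{Q}}_p$) over which $T$ splits. Commutativity of $T$ gives $T(B^+_\dR) \cdot t^\mu \cdot T(B^+_\dR) = t^\mu \cdot T(B^+_\dR)$, a single $T(B^+_\dR)$-coset, so goodness of type $[\mu]$ forces $g = t^\mu h$ for some $h \in T(B^+_\dR)$. Hence
\[ \pi_{\mc{L}_\et}(\mc{G}, \triv_\Isoc) = t^\mu T(B^+_\dR) \quad \text{and} \quad \pi_{\mc{L}_\dR}(\mc{G}, \triv_\et) = t^{-\mu}T(B^+_\dR). \]

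To finish, I would verify directly that the lattice $t^\mu \cdot \omega_\std \otimes B^+_\dR$ on the trivial $T$-torsor is the canonical lattice $\mc{L}_\can$ of the $\overline{\mbb{Q}}_p$-filtration on $\omega_\std \otimes \overline{\mbb{Q}}_p$ determined by $-\mu$ (and analogously for $t^{-\mu}$ and $\mu$). This is a direct computation using the weight decomposition $V_{\overline{\mbb{Q}}_p} = \bigoplus_j V[j]$ induced by $\mu$: on weight $j$ one gets $t^j V[j]_{\overline{\mbb{Q}}_p} \otimes B^+_\dR$, which matches $\sum_{i} t^{-i} \mr{Fil}^i_{-\mu} V_{\overline{\mbb{Q}}_p} \otimes B^+_\dR$. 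Since $\mu$ is defined over $\overline{\mbb{Q}}_p$, both period lattices come from canonical lattice constructions of $\overline{\mbb{Q}}_p$-filtrations, as required.

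For the second statement, applying the first part to the canonical $\MG(M)$-structure on $M$ shows that the de Rham lattice on $\omega_\et(M)$ is $\overline{\mbb{Q}}_p$-analytic, and the étale lattice on $\omega_\Isoc(M)$ is $\overline{\mbb{Q}}_p$-analytic, which in particular is $\overline{\breve{\mbb{Q}}}_p$-analytic via the inclusion $\overline{\mbb{Q}}_p \subseteq \overline{\breve{\mbb{Q}}}_p$; translating the latter statement back through the equivalence of \cref{theorem.adm-pair-propeties} and the definition of the étale lattice as $\mc{L}_\et$ gives exactly that $M \in \AdmPair(\overline{\breve{\mbb{Q}}}_p)$. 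There are no real obstacles here; the only nontrivial input is the collapse of Schubert cells for a torus, which is purely formal from commutativity.
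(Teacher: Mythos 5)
Your proof is correct and follows essentially the same route as the paper's: reduce (via the canonical refinement of $G$-structure) to $G = T = \MG(\mc{G})$ a torus, then use that for a torus the Schubert cells of $\Gr_T$ degenerate to single $T(B^+_\dR)$-cosets, each indexed by a cocharacter $\mu$ defined over a finite extension of $\mathbb{Q}_p$, and identify that coset with the canonical lattice of the filtration given by $\mu$. The paper phrases this via minusculity of every cocharacter of a torus (so $\BB \colon \Gr_{[\mu]}(C) \to \Fl_{[\mu^{-1}]}(C)$ is a bijection and the target is a single point), whereas you compute the collapse $T(B^+_\dR)t^\mu T(B^+_\dR) = t^\mu T(B^+_\dR)$ directly from commutativity and then match weight spaces by hand — the content is the same, yours merely spelled out in more detail. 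One small wording slip: since the étale lattice lives on $\omega_\Isoc$, a $\breve{\mathbb{Q}}_p$-torsor, the precise conclusion is $\overline{\breve{\mathbb{Q}}}_p$-analyticity (the filtration is defined over the compositum $E\cdot\breve{\mathbb{Q}}_p$ for $E/\mathbb{Q}_p$ a finite splitting field), not literally $\overline{\mathbb{Q}}_p$-analyticity; your appeal to \cref{theorem.adm-pair-propeties} in the last paragraph is also unnecessary — the conclusion is immediate from the definitions once you have $\pi_{\mc{L}_\et} = t^\mu T(B^+_\dR)$.
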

\begin{proof}
By functoriality of periods, we may assume $G=T$ is a torus. Then every cocharacter is minuscule so each cell of the affine Grassmannian is equal to the flag variety and consists of a single point, thus is defined over a finite extension of the base field (which is $\mbb{Q}_p$ for the de Rham lattice and $\breve{\mbb{Q}}_p$ for the \'{e}tale lattice). 
\end{proof}

For completeness, we recall now the classification of CM admissible pairs, a result due to Ansch\"utz \cite{anschutz} in the language of Breuil-Kisin-Fargues modules and essentially equivalent to a result of Serre \cite[Th\'{e}or\`{e}me 5 and Th\'{e}or\`{e}me 6]{serre:groupes-alg-ht} on abelian Galois representations. This classification will not be used in any of our results.

We write $\AdmPair^{\CM}(C)$ for the Tannakian subcategory of CM admissible pairs. By \cref{prop.cm-has-rig-an-periods}, $\AdmPair^\CM(C) \subseteq \AdmPair(\overline{\breve{\mbb{Q}}}_p)$. In fact, the motivic Galois group $S$ of $\AdmPair^\CM(C)$ is the maximal abelian quotient of the motivic Galois group of $\AdmPair(\overline{\breve{\mbb{Q}}}_p)$ --- there can be no additive component because the only $\overline{C}_0$-analytic extension of the trivial admissible pair by itself is trivial by \cref{example.no-non-trivial-rigid-analytic-ext-of-trivial-by-itself}. Note also that, for $T$ a torus, any $T$-admissible pair is basic, so $\AdmPair^\CM(C)$ is equivalent via \cref{theorem.adm-pair-propeties} to the category $\HS^\CM(C)$ of $p$-adic Hodge structures with complex multiplication. 

Consider the pro-torus over $\mbb{Q}_p$, $S'=\lim_{\mbb{Q}_p \subseteq E \subseteq C, [E:\mbb{Q}_p]<\infty} \mr{Res}^{E}_{\mbb{Q}_p} \mbb{G}_m$, where the transition maps are the norm maps. Then the character group $X^*(S')$ is the space of locally constant functions on $\Gal(\overline{\mbb{Q}}_p/\mbb{Q}_p)$, and there is a cocharacter $\mu:\mbb{G}_{m,\overline{\mbb{Q}}_p} \rightarrow S'_{\overline{\mbb{Q}}_p}$ corresponding to 
\[ \mu^*: X^*(S') \rightarrow X^*(\mbb{G}_m)=\mbb{Z}, f \mapsto f(\Id).\]
Then $\mu^{-1}(t)$ classifies an $S'$-admissible pair whose associated $S'$-$p$-adic Hodge structure is classified by $\mu(t)$ (the weight homomorphism corresponds to the map on character groups given by integrating $f/2$). Explicitly, if we consider the representation of $S'$ determine by $S' \twoheadrightarrow  \mr{Res}^{E}_{\mbb{Q}_p} \mbb{G}_m$ and the standard representation of the restriction of scalars given by $E$ acting on itself by multiplication, then the associated admissible pair (resp. $p$-adic Hodge strucure) is given by the covariant Dieudonn\'{e} module of a Lubin-Tate formal group for $\mc{O}_E$ (resp. its Tate module). Arguing as in \cite{anschutz} or \cite{serre:groupes-alg-ht}, one finds the induced map $S \rightarrow S'$ is an isomorphism.

\section{Transcendence}\label{s.transcendence}

In this section we prove \cref{main.transcendence} and \cref{main.G-structure-period-maps}, and give a refinement that applies outside of the basic case. Before giving these proofs, in \cref{ss.complex-transcendence} we describe in more detail the analogous transcendence results over $\mbb{C}$.  

\subsection{Conditional and unconditional results over $\mbb{C}$}\label{ss.complex-transcendence}

Let $A/\mbb{C}$ be an abelian variety. The kernel of the exponential $\Lie\+ A \rightarrow A(\mbb{C})$ is naturally identified with $H_1(A(\mbb{C}),\mbb{Z})$.  The Hodge filtration is the kernel of the induced map
\[ H_1(A(\mbb{C}), \mbb{C})=H_1(A(\mbb{C}), \mbb{Z}) \otimes \mbb{C} \rightarrow \Lie\+ A, \]
which can canonically be identified with $\omega_{A^\vee}$, the invariant differentials on the dual abelian variety $A^\vee$. This weight $-1$ $\mbb{Q}$-Hodge structure determines $A$ up to isogeny, and we say $A$ has complex multiplication if this weight $-1$ Hodge structure does --- this is consistent with our earlier definitions, since $\mbb{Q}$-Hodge structures form a Tannakian category, and is equivalent to the usual definition that $\End(A) \otimes \mbb{Q}$ contains a semisimple commutative subalgebra of dimension $2 \dim A$. We say $A$ is defined over $\overline{\mbb{Q}}$ if the equations defining $A$ can be chosen to have coefficients in $\overline{\mbb{Q}}$, i.e. if there is an abelian variety over $\overline{\mbb{Q}}$ whose base change to $\mbb{C}$ is $A$. 

\begin{theorem}[Cohen \cite{cohen:transcendence} and Shiga-Wolfart \cite{shiga-wolfart:transcendence}, generalizing Schneider \cite{schneider:j}]\label{theorem.complex-av-transcendence} An abelian variety $A/\mbb{C}$ has complex multiplication if and only if $A$ is defined over $\overline{\mbb{Q}}$ and its Hodge filtration is defined over $\overline{\mbb{Q}}$ (i.e. the subspace $\omega_{A^\vee} \subset H_1(A(\mbb{C}), \mbb{Q}) \otimes \mbb{C}$ admits a basis whose elements are $\overline{\mbb{Q}}$-linear combinations of elements in $H_1(A(\mbb{C}), \mbb{Q})$).  
\end{theorem}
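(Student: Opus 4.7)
If $A$ has CM, then $T = \MT(H_1(A, \mbb{Q}))$ is a $\mbb{Q}$-torus. By Shimura--Taniyama theory, every CM abelian variety descends (up to isogeny) to an abelian extension of its reflex field $E \subset \overline{\mbb{Q}}$, so $A$ is defined over $\overline{\mbb{Q}}$. The Hodge cocharacter $\mu:\mbb{G}_{m,\mbb{C}} \to T_{\mbb{C}}$ is defined over $E$ (any cocharacter of a torus is rational over its field of definition), so the Hodge filtration, viewed as a point of $\Fl_T \subset \Fl_{\GL(H_1(A,\mbb{Q}))}$, is $E$-rational, hence $\overline{\mbb{Q}}$-rational.

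\textbf{Reverse direction.} This is the substantive content. The plan is to invoke W\"ustholz's analytic subgroup theorem: for a connected commutative algebraic group $G/\overline{\mbb{Q}}$ and any nonzero $u \in \Lie G(\mbb{C})$ with $\exp_G(u) \in G(\overline{\mbb{Q}})$, there is a positive-dimensional connected algebraic subgroup $H \subseteq G$ over $\overline{\mbb{Q}}$ with $u \in \Lie H(\mbb{C})$. First reduce to the case $A$ simple by decomposing $A$ up to isogeny and checking that the two rationality hypotheses pass to each simple factor. The assumptions then translate, using the $\overline{\mbb{Q}}$-structure on $\Lie A$ from $A/\overline{\mbb{Q}}$ and the transversality $F^0 \oplus \overline{F^0} = H_1(A, \mbb{C})$, into $\overline{\mbb{Q}}$-linear relations among the periods of $A$ (the coordinates of $H_1(A, \mbb{Z}) \subset \Lie A_\mbb{C}$ in an algebraic basis of $\Lie A$). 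Apply W\"ustholz to the group $G = A^n$ and to $\overline{\mbb{Q}}$-rational subspaces of $\Lie(A^n)(\mbb{C})$ containing logarithms of torsion points (which are automatically algebraic). For $n = 2$, the subgroups of $A \times A$ that arise are, after simplification using the simplicity of $A$, graphs of elements of $\End^0(A)$; running over enough period relations produces enough endomorphisms to make $H_1(A, \mbb{Q})$ a cyclic $\End^0(A)$-module, so $\dim_{\mbb{Q}} \End^0(A) = 2g$. Combined with Albert's classification and the polarization, this forces $\End^0(A)$ to contain a CM field of degree $2g$, i.e.\ $A$ has CM.

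\textbf{Main obstacle.} The key non-elementary input is W\"ustholz's analytic subgroup theorem, which is a deep diophantine transcendence result; I would invoke it as a black box. Granting this, the delicate bookkeeping lies in choosing the subspaces to which W\"ustholz is applied so that (a) the hypothesis (existence of a nonzero logarithm of an algebraic point in the subspace) is verifiable directly from the $\overline{\mbb{Q}}$-rationality of the Hodge filtration, and (b) the conclusions assemble into a CM structure rather than, e.g., a quaternionic Type II or III endomorphism algebra. This last step uses the Hodge-theoretic rigidity coming from the rationality of $F^0$ together with Albert's classification to eliminate the non-CM Albert types on dimensional grounds.
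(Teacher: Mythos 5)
The paper does not itself prove \cref{theorem.complex-av-transcendence}; it is quoted from Cohen and Shiga--Wolfart, and the only comment the paper makes is that the known proof combines the equivalence between weight-one motives and weight $-1$ $\mbb{Q}$-Hodge structures with W\"ustholz's analytic subgroup theorem. Your sketch is aligned with that attribution. What the paper \emph{does} prove in this neighbourhood is the conditional generalization \cref{main.complex}, by a completely different mechanism: assuming the Grothendieck period conjecture, it treats the de Rham comparison $c$ as a generic point of the motivic Galois group, observes that $\overline{\mbb{Q}}$-rationality of the Hodge filtration forces $\Fl_{[\mu]}$ to be a single point, and then uses the Tannakian automorphism lemma (\cref{lemma.tannakian-automorphisms}) plus Zariski density of rational points to conclude $G$ is abelian. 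That is the argument the paper then transposes to the $p$-adic side; so while your approach matches the cited unconditional proof, it is not the approach the authors emphasize or generalize.

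Some comments on your sketch. The forward direction is fine (Shimura--Taniyama descent plus $\overline{\mbb{Q}}$-rationality of cocharacters of the Mumford--Tate torus). In the reverse direction: (i) your statement of W\"ustholz is slightly off — the analytic subgroup theorem gives a $\overline{\mbb{Q}}$-subgroup $H \leq G$ with $\Lie H \subseteq V$ for a $\overline{\mbb{Q}}$-rational subspace $V$ containing a nonzero logarithm of an algebraic point, not merely $u \in \Lie H$; the strong form $\Lie H \subseteq V$ is precisely what lets one extract endomorphisms from period relations. (ii) Showing $H_1(A,\mbb{Q})$ is a cyclic $\End^0(A)$-module gives $[\End^0(A):\mbb{Q}] = 2g$, but this alone does not force CM — for a simple abelian variety this dimension is also attained by Albert Type II/III quaternionic algebras (e.g.\ QM abelian surfaces), and these are genuinely non-CM. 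You acknowledge this and say the $\overline{\mbb{Q}}$-rationality of $F^0$ is used to exclude Types II/III, which is correct in spirit, but it is the heart of the matter rather than ``bookkeeping''; in effect one must show that if $F^0$ is $\overline{\mbb{Q}}$-rational the endomorphisms produced by W\"ustholz commute and generate a field, not a quaternion order. As a proof proposal this is a reasonable high-level account of the Cohen/Shiga--Wolfart strategy, but the two points above are where the real work lives and they are presently deferred rather than handled.
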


Abelian varieties up to isogeny are precisely the weight one motives over $\mbb{C}$. The proof of this theorem makes use of the equivalence of this subcategory with the category of weight $-1$ $\mbb{Q}$-Hodge structures and the W\"ustholz analytic subgroup theorem, a linear transcendence result. \cref{main.complex} gives a conditional generalization of this result to all motives over $\mbb{C}$, and the assumptions match up with these ingredients: we assume the standard conjectures so that there is a good Tannakian category of motives, we assume the Hodge conjecture so that Hodge structures give a fully faithful realization, and we assume the Grothendieck period conjecture as a non-linear transcendence result to get us started. 

We now explain the setup in detail, then prove \cref{main.complex}: Assume the standard conjectures, and let $\Mot(\mbb{C})$ be the category of pure motives over $\mbb{C}$ with $\mbb{Q}$-coefficients. It is equipped with a Betti realization $\omega_{B}: \Mot(\mbb{C}) \rightarrow \Vect(\mbb{Q})$ and an algebraic de Rham realization $\omega_{\dR}: \Mot(\mbb{C}) \rightarrow \Vect(\mbb{C})$. The latter admits a Hodge filtration $F^\bullet \omega_{\dR}: \Mot(\mbb{C}) \rightarrow \Vect^f(\mbb{C})$, and there is a canonical comparison isomorphism $c: \omega_{\dR} \rightarrow \omega_{B} \otimes \mbb{C}$. The category $\Mot(\overline{\mbb{Q}})$ of motives over $\overline{\mbb{Q}}$ is a full subcategory, and the restriction of $F^\bullet \omega_\dR$ to $\Mot(\overline{\mbb{Q}})$ factors canonically through $\Vect^f(\overline{\mbb{Q}})$. Both $\Mot(\mbb{C})$ and $\Mot(\overline{\mbb{Q}})$ are Tannakian categories over $\mbb{Q}$, neutralized by $\omega_B$. Moreover,
\[ \Isom^\otimes(\omega_\dR|_{\Mot(\overline{\mbb{Q}})}, \omega_B \otimes \overline{\mbb{Q}}|_{\Mot(\overline{\mbb{Q}})}) \]
is a scheme over $\overline{\mbb{Q}}$ and the Grothendieck period conjecture says that 
\[ c \in \Isom^\otimes(\omega_\dR|_{\Mot(\overline{\mbb{Q}})}, \omega_B \otimes \overline{\mbb{Q}}|_{\Mot(\overline{\mbb{Q}})})(\mbb{C}) \]
is a generic point, i.e. is $\overline{\mbb{Q}}$-Zariski dense. There is also an exact tensor functor from $\Mot(\mbb{C})$ to the category $\mbb{Q}\dash\HS$ of $\mbb{Q}$-Hodge structures sending  $M \in \Mot(\mbb{C})$ to  $\left(\omega_B(M), c(M)(\Fil^\bullet\omega_\dR(M)) \right)$, and the Hodge conjecture implies this is fully faithful.  

\begin{proof}[Proof of \cref{main.complex}]
The algebraicity for CM-motives is well known, so let us assume both $M$ and the Hodge filtration are defined over $\overline{\mbb{Q}}$. Let $G$ be the motivic Galois group of $M$, i.e. $G=\Aut^\otimes(\omega_B|_{\langle M \rangle})$. It is a connected linear algebraic group over $\mbb{Q}$ (e.g., by the Hodge conjecture and the connectedness of Mumford-Tate groups), and we have a natural tensor equivalence $\Rep\+ G \rightarrow \langle M \rangle$ and an identification of $\omega_\std$ on $\Rep\+G$ with $\omega_B$ on $\langle M \rangle$. If we fix also a trivialization $\omega_\dR|_{\langle M \rangle}=\omega_\std \otimes \overline{\mbb{Q}}$ then we obtain an identification 
\[ \Isom^\otimes(\omega_\dR|_{\langle M \rangle}, \omega_B \otimes \overline{\mbb{Q}}|_{\langle M \rangle})=\Isom^\otimes(\omega_\std \otimes \overline{\mbb{Q}}, \omega_\std \otimes \overline{\mbb{Q}})=G_{\overline{\mbb{Q}}} \]
and, by the Grothendieck period conjecture, the de Rham comparison isomorphism $c$ is a generic point in $G_{\overline{\mbb{Q}}}(\mbb{C})$. Splitting the filtration of $\omega_\dR|_{\langle M \rangle}$ gives a cocharacter $\mu$ of $G_{\overline{\mbb{Q}}}$ so that the Hodge filtration is $F_\mu$. The classifying point for the induced fully faithful functor (by the Hodge conjecture) $\Rep\+ G \rightarrow \mbb{Q}\dash\HS$ is
\[ c \cdot F_\mu \in \Fl_{[\mu]}(\mbb{C}) \]
Since the orbit map for $F_\mu$, $G_{\overline{\mbb{Q}}} \rightarrow \Fl_{[\mu]}$ is dominant, this is a generic point of $\Fl_{[\mu]}/\overline{\mbb{Q}}$, so if the Hodge filtration is $\overline{\mbb{Q}}$-algebraic (with respect to the Betti rational structure), we deduce $c \cdot F_\mu = \Fl_{[\mu]}$ and hence the stabilizer $P_\mu$ of $F_\mu$ is $G$. If this is the case, then each element of $G(\mbb{Q})$ stabilizes the Hodge filtration thus induces an automorphism of the induced tensor functor from $\Rep\+ G$ to $\mbb{Q}$-Hodge structures. By Lemma \ref{lemma.tannakian-automorphisms} the only automorphisms are by $Z(G)(\mbb{Q})$ so we conclude $G(\mbb{Q}) \subseteq Z(G)$. Since the $\mbb{Q}$-points of a connected linear algebraic group over $\mbb{Q}$ are Zariski dense by \cite[Corollary 18.3]{borel:linear-algebraic-groups}, we conclude $G=Z(G)$, i.e. $G$ is abelian. It is thus a product of a torus and an additive group, but the additive part is trivial as there are no non-trivial extensions of the trivial Hodge structure by itself (this is already true at the level of filtrations)\footnote{Of course we could also exclude the possibility of an additive factor by using that $G$ is reductive because of the polarization on motives, but the given argument better mirrors the $p$-adic case.}. Thus, $G$ is a torus. 
\end{proof}

\subsection{Proof of \cref{main.transcendence} and \cref{main.G-structure-period-maps}} Given the setup of \cref{ss.periods}, \cref{main.G-structure-period-maps} is an immediate consequence of \cref{main.transcendence} since for any connected linear algebraic group $G$, $\Rep\+ G$ admits a single tensor generator. We now prove \cref{main.transcendence}. To that end, we fix an algebraically closed non-archimedean extension $C/\mbb{Q}_p$. Let $\kappa=\mc{O}_C/\mf{m}_C$ be the residue field, and let $C_0=W(\kappa)[1/p]$, so that any $p$-adic subfield of $C$ is contained in the algebraic closure $\overline{C}_0$ of $C_0$ in $C$. 

In \cref{prop.cm-has-rig-an-periods} we saw that any CM admissible pair over $C$ is $\overline{\breve{\mbb{Q}}}_p$-analytic and has $\overline{\mbb{Q}}_p$-analytic de Rham lattice. Suppose now that $M\in \AdmPair^{\mr{basic}}(\overline{C}_0)$, and that the de Rham lattice of $M$ is also $\overline{C}_0$-analytic. We may thus fix a finite extension $K/C_0$
such that $M$ is an admissible pair with good reduction over $K$ and such that the de Rham lattice is also defined over $K$. Let $\mf{I}=\Gal(\overline{C}_0/K)$. 

Let $G$ be the motivic Galois group of $M$, and let
\[ \mc{G}: \Rep\+ G \rightarrow \AdmPair^\goodred(K) \subseteq \AdmPair(C) \]
be the canonical $G$-structure for $M$, so that there is a canonical trivialization $\omega_\et \circ \mc{G}=\omega_\std$. Fix also a trivialization $\omega_\Isoc \otimes_{\breve{\mbb{Q}}_p} K=\omega_\std$ (viewed as a fiber functor to $\Vect(K)$). Then, as in \cref{ss.periods}, $c_\dR$ corresponds (via the chosen trivializations) to an element $g \in G(B_\dR)$, and the de Rham lattice is classified by 
\[ g^{-1} G(B^+_\dR) \in \Gr_G(C)=G(B_\dR)/G(B^+_\dR). \]
As in \cref{lemma:crystalline-galois-rep-etale-trivialized}, we find that $\mf{I}$ acts on $g$ by
\[ \sigma(g) = g \rho(\sigma) \qquad \sigma \in \mf{I}, \]
where $\rho: \mf{I} \rightarrow G(\mbb{Q}_p)$ is the associated crystalline Galois representation. The de Rham lattice period mapping is $\mf{I}$ equivariant (\cref{rmk:lattice-period-maps-G_K-equivariant}) so the Galois action on the de Rham lattice period multiplies it on the left $\rho(\sigma)^{-1}$. By assumption, the de Rham lattice period is preserved by this action (\cref{remark:de-Rham-lattice-Galois-equivariant}), so we conclude that $\rho(\sigma) \in G(\mbb{Q}_p)$ preserves the de Rham lattice. The induced functor from $\Rep\+ G$ to $p$-adic Hodge structures is fully faithful and $\rho(\sigma)$ is an automorphism of this functor so \cref{lemma.tannakian-automorphisms} shows that $\rho(\sigma) \in Z(G)(\mbb{Q}_p)$. However, the image of $\rho$ is Zariski dense in $G$ by \cref{corollary.galois-rep-dense-open}, so we conclude $Z(G)=G$, so $G$ is abelian. It is thus a product of a torus part and an additive part, but the additive part must be trivial because there is no non-trivial $\overline{C}_0$-analytic extension of the trivial admissible pair by itself (\cref{example.no-non-trivial-rigid-analytic-ext-of-trivial-by-itself}). Thus $G$ is a torus. 

\begin{remark}\label{remark.relation-with-one-dim-proof}
The proof is similar to the proof for one-dimensional formal groups given in \cite{howe:transcendence}, but at the time we only understood weaker tools --- indeed, in \cite{howe:transcendence} we used Tate's full faithfulness of the $p$-adic Tate module over a $p$-adic field in place of the crystalline comparison here, and the Scholze-Weinstein classification in place of the equivalence between basic admissible pairs and $p$-adic Hodge structures here. Happily, we have skipped over the analog of the abelian varieties step (where progress has halted in the archimedean theory) and gone straight to an unconditional analog of \cref{main.complex}. Nonetheless, it would still be interesting to find another proof for isoclinic formal groups mirroring the use of the W\"ustholz analytic subgroup theorem in \cref{theorem.complex-av-transcendence}.
\end{remark}

\begin{remark}\label{remark.real-hs-transcendence}
The same methods would give an unconditional theorem for extended real Hodge structures, but the result is not very interesting: in this case, the condition for an extended real Hodge structure analogous to having an $\overline{C}_0$-analytic de Rham lattice is simply to be a real Hodge structure, but every real Hodge structure has complex multiplication (indeed, the motivic Galois group of the category of real Hodge structures is the Deligne torus $\mr{Res}_{\mbb{C}/\mbb{R}} \mbb{G}_m$). Thus in the archimedean case one must use an underlying global structure on the coefficients to have an interesting transcendence theory, whereas in the non-archimedean case there is already a rich purely local transcendence theory. 
\end{remark}

\subsection{Beyond the basic case}\label{ss.beyond-basic}

In \cite[Conjecture 4.1]{howe:transcendence}, one of the authors made a conjecture based on the result for one-dimensional formal groups that does not really make sense as written (regretfully, we failed to noticed at the time that isogeny Breuil-Kisin-Fargues modules were not an abelian category!). The natural correction is to use rigidified Breuil-Kisin-Fargues modules. Doing so, the conjecture becomes that a $\overline{C}_0$-analytic admissible pair with $\overline{C}_0$-analytic Hodge-Tate filtration admits complex multiplication. \cref{main.transcendence} thus proves a weaker statement where we require the stronger condition that the de Rham lattice is $\overline{C}_0$-analytic and restrict ourselves to basic admissible pairs. Note that in the case of minuscule cocharacter, including all settings that apply to $p$-divisible groups, the Hodge-Tate filtration uniquely determines the de Rham lattice, so that the corrected conjecture is now proved in these basic minuscule cases. With a more complete understanding of the structures that allow us to prove \cref{main.transcendence}, it no longer seems reasonable to conjecture that $\overline{C}_0$-analyticity of the Hodge-Tate filtration alone would suffice beyond the minuscule case --- it would be interesting to have an example!

With the correction described above, the conjecture does not hold outside the basic case. Indeed, already the admissible pair attached to an ordinary elliptic curve without complex multiplication gives an example where it fails. This may appear at odds with the full results of \cite{howe:transcendence}, which also treat one-dimensional $p$-divisible groups with an \'{e}tale part, but note that in \cite{howe:transcendence} we did not take into account the rigidification and characterized CM purely in terms of endomorphisms; without the rigidification the \'{e}tale part of a $p$-divisible group can always be split off from the connected part over $\mc{O}_C$. 

In the non-basic counterexample given by an ordinary elliptic curve, note that the slope filtration still lifts, and each graded part for the slope filtration has complex multiplication. In fact, this holds in general, as we explain now.

The slope filtration on the category of isocrystals is the increasing $\mbb{Q}$-filtration $\Fil_\lambda(W)=\bigoplus_{\lambda' \leq \lambda} W_{\lambda'}$, where $W_{\lambda'}$ denotes the $D_{\lambda'}$-isotypic component of $W$. For $\mc{M} \subseteq \AdmPair(C)$ a Tannakian sub-category, we say the slope filtration lifts to $\mc{M}$ if $\Fil_\lambda(W)$ underlies a sub-object for any $(W, \mc{L}_\et) \in \mc{M}$. Note that it is equivalent to say that $\Aut^\otimes(\omega_\Isoc|_{\mc{M}})$, where $\omega_\Isoc$ is viewed as a fiber functor to $\Vect(\breve{\mbb{Q}}_p)$, preserves the slope filtration. 

\begin{example}
The slope filtration lifts to the subcategory of basic admissible pairs (because the slope grading does!). 
\end{example}

\begin{theorem}\label{theorem.transcendence-with-slope}
Let $M \in \AdmPair(\overline{C}_0)$ and suppose the de Rham lattice of $M$ is $\overline{C}_0$-analytic. Then the slope filtration lifts to $\langle M\rangle$, and the associated graded has complex multiplication. 
\end{theorem}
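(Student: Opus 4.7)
The plan is to apply \cref{main.transcendence} to the associated graded: I will show that the slope filtration of $\omega_\Isoc$ lifts to a filtration by sub-objects in $\langle M\rangle$, and then that the graded $\gr(M)=\bigoplus_\lambda\gr_\lambda M$, a direct sum of isoclinic hence basic admissible pairs, inherits both $\overline{C}_0$-analyticity and $\overline{C}_0$-analyticity of its de Rham lattice. \cref{main.transcendence} then applies to each $\gr_\lambda M$ to yield CM.

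Setup. Let $G=\MG(M)$, pick a strict $p$-adic subfield $K\subseteq\overline{C}_0$ over which $M$ has good reduction and the de Rham lattice is defined, and set $\mathfrak{I}=\Gal(\overline{C}_0/K)$. Fix compatible trivializations and form the crystalline Galois representation $\rho\colon\mathfrak{I}\to G(\mathbb{Q}_p)$, which has open image by \cref{corollary.galois-rep-dense-open} and (via \cref{lemma:crystalline-galois-rep-etale-trivialized}) satisfies $\sigma(c_\dR)=c_\dR\rho(\sigma)$ for the de Rham period $c_\dR\in G(B_\dR)$. By \cref{remark:de-Rham-lattice-Galois-equivariant} each $\rho(\sigma)$ preserves the de Rham lattice $\mathcal{L}_\dR\subset\omega_\et\otimes B_\dR$.

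Key step (lifting the slope filtration). Let $b\in G(\breve{\mathbb{Q}}_p)$ represent the underlying $G$-isocrystal, with Newton cocharacter $\nu_b$, and let $P_b\subseteq G$ denote the stabilizer of the slope filtration on $\omega_\Isoc$ (defined over $\mathbb{Q}_p$ since $\nu_b$ commutes with $b$). The slope filtration lifts to $\langle M\rangle$ precisely when $G\subseteq P_b$, i.e.\ when $P_b=G$. Since $\rho(\sigma)\in\GL(\omega_\et)$ preserves $\mathcal{L}_\dR$, it acts as an endomorphism of the modified bundle $(\omega_\et\otimes\mathcal{O}_\FF)_{\mathcal{L}_\dR}$ on $\FF_C$. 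Via $c_\dR$ this bundle is canonically identified with $\mathcal{E}(\omega_\Isoc)$, which decomposes as $\bigoplus_\lambda\mathcal{O}_\FF(-\lambda)^{m_\lambda}$. The vanishing $\Hom(\mathcal{O}_\FF(s),\mathcal{O}_\FF(s'))=0$ for $s>s'$ forces every endomorphism of $\mathcal{E}(\omega_\Isoc)$ to preserve its Harder-Narasimhan filtration, which matches the slope filtration $W_{\leq\lambda}\otimes B_\dR\subset W\otimes B_\dR$; unpacking yields $c_\dR\rho(\sigma)c_\dR^{-1}\in P_b(B_\dR)$. As $\rho(\mathfrak{I})$ is Zariski dense in $G$ over $\mathbb{Q}_p$, hence in $G_{B_\dR}$ over $B_\dR$, and conjugation by $c_\dR\in G(B_\dR)$ is an automorphism of $G_{B_\dR}$, the Zariski closure of $c_\dR\rho(\mathfrak{I})c_\dR^{-1}$ in $G_{B_\dR}$ is all of $G_{B_\dR}$; its containment in $P_{b,B_\dR}$ then forces $G_{B_\dR}=P_{b,B_\dR}$, whence $G=P_b$ over $\mathbb{Q}_p$.

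Conclusion. With the slope filtration lifted, $\gr(M)$ is a direct sum of isoclinic, hence basic, admissible pairs. Since the slope decomposition $W=\bigoplus W_\lambda$ is defined over $\breve{\mathbb{Q}}_p\subseteq\overline{C}_0$, the $\overline{C}_0$-filtration $F^\bullet W_{\overline{C}_0}$ defining the de Rham lattice of $M$ induces, by intersection and passage to gradeds, $\overline{C}_0$-filtrations on each $W_{\lambda,\overline{C}_0}$, which are the de Rham lattice filtrations of $\gr_\lambda M$; so each $\gr_\lambda M$ is $\overline{C}_0$-analytic with $\overline{C}_0$-analytic de Rham lattice. Applying \cref{main.transcendence} to each basic $\gr_\lambda M$ then gives that it has CM, so $\gr(M)$ has CM. The main obstacle is the bundle identification $(\omega_\et\otimes\mathcal{O}_\FF)_{\mathcal{L}_\dR}\cong\mathcal{E}(\omega_\Isoc)$ together with the careful translation of Hom-vanishing on $\mathcal{E}(\omega_\Isoc)$ into the parabolic containment $c_\dR\rho(\sigma)c_\dR^{-1}\in P_b(B_\dR)$; the Zariski-density bootstrap to $G=P_b$ is then routine.
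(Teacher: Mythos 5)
Your proof is correct and follows essentially the same route as the paper's: show the crystalline Galois image preserves the de Rham lattice, hence acts by automorphisms of the modified bundles on $\FF$, hence preserves the Harder--Narasimhan (slope) filtration, and then bootstrap via Zariski density of the Galois image to conclude that the slope filtration lifts to $\langle M \rangle$; the final application of \cref{main.transcendence} to the graded pieces is also the paper's step. Two small imprecisions do not affect the argument but are worth fixing: the parabolic $P_b$ should be taken inside $\Aut^\otimes(\omega_\Isoc)$ (equivalently $G_b$), not inside $G$ itself, since the slope filtration lives on $\omega_\Isoc$; and in the Conclusion the $\overline{C}_0$-analyticity of the de Rham lattice involves a filtration on $\omega_\et(M)_{\overline{C}_0}$, whereas a filtration on $W_{\overline{C}_0}$ governs the $\overline{C}_0$-analyticity of $M$ via the \'etale lattice --- you have tacitly conflated the two.
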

\begin{proof}
If the slope filtration lifts then the associated graded is basic, $\overline{C}_0$-analytic, and has $\overline{C}_0$-analytic de Rham lattice, so \cref{main.transcendence} implies it has complex multiplication. Thus it remains only to show that the slope filtration lifts. 

For this, we may proceed as in the proof of \cref{main.transcendence} to obtain $\mf{I}$ and $\rho$ such that, for $\sigma \in \mf{I}$, $\rho(\sigma)$ preserves the de Rham lattice. This implies that $\rho(\sigma)$ induces an automorphism of the vector bundle $\mc{E}(W)$ for any $(W,\mc{L}_\et) \in \langle M \rangle$. Such an automorphism preserves the slope filtration of $\mc{E}(W)$, since morphisms of semistable vector bundles on $\FF_{C^\flat}$ only go up in slope. Thus, after we use $c_\dR$ to identify $G(B_\dR)$ with $\Aut^\otimes(\omega_{\Isoc})(B_\dR)$, we find $g$ preserves the slope filtration of $W_{B_\dR}$. But since the image of $\rho$ is Zariski dense\footnote{Here we use that the image of $\rho$ is a Zariski dense set of $\mbb{Q}_p$-points of $G$, and that this implies the induced set of $L$-points in $G_L$ for any $L/\mbb{Q}_p$ is also Zariski dense --- to see this second point, fix a basis of $L$ as a $\mbb{Q}_p$-vector space to obtain a basis for $\mc{O}(G_L)=\mc{O}(G)\otimes_{\mbb{Q}_p} L$ as a free $\mc{O}(G)$-module, then argue separately with each term in any linear combination of basis vectors.} in $G_{B_\dR}$ by \cref{corollary.galois-rep-dense-open}, we conclude that $\Aut^\otimes(\omega_{\Isoc}|_{\langle M \rangle})$, viewed as a fiber functor to $\breve{\mbb{Q}}_p$, preserves the slope filtration. In other words, the slope filtration lifts to $\langle M \rangle$.  
\end{proof}

\bibliographystyle{plain}
\bibliography{refs}

\end{document}